\def\rad{\operatorname{rad}}
\def\radop{\rad_{\mathrm{op}}}
\def\ac{{\rm ac}}
\def\val{{\mathrm{val}}}
\def\omin{{\mathrm{omin}}}  % index for o-minimal theory and language
\def\rank{{\rm rank}}
\def\11{{\mathbf 1}}
\def\CC{{\mathbb C}}
\def\LL{{\mathfrak L}}
\def\NN{{\mathbb N}}
\def\QQ{{\mathbb Q}}
\def\RR{{\mathbb R}}
\def\ZZ{{\mathbb Z}}
\def\cA{{\mathcal A}}
\def\cB{{\mathcal B}}
\def\cC{{\mathcal C}}
\def\cL{{\mathcal L}}
\def\cM{{\mathcal M}}
\def\cO{{\mathcal O}}
\def\cR{{\mathcal R}}
\def\cS{{\mathcal S}}
\def\cT{{\mathcal T}}
\mathchardef\alphag="7C0B \mathchardef\betag="7C0C
\mathchardef\gammag="7C0D \mathchardef\deltag="7C0E
\mathchardef\varepsilong="7C22 \mathchardef\varphig="7C27
\mathchardef\psig="7C20 \mathchardef\zetag="7C10
\mathchardef\epsilong="7C0F \mathchardef\rhog="7C1A
\mathchardef\taug="7C1C \mathchardef\upsilong="7C1D
\mathchardef\iotag="7C13 \mathchardef\thetag="7C12
\mathchardef\pig="7C19 \mathchardef\sigmag="7C1B
\mathchardef\etag="7C11 \mathchardef\omegag="7C21
\mathchardef\kappag="7C14 \mathchardef\lambdag="7C15
\mathchardef\mug="7C16 \mathchardef\xig="7C18
\mathchardef\chig="7C1F \mathchardef\nug="7C17
\mathchardef\varthetag="7C23 \mathchardef\varpig="7C24
\mathchardef\varrhog="7C25 \mathchardef\varsigmag="7C26
\mathchardef\Omegag="7C0A \mathchardef\Thetag="7C02
\mathchardef\Sigmag="7C06 \mathchardef\Deltag="7C01
\mathchardef\Phig="7C08 \mathchardef\Gammag="7C00
\mathchardef\Psig="7C09 \mathchardef\Lambdag="7C03
\mathchardef\Xig="7C04 \mathchardef\Pig="7C05
\mathchardef\Upsilong="7C07
\newtheorem{theorem}[subsubsection]{Theorem}
\newtheorem{thm}[subsubsection]{Theorem}
\newtheorem{lem}[subsubsection]{Lemma}
\newtheorem{cor}[subsubsection]{Corollary}
\newtheorem{prop}[subsubsection]{Proposition}
\theoremstyle{definition}
\newtheorem{defn}[subsubsection]{Definition}
\newtheorem{notn}[subsubsection]{Notation}
\newtheorem{example}[subsubsection]{Example}
\newtheorem{def-prop}[subsubsection]{Proposition-Definition}
\newtheorem{def-theorem}[subsubsection]{Theorem-Definition}
\newtheorem{def-lem}[subsubsection]{Lemma-Definition}
\theoremstyle{remark}
\newtheorem{remark}[subsubsection]{Remark}
\newtheorem{question}[subsubsection]{Question}
\theoremstyle{plain}
\numberwithin{equation}{subsection}
\def\boxit#1#2{\setbox1=\hbox{\kern#1{#2}\kern#1}%
\dimen1=\ht1 \advance\dimen1 by #1 \dimen2=\dp1 \advance\dimen2 by
#1
\setbox1=\hbox{\vrule height\dimen1 depth\dimen2\box1\vrule}%
\setbox1=\vbox{\hrule\box1\hrule}%
\advance\dimen1 by .4pt \ht1=\dimen1 \advance\dimen2 by .4pt
\dp1=\dimen2 \box1\relax}
\renewcommand{\theequation}{\thesubsection.\arabic{equation}}
\mathchardef\alphag="7C0B \mathchardef\betag="7C0C
\mathchardef\gammag="7C0D \mathchardef\deltag="7C0E
\mathchardef\varepsilong="7C22 \mathchardef\varphig="7C27
\mathchardef\psig="7C20 \mathchardef\zetag="7C10
\mathchardef\epsilong="7C0F \mathchardef\rhog="7C1A
\mathchardef\taug="7C1C \mathchardef\upsilong="7C1D
\mathchardef\iotag="7C13 \mathchardef\thetag="7C12
\mathchardef\pig="7C19 \mathchardef\sigmag="7C1B
\mathchardef\etag="7C11 \mathchardef\omegag="7C21
\mathchardef\kappag="7C14 \mathchardef\lambdag="7C15
\mathchardef\mug="7C16 \mathchardef\xig="7C18
\mathchardef\chig="7C1F \mathchardef\nug="7C17
\mathchardef\varthetag="7C23 \mathchardef\varpig="7C24
\mathchardef\varrhog="7C25 \mathchardef\varsigmag="7C26
\mathchardef\Omegag="7C0A \mathchardef\Thetag="7C02
\mathchardef\Sigmag="7C06 \mathchardef\Deltag="7C01
\mathchardef\Phig="7C08 \mathchardef\Gammag="7C00
\mathchardef\Psig="7C09 \mathchardef\Lambdag="7C03
\mathchardef\Xig="7C04 \mathchardef\Pig="7C05
\mathchardef\Upsilong="7C07
\newcommand{\RV}{\mathrm{RV}}
\newcommand{\VF}{\mathrm{VF}}
\newcommand{\VG}{\mathrm{VG}}
\newcommand{\RF}{\mathrm{RF}}
\newcommand{\RRV}{\mathrm{RRV}}
\newcommand{\VFR}{\mathrm{VFR}}
\newcommand{\rv}{\operatorname{rv}}
\newcommand{\res}{\operatorname{res}}
\newcommand{\Th}{\operatorname{Th}}
\newcommand{\trop}{\mathrm{trop}}
\newcommand{\vol}{\mathrm{vol}}
\renewcommand{\sp}{\mathrm{sp}}
\DeclareMathOperator*{\Ob}{Ob}
\DeclareMathOperator*{\fibdim}{fibdim}
\DeclareMathOperator{\tr}{tr}
\DeclareMathOperator{\id}{id}
\DeclareMathOperator{\gr}{graph}
\DeclareMathOperator{\Fn}{Fn}
\newcommand{\alg}{{\mathrm{alg}}}
\newcommand{\rcl}{\mathrm{rcl}}
\newcommand{\an}{\mathrm{an}}
\newcommand{\ring}{\mathrm{ring}}
\newcommand{\intr}{\mathrm{intr}}
\DeclareMathOperator{\K}{K}
\def\Jac{\operatorname{Jac}}
\newcommand{\abs}[1]{\lvert#1\rvert}
\DeclarePairedDelimiter{\norm}{\lVert}{\rVert}
\newbox\removebox
\newcommand\remove[1]{%
\setbox\removebox=\ifmmode\hbox{$#1$}\else\hbox{#1}\fi%
\leavevmode
\rlap{\textcolor{blue}{\vrule height0.8ex depth-0.6ex width\wd\removebox}}%
\box\removebox
}
\long\def\bigremove#1{%
\par\setbox\removebox=\vbox{#1}%
\vbox{%
\vbox to0pt{\hbox{\tikz\draw[color=blue,thick] (0,0) -- (\wd\removebox,-\ht\removebox)  (\wd\removebox,0) -- (0,-\ht\removebox);}}
\box\removebox
}
}
\newcommand\acl{\mathrm{acl}}
\newcommand\dcl{\mathrm{dcl}}
\newcommand\bdd{\mathrm{bdd}}
\definecolor{orange}{rgb}{1,0.5,0}
\newcommand{\private}[1]{\leavevmode{\scriptsize\color{blue}\marginpar{{\scriptsize Private comment}}#1\par}}
\renewcommand{\private}[1]{}
\thanks{}
\title{Integration in Hensel minimal fields}
\author{Mathias Stout and Floris Vermeulen}
\subjclass[2020]{Primary 03C60, 12J25; Secondary 03C98, 12L12, 03C65}
\begin{document}

\begin{abstract}
We develop a framework of motivic integration in the style of Hrushovski--Kazhdan in arbitrary Hensel minimal fields of equicharacteristic zero. 
Hence our work generalizes that of Hrushovski--Kazhdan and Yin, but applies more broadly to discretely valued fields, almost real closed fields with analytic structure, pseudo-local fields, and coarsenings.
In more detail, we obtain isomorphisms of Grothendieck rings of definable sets, with or without volume forms, in the valued field sort and in the leading term sort. 
Along the way we develop a theory of effective 1-h-minimal structures, where finite definable sets can be lifted from the leading term sort to the valued fields sort.
We show that many natural examples of 1-h-minimal structures are effective, and develop dimension theory and a theory of differentiation in $\RV$ for effective structures.
\end{abstract}

\maketitle

\setcounter{secnumdepth}{2}
\tableofcontents

\section{Introduction}\label{sec:intro}

%Dingen die we waarschijnlijk willen vermelden
%
%\begin{enumerate}
%\item History of motivic integration
%\item Hrushovski--Kazhdan integration and amend minor mistakes
%\item More modern and unified treatment
%\item How to read this paper
%\item Hensel minimality
%\item Comparison with Cluckers--Loeser, Cluckers--Halupczok, Felix, Yin
%\item Topological zeta function and dimensional motivic integration
%\item Future avenue: mixed characteristic, tilting, AKE, Fourier transforms, Mellin transforms, standard decomposition and functoriality, stratifications and risometries, Forey--Yin, Hrushovski--Loeser (MONODROMY AND THE LEFSCHETZ FIXED POINT
%FORMULA), Poisson formula, Euler characteristics on pseudo-finite fields (Johnson, Halupczok, Hrushovski, ...), interpretable groups in valued fields
%\end{enumerate}

%\Mathias{Small todolist of annoying todos}
%\begin{enumerate}
%	\item Distinguish between $\cC$ and $\Ob\cC$
%	\item $f \colon X \to Y$ vs $f : X \to Y$
%	\item effective $1$-h-minimal vs effectively 1-h-minimal
%\end{enumerate}

In the nineties Kontsevich introduced motivic integration to prove that birational Calabi--Yau varieties have the same Hodge numbers, thereby generalizing a theorem due to Batyrev~\cite{batyrev-betti}.
The theory was subsequently heavily developed by Denef--Loeser~\cite{DLBarc, DLJAG, DL, DLinvent} in a geometric and arithmetic-geometric way, and later by Loeser--Nicaise--Sebag~\cite{LoeserSeb, NicaSeba} based on N\'eron models. 
Since then, motivic integration has grown to a vast subject with applications in singularity theory and birational geometry~\cite{Baty-Moreau, Batyrev, Veys:stringy, NicaSeba, MustJAMS}.

Later work by Cluckers--Loeser~\cite{CLoes} and Hrushovski--Kazhdan~\cite{HK} provide model-theoretic frameworks for motivic integration based on a deep understanding of definable sets in valued fields.
These approaches naturally lead to a theory of parametric integrals with Fubini type theorems, and are well-suited for the development of motivic theory of distributions, Fourier transforms~\cite{CLexp, HKPoiss} and Mellin transforms~\cite{CLNV24}.
Moreover, the model-theoretic approach naturally gives transfer principles between characteristic zero and large positive characteristic, generalizing the classical Ax--Kochen--Ersov principle~\cite{CLoesAKE}.
These model-theoretic frameworks have seen applications to the Langlands program~\cite{CHL, YGordon, CGH2, Casselman-Cely-Hales, ForeyLoeserWyss} and birational geometry~\cite{HruLoeser:mono, NicaiseOttem, NicaiseOttem2, ForeyVirtual}.

In a sense, the theories of Cluckers--Loeser and Hrushovski--Kazhdan are somewhat orthogonal.
Cluckers--Loeser integration applies to discretely valued fields, since it relies crucially on summation of certain series over $\ZZ$.
On the other hand, the theory of Hrushovski--Kazhdan is more geometric and primarily applies to algebraically closed valued fields of equicharacteristic zero, with some tools to reduce to henselian subfields.  %\Mathias{Note: HK also have results on Henselian subfields}
These frameworks also differ on a more philosophical level.
For Cluckers--Loeser, the ring of motivic functions is given explicitly by a set of generators and relations.
The difficult part is then to show that these rings are closed under integration, which constitutes the majority of the work in~\cite{CLoes}.
Hrushovski--Kazhdan provide a universal ring of motivic functions, which is closed under integration essentially by definition.
However, it is not obvious what this ring of functions looks like, and much of the work in~\cite{HK} is devoted to understanding this ring in terms of the leading term sorts. 
Nevertheless, the ingredients in both frameworks are quite similar, and rely crucially on an analysis of definable sets and definable maps in valued fields.

The aim of this article is to develop a universal theory of motivic integration in arbitrary $1$-h-minimal theories of equicharacteristic zero, in the style of Hrushovski--Kazhdan.
As such, our theory generalizes that of Hrushovski--Kazhdan, but applies more broadly to discretely valued fields, $\cT_{\omin}$-convex valued fields (see also~\cite{Yin.tcon}), almost real closed fields with analytic structure, and pseudo-local fields.
Additionally, since our framework applies also to discretely valued fields, a specialization to Cluckers--Loeser motivic integration is possible, which we will realize in upcoming work.
From a different perspective, one can consider our main results on the universal Euler characteristic on valued fields as a generalization of the AKE principle to formulas rather than sentences.
Let us go more into details.

\subsection{Main results} 
Let $K$ be a valued field of equicharacteristic zero, considered in some language $\cL\supset \cL_{\val}= \{0,1,+,\cdot, \cO\}$ such that $\Th_{\cL}(K)$ is $1$-h-minimal. 
We denote the valuation ring by $\cO_K$ and the maximal ideal by $\cM_K$.
We denote by $\RV = K^\times / (1+\cM_K)\cup\{0\}$ the leading term sort, and by $\rv\colon K\to \RV$ the natural quotient map extended by $\rv(0)=0$.
Write $\RV^\times = \RV\setminus \{0\}$.
The value group of $K$ is denoted by $\Gamma_K^\times$, with multiplicative valuation $|\cdot |\colon K\to \Gamma_K = \Gamma_K^\times \cup\{0\}$.

In short, our main results concern a precise description of the Grothendieck ring of definable sets $K$ in terms of the Grothendieck ring of definable sets in $\RV$.
We provide such results both with and without measures.

Our main results relate the Grothendieck semiring $\K_+\VF$ to the graded Grothendieck semiring $\K_+\RV[*]$.
Essentially, these are generated by elements of the form $[X]$ and $[R]_n$ respectively, where $X \subset K^n$ and $R \subset (\RV^{\times})^n$ are $\emptyset$-definable. 
We refer to Section~\ref{sec:int} for precise definitions.
For now, we simply note that these semirings depend only on the theory of $K$ in $\cL$, and not on the specific model.

Our first goal is to provide a comparison between these semirings, which is achieved by the following theorem.
Denote $\RV^\times_{<1} = \{\xi\in \RV^\times \mid |\xi| < 1\}$.

\begin{thm}[{{{Theorem~\ref{thm:non.eff.int}}}}]\label{thm:main.not.eff}
Let $\cT$ be a $1$-h-minimal theory of equicharacteristic zero valued fields.
Then there is a natural surjective morphism of semirings
\[
\int\colon \K_+ \VF\to \K_+ \RV[*] / I_{\mathrm{sp}}
\]
where $I_{\mathrm{sp}} = ([1]_1, [\RV^\times_{<1}]_1 + [1]_0)$, such that for every $\emptyset$-definable $R\subset (\RV^\times)^n$ we have $\int([\rv^{-1}(R)]_n) = [R]_n$.
\end{thm}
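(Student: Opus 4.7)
The plan is to follow the Hrushovski--Kazhdan blueprint, replacing their use of ACVF-specific tools with the cell decomposition theory available in $1$-h-minimal theories. Roughly, $\int$ should be built by writing every $\emptyset$-definable $X \subset K^n$ as a finite disjoint union of ``basic'' pieces of the form $\rv^{-1}(R)$ (up to translation by a definable center), mapping each such piece to its $\RV$-datum $[R]_m$ in the appropriate grade, and then summing. Surjectivity is then automatic from the generating property of the classes $[R]_n$, so the whole theorem comes down to constructing a well-defined map on $\K_+\VF$ and checking multiplicativity.

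First I would carry out the construction on generators. Using the cell decomposition for $1$-h-minimal structures, any $\emptyset$-definable $X \subset K^n$ admits a finite partition into cells of the form
\[
C_{c,R} = \bigl\{ x \in K^n : \rv(x_1 - c_1) \in R_1,\ \rv(x_2 - c_2(x_1)) \in R_2,\ \ldots,\ \rv(x_n - c_n(x_{<n})) \in R_n \bigr\},
\]
where the $c_i$ are definable functions and the $R_i \subset \RV^\times \cup \{0\}$ are the fibers of some definable $R \subset \RV^n$ (the case $\rv(x_i - c_i) = 0$, i.e.\ $x_i = c_i$, being handled separately). For such a cell one sets $\int[C_{c,R}] := [R \cap (\RV^\times)^n]_n$, together with contributions of lower grade coming from the coordinates where $x_i = c_i$; the latter are then forced into the image of $[1]_k$-type classes, where the ideal $I_{\mathrm{sp}}$ will take care of them.

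The main obstacle, and by far the longest step, is well-definedness modulo $I_{\mathrm{sp}}$. Two different cell decompositions of the same $X$ must yield the same element of $\K_+\RV[*]/I_{\mathrm{sp}}$. I would split the verification into three kinds of moves:
\begin{itemize}
\item Common refinement: two compatible partitions by cells with the \emph{same} centers refine to a common cell decomposition, for which additivity of $[R]_n$ over partitions of $R$ gives the identity on the nose.
\item Change of center: replacing a center $c$ by another definable center $c'$ with $\rv(c-c') \neq \rv$-class boundary data produces $\RV$-data differing by translation in $\RV^\times$, which must be absorbed; the generator $[1]_1$ of $I_{\mathrm{sp}}$ is what kills the remaining discrepancy along a single $\rv$-class, while the second generator $[\RV^\times_{<1}]_1 + [1]_0$ reconciles the ``point vs.\ maximal ideal'' ambiguity arising when a center is moved onto a cell point (so that $\{x = c\}$, contributing $[1]_0$, has to be exchanged against the open ball $\{\rv(x-c) < 1\}$, contributing $[\RV^\times_{<1}]_1$).
\item Reparametrizing the coordinate order: one shows independence of the $n$-fold iterated cell decomposition on the order of coordinates, again via a Fubini-type argument using the graded structure of $\K_+\RV[*]$.
\end{itemize}
Each of these reduces, by induction on $n$ and on the complexity of the cells, to the one-dimensional analogue, where the relations defining $I_{\mathrm{sp}}$ are precisely calibrated to make the identities hold.

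Once well-definedness is established, the remaining checks are essentially formal. Additivity: disjoint unions of cells are again cells (after common refinement), and the assignment clearly respects disjoint union. Multiplicativity: the product of two cells in $K^n$ and $K^m$ with data $R, R'$ is a cell in $K^{n+m}$ with data $R \times R'$, matching the product $[R]_n\cdot[R']_m = [R\times R']_{n+m}$ in the graded semiring. The identities $\int([\rv^{-1}(R)]_n) = [R]_n$ hold by construction for any $R \subset (\RV^\times)^n$, and since such classes generate $\K_+\RV[*]$, surjectivity onto the quotient follows. The heart of the argument is thus the bookkeeping around $I_{\mathrm{sp}}$; the hypothesis of $1$-h-minimality enters exactly at the cell decomposition and at the control over one-variable definable functions (Jacobian and preparation results) needed to perform the center changes.
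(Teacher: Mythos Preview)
Your proposal correctly identifies cell decomposition as the main tool and the role of $I_{\mathrm{sp}}$ in reconciling different decompositions. However, there is a genuine gap in the well-definedness argument.

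You write that well-definedness amounts to showing that two cell decompositions of the \emph{same} $X$ give the same class modulo $I_{\mathrm{sp}}$. But $\int$ must be defined on $\K_+\VF$, where $[X] = [Y]$ whenever there is a $\emptyset$-definable bijection $f\colon X \to Y$. So you must also show $\int[X] = \int[Y]$ for such $f$. This does \emph{not} reduce to independence of cell decomposition: the image of a twisted box under $f$ is typically not a twisted box, so there is no cell decomposition of $Y$ matching one of $X$ through $f$. Your three bullets (common refinement, change of center, coordinate reordering) all concern a fixed $X$; invariance under general bijections is simply not addressed.

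The paper handles this as follows. In dimension one, it invokes \emph{compatible domain-image preparation} (an addendum to cell decomposition in $1$-h-minimality): given $f\colon X\to Y$, one refines cell decompositions $\chi$ of $X$ and $\psi$ of $Y$ so that $f$ carries each twisted box of $\chi$ bijectively to a twisted box of $\psi$; combined with the change-of-center argument (your second bullet, which matches Lemma~\ref{lem:cell.refinement}) this settles the one-dimensional case. In higher dimensions there is no analogous direct preparation. Instead the paper proves (Lemma~\ref{lem:rel.unary}) that every $\emptyset$-definable bijection factors, after a finite partition, as a composition of \emph{relatively unary} bijections, each altering only one $K$-coordinate. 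It then introduces intermediate categories $\VFR[k,\ell]$, defines partial integrals $I^j$ integrating out the $j$-th variable, and proves a Fubini relation $I^j I^{j'} = I^{j'} I^j$ using the bi-twisted box lemma (Lemma~\ref{lem:bi.twisted}). The one-dimensional result is applied to each unary factor, and Fubini lets one reorder the integrations after decomposing $f$. Your third bullet gestures at Fubini, but only for coordinate permutations; the reduction of an arbitrary bijection to relatively unary pieces is the missing structural ingredient without which the higher-dimensional case does not go through.

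A minor point: $I_{\mathrm{sp}}$ is the semiring congruence generated by the single pair $([1]_1,\,[\RV^\times_{<1}]_1 + [1]_0)$, not by two separate elements; there is only one relation being imposed, namely the one forced by $1+\cM_K \cong \cM_K$ in $\VF$.
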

The map $\int$ essentially boils down to cell decomposition.

The congruence $I_{\mathrm{sp}}$ is exactly the same one as in~\cite{HK}.
The fact that the congruence $I_{\mathrm{sp}}$ appears is not surprising. 
Indeed, $[1]_1$ corresponds to the definable set $1+\cM_K\subset K$, while $[\RV^\times_{<1}]_1 + [1]_0$ corresponds to $\cM_K\subset K$, and the classes $[1+\cM_K]$ and $[\cM_K]$ are obviously equal in $\K_+ \VF$.
What is more surprising is that this simple congruence is precisely the cokernel of $\int$.

In general, $\int$ is not injective. 
Indeed, a simple obstruction is given by the existence of a $\emptyset$-definable point $\xi \in \RV$ such that $\rv^{-1}(\xi)$ contains no $\emptyset$-definable point of $K$.
Essentially, this turns out to be the only obstruction: we show that $\int$ is a bijection if $\cT$ is \emph{effective}, similar to~\cite{HK}.
Effectivity means that for every model $K$ of $\cT$, every parameter set $A\subset K$ and every $A$-definable $\xi\in \RV$, there exists an $A$-definable $x\in K$ with $\rv(x) = \xi$.
In other words, elements from $\RV$ can be lifted to the valued field in a uniformly definable way.

In Section~\ref{sec:eff} we provide many natural examples of effective $1$-h-minimal theories, which can be summarized as follows.
\begin{enumerate}
\item (Corollary~\ref{cor:vf.effective}) The theory $\cT$ of an equicharacteristic zero valued field in the language $\cL_\val$ is effective if the residue field is algebraically bounded over the prime field.
In particular this applies when the residue field is algebraically closed, real closed, $p$-adically closed, or pseudofinite.
This remains true when adding analytic structure to the language as in~\cite{CLR}.
\item (Proposition~\ref{prop:t.convex.effective}) Let $\cT_\omin$ be a complete power-bounded theory of o-minimal fields. 
Then the corresponding $\cT_\omin$-convex theory is effective.
This is the setting studied by Yin in~\cite{Yin.tcon}.
\item (Proposition~\ref{prop:real.an.effective}) The theory $\cT$ of an ordered valued field with convex valuation ring equipped with analytic structure from a strong and rich real Weierstrass system as in~\cite{NSV24} is effective.
\item (Proposition~\ref{prop:coarsening.eff}) If $\cT$ is any $1$-h-minimal theory (possibly of mixed characteristic), then the theory of any proper nontrivial equicharacteristic zero coarsening becomes effective.
\end{enumerate}

In general 1-h-minimal theories, the structure $\RV$ can exhibit wild geometric behaviour.
One may think of effective 1-h-minimal theories as those in which the $\RV$-sort itself exhibits tame geometric behaviour.
In Section~\ref{sec:dim.RV} we develop dimension theory for definable sets in $\RV$ for effective theories, which interacts well with the dimension theory on the valued field.
The main properties of this dimension function are contained in Theorem~\ref{thm:dim.theory.RV}.
In Section~\ref{sec:intrinsic.jac} we develop a notion of derivatives for definable maps on $\RV$ for effective theories, which again interacts well with derivatives on $K$.

Non-examples of effective theories include the theory of $\QQ((t))$ in $\cL_\val$, or any theory in which an angular component map is definable, see Section~\ref{sec:non.examples}.

For effective theories, our main result then becomes the following.
This generalizes~\cite[Thm.\,8.8]{HK} and~\cite[Thm.\,5.40]{Yin.tcon}.

\begin{thm}[{{{Theorem~\ref{thm:integration.semiring}}}}]\label{thm:main.eff}
Let $\cT$ be an effective $1$-h-minimal theory of equicharacteristic zero valued fields.
Then the map $\int\colon \K_+ \VF\to \K_+ \RV[*]/I_\sp$ is an isomorphism of semirings.
\end{thm}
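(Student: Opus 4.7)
The plan is to construct an explicit inverse $\Lp\colon \K_+\RV[*]/I_\sp\to \K_+\VF$ to the surjective map $\int$ supplied by Theorem~\ref{thm:non.eff.int}. The candidate is forced by compatibility with the defining property of $\int$: on generators one sends $[R]_n$ with $R\subset (\RV^\times)^n$ to $[\rv^{-1}(R)]\in \K_+\VF$. Granted well-definedness, $\int\circ \Lp = \id$ is immediate from Theorem~\ref{thm:non.eff.int}, while $\Lp\circ \int = \id$ follows from the very cell-decomposition construction of $\int$: any class $[X]\in \K_+\VF$ can be rewritten as $\sum_j [\rv^{-1}(R_j)]$ with $\int[X] = \sum_j [R_j]_{n_j}$, so applying $\Lp$ returns $[X]$.

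The substance is therefore in verifying that $\Lp$ is well-defined, i.e.\ that it respects both the definable-bijection equivalence on $\K_+\RV[*]$ and the congruence $I_\sp$. Compatibility with $I_\sp$ is easy: the affine translation $x\mapsto x-1$ gives $[1+\cM_K] = [\cM_K]$ in $\K_+\VF$, and splitting $\cM_K = \rv^{-1}(\RV^\times_{<1})\sqcup\{0\}$ rewrites this as $[\rv^{-1}(\{1\})] = [\rv^{-1}(\RV^\times_{<1})] + [\{0\}]$, which is exactly the defining relation of $I_\sp$. For the bijection relation, let $f\colon R_1\to R_2$ be a $\emptyset$-definable bijection between $R_i\subset (\RV^\times)^{n_i}$. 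Over each $\xi\in R_1$ the fibre $\rv^{-1}(\xi)$ is a translate of the polydisc $(1+\cM_K)^{n_1}$, and similarly at $f(\xi)$. Here effectivity is essential: it supplies uniformly definable lifts of the parameters $\xi$ and $f(\xi)$ to $K$, which trivialize both $\rv$-fibrations parametrically and let one transport $f$ to a definable bijection of the total spaces in the case $n_1 = n_2$. When $n_1\neq n_2$, the mismatch between $(1+\cM_K)^{n_1}$ and $(1+\cM_K)^{n_2}$ is absorbed precisely by the identity $[1+\cM_K] = [\cM_K]$ enforced by $I_\sp$.

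The main obstacle is the uniformity in the previous paragraph: assembling the pointwise effective lifts into a single $\emptyset$-definable family of bijections between the $\rv$-fibrations over $R_1$ and $R_2$, while controlling how each lift of an $\RV$-coordinate to a $\VF$-coordinate shifts the grading. I expect to handle this by reducing $f$ to a cellular normal form, using cell decomposition together with the dimension and derivative theory for $\RV$ developed in Sections~\ref{sec:dim.RV}--\ref{sec:intrinsic.jac}, so that $f$ becomes a composition of coordinate permutations, $\rv$-trivial translations inside polydiscs, and graph-type moves that replace an $\RV$-coordinate by its effective lift to $\VF$. Each such elementary move visibly preserves the $\K_+\VF$-class modulo $I_\sp$, paralleling the corresponding reduction in~\cite{HK} and in~\cite{Yin.tcon}; composing them yields the required equality $[\rv^{-1}(R_1)] = [\rv^{-1}(R_2)]$ in $\K_+\VF$ and completes the proof that $\Lp$ is a well-defined two-sided inverse of $\int$.
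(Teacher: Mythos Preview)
Your high-level strategy matches the paper's: the inverse to $\int$ is the lifting map $\LL$ (your $\Lp$) sending $[R]_n\mapsto[\rv^{-1}(R)]$, and the substance is Lemma~\ref{lem:lift.bijection}, that isomorphisms in $\RV[n]$ lift to isomorphisms in $\VF$. Your treatment of $I_\sp$-compatibility and of $\Lp\circ\int=\id$ is correct. One confusion to clear up first: the case $n_1\neq n_2$ never arises, since $\RV[*]=\bigoplus_n\RV[n]$ is a \emph{formal} direct sum and its morphisms stay within a fixed grade; nothing needs to be ``absorbed'' across gradings.

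The genuine gap is in your trivialization argument. Effectivity does not supply a definable section $\xi\mapsto c(\xi)\in K^n$ of $\rv$ over $R_1$; it only supplies lifts parametrized by $K$-points (Corollary~\ref{cor:lifting.maps}, Lemma~\ref{le:effective.characterizations}(\ref{it:lifting.families})). So what you actually get are maps $c,d\colon K^n\to K^n$ with $\rv(c(x))=\rv(x)$ and $\rv(d(x))=f(\rv(x))$, and the candidate $x\mapsto(d(x)/c(x))\cdot x$ has $c,d$ varying \emph{within} each $\rv$-fibre---it is not visibly a bijection. Your proposed fix via a ``cellular normal form'' decomposing an arbitrary definable bijection in $\RV[n]$ into elementary moves is not something the paper establishes, and the derivative theory of Section~\ref{sec:intrinsic.jac} plays no role in the unmeasured statement.

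The paper's argument for Lemma~\ref{lem:lift.bijection} instead inducts on the ambient dimension $n$ using the $\RV$-dimension theory of Section~\ref{sec:dim.RV}. If $\dim_\RV R<n$, Theorem~\ref{thm:dim.theory.RV}(\ref{it:coordinate.projections}) lets one regard $R,S$ as objects of $\RV[n-1]$, lift the bijection there by induction, and then handle the remaining thin coordinate by exactly your $d/c$ trick. If $\dim_\RV R=n$, one takes \emph{any} lifts of $h$ and $h^{-1}$ and shows that the set of $\xi$ over which the lift fails to be a bijection $\rv^{-1}(\xi)\to\rv^{-1}(h(\xi))$ has $\dim_\RV<n$, reducing to the previous case. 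The key ingredient your sketch does not isolate is the indivisibility of full-dimensional fibres (Theorem~\ref{thm:dim.theory.RV}(\ref{it:generic.points})): for generic $\xi$, every $\RV$-definable subset of $\rv^{-1}(\xi)$ is empty or all of it, and this is what forces an arbitrary lift to be bijective over such $\xi$.
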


The proofs of Theorems~\ref{thm:main.not.eff} and~\ref{thm:main.eff} take up all of Section~\ref{sec:int}, and form the technical heart of this paper.

In Section~\ref{sec:measures}, we introduce measures and consider definable sets with volume forms.
Let $\K_+ \mu \VF[*]$ denote the Grothendieck semiring of $\emptyset$-definable sets $X\subset K^n$ with a volume form $\omega\colon X\to \RV^\times$.
Intuitively, two classes $[X,\omega]$ and $[Y,\rho]$ are considered the same if there is a measure-preserving bijection defined almost everywhere between $X$ and $Y$.

Similarly, we introduce the Grothendieck semiring of $\emptyset$-definable sets in $\RV$ with volume forms, denoted by $\K_+ \mu \RV[*]$.
The definition of this object is somewhat more complicated, since in general it is not clear what measure-preserving should mean for maps on $\RV$.
For many well-known theories there is a theory of differentiation on $\RV$, and in these cases the notion of measure-preserving is what one would expect.
For example, this is the case when the residue field of $K$ is algebraically closed, o-minimal, or itself $1$-h-minimal.
In Section~\ref{sec:intrinsic.jac} we also develop a notion of derivatives on algebraically bounded fields, which may be used to give a more intrinsic definition of the semiring $\K_+ \mu \RV[*]$.

The precise definitions of the semirings $\K_+ \mu \VF[*]$ and $\K_+ \mu \RV[*]$ are given in Section~\ref{ssec:volume.forms}, and we give several natural variants e.g.\ for $\Gamma_K$-valued volume forms, for bounded sets, or when only considering volumes of definable sets.

Our main result is a comparison between the semirings $\K_+ \mu \VF[*]$ and $\K_+ \mu \RV[*]$ via an integration map.
Under effectivity we obtain that this map is an isomorphism.
This generalizes~\cite[Thm.\,8.28]{HK} from V-minimal theories to 1-h-minimal theories.

\begin{thm}[{{{Theorem~\ref{thm:isom.K.mu}}}}]\label{thm:main.measures}
Let $\cT$ be a $1$-h-minimal theory of equicharacteristic zero valued fields.
Then there is a surjective morphism of graded semirings
\[
\int\colon \K_+ \mu \VF[*]\to \K_+ \mu \RV[*] / I_\mathrm{sp}^\mu
\]
where $I_\mathrm{sp}^\mu = ([1]_1, [\RV^\times_{<1}]_1)$.
Moreover, if $\cT$ is effective then this map is an isomorphism.
\end{thm}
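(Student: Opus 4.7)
The plan is to mirror the proof of Theorem~\ref{thm:main.eff}, adding in the bookkeeping needed for volume forms. At a high level, the argument promotes the non-measure integration map to one that tracks not just the $\RV$-geometry of a cell but also the value of the volume form on it; the smaller congruence $I_\mathrm{sp}^\mu$ reflects that volume forms cannot distinguish a point from the empty set, while still distinguishing balls from each other.

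First, I would construct $\int$ on generators by cell decomposition. For a class $[X,\omega]$ with $X\subset K^n$ definable and $\omega\colon X\to \RV^\times$ a volume form, choose a cell decomposition of $X$ adapted to $\omega$ using $1$-h-minimality. On each cell $C$ the map $\omega$ becomes, possibly after further refinement, a function of the $\RV$-data of $C$, so recording the pair consisting of the $\RV$-skeleton of $C$ and the induced $\omega|_C$ yields an element of $\K_+\mu\RV[*]$. Summing over cells and taking the class modulo $I_\mathrm{sp}^\mu$ produces the candidate image. Additivity (disjoint union) and multiplicativity (product volume forms on products of cells) are immediate from the construction, so once well-definedness is established one automatically has a morphism of graded semirings.

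Well-definedness is the first real step. Given two adapted cell decompositions of $(X,\omega)$, one passes to a common refinement; further subdividing a cell contributes either an extra $[1]_1$ factor (from subdividing a ball $1+\cM_K$) or an extra $[\RV^\times_{<1}]_1$ factor (from subdividing a ball $\cM_K$), and both are generators of $I_\mathrm{sp}^\mu$. The difference between $I_\mathrm{sp}^\mu$ and the non-measure congruence $I_\sp$ is the absence of the $[1]_0$ generator, which precisely reflects that removing a single point does not change the measure-theoretic class. Surjectivity is then immediate: the class $[\rv^{-1}(R),\omega_R\circ\rv]$ maps to $[R,\omega_R]$ for any $[R,\omega_R]\in \K_+\mu\RV[*]$.

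The main obstacle is injectivity under effectivity, and I would proceed by bootstrapping from the non-measure result. If $(X,\omega)$ and $(Y,\rho)$ map to the same class modulo $I_\mathrm{sp}^\mu$, then forgetting the volume forms places $[X]$ and $[Y]$ in the same class modulo $I_\sp$, so Theorem~\ref{thm:main.eff} produces a definable bijection $f\colon X\to Y$ (up to measure zero). Effectivity is used to lift $\RV$-level data to the valued field, and the theory of $\RV$-Jacobians developed in Section~\ref{sec:intrinsic.jac} is then used to upgrade $f$ into a measure-preserving bijection: the $\RV$-Jacobian of the map on $\RV$-skeletons determines the VF-Jacobian of $f$ up to a $(1+\cM_K)$-valued factor, and measured equality in $\RV$ forces this factor to be exactly $\rho/(f^*\omega)$. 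Making this last step fully rigorous — that the VF-level volume-change is determined by the $\RV$-Jacobian computation under effectivity — is where I expect the bulk of the technical work to lie, exactly in the spirit of the corresponding step in~\cite{HK}.
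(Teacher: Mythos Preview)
Your construction of $\int$ via cell decomposition and your surjectivity argument are fine, but the crucial step---well-definedness of $\int$ on isomorphism classes---is not addressed. Your ``well-definedness'' paragraph only compares two cell decompositions of the \emph{same} pair $(X,\omega)$; it says nothing about why a measure-preserving essential bijection $f\colon(X,\omega)\to(Y,\rho)$ forces the cell-decomposition outputs to agree modulo $I_\mathrm{sp}^\mu$. That is the actual content of the theorem, and once it is established injectivity is automatic: $\LL$ is surjective by cell decomposition and satisfies $\int\circ\LL=\id$, hence is a two-sided inverse.

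The bootstrap you propose for injectivity does not fill this gap. Forgetting volume forms and invoking Theorem~\ref{thm:main.eff} yields \emph{some} definable bijection $g\colon X\to Y$, but its Jacobian is whatever it is; there is no mechanism to ``upgrade'' $g$ so that $\omega=(\rho\circ g)\cdot\rv(\Jac g)$, and a $(1+\cM_K)$-valued correction changes neither the map nor its $\rv$-Jacobian. The paper does not bootstrap. It re-runs the entire machinery of Section~\ref{sec:int} with volume forms tracked throughout: given a measure-preserving essential bijection $f\colon\LL(R,\omega)\to\LL(S,\rho)$, one decomposes $f$ into relatively unary measure-preserving pieces (Lemma~\ref{lem:rel.unary.mu}, using Lemma~\ref{lem:unique.volume.form} to transport volume forms along each factor), introduces measured interpolating categories $\mu\VFR[k,\ell]$ with partial integration maps $I^j_\mu$, proves a measured Fubini, and reduces to the one-dimensional kernel computation of Lemma~\ref{lem:kernel.mu.dim.1}. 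Note also that because morphisms in $\mu\RV[n]$ are \emph{defined} via lifts as in Lemma~\ref{lem:lift.bijections.mu}, the isomorphism of Theorem~\ref{thm:isom.K.mu} in fact holds without effectivity; effectivity only enters to guarantee that $\RV$-Jacobians exist, i.e., that $\mu\RV[n]$ has the expected supply of morphisms.
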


For applications it is often useful to have further maps from the $\RV$-sort down to the residue field and value group.
This makes it easier to construct additive invariants of definable sets, and may help in computing Grothendieck rings of valued fields.
In upcoming work we study definable sets in short exact sequences and use this to define Grothendieck rings for the residue field $\K_+ \RF$ and for the value group $\K_+ \VG$.
This leads to several new computations of Grothendieck rings of valued fields, and to a comparison of our framework with that of Cluckers--Loeser~\cite{CLoes}.
In particular we will show that Cluckers--Loeser integration is not universal: the map from (a suitable variant of) $\K_+ \mu \VF[*]$ to the Cluckers--Loeser ring of motivic volumes has a nontrivial kernel.

\subsection{Further avenues} %mixed characteristic, tilting, AKE, Fourier transforms, Mellin transforms, standard decomposition and functoriality, stratifications and risometries, Forey--Yin, Hrushovski--Loeser (MONODROMY AND THE LEFSCHETZ FIXED POINT FORMULA), Poisson formula, Euler characteristics on pseudo-finite fields (Johnson, Halupczok, Hrushovski, ...), interpretable groups in valued fields

There are several natural further questions related to our work.

Firstly, the framework of Cluckers--Loeser has been generalized and expanded to include a theory of motivic distributions, Fourier transforms, and Mellin transforms.
This generalization is crucial for applications in representation theory and the Langlands program.
Similarly, the Hrushovski--Kazhdan framework already includes a treatment of distributions and Fourier transforms.
This has led to a motivic Poisson formula in~\cite{HKPoiss}.
We expect that motivic distributions, Fourier transforms, and Mellin transforms can be developed in the generality of the current paper, and we hope to study this question in a future work. 
Additionally, it would be interesting to develop the analogue of the bounded integral of Forey--Yin~\cite{ForeyYin}, which interpolates between the non-measured and measured case.

Secondly, we only deal with the equicharacteristic zero case in this paper, and so it is a natural question to wonder what happens in mixed characteristic.
With the correct definitions and set-up, we expect that the analogue of Theorem~\ref{thm:main.not.eff}, and Theorem~\ref{thm:main.measures} without the moreover part, are not too difficult to prove in mixed characteristic.
However, it is not at all clear to us how to adapt effectivity to mixed characteristic.
For example, if $K$ is a henselian valued field of mixed characteristic $(0,p)$ with infinite residue field, then $K$ is automatically not effective.
The reason is simply that the inverse Frobenius on the residue field, which is a $\emptyset$-definable map, cannot lift definably to the valued field.
As such, it seems highly nontrivial to us to find the correct definitions to even state an analogue of Theorem~\ref{thm:main.eff}, or the moreover part of Theorem~\ref{thm:main.measures}.
See~\cite{CH:GrQp} for a positive result in this direction, where it is proven that the universal theory of integration in $\QQ_p$ is simply $p$-adic integration. 
Note however that $\QQ_p$ is effective, because of definable choice.
Another interesting setting to study this problem would be perfectoid fields.
This would give a way to ``tilt'' motivic measure, and may lead to higher-dimensional generalizations of the Fontaine--Wintenberger theorem~\cite{FonWin}.

Thirdly, our hope is that the notion of effectivity may be useful in other places when dealing with valued fields.
While 1-h-minimality already implies many tame geometric consequences, it says nothing about the structure on $\RV$.
Under effectivity, also the structure on $\RV$ is tame, with a resulting dimension theory of definable sets and a notion of differentiation of definable maps.
For example, we believe that effectivity is the correct ingredient to obtain definability of risometries, see \cite[Question~3.2.8]{B-WH25}.
More generally, it may be interesting to investigate how stratifications interact with Theorems~\ref{thm:main.eff} and~\ref{thm:main.measures}, and see if one can compute generalized Euler characteristics using stratifications as in~\cite{ComteHalupczok}.
%\Floris{Some extra stuff here? Maybe interpretable groups in valued fields?}

\subsection{Outline}  
We briefly summarize the structure of this paper. 
Each section is essentially self-contained and builds only on the main theorems of the previous sections and the consequences of 1-h-minimality (only in Section~\ref{sec:measures} do we build on the arguments of Section~\ref{sec:int}).
In particular, to understand the proof of our main theorem in Section~\ref{sec:int} only a working knowledge of the consequences of 1-h-minimality, the definition of effectivity, and the main theorem on $\RV$-dimension of Section~\ref{sec:dim.RV} is required.

\begin{itemize}
	\item Section~\ref{sec:prelims} contains some preliminaries.
	We discuss semirings and congruences, our conventions for valued fields, h-minimality and its consequences, and $\acl$-dimension.
	
	\item In Section~\ref{sec:eff} we introduce the notion of effectivity, which is used to lift definable maps from $\RV$ to the valued field.
	We give a general strategy to show that structures are effective via quantifier elimination, and use this to show that several natural $1$-h-minimal theories are effective.
	
	\item In Section~\ref{sec:dim.RV} we prove that for effective theories $\acl$ satisfies the exchange principle on $\RV$.
	This gives a dimension theory for definable sets of $\RV$, and we show that we have $\exists^\infty$-elimination.
	We also give several examples of non-effective theories, using dimension theory.
	
	\item Then in Section~\ref{sec:int} we define the categories $\VF$ and $\RV[*]$ and study their Grothendieck semirings $\K_+ \VF$ and $\K_+ \RV[*]$.
	In particular, we prove Theorems~\ref{thm:main.not.eff} and~\ref{thm:main.eff}.
	We start by showing that in effective theories, there is a well-defined lifting map  $\LL\colon \Ob \RV[*]\to \Ob \VF$, which will determine an inverse to $\int$ on the level of Grothendieck rings, once we verify that the latter is well-defined.
	We then construct the integration map in ambient dimension one first, and then proceed inductively, using a Fubini theorem and a reduction to unary maps.
	
	\item Finally, in Section~\ref{sec:measures} we construct the categories $\mu\VF[*]$ and $\mu \RV[*]$ of objects with volume forms, and follow a similar strategy to relate them using the lifting map $\LL$.
	We also develop a notion of derivatives on $\RV$ for effective theories, and show that it agrees with the usual notion of derivatives in many natural $1$-h-minimal theories.
\end{itemize}

\subsection{Acknowledgements} %Raf, Pierre, Silvain, Martin, Immi, Mariana, Franzi (Florian? Akash?)
We would like to thank Raf Cluckers for his constant encouragement and continued interest in this project, and for the many discussion surrounding the topics of this paper.
We are grateful to Pierre Touchard for introducing us to the model theory of short exact sequences, and to Silvain Rideau--Kikuchi for help with Proposition~\ref{prop:t.convex.effective} and for various discussions on Hrushovski--Kazhdan integration.
We have also benefited from discussions with Arthur Forey, Immanuel Halupczok, Martin Hils, Franziska Jahnke and Mariana Vicaria.
The author M.S. would like to thank the University of Münster for its hospitality during a research stay in which part of this work was carried out.

The author M.S.\ is supported by McMaster University and the Fields Institute.
The author F.V.\ is supported by the Humboldt foundation.

\section{Preliminaries}\label{sec:prelims}

\subsection{Semigroups and semirings}\label{ssec:semirings}

Recall that a \emph{semigroup} $G$ is a set with operation $+: G^2\to G$ and an element $0$ which satisfies the usual group axioms except that elements need not have inverses. All semigroups in this paper will be abelian. Similarly, a \emph{semiring} $R$ is a set with operations $+: R^2\to R, \cdot: R^2\to R$ and elements $0,1$ which satisfy the usual ring axioms except that not every element needs to have an inverse for $+$, together with the axiom that $0 \cdot x = 0 = x \cdot 0$ for each $x \in R$. Morphisms of semigroups and semirings are defined as usual. Throughout this paper, all semirings will be commutative with $1$. 

If $G$ is a semigroup then one can add additive inverses for every element of $G$ to obtain the \emph{group associated to $G$} or \emph{groupification of $G$}, which we denote by $G^a$. Similarly, if $R$ is a semiring we can add additive inverses to obtain $R^a$, the \emph{ring associated to $R$}. There are natural morphisms $i\colon G\to G^a$, $i\colon R\to R^a$ of semigroups resp.\ semirings, which are in general not injective. The map $i$ is injective if and only if $G$ (or $R$) is \emph{cancellative}, meaning that whenever $a+c=b+c$ then $a=b$. Most of the semigroups and semirings we construct in this paper will not be cancellative, and so one loses information when moving the associated group or ring. 
%This is the reason we work with Grothendieck semirings rather than Grothendieck rings.

A \emph{congruence} $I$ of a semiring $R$ is an equivalence relation on $R$ which is a subsemiring of $R^2$. 
Equivalently, it is an equivalence relation such that for all $a,b,c\in R$ if $(a,b)\in I$ then also $(a+c,b+c)\in I$ and $(ac, bc)\in I$. 
Congruences play the role of ideals in semirings. 
In particular, the semiring operations on $R$ descend to $R/I$, giving $R/I$ the structure of a semiring. 
The projection map $R\to R/I$ is a semiring morphism. The intersection of congruences is again a congruence. 
Therefore, if $S$ is a subset of $R^2$ the \emph{congruence generated by $S$} makes sense, and we denote it by $(S)$. 
If $a,b\in R$ then we denote the congruence generated by $(a,b)\in R^2$ simply by $(a,b)$.
%For $S\subset R$ a subset, we define the \emph{annihilator} of $S$ to be the congruence generated by the elements of the form $(a,b)$ such that for all $s\in S$ we have $sa=sb$.
%The annihilator of $S$ is denoted by $\ann(S)$.

If $S\to R, R'$ are two semiring morphisms, then one can define the tensor product $R\otimes_S R'$, which is simply the pushout in the category of semirings. In particular, it satisfies the usual universal property. Equivalently, one may think of $R$ and $R'$ as $S$-semimodules, and then as an $S$-semi-module $R\otimes_S R'$ is also the tensor product in the category of $S$-semimodules. We refer to~\cite[Sec.\,8]{ComteHalupczok} for more details.

\begin{example}
Consider $\NN$ with the usual operations $+, \cdot$. This is a cancellative semiring whose associated ring is $\ZZ$. In this case, there is no real loss of information in working with $\ZZ$ rather than $\NN$.
\end{example}

\begin{example}\label{eg:trop}
Consider the tropical semiring $\NN_{\trop}$ which is $\NN\cup\{-\infty\}$ with operations $\max, +$. This is a semiring in which every element is idempotent. In particular, this semiring is not cancellative, and the associated ring is the zero ring. In many geometric theories this semiring appears naturally when looking at the dimension of definable sets. 
\end{example}

\subsection{Grothendieck semigroups and semirings} 

Given an essentially small category $\cC$, define the \emph{Grothendieck semigroup} $\K_+\cC$ of $\cC$ as the semigroup given by generators $[X]$ for $X \in \cC$, modulo the relations
\begin{enumerate}
	\item $[X] = [Y]$ if $X$ is isomorphic to $Y$, and
	\item $[X \sqcup Y] = [X] + [Y]$, where $X \sqcup Y$ denotes the coproduct of $X$ and $Y$ when it exists.
\end{enumerate}
By definition, any map from $\Ob \cC$ to a semigroup, respecting isomorphism classes and coproducts factors through a unique semigroup morphism from $\K_+\cC$.

The objects of our categories will consist of $\emptyset$-definable sets for a given language and theory, possibly decorated with some extra information such as volume forms and equipped appropriate notions of morphisms. 
In all of our cases, the coproduct will be induced by the disjoint union.
Actually, our categories will often be closed under coproducts (and contain the empty set), whence the underlying set of $\K_+\cC$ simply consists of all isomorphism classes in $\cC$.

Given such a category $\cC$, we will often consider the full subcategories $\cC[n]$, consisting of all objects living in ambient dimension $n$.
The corresponding family of semigroups $\K_+\cC[n]$ for each $n \in \NN$ can be investigated through the \emph{graded Grothendieck semigroup} $\K_+\cC[*] = \bigoplus_{n \geq 0} \K_+\cC[n]$.
Objects in $\K_+\cC[*]$ are represented as finite sums of classes $[X]_n$ where the index $n$ emphasizes that $[X]_n$ belongs to $\K_+\cC[n]$.

In our settings, the cartesian product (or a natural operation induced by the cartesian product in the case of Section~\ref{sec:measures}) will induce a well-defined product on $\K_+\cC$ and $\K_+\cC[*]$, given respectively by
\begin{itemize}
	\item $[X] \cdot [Y] = [X \times Y]$, and
	\item $[X]_n \cdot [Y]_m = [X \times Y]_{m + n}$.
\end{itemize}
In all our use cases, this product will endow $\K_+\cC$ (resp. $\K_+\cC[*]$) with the structure of a (graded) commutative semiring.

Finally, one can consider the groupification of $\K_+\cC$ an  $\K_+\cC[*]$ to obtain the (graded) \emph{Grothendieck groups} $\K\cC$ and $\K\cC[*]$.
If $\K_+\cC$ or $\K_+\cC[*]$ was endowed with a (graded) semiring structure, $\K\cC$, resp. $\K\cC[*]$ is a (graded) \emph{Grothendieck rings}.

\begin{example}
Let $\cR$ be an o-minimal field. By~\cite[p.\,132]{vdD98}, two $\emptyset$-definable sets are $\emptyset$-definably isomorphic if and only if they have the same dimension and Euler characteristic. 
Let $R\subset \NN_{\trop}\times \ZZ$ be the subsemiring of positive elements
\[
R = \{(a,b)\in \NN\times \ZZ\mid b\geq 0 \text{ if } a = 0\} \cup\{(-\infty, 0)\}.
\]
Then we obtain an isomorphism
\[
\K_+ \cR \to R\colon [X]\mapsto (\dim X, \operatorname{Eu} X),
\]
where $\operatorname{Eu} X$ is the Euler characteristic of $X$. 
When taking the associated Grothendieck ring the tropical part of $R$ disappears, and we obtain $\K \cR = \ZZ$ where the isomorphism is given by the Euler characteristic. 
\end{example}

\begin{example}\label{ex:padic.semiring}
Let $L/\QQ_p$ be a $p$-adic field, considered in the language $\cL_\ring(L)$.
By a theorem of Cluckers--Haskell~\cite{CluckersHaskell}, the Grothendieck ring of $L$ is trivial.
Even more, by~\cite[Cor.\,3]{cluckers.padic} two infinite definable sets $X,Y\subset L^n$ are in definable bijection if and only if $\dim X = \dim Y$.
Let us describe the Grothendieck semiring $\K_+ L$.
Define $\Delta = \NN \sqcup \{\delta_i\mid i\in \NN_{\geq 1}\}$ with the usual addition and multiplication for elements of $\NN$, the tropical semiring on the $\delta_i$, and where the $\delta_i$ absorb $\NN$.
In more detail, this means that $\delta_i + \delta_j = \delta_{\max\{i,j\}}, \delta_i\delta_j = \delta_{i+j}$, for $n\in \NN\subset \Delta$ we have $n+\delta_i = \delta_i$ and if $n\neq 0$ then $n\delta_i = \delta_i$.
Then the map $\K_+ L\to \Delta$ which maps $[X]$ to $\# X$ if $X$ is finite and to $\delta_{\dim X}$ if $X$ is infinite is an isomorphism.
\end{example}

\subsection{Basic notation in valued fields} \label{sec:basic_notation}

Let $K$ be a valued field with valuation ring $\cO_K$. We denote the valuation multiplicatively by $|\cdot |\colon K^\times\to \Gamma_K^\times$ and denote $\Gamma_K = \Gamma_K^\times \cup \{0\}$. The maximal ideal is denoted by $\cM_K$, and the residue field by $k$. If $\lambda\in \Gamma_K^\times$ and $a\in K$, then $B_{<\lambda}(a)$ and $B_{\leq \lambda}(a)$ denote the open resp.\ closed ball of radius $\lambda$ around $a$. 

We define the \emph{leading term structure} as $\RV = K^\times / (1+\cM_K)\cup \{0\}$, and the natural quotient map is denoted by $\rv\colon K\to \RV$ (extended by $\rv(0) = 0$). We let $\RV^\times = \RV\setminus \{0\}$. The structure $\RV$ combines information from the residue field and the value group via the short exact sequence
\[
1\to k^\times \to \RV^\times \to \Gamma_K^\times \to 1.
\]
Note that $\rv(x) = \rv(y)$ if and only if $|x-y|<|x|$ or $x=y=0$.

The fibres of the map $K^\times \to \RV^\times\colon x\mapsto \rv(x-c)$ are precisely the maximal open balls not containing $c$. We will call such a ball \emph{1-next to $c$}. If $C\subset K$ is a finite set, then a \emph{ball $1$-next to $C$} is an open ball of the form $\cap_{c\in C}B_c$, where each $B_c$ is a ball $1$-next to $c\in C$. Equivalently, the balls $1$-next to $C$ are the maximal open balls disjoint from $C$. 

A set $X\subset K$ is said to be \emph{$1$-prepared} by a finite set $C\subset K$ if every ball $1$-next to $C$ is either contained in $X$, or disjoint from $X$. The reader should think about this finite set $C$ as controlling $X$. 

We will also need some higher depth variants of these definitions. If $\lambda\in \Gamma_K^\times, \lambda\leq 1$, then we define 
\[
\RV_\lambda = \frac{K^\times}{1+B_{<\lambda}(0)} \cup \{0\},
\]
with the natural map $\rv_\lambda\colon K\to \RV_\lambda$. Note that $\RV_1 = \RV$ and $\rv_1 = \rv$. There are natural surjections $\RV_\lambda\to \RV_\mu$ whenever $\lambda \leq \mu$. Intuitively, when $\lambda$ becomes smaller, the structure $\RV_\lambda$ sees more of the valued field $K$. For example, if $K = k((t^\QQ))$, then (using additive notation for $\Gamma_K$)
\[
\rv_\lambda\left(\sum_{i= i_0}^\infty a_i t^i\right) = \sum_{i=i_0}^{i_0+\lambda} a_i t^i, \quad \text{ if } a_{i_0}\neq 0.
\]

We denote by $\oplus\subset \RV^3$ the partial addition on $\RV$, which is a $\emptyset$-definable ternary relation given by
\[
\oplus(\alpha, \beta, \gamma) \text{ iff } \exists a,b,c\in K\colon \rv(a)=\alpha\wedge \rv(b)=\beta \wedge \rv(c)=\gamma \wedge a+b=c.
\]
For $\alpha, \beta\in \RV$ we write $\alpha\oplus \beta$ for the set of $\gamma\in \RV$ for which $\oplus(\alpha, \beta, \gamma)$.
If $0\notin \alpha\oplus \beta$ then $\alpha\oplus \beta$ consists of a singleton, and we also use $\alpha\oplus \beta$ to refer to this element.

\subsection{Some model-theoretic conventions}

Throughout, we will consider a non-trivially valued field $K$ of characteristic zero equipped with an $\cL$-structure for some language $\cL$ expanding the language of valued fields $\cL_{\val} = \{0,1,+,-,\cdot,\cO_K\}$. 
As is usual in applications of Hensel minimality, the precise language $\cL$ is not important, only which sets are definable and over what parameters. By ``$\emptyset$-definable'', we mean $\cL$-definable (without parameters). 
We say that a set is $A$-definable for some collection of parameters $A$ if it is $\cL(A)$-definable. A set is called definable, it is $A$-definable for some parameter set $A$.

The language $\cL$ itself may be one-sorted or multi-sorted, including e.g.\ sorts for $k,\Gamma$ and $\RV$, equipped with the induced structure, along with the usual maps from $K$ to these sorts. 
Even if $\cL$ is one-sorted, we can still work with elements and subsets of arbitrary imaginary sorts, by either moving to the $\cL^\mathrm{eq}$-structure $K^\mathrm{eq}$ or by simulating them in $K$, as explained in \cite[Sec.\,2.2]{CHR}. 
For example: a $\emptyset$-definable subset of $\RV^n$ can be identified with a $\emptyset$-definable subset of $K^n$ which is a union of fibres of $\rv \colon K^n \to \RV^n$.

Given any (potentially imaginary) set of parameters $A$, a (potentially imaginary) element $b$ is said to be in the algebraic closure of $A$ if there exists a finite $A$-definable set containing $b$. 
We write e.g. $\acl(A) \cap K$ or $\acl(A) \cap \RV$ to denote the elements of respectively $K$ and $\RV$ that are in the algebraic closure of $A$. 
We use similar notation for other imaginary sorts such as $k$ or $\Gamma$, and also for the definable closure $\dcl(A)$.

\subsection{Hensel minimality and cell decomposition}

We recall some basic results about $1$-h-minimality. Throughout, let $\cT$ be a $1$-h-minimal theory of equicharacteristic zero valued fields, and let $K$ be a model of $\cT$. We refer to~\cite{CHR} for the precise definition and recall some of the consequences that are useful to us. 
The most fundamental property is that we can uniformly prepare definable sets.

\begin{prop}[{{{Uniform preparation~\cite[Cor.\,2.6.6]{CHR}}}}]
Let $A \subset K$ be a parameter and let $W\subset K\times \RV^k$ be $A \cup \RV$-definable. Then there exists a finite $A$-definable set $C\subset K$ such that for every $\xi\in \RV^k$, $C$ $1$-prepares the set $W_\xi\subset K$.
\end{prop}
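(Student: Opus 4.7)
The plan is to combine a fibrewise application of the defining property of $1$-h-minimality with a compactness argument to obtain uniformity in the $\RV^k$-parameter. The definition of $1$-h-minimality directly guarantees that any single $A \cup \RV$-definable subset of $K$ admits a finite $A$-definable $1$-preparing set; the content of the corollary is precisely that one such preparing set can be chosen to work for every fibre of the family simultaneously.

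First, fix $\xi \in \RV^k$. The fibre $W_\xi \subset K$ is $A \cup \RV$-definable (using $\xi$ as an additional $\RV$-parameter), so $1$-h-minimality produces a finite $A$-definable set $C_\xi \subset K$ that $1$-prepares $W_\xi$. The essential feature of the $1$-h-minimality axiom invoked here is that the preparing set depends only on the $K$-parameters $A$, not on the $\RV$-parameters involved in defining $W$ and its fibre.

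To upgrade this to uniformity, I would argue by contradiction: suppose no single finite $A$-definable set $C \subset K$ prepares $W_\xi$ for every $\xi \in \RV^k$. For each finite $A$-definable $C \subset K$, let $\phi_C(\xi)$ be the $\cL$-formula (with parameters from $A$ and from the $\RV$-parameters defining $W$) expressing that $C$ $1$-prepares $W_\xi$. A key monotonicity observation is that if $C \subset C'$ are both finite $A$-definable and $C$ prepares $W_\xi$, then so does $C'$, because every ball $1$-next to $C'$ is contained in some ball $1$-next to $C$. The partial type $\pi(\xi) = \{\neg \phi_C(\xi) : C \text{ finite and } A\text{-definable}\}$ is then finitely satisfiable: given $C_1, \ldots, C_n$, their union $C = C_1 \cup \cdots \cup C_n$ is still finite and $A$-definable, by hypothesis fails to prepare some $W_\xi$, and by monotonicity this same $\xi$ also falsifies $\phi_{C_i}$ for each $i$. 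Realising $\pi$ by some $\xi^*$ in an elementary extension $K' \succeq K$ and applying $1$-h-minimality in $K'$ yields a finite $A$-definable set $C^* \subset K'$ preparing $W_{\xi^*}$. Since $A \subset K \preceq K'$ and $C^*$ is finite, $C^*$ descends to a finite $A$-definable subset of $K$, contradicting $\xi^* \models \neg \phi_{C^*}$.

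The principal thing to verify carefully is the monotonicity of preparation, which is what drives finite satisfiability of $\pi$; beyond that, the argument is a routine compactness upgrade. The handling of $\RV^k$ as a (possibly imaginary) sort in the compactness step is standard given the conventions spelled out in the preliminaries.
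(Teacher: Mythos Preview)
The paper does not prove this proposition; it is cited from \cite[Cor.\,2.6.6]{CHR} as background. Your compactness argument correctly reduces the uniform family statement to the single-fibre case: monotonicity of $1$-preparation under enlarging $C$ is exactly the right lever for finite satisfiability of $\pi$, and the descent of a finite $A$-definable set from $K'$ to $K$ via elementarity is handled properly.

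One terminological caveat: what you invoke as ``the definition of $1$-h-minimality'' --- that any single $A \cup \RV$-definable subset of $K$ admits a finite $A$-definable $1$-preparing set --- is in \cite{CHR} already a consequence rather than the raw axiom, which permits only a single parameter from a single $\RV_\lambda$. Passing from one $\RV$-parameter to arbitrarily many is itself a short argument there (of the same compactness flavour, applied to the $\RV$-parameters in the defining formula rather than to the family index $\xi$). Once that single-set case is granted, your reduction is complete, and the overall shape of your argument matches how \cite{CHR} arrives at Corollary~2.6.6.
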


This implies for example that $\RV$-unions stay finite.
That is, if $(W_\xi)_{\xi\in \RV^m}$ is a $A$-definable family of finite subsets $W_\xi\subset K$, then $\cup_{\xi\in \RV^m} W_\xi$ is still finite.
This will frequently be used implicitly.

%\begin{prop}[{{{Constant or injective~\cite[Lem.\,2.8.2]{CHR}}}}]
%Let $f: K\to K$ be $\emptyset$-definable. Then there exists a finite $\emptyset$-definable set $C\subset K$ such that for every ball $B$ $1$-next to $C$, $f$ is either constant or injective on $B$.
%\end{prop}
%
%\begin{theorem}[{{{Compatible domain-image preparation~\cite[Prop.\,2.8.6]{CHR}}}}]\label{thm:comp.dom.im}
%Let $f: K\to K$ be $\emptyset$-definable and let $C_0\subset K$ be a finite $\emptyset$-definable set. Then there exist finite $\emptyset$-definable sets $C,D\subset K$ with $C_0\subset C$ such that $f(C)\subset D$ and such that for every ball $B$ $1$-next to $C$, $f(B)$ is either an element of $D$ or a ball $1$-next to $D$.
%\end{theorem}
%
%Domain-image preparation will play a key role in Section~\ref{sec:int} to determine the kernel of the lifting map $\LL$ in dimension one. As Hrushovski--Kazhdan note, a higher-dimensional variant of this result for maps $K^n\to K^n$ would allow one to quickly determine the kernel of $\LL$ also in higher dimensions. Although~\cite{Verm:h-min} contains a version of compatible domain-image preparation for maps $K^n\to K$, it is not strong enough to deduce a version for maps $K^n\to K^n$.

An important tool in Hensel minimal fields is cell decomposition. 
For many applications of 1-h-minimality, one can assume the existence of algebraic Skolem functions.
That is, one can assume that every finite-to-one $\emptyset$-definable function has a $\emptyset$-definable section. 
This leads to a simpler notion of cell decomposition as in \cite[Thm.\,5.2.4]{CHR}, compared to the more classical version in \cite[Thm.\,5.7.3]{CHR}. 
We will rather use the latter version, as the former requires expanding our structure by new definable sets.
See also \cite{Verm:h-min} for a similar treatment of cell decomposition.

For $m\leq n$ denote by $\pi_{\leq m}$ the projection $K^n\to K^m$ onto the first $m$ coordinates, and by $\pi_m$ the projection onto the $m$-th coordinate. Similarly define $\pi_{<m} \coloneqq \pi_{\leq m-1}$ for $1 < m \leq n$.

\begin{defn}[Twisted boxes]
Let $A\subset K\cup \RV$ be a parameter set, and let $X\subset K^n$ be $A$-definable. Then $X$ is called an $A$-definable \emph{twisted box} if there exists an element $r\in \RV^n$, and $A$-definable maps $c_i\colon \pi_{<i}(X)\to K$ for $i=1, \ldots, n$ such that
\[
X = \{x\in K^n\mid (\rv(x_i-c_i(\pi_{<i}(x))) )_{i=1, \ldots, n} = r\}.
\]
The tuple $c = (c_i)_i$ is called the \emph{centre tuple of $X$}. We use the notation $\rtimes(c,r)$ to denote the twisted box $X$ as defined here.
\end{defn}

\begin{defn}[Cell decomposition]
Let $A\subset K\cup \RV$ be a parameter set. Then an \emph{$A$-definable cell decomposition of $K^n$} is an $A$-definable map $\chi\colon K^n\to \RV^k$ (for some $k$) such that every non-empty fibre $\chi^{-1}(\xi)$ of $\chi$ is an $A\xi$-definable twisted box whose centre tuple moreover depends definably on $\xi$. By a \emph{twisted box of $\chi$} we simply mean a fibre of $\chi$. 
\end{defn}

It is important to note that an $A$-definable cell decomposition $\chi\colon K^n\to \RV^k$ consists of both the data of $\chi$, and the definable data of all the cell centres. That is, if we write $\chi^{-1}(\xi) = \rtimes (c(\xi), r(\xi))$ then the tuple of functions $c(\xi) = (c_1(\xi), \ldots, c_n(\xi))$ are part of the data of the cell decomposition. Nevertheless, we typically suppress the cell centres from the notation and simply say that $\chi\colon K^n\to \RV^k$ is a cell decomposition. Note that the value $r(\xi)$ depends definably on $\xi$, and that may be recovered from $\xi, \chi$ and the centre tuple $c(\xi)$.

We define $0\cdot \RV^\times = \{0\}\subset \RV$ and $1\cdot \RV^\times = \RV^\times$.

\begin{defn}[{Cells}] \label{def:cells}
Let $A \subset K \cup \RV$ be a parameter set and $\chi \colon K^n \to \RV^m$ an $A$-definable cell decomposition. A twisted box $\rtimes(c(\xi),r(\xi))$ of $\chi$ is said to have type $(i_1,\dots,i_n) \in \{0,1\}^n$ if $r_{j}(\xi) \in i_j \cdot \RV^{\times}$ for $j = 1,\dots,n$. An $A$-definable \emph{cell} of type $(i_1,\dots,i_n)$ of $\chi$ is an $A$-definable set $X \subset K^n$ which is a union of twisted boxes of $\chi$, each of type $(i_1,\dots,i_n)$. 
\end{defn}

\begin{thm}[{{{Cell decomposition~\cite[Thm.\,5.7.3]{CHR} }}}]\label{thm:cell.decomp}
Let $X\subset K^n$ be $A$-definable, for some $A\subset K\cup \RV$. Then there exists an $A$-definable cell decomposition $\chi\colon K^n\to \RV^N$ such that $X$ is a union of twisted boxes of $\chi$. We say that \emph{$\chi$ is a cell decomposition of $X$}. Moreover, we may assume that any of the following properties hold for $\chi$:
\begin{enumerate}
	\item (Preparation of functions) If an $A$-definable map $\psi\colon X\to \RV^m$ is given, then we may assume that $\psi$ is constant on the twisted boxes of $\chi$. 
	\item (Uniform cell decomposition) \label{it:adapted} If an $A$-definable set $Y \subset K^n \times \RV^m$ is given, then we may assume that, for each $\xi \in \RV^m$, $\chi$ is a cell decomposition for the fibre $Y_{\xi} \subset K$. 
	\item (Continuous centers) We may assume that all functions in each centre tuple of $\chi$ are continuous. 
	\item (Compatible domain-image preparation) Assume $n = 1$ and that we are given an $A$-definable map $f \colon K \to K$. Then then we can arrange that there exists an $A$-definable cell decomposition $\psi \colon K \to \RV^M$ such that the image under $f$ of each twisted box of $\chi$ is a twisted box of $\psi$.
\end{enumerate}
\end{thm}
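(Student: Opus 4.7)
The plan is to proceed by induction on the ambient dimension $n$, using the uniform preparation theorem (Corollary~2.6.6 cited above) as the core engine, and to handle the four optional refinements by suitably strengthening the data carried through the induction. To make the induction go through cleanly, property~(1) should be incorporated from the start: I would prove simultaneously that one can also ensure any given finite tuple of $A$-definable maps $g_i \colon X \to \RV$ is constant on every twisted box of $\chi$.

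For the base case $n = 1$, apply uniform preparation to obtain a finite $A$-definable set $C \subset K$ that $1$-prepares $X$ and the preimages $g_i^{-1}(\eta)$. Define $\chi \colon K \to \RV^{|C|}$ by $\chi(x) = (\rv(x-c))_{c \in C}$. Each nonempty fiber is either a singleton of $C$ or a ball $1$-next to $C$, hence a twisted box with any fixed element of $C$ as its center $c_1$. For the inductive step, regard $X \subset K^n$ as an $A$-definable family of slices $X_{x'} \subset K$ indexed by $x' \in K^{n-1}$, and apply uniform preparation with $x'$ adjoined as $K$-parameter, producing an $A$-definable family $C_n(x') \subset K$ of uniformly bounded finite sets that prepares each slice. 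The family $x' \mapsto C_n(x')$ can then be regarded as $A$-definable data; applying the inductive hypothesis to $K^{n-1}$ with the strengthened form of~(1) yields a cell decomposition $\chi' \colon K^{n-1} \to \RV^{N'}$ on each twisted box of which $C_n$ is constant (as a subset of $K$), so that its elements can serve as the last-coordinate centers $c_n$. Gluing $\chi'$ in the first $n-1$ coordinates with the fiberwise $\chi$ in $x_n$ gives the desired $\chi \colon K^n \to \RV^N$.

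The remaining refinements are now straightforward. For~(2), absorb $\xi \in \RV^m$ into the parameters of the preparation, which is legitimate because $1$-h-minimality is stable under adjoining $\RV$-parameters; the resulting decomposition works uniformly in $\xi$. For~(3), use that $A$-definable functions are generically continuous in $1$-h-minimal structures (a consequence of preparation and the Jacobian property) to further refine each cell along the loci where some $c_i$ fails to be continuous, iterating until all centers are continuous on their respective pieces. For~(4), cell decompose $K$ compatibly with both $X$ and, after also cell-decomposing $f(K)$, the preimages under $f$ of those image cells; on each domain cell $f$ becomes unary and measurable enough that its image is a union of image twisted boxes.

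The main obstacle will be property~(3). Plain uniform preparation produces the centers $c_i$ as $\RV$-parametrized $A$-definable functions with no a priori regularity. Upgrading to continuity requires an auxiliary induction combining generic continuity with careful bookkeeping to ensure that the refined centers remain $A$-definable and depend definably on the cell index $\xi$; avoiding a Skolem function at this step, while keeping track of how the centers vary across families, is precisely where most of the technical effort of~\cite[Thm.\,5.7.3]{CHR} is concentrated.
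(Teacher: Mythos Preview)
The paper does not give its own proof of this theorem; it is quoted verbatim as a citation of \cite[Thm.\,5.7.3]{CHR} and used as a black box. So there is nothing in the paper to compare your sketch against.

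That said, your inductive step contains a genuine gap. You claim that applying the inductive hypothesis with property~(1) to $K^{n-1}$ produces a cell decomposition $\chi'$ on each of whose twisted boxes the finite set $C_n(x')\subset K$ is \emph{constant as a subset of $K$}. But property~(1) only prepares maps into $\RV^m$; it says nothing about making a $K$-valued family of finite sets constant. In fact one cannot expect $C_n(x')$ to be locally constant in $x'$: generically these sets vary continuously with $x'$, and forcing them to be constant on open twisted boxes would collapse the center functions to points in $\acl(A)$. What is actually needed is to extract from the family $C_n(x')$ individual $A$-definable (or $A\xi$-definable) \emph{functions} $c_{n,j}\colon \pi_{<n}(X)\to K$ to serve as centers, and this extraction---parametrizing a finite set by functions without invoking algebraic Skolem functions---is exactly the delicate point. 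You correctly flag this difficulty at the end in connection with property~(3), but it already bites in the basic construction of $\chi$, not only in the continuity upgrade. The standard route (as in \cite{CHR}) is to use that finite definable sets admit $\RV$-parametrizations, so that the elements of $C_n(x')$ become $A\cup\RV$-definable functions of $x'$, and then to fold the extra $\RV$-parameters into the target of $\chi$.
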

If $Y,\chi$ are as in Theorem~\ref{thm:cell.decomp}(\ref{it:adapted}) above, we say that $\chi$ is \emph{adapted} to $Y$, or simply that $\chi$ is a cell decomposition for $Y$.

%At several points we will use the notion of cell decompositions over $\RV^m$. This is simply a bookkeeping device where we keep track of the $\RV^m$ coordinates.
%
%\begin{defn}[Cell decompositions over $\RV^m$] \label{def:decomp.over}
%Let $X\subset K^n\times \RV^m$ be $A$-definable, for some $A\subset K\cup \RV$. 
%An $A$-definable \emph{cell decomposition of $X$ over $\RV^m$} is an $A$-definable map $\chi: K^n\times \RV^m \to \RV^N$ such that for every $\xi\in \RV^N$, $\chi^{-1}(\xi)$ is of the form $F\times \{\eta\}\subset K^n\times \RV^m$ where $F$ is an $A\cup\{\xi\}$-definable twisted box with $A\cup\{\xi\}$-definable centre.
%\end{defn}
%%
%If $\chi$ is a cell decomposition over $\RV^m$ as in Definition~\ref{def:decomp.over}, then we say that a fibre $F \times \{\eta\}$ is a \emph{twisted box} of $\chi$ of type $(i_1,\dots,i_n) \in \{0,1\}^n$ if $F$ is a twisted box of type $(i_1,\dots,i_n)$ (as in Definition \ref{def:cells}).
%
%\begin{lem}
%Let $A\subset K\cup \RV$ be a parameter set and let $X\subset K^n\times \RV^m$ be $A$-definable. Then there exists an $A$-definable cell decomposition of $X$ over $\RV^m$.
%\end{lem}
%
%\begin{proof}
%Let $\chi_0: K^n\to \RV^N$ be a cell decomposition for every $X_\eta\subset K^n$ where $\eta\in \RV^m$. Such a cell decomposition exists by the moreover part of Theorem~\ref{thm:cell.decomp}. Now define
%\[
%\chi: K^n\times \RV^m\to \RV^{N+m}: (x, \eta)\mapsto (\chi_0(x),\eta). \qedhere
%\]
%\end{proof}

For convenience, we will use the notion of cylindrical cell decompositions. This is a more inductive definition of a cell decomposition, building from $K, K^2, \ldots$ up to $K^n$.

\begin{defn}[Cylindrical cell decomposition]
Let $A\subset K\cup \RV$ be a parameter set. An $A$-definable \emph{cylindrical cell decomposition of $K^n$} is a sequence of cell decompositions $\chi_i\colon K^i\to \RV^{k_i}$ for $i=1, \ldots, n$ such that for every $i$, if $F$ is a twisted box of $\chi_i$ then $\pi_{<i}(F)$ is a twisted box of $\chi_{i-1}$.
\end{defn}

Note that this definition automatically applies to further projections. That is, if $(\chi_1, \ldots, \chi_n)$ is a cylindrical cell decomposition and $F$ is a twisted box of $\chi_i$, then for every $j \leq i$, the set $\pi_{<j}(F)$ is a twisted box of $\chi_j$.

\begin{theorem}[Cylindrical cell decomposition]
Let $A\subset K\cup \RV$ be a parameter set and let $\chi\colon K^n\to \RV^k$ be an $A$-definable cell decomposition. Then there exists an $A$-definable cylindrical cell decomposition $(\chi_1, \ldots, \chi_n)$ such that $\chi_n$ refines $\chi$, i.e.\ $\chi$ is constant on the twisted boxes of $\chi_n$.
\end{theorem}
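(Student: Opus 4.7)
The proof naturally proceeds by induction on $n$, the base case $n=1$ being trivial (take $\chi_1=\chi$, the projection condition being vacuous). For the inductive step, the strategy is to first construct the cylindrical decomposition on $K^{n-1}$, and then further refine $\chi$ itself to be compatible with it.

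First, I would consider the $A$-definable set
\[
Y = \{(y,\xi) \in K^{n-1}\times \RV^{k} : \exists x_n\in K, \chi(y,x_n)=\xi \},
\]
whose fiber $Y_\xi$ is precisely $\pi_{<n}(\chi^{-1}(\xi))$. Note that $Y_\xi$ is already itself a twisted box (with centers $(c_1(\xi),\ldots,c_{n-1}(\xi))$ and radii $(r_1(\xi),\ldots,r_{n-1}(\xi))$ inherited from the twisted box $\chi^{-1}(\xi)$). Applying Theorem~\ref{thm:cell.decomp}\eqref{it:adapted} (uniform cell decomposition) yields an $A$-definable cell decomposition $\chi'_{n-1}\colon K^{n-1}\to \RV^{k_{n-1}}$ adapted to $Y$, so that each $Y_\xi$ is a union of twisted boxes of $\chi'_{n-1}$. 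By the inductive hypothesis, $\chi'_{n-1}$ admits a cylindrical refinement $(\chi_1,\ldots,\chi_{n-1})$, which in particular still has the property that each $Y_\xi$ is a union of twisted boxes of $\chi_{n-1}$.

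Next, I would define $\chi_n\colon K^n\to \RV^{k+k_{n-1}}$ by
\[
\chi_n(x) = \bigl(\chi(x),\, \chi_{n-1}(\pi_{<n}(x))\bigr),
\]
and verify that this is a cell decomposition whose projected twisted boxes coincide with those of $\chi_{n-1}$. For a nonempty fiber, $\chi_n^{-1}(\xi,\eta) = \chi^{-1}(\xi) \cap (\chi_{n-1}^{-1}(\eta)\times K)$; by the adaptation from the previous step, $\chi_{n-1}^{-1}(\eta)\subset Y_\xi$ whenever this intersection is nonempty. Writing $(d_1(\eta),\ldots,d_{n-1}(\eta))$ and $(s_1(\eta),\ldots,s_{n-1}(\eta))$ for the centers and radii of $\chi_{n-1}^{-1}(\eta)$, the fiber $\chi_n^{-1}(\xi,\eta)$ is then the twisted box with center tuple $(d_1(\eta),\ldots,d_{n-1}(\eta),c_n(\xi,\cdot))$ and radii $(s_1(\eta),\ldots,s_{n-1}(\eta),r_n(\xi))$, all depending definably on $(\xi,\eta)$. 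Its projection onto $K^{n-1}$ is exactly $\chi_{n-1}^{-1}(\eta)$, as required.

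\textbf{Main obstacle.} The technical heart is the verification in the last step that the intersection of a twisted box of $\chi$ with the preimage of a twisted box of $\chi_{n-1}$ under $\pi_{<n}$ is itself a twisted box with a definable centre tuple. This hinges on two points: the adaptation from the first step (so that this intersection is either empty or $\pi_{<n}$-equal to the lower twisted box), and the observation that a centre function of $\chi$ can be restricted to a sub-twisted box while remaining a valid centre function, since only its values on the projected sub-box matter. Once this is in hand, the three pieces of the cylindrical decomposition fit together formally.
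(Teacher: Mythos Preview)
Your proposal is correct and follows essentially the same approach as the paper's proof: obtain a cell decomposition on $K^{n-1}$ adapted to the projections $\pi_{<n}(\chi^{-1}(\xi))$, make it cylindrical by induction, and then set $\chi_n(x) = (\chi(x),\chi_{n-1}(\pi_{<n}(x)))$. The only cosmetic difference is that the paper packages the induction via a slightly stronger auxiliary statement (refining a given \emph{sequence} $\psi_1,\ldots,\psi_n$ of cell decompositions simultaneously), but this extra generality is not needed for the theorem itself and your direct induction works just as well; you also spell out more explicitly than the paper does why the fibres of $\chi_n$ are twisted boxes with definable centres.
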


\begin{proof}
We will prove the following slightly stronger statement by induction on $n$.
\begin{enumerate}
\item[] If $\psi_1, \ldots, \psi_n$ is a sequence of cell decomposition $\psi_i\colon K^i\to \RV^{m_i}$, then there exists a cylindrical cell decomposition $(\chi_1, \ldots, \chi_n)$ such that $\chi_i$ refines $\psi_i$.
\end{enumerate}
Clearly, the result follows from this stronger variant.

If $n=1$, then we may simply take $\chi_1 = \psi_1$. So assume that $n>1$ and that the result is already known for smaller values of $n$. Let $\psi_{n-1}'\colon K^{n-1}\to \RV^{m'}$ be a cell decomposition refining $\psi_{n-1}$ and such that for every twisted box $F$ of $\psi_n$, $\pi_{<n}(F)$ is a union of twisted boxes of $\psi_{n-1}'$. Using induction, let $(\chi_1, \ldots, \chi_{n-1})$ be a cylindrical cell decomposition refining $(\psi_1, \ldots, \psi_{n-2}, \psi_{n-1}')$. We define a cell decomposition $\chi_n$ via
\[
\chi_n(x) = (\psi_n(x), \chi_{n-1}(\pi_{<n}(x))).
\]
It is then straightforward to check that $(\chi_1, \ldots, \chi_n)$ is indeed a cylindrical cell decomposition refining $(\psi_1, \ldots, \psi_n)$.
\end{proof}

The following straightforward lemma on cylindrical cell decompositions will be useful to induct on dimension, especially to develop dimension theory on $\RV$ in Section~\ref{sec:dim.RV}.
\begin{lem}\label{lem:cyl.cell.decomp}
Let $(\chi_1,\ldots,\chi_n)$ be a $\emptyset$-definable cylindrical cell decomposition of $K^n$ which refines $\rv \colon K^n\to \RV^n$. 
Then every $\rv$-fibre $B = \rv^{-1}(\xi)$ is either a twisted box of $\chi_n$, or it contains a twisted box of $\chi_n$ whose dimension is strictly smaller than $n$.
\end{lem}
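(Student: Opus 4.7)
The plan is to induct on the ambient dimension $n$, using the cylindrical structure to reduce the inductive step to a one-dimensional analysis of the ``last coordinate'' slice, with the base case $n = 1$ handled separately.

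For the inductive step ($n \geq 2$), I would set $B' := \pi_{<n}(B) = \rv^{-1}(\xi_{<n})$. The truncated cylindrical decomposition $(\chi_1,\ldots,\chi_{n-1})$ still refines $\rv$ on $K^{n-1}$ (immediate from cylindricality and the refinement hypothesis on $\chi_n$), so the inductive hypothesis applies to $B'$. If $B'$ contains a $\chi_{n-1}$-twisted box $F'$ of dimension strictly less than $n-1$, then cylindricality together with the non-emptiness of each fibre $\{y\}\times \rv^{-1}(\xi_n)$ for $y\in F'$ gives a $\chi_n$-twisted box $F\subset B$ with $\pi_{<n}(F)=F'$, and $\dim F \leq \dim F' + 1 < n$. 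Otherwise $B' = F'$ is itself a single $\chi_{n-1}$-twisted box, and every $\chi_n$-twisted box in $B$ projects onto $F'$. If there is only one such box then $B$ is a single $\chi_n$-twisted box. If there are several, I would fix any $y \in F'$ and observe that the fibres $(F_i)_y \subset B_y = \rv^{-1}(\xi_n)$ partition $B_y$ into twisted boxes of the form $\{x_n : \rv(x_n - c_n^{(i)}(y)) = r_n^{(i)}\}$, each non-empty since $\pi_{<n}(F_i) = F'$. Applying the base case to this partition forces some $(F_i)_y$ to be a singleton, hence $r_n^{(i)} = 0$; the corresponding $F_i$ is then the graph of $c_n^{(i)} \colon F' \to K$, of dimension $\dim F' < n$.

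For the base case $n = 1$, if $\xi = 0$ then $B = \{0\}$ is a single $\chi_1$-fibre by refinement of $\rv$. If $\xi \neq 0$, then $B$ is an open ball; I would assume it is partitioned into at least two twisted boxes $F_j = \rtimes(c_j, r_j)$ with no singletons among them, and derive a contradiction. Since $\rv$ is constant equal to $\xi$ on each $F_j$, a direct ultrametric computation forces $|r_j| < |\xi|$ and $c_j \in B$ for every such $j$. Iterating the ``cell of the centre'' map then produces a sequence of distinct centres in $B$ which must eventually cycle, because the full centre set $\{c(\zeta) : \zeta \in \chi_1(K)\}$ is finite as an $\RV$-union of singletons (via the 1-h-minimality consequence that $\RV$-unions of finite sets remain finite). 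The main obstacle will be ruling out such a pure dim-1 cycle: an ultrametric analysis shows that its cells must share a common radius $\rho$, with cycle centres pairwise at distance exactly $\rho$, and the ``next centre'' assignment on $B$ must be a $\emptyset$-definable fixed-point-free map onto this finite set. Showing that such a configuration is incompatible with $1$-h-minimality and with the refinement of $\rv$---hence forcing some $F_j$ to be a singleton after all---is the delicate step, and I expect it to require a finer use of the uniform preparation properties of the cell decomposition.
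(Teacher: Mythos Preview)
Your inductive step is the paper's argument: both split on whether $B':=\pi_{<n}(B)$ is a single $\chi_{n-1}$-box, lift a lower-dimensional box from $B'$ in one branch, and in the other locate a last-centre graph of dimension $n-1$ inside $B$ (you phrase this as applying the base case to the slice $B_y$; the paper asserts directly that the graph is a twisted box of $\chi_n$).

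The gap is the base case, and your suspicion that the cycle argument is delicate is well-founded: under the paper's bare definition of cell decomposition the statement is actually \emph{false} for $n=1$. Over $\CC((t))$ with $t$ named, take $\chi_1(x)=(\rv(x),\res((x-1)/t))$ on $B=\rv^{-1}(1)$ and $(\rv(x),0)$ elsewhere; the fibres inside $B$ are the balls $1+at+t^2\cO$ for $a\in\CC$, all one-dimensional (centre $1$ when $a\neq 0$, centre $1+t$ when $a=0$). So no combinatorial cycle-chasing can close the argument in this generality. What the paper is tacitly using---both in calling $n=1$ ``clear'' and in writing ``this graph is a twisted box of $\chi_n$''---is the finer structure of cell decompositions as actually produced by the cell-decomposition theorem: each centre value $c(\zeta)$ lies in its own singleton fibre, and in higher dimension the graph of every last-centre function over a $\chi_{n-1}$-box is itself a $\chi_n$-fibre. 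With that in hand the base case is one line: you already showed $c_j\in B$, and $\{c_j\}$ is the $0$-dimensional twisted box you need. So abandon the cycle argument and either invoke this structural property of the decompositions under consideration or add it as an explicit hypothesis.
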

\begin{proof}
	We prove this by induction on $n$, where the case $n = 1$ is clear. So suppose $n \geq 2$.
	Let $B = \rv^{-1}(\xi) \subset K^n$ be an $\RV$-fibre and suppose it is not a cell of $\chi_n$. Let $\pi \colon K^n \to K^{n-1}$ be the projection onto the first $n-1$ coordinates. 
	If $\pi(B)$ is a twisted box of $\chi_{n -1 }$, then the graph of the last centre of $\chi_n$ over $\pi(B)$ must lie completely within $B$. 
	Indeed, otherwise $B$ would be a single twisted box of $\chi_n$. 
	But this graph is a twisted box of $\chi_n$ of dimension $n-1$.
	
	If $\pi(B)$ is not a twisted box of $\chi_{n-1}$, then the inductive hypothesis provides a twisted box $F$ of $\chi_{n-1}$ such that $F \subset \pi(B)$ and $F$ has dimension at most $n-2$. 
	Now take any $x \in B$ such that $\pi(x) \in F$. Then $x$ is contained in a twisted box $G \subset B$ of $\chi_n$ with projection equal to $F$. 
	It follows that $G$ is a twisted box of dimension at most $n -1$ in $B$, as desired.
\end{proof}

Cell decomposition also leads to a good notion of dimension for definable sets.

\begin{defn}
Let $X\subset K^n$ be $\emptyset$-definable. Then we say that $X$ has \emph{dimension at most $k$} if there exists a finite-to-one $\emptyset$-definable map $f\colon X\to K^k$. The dimension of $X$ is denoted by $\dim X$. By convention $\dim \emptyset = -\infty$.
\end{defn}

We have the following results on dimension theory.

\begin{theorem}[{{{Dimension theory~\cite[Thm.\,5.3.4]{CHR} }}}]\label{thm:dim.theory.VF}
Let $X,Y\subset K^n, Z\subset K^m$ be $\emptyset$-definable and let $f \colon X\to Z$ be $\emptyset$-definable. Then the following hold.
\begin{enumerate}
\item (Linear maps) The dimension of $X$ is the largest integer $k$ such that there exists a $K$-linear map $\ell\colon K^n\to K^k$ such that $\ell(X)$ has non-empty interior.
\item \label{it:coordinate.projections.VF} (Coordinate projections) The dimension of $X$ is the largest integer $k$ such that there exists a coordinate projection $\pi\colon K^n\to K^k$ such that $\pi(X)$ has non-empty interior. In particular, dimension is independent of the language and of the model. Also, $\dim X = 0$ if and only if $X$ is finite.
\item (Unions) The dimension of $X\cup Y$ is $\max\{\dim X, \dim Y\}$.
\item (Definability) For $d\leq n$ the set of $z\in Z$ such that $\dim f^{-1}(z)=d$ is $\emptyset$-definable.
\item (Fibrations) If all fibres of $f$ have dimension $d$ then $\dim X = d+\dim Z$.
\item (Local dimension) There exists an $x\in X$ such that for every open ball $B$ around $x$ we have $\dim X = \dim (X\cap B)$.
\end{enumerate}
\end{theorem}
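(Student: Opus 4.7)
The plan is to derive all six items from the cell decomposition theorem (Theorem~\ref{thm:cell.decomp}). The key observation is that a twisted box $F = \rtimes(c,r) \subset K^n$ of type $(i_1,\ldots,i_n)$ has a natural ``free dimension'' $|S|$, where $S = \{j : i_j = 1\}$: the coordinate projection $\pi_S \colon F \to K^{|S|}$ is injective (the coordinates outside $S$ are reconstructed inductively from the relation $x_j = c_j(\pi_{<j}(x))$), and its image is a product of open balls, hence open in $K^{|S|}$. I would then define a provisional dimension $d(X)$ as the maximum $|S|$ appearing among twisted box types in some cell decomposition of $X$.

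The core task is to show simultaneously that $d(X) = \dim X$ and part (\ref{it:coordinate.projections.VF}). For a twisted box $F \subset X$ of type with $|S| = d(X)$, the projection $\pi_S$ restricts to an injection on $F$ with open image in $K^{d(X)}$, and combining the cell-wise projections yields a finite-to-one $\emptyset$-definable map $X \to K^{d(X)}$, so $\dim X \leq d(X)$ and $\pi_S(X)$ has non-empty interior. Conversely, if $\pi \colon K^n \to K^k$ is a coordinate projection with $\pi(X)$ having non-empty interior, applying uniform cell decomposition to $(\pi, X)$ and inspecting which cell types can have $k$-dimensional image forces $k \leq d(X)$. Part (1) reduces to (2) via a generic linear change of coordinates, which simultaneously gives independence of language and model. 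Part (3) is immediate from a joint cell decomposition of $X$ and $Y$. For (4), I would apply the uniform version of Theorem~\ref{thm:cell.decomp}(\ref{it:adapted}) to the family $(f^{-1}(z))_{z \in Z}$: the maximum $|S|$ appearing among cell types in the fibre over $z$ is definable in $z$. Part (5) follows by cell-decomposing $X$ compatibly with $f$ and tracking free dimensions fibrewise. For (6), choose $x$ in a twisted box $F$ of maximal free dimension in $X$; locally $F$ is an open product of balls in the $S$-coordinates, so every sufficiently small ball around $x$ meets $X$ in a set whose dimension is still $\dim X$.

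The main obstacle is verifying that the provisional dimension $d(X)$ is well-defined, independent of the chosen cell decomposition, or equivalently to rule out a $\emptyset$-definable finite-to-one map from a twisted box of free dimension $k$ to $K^{\ell}$ with $\ell > k$. This is the ``invariance of domain''-type statement for 1-h-minimal fields. The approach is to cell-decompose in the target, reducing iteratively to the unary situation, and then invoke $1$-preparation: a definable finite-to-one map from an open ball in $K$ into $K$ is, after preparation, locally an injection, which forces the free-coordinate count to be preserved. A coordinate-wise induction on the ambient dimension in the source then rules out raising the free dimension and completes the argument.
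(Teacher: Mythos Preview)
The paper does not prove this theorem; it is cited from \cite[Thm.\,5.3.4]{CHR} and used as a black box. Your sketch via cell decomposition is the correct strategy and is essentially how that reference proceeds.

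Two points to tighten. First, in your formulation of the main obstacle you write ``rule out a finite-to-one map from a twisted box of free dimension $k$ to $K^\ell$ with $\ell > k$''; the inequality should read $\ell < k$. Such a box already injects into $K^k$ via $\pi_S$, so maps to $K^\ell$ with $\ell \geq k$ are harmless; what must be excluded is a finite-to-one map to $K^{k-1}$, i.e.\ that an open set in $K^k$ cannot map finite-to-one into $K^{k-1}$. Your unary reduction is the right idea for this, though the induction needs a sharper inductive statement than you give. Second, ``combining the cell-wise projections yields a finite-to-one map $X \to K^{d(X)}$'' is not immediate, since a cell decomposition has infinitely many twisted boxes indexed by $\RV^N$; you need that $\RV$-unions of finite sets stay finite under $1$-h-minimality to bound the fibres. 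For (6) you should also invoke the continuous-centres addendum to cell decomposition so that $\pi_S(F \cap B)$ remains open for every ball $B$ around $x \in F$.
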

In particular, it follows from e.g. item (\ref{it:coordinate.projections.VF}) that if $X$ is a cell or twisted box of type $(i_1,\dots,i_n)$, then $X$ has dimension $k = i_1 + \dots + i_n$. In fact, for each $x \in X$ and open ball $B$ around $x$ it holds that $\dim(X \cap B) = k$.

\subsection{Differentiation and the Jacobian property}

As usual, a map $f\colon U\subset K^n\to K$ on an open $U$ is said to be $C^1$ at $x\in U$ if there exists $Df(x)\in K^n$ such that 
\[
\lim_{y\to x}\frac{|f(y)-f(x)-Df(x)\cdot (y-x)|}{|y-x|} = 0.
\]
For $f\colon U\subset K^n\to K^n$ $C^1$ on $U$, we denote by $Df\colon U\to K^{n\times n}$ its \emph{total derivative} and by $\Jac f\colon U\to K$ its \emph{Jacobian} $\Jac f = \det (Df)$.

A key theorem of~\cite{CHR} is that the Jacobian property holds for definable functions.

\begin{theorem}[{{{Jacobian property~\cite{CHR}}}}]
Let $f\colon K\to K$ be a $\emptyset$-definable function. Then there exists a finite $\emptyset$-definable set $C\subset K$ such that for every ball $B$ $1$-next to $C$ the following hold:
\begin{enumerate}
\item $f$ is $C^1$ on $B$ and $\rv(f')$ is constant,
\item \label{it:RVJac.rvfprime} for $x,y\in B$ we have
\[
\rv(f(x)-f(y)) = \rv(f'(x)) \rv(x-y), \text{ and}
\]
\item for every open ball $B'\subset B$, the image $f(B')$ is an open ball of radius $|f'(x)|\radop B'$ (for any $x\in B$).
\end{enumerate}
\end{theorem}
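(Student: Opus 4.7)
The plan is to derive the Jacobian property from a careful preparation of the two-variable function $g\colon K^2 \to \RV$ defined by $g(x,y) = \rv(f(x) - f(y))$. First I would apply cylindrical cell decomposition to $K^2$ adapted to $g$, obtaining $\emptyset$-definable cell decompositions $\chi_1\colon K\to \RV^{k_1}$ and $\chi_2\colon K^2\to \RV^{k_2}$ with $(\chi_1,\chi_2)$ cylindrical and such that $g$ is constant on each twisted box of $\chi_2$. I let $C$ be the (finite) set of centres of $\chi_1$, enlarged if necessary to also control the behaviour of the last centre of $\chi_2$ along the diagonal. Then for any ball $B$ $1$-next to $C$, the twisted boxes of $\chi_2$ lying inside $B\times B$ are parametrised cleanly by $\rv(x-y)$, and a symmetrisation argument yields a $B$-definable map $\mu_B\colon \RV^\times \to \RV$ with $\rv(f(x)-f(y)) = \mu_B(\rv(x-y))$ for all distinct $x,y\in B$.

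The core technical step is to show that $\mu_B$ is \emph{multiplicatively linear}: there exists $\eta_B\in \RV$ with $\mu_B(\xi) = \eta_B\cdot \xi$ on its whole domain. This I would extract from the additive identity $f(x)-f(z) = (f(x)-f(y))+(f(y)-f(z))$ read through $\rv$: choosing $x,y,z\in B$ so that $\rv(x-y)$ and $\rv(y-z)$ have strictly different valuations (so that $\oplus$ on $\RV$ is single-valued on the relevant pair) forces $\mu_B$ to respect addition in this single-valued regime. Combining this with the scaling action $x\mapsto x_0 + t(x-x_0)$ for varying $t$ then pins $\mu_B$ down to multiplication by some fixed $\eta_B$. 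The degenerate case $\eta_B=0$ corresponds to $f$ being locally constant on $B$ and is handled separately.

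Once multiplicativity is in hand, the Jacobian property follows almost formally. Item (2) is the content of the equation $\mu_B(\xi)=\eta_B\xi$. For item (1), constancy of the $\rv$-image of the Newton quotient $(f(x)-f(y))/(x-y)$ implies that this quotient is Cauchy as $y\to x$, giving $C^1$-differentiability with $\rv(f'(x))=\eta_B$. For item (3), given an open sub-ball $B'\subset B$ and $x_0\in B'$, item (2) shows that $f(B') = \{f(x_0)\}\cup\{w\in K : \rv(w-f(x_0))\in \eta_B\cdot \rv(B'-x_0)\}$, which is the open ball around $f(x_0)$ of radius $|\eta_B|\radop B' = |f'(x_0)|\radop B'$. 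The main obstacle is the multiplicativity step: since $\oplus$ is only single-valued when the summands have strictly distinct valuations, one must carefully propagate the linearity constraint from these generic configurations to every admissible input in $\RV^\times$, which is where the real power of $1$-h-minimality is used.
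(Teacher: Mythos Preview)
This theorem carries no proof in the paper: it is quoted as a preliminary result from \cite{CHR}, so there is no in-paper argument to compare against. Your proposal is therefore an attempt to reprove a cited theorem, and it has genuine gaps.

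First, a circularity risk. In \cite{CHR} the Jacobian property is established early (Section~3), directly from one-variable preparation, and higher-dimensional cell decomposition (Section~5) is developed afterwards, in places using it. Your plan invokes two-variable cylindrical cell decomposition to derive the Jacobian property; you would need to check carefully that the particular cell decomposition you use can be obtained from preparation alone, without already appealing to the result you are proving.

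Second, the step you yourself flag as ``the main obstacle'' is essentially the entire content of the theorem, and your sketch does not supply it. From the additive identity one gets only $\mu_B(\xi_1\oplus\xi_2)\in\mu_B(\xi_1)\oplus\mu_B(\xi_2)$ when $|\xi_1|\neq|\xi_2|$, which constrains $\mu_B$ very weakly; the passage to $\mu_B(\xi)=\eta_B\,\xi$ for \emph{all} $\xi$ is exactly where the work lies. In \cite{CHR} this is handled by first proving a \emph{valuative} Jacobian property (constancy of $|f(x)-f(y)|/|x-y|$ on balls $1$-next to a finite set) via a separate argument, and then upgrading to the $\rv$-level. Your ``scaling action'' remark does not indicate a replacement for that step.

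Third, your derivation of item~(1) is incorrect as stated. Constancy of $\rv\big((f(x)-f(y))/(x-y)\big)$ only says that all Newton quotients lie in the single open ball $\rv^{-1}(\eta_B)$; a short computation using three points $x,y_1,y_2$ gives $|q_1-q_2|<|\eta_B|$ but no better, so the quotients need not be Cauchy, and even Cauchy nets need not converge in a non-spherically-complete $K$. In \cite{CHR}, generic differentiability is obtained by an independent argument, not as a corollary of the $\rv$-linearity of the difference quotient.
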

Note that item~(\ref{it:RVJac.rvfprime}) above implies in particular that
\[ \abs{f(x)-f(y)} = \abs{f'(x)} \abs{x-y}.  \]

\begin{theorem}[{{{$C^1$ cell decomposition~\cite{CHR}}}}]
Let $f\colon K^n\to K$ be $\emptyset$-definable. Then there exists a cell decomposition of $K^n$ such that every $n$-dimensional twisted box is open and $f$ is $C^1$ on every such twisted box.
\end{theorem}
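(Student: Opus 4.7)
The plan is to apply the one-variable Jacobian property in each coordinate direction separately, combine the resulting preparation data via cell decomposition, and finally upgrade the pointwise partial-derivative information to joint $C^1$ regularity.

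For each $i \in \{1,\ldots,n\}$, view $f$ as a $\emptyset$-definable family of one-variable functions $K \to K$ parametrised by the remaining $n-1$ coordinates. Applying the one-variable Jacobian property uniformly in parameters (via compactness together with uniform preparation) yields a $\emptyset$-definable set $W_i \subset K^n$ whose fibres over the projection forgetting the $i$-th coordinate are finite, such that on every ball $1$-next to the corresponding fibre of $W_i$, the restriction of $f$ in the $i$-th variable is $C^1$ with $\rv(\partial_i f)$ constant. Now apply Theorem~\ref{thm:cell.decomp} to obtain a $\emptyset$-definable cell decomposition $\chi\colon K^n \to \RV^N$ adapted simultaneously to $W_1,\ldots,W_n$, with continuous centres; by a further application of Theorem~\ref{thm:cell.decomp}(1), one may additionally arrange that each of the maps $\rv\circ\partial_i f$ is constant on every twisted box. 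On each $n$-dimensional twisted box $B$ of $\chi$ (those of type $(1,\ldots,1)$, which are automatically open), every $\partial_i f$ exists pointwise on $B$, is continuous, and $\rv(\partial_i f)$ is constant.

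It then remains to verify joint $C^1$-ness on $B$. Fix $x \in B$ and consider $y \in B$ close to $x$. Use the telescoping decomposition
\[
f(y) - f(x) \;=\; \sum_{i=1}^{n} \bigl[f(y_1,\ldots,y_i,x_{i+1},\ldots,x_n) - f(y_1,\ldots,y_{i-1},x_i,x_{i+1},\ldots,x_n)\bigr].
\]
Applying item~(\ref{it:RVJac.rvfprime}) of the Jacobian property to each summand in the $i$-th direction, the $i$-th term equals $\partial_i f(z_i)(y_i - x_i)$ up to an error strictly dominated by $|\partial_i f(z_i)||y_i - x_i|$, where $z_i \in B$ satisfies $z_i \to x$ as $y \to x$. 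Replacing $\partial_i f(z_i)$ by $\partial_i f(x)$ via continuity of $\partial_i f$ on $B$, and summing using the ultrametric triangle inequality, one obtains $|f(y) - f(x) - \nabla f(x)\cdot (y-x)| = o(|y-x|)$, so $f$ is $C^1$ on $B$ with total derivative $Df(x) = (\partial_1 f(x),\ldots,\partial_n f(x))$.

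The main obstacle is the last step: converting the pointwise existence of the $\partial_i f$ with constant $\rv$ into the sharp error bound needed for joint $C^1$. The bound $|\partial_i f(z)-\partial_i f(x)|<|\partial_i f(x)|$ provided on $B$ by constancy of $\rv(\partial_i f)$ is by itself insufficient; one must also control how quickly this difference tends to zero as $z \to x$. This is where the full strength of 1-h-minimality comes in, via its preparation statements at arbitrary depth $\lambda$, which yield the finer continuity of $\partial_i f$ with error control beyond the single depth $\lambda = 1$.
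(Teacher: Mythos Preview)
The paper does not prove this theorem; it is cited from~\cite{CHR}. So let me assess your argument directly.

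Your overall architecture is right---one-variable Jacobian in each coordinate direction, assemble the preparation data into a single cell decomposition, then telescope---and this is essentially the route taken in~\cite{CHR}. But the final analytic step has a genuine gap, which you correctly flag but do not close.

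Concretely, item~(\ref{it:RVJac.rvfprime}) of the Jacobian property applied to the $i$-th slice function only says that the $i$-th telescoping summand and $\partial_i f(z_i)(y_i - x_i)$ have the same leading term, hence
\[
\bigl|\,[\text{$i$-th summand}] - \partial_i f(z_i)(y_i - x_i)\bigr| < |\partial_i f(z_i)|\,|y_i - x_i|,
\]
an $O(|y-x|)$ bound, not $o(|y-x|)$. Replacing $\partial_i f(z_i)$ by $\partial_i f(x)$ via continuity handles the cross term $[\partial_i f(z_i)-\partial_i f(x)](y_i-x_i)$ but does nothing to improve the displayed bound. You also assert continuity of $\partial_i f$ on $B$ without justifying it; constancy of $\rv(\partial_i f)$ does not suffice.

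Your remedy of ``preparation at arbitrary depth~$\lambda$'' points in the right direction but is too vague as written: one cannot $\lambda$-prepare for all $\lambda$ with a single finite set, and you do not explain how the depth-$\lambda$ statements assemble into a single estimate on a fixed twisted box. What actually closes the gap in~\cite{CHR} is the one-variable Taylor approximation (their Theorem~3.2.2): after suitable preparation one has, for each slice function $g_i$ and uniformly in the parameter coordinates,
\[
\bigl|g_i(y_i) - g_i(x_i) - g_i'(x_i)(y_i - x_i)\bigr| \le C\,|y_i - x_i|^2
\]
with $C$ controlled by the second partial. Combined with Lipschitz continuity of each $\partial_i f$---obtained by also applying the Jacobian property to $\partial_i f$ and refining $\chi$ accordingly---this makes the telescope produce an $O(|y-x|^2)$ remainder. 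So the missing step is: refine $\chi$ so that the second-order data are prepared and the Taylor estimate holds in each slice; then your argument goes through.
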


For Fubini theorems, we need to be able to switch coordinates for a twisted box in $K^2$. This is enabled by the following lemma.

\begin{lem}[{{{\cite[Lem.\,5.4.12]{CHR}}}}]\label{lem:bi.twisted}
Let $A\subset K\cup \RV$ and let $X\subset K^2$ be an $A$-definable twisted box, say of the form
\[
X = \{(x,y)\in K^2\mid \rv(x-c_1)=\xi_1, \rv(y-c_2(x_1)) = \xi_2\}.
\]
Assume that $c_2$ has the Jacobian property. Then either $X$ is a box, or
\[
X = \{(x,y)\in K^2\mid y\in Y, \rv(x-c_2^{-1}(y)) = \xi_3\},
\]
where $Y$ is the projection of $X$ onto the second coordinate, and $\xi_3 = -\rv(c_2')^{-1}\xi_2$ is $\dcl(A) \cap \RV$-definable.
\end{lem}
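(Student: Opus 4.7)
The plan is to analyze $c_2$ on the single open ball $B_1 = \{x \in K : \rv(x - c_1) = \xi_1\}$ via its Jacobian property. Set $\mu = \rv(c_2'(x_0))$ for any $x_0 \in B_1$; by the Jacobian property this is independent of the choice of $x_0$, $c_2$ is $C^1$ and injective on $B_1$, and $c_2(B_1)$ is an open ball of radius $|\mu|\radop B_1$. The natural case split is whether $|\mu|\radop B_1 \leq |\xi_2|$ or $|\mu|\radop B_1 > |\xi_2|$; geometrically, this records whether the image $c_2(B_1)$ is small enough to fit inside a single additive translate of $\rv^{-1}(\xi_2)$.

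In the first case, I would conclude directly that $X$ is a box. Indeed, for any $x_1, x_2 \in B_1$ the Jacobian property gives $|c_2(x_1) - c_2(x_2)| \leq |\mu|\radop B_1 \leq |\xi_2|$, so the set $c_2(x) + \rv^{-1}(\xi_2)$ is a fixed open ball $B_2$ independent of $x \in B_1$, and thus $X = B_1 \times B_2$.

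In the second case, the goal is to rewrite $\rv(y - c_2(x)) = \xi_2$ as a condition on $x$ parametrized by $y$. First, I would verify that $Y = c_2(B_1)$, ensuring $c_2^{-1}$ is well defined on $Y$: any $y \in Y$ satisfies $|y - c_2(x)| = |\xi_2| < |\mu|\radop B_1$ for some $x \in B_1$, and since $c_2(B_1)$ is the open ball of radius $|\mu|\radop B_1$ around $c_2(x)$, we get $y \in c_2(B_1)$. Applying the Jacobian property identity $\rv(c_2(x) - c_2(c_2^{-1}(y))) = \mu \cdot \rv(x - c_2^{-1}(y))$ together with $c_2(c_2^{-1}(y)) = y$ rearranges to $\rv(x - c_2^{-1}(y)) = -\mu^{-1}\xi_2 =: \xi_3$. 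The definability $\xi_3 \in \dcl(A)\cap \RV$ is then automatic, since $\mu = \rv(c_2')$ is a definable function of the parameters defining $c_2$ and $B_1$.

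The main obstacle is the second case, where one must correctly interpret $c_2^{-1}$ as an honest definable function on all of $Y$ (using injectivity provided by the Jacobian property) and carefully track the sign $\rv(-1)$ when solving for $\rv(x - c_2^{-1}(y))$; the remainder of the argument is bookkeeping on ball radii.
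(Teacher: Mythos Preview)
The paper does not supply its own proof of this lemma; it is simply quoted from \cite[Lem.\,5.4.12]{CHR}. Your argument is correct and is exactly the natural proof: split on whether the image ball $c_2(B_1)$ has radius at most $|\xi_2|$ (in which case all the fibres $c_2(x)+\rv^{-1}(\xi_2)$ coincide and $X$ is a box) or strictly larger (in which case the Jacobian identity $\rv(c_2(x)-y)=\mu\,\rv(x-c_2^{-1}(y))$ lets you solve for $x$). One small point worth making explicit for completeness is the backward inclusion: given $y\in Y$ and $\rv(x-c_2^{-1}(y))=\xi_3$, you should note that $|\xi_3|=|\mu|^{-1}|\xi_2|<\radop B_1$ forces $x\in B_1$, so the Jacobian property applies and yields $\rv(y-c_2(x))=\xi_2$; this shows the two descriptions of $X$ genuinely agree as sets, not just that one contains the other.
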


We will call a twisted box $X\subset K^2$ as in this lemma, i.e.\ whose cell centre satisfies the Jacobian property, a \emph{bi-twisted box}.

\begin{lem}\label{lem:cell.decomp.bi.twisted}
Let $n\geq 2$ and let $X\subset K^n$ be $A$-definable, for $A\subset K\cup \RV$. Then there exists a cell decomposition $\chi\colon X\to \RV^N$ of $X$ such that for every $\eta\in \RV^N$ and every $a\in K^{n-2}$ the set
\[
\{x\in K^2\mid \chi(a,x) = \eta\}
\] 
is an $Aa\xi$-definable bi-twisted box.
\end{lem}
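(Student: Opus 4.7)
The plan is to take an arbitrary cell decomposition of $X$, use the Jacobian property to control the last centre as a one-variable function in the second-to-last coordinate, and then refine via uniform cell decomposition so that the two-dimensional slices obtained by fixing the first $n-2$ coordinates automatically become bi-twisted boxes.

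First, fix any cell decomposition $\chi_0 \colon K^n \to \RV^{N_0}$ of $X$ with centre tuple $(c_1^\eta, \dots, c_n^\eta)_{\eta}$ depending definably on $\eta$. View the last centre $c_n^\eta(x_1, \dots, x_{n-1})$ as an $A$-definable family of one-variable functions in $x_{n-1}$, parameterised by $(a, \eta) \in K^{n-2} \times \RV^{N_0}$. Combining the Jacobian property with uniform preparation in the $1$-h-minimal setting yields an $A$-definable family of finite subsets $C_{a, \eta} \subset K$ such that $c_n^\eta(a, \cdot)$ has the Jacobian property on every ball $1$-next to $C_{a, \eta}$.

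Next, apply Theorem~\ref{thm:cell.decomp} to obtain a cell decomposition $\chi \colon K^n \to \RV^N$ of $X$ that refines $\chi_0$ and additionally prepares the $A$-definable subset
\[
\widetilde{C} = \{x \in K^n : x_{n-1} \in C_{(x_1, \dots, x_{n-2}), \chi_0(x)}\}.
\]
For each twisted box $\chi^{-1}(\eta)$ with $(n-1)$-th centre $c_{n-1}$ and $\rv$-class $r_{n-1}$, and for each $a \in K^{n-2}$ in its projection, the set $\{x_{n-1} : \rv(x_{n-1} - c_{n-1}(a)) = r_{n-1}\}$ is either a singleton (if $r_{n-1} = 0$) or a ball, which by preparation of $\widetilde{C}$ must be disjoint from $C_{a, \chi_0(\chi^{-1}(\eta))}$ (as any infinite ball cannot be contained in a finite set). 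In the latter case the ball is therefore contained in a single ball $1$-next to $C_{a, \chi_0(\chi^{-1}(\eta))}$, on which $c_n^\eta(a, \cdot)$ has the Jacobian property.

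Finally, verify the claim. For fixed $\eta$ and $a$, the two-dimensional slice $\{x \in K^2 : \chi(a, x) = \eta\}$ is of the form
\[
\{(x_{n-1}, x_n) : \rv(x_{n-1} - c_{n-1}(a)) = r_{n-1}, \rv(x_n - c_n(a, x_{n-1})) = r_n\},
\]
which is a twisted box in $K^2$ whose last centre $c_n(a, \cdot)$ has the Jacobian property on the relevant $x_{n-1}$-ball by construction. Hence this slice is a bi-twisted box, as required.

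The chief technical ingredient is the first step: the Jacobian preparation must be uniform across the $\RV^{N_0}$-parameter $\eta$ (which is immediate from uniform preparation) and across the $K^{n-2}$-parameter $a$ (which requires iterated application of Theorem~\ref{thm:cell.decomp} to break up the $K$-valued parameter space before invoking the one-variable Jacobian property). Once the family $(C_{a, \eta})_{a, \eta}$ is in hand, the remaining steps amount to a single application of cell decomposition with preparation.
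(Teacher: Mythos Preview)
Your approach is the same as the paper's: start with a cell decomposition $\chi_0$, prepare the last centre $c_n^{\eta_0}$ (viewed as a one-variable function of $x_{n-1}$, uniformly in $a$ and $\eta_0$) for the Jacobian property, then refine so that each two-dimensional slice becomes a bi-twisted box. The paper is terser; you spell out the uniformity in the parameters more carefully.

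There is, however, one genuine gap. After obtaining the preparing family $C_{a,\eta_0}$ for the last centre $c_n^{\eta_0}$ of $\chi_0$, you invoke Theorem~\ref{thm:cell.decomp} to produce a refinement $\chi$ preparing $\widetilde{C}$, and then assert that the last centre $c_n$ \emph{of $\chi$} has the Jacobian property on the relevant $x_{n-1}$-ball ``by construction''. But an arbitrary refinement may change the last centre: the twisted boxes of $\chi$ carry their own centre tuples, and nothing in your construction forces the $n$-th centre of $\chi$ to agree with that of $\chi_0$. Your preparation controls $c_n^{\eta_0}(a,\cdot)$, not $c_n^\eta(a,\cdot)$, so the final step does not follow as written.

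The fix is to refine only in the first $n-1$ coordinates while retaining $c_n^{\eta_0}$ as the last centre of $\chi$. This is legitimate here because, restricted to any twisted box of $\chi_0$, the set $\widetilde{C}$ constrains only $x_1,\dots,x_{n-1}$ (the dependence on $x_n$ enters solely through $\chi_0(x)$, which is already constant there). This is precisely how the paper phrases it: ``simply refine the first $n-1$ coordinates''. With that one adjustment your argument is complete.
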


\begin{proof}
Let $\chi_0\colon K^n\to \RV^M$ be a cell decomposition of $X$. Now simply refine the first $n-1$ coordinates such that for every $a\in K^{n-2}$ and every twisted box $F$ of $\chi_0$ with cell centre $(c_1, \ldots, c_n)$ we have that the map 
\[
(a\times K)\cap \pi_{<n}(F) \to K\colon x_{n-1}\mapsto c_n(a,x_{n-1})
\]
satisfies the Jacobian property. The resulting cell decomposition $\chi$ is as desired.
\end{proof}

\subsection{On $\acl$-dimension}\label{ssec:acl.dim.prelims}

We recall how the exchange principle for $\acl$ leads to a dimension theory for definable sets, see~\cite{HrushovskiPillay, Gagelman, JY23, vdD89} for more information.
Let $\cT$ be any complete $\cL$-theory for some language $\cL$.
We say that $\cT$ is \emph{pregeometric} if $\acl$ satisfies the exchange principle, i.e.\ for every model $M$ of $\cT$ and for all $a,b\in M, A\subset M$, if $a\in \acl(Ab)\setminus \acl(A)$ then $b\in \acl(Aa)$.
If moreover $\cT$ has $\exists^\infty$-elimination then we call $\cT$ \emph{geometric}.
Recall that this means that for every $\emptyset$-definable map $f\colon X\to Y$ between $\emptyset$-definable sets, the set 
\[
\{y\in Y\mid f^{-1}(y)\text{ is infinite}\}
\]
is $\emptyset$-definable in any model of $\cT$.
By compactness, this is equivalent to saying that there exists some integer $N$ such that if $f^{-1}(y)$ is finite then $|f^{-1}(y)|\leq N$.

\begin{defn}
Let $\cT$ be pregeometric, let $M$ be a sufficiently saturated model of $\cT$ and let $X\subset M^n$ be a $\emptyset$-definable set.
Then the \emph{$\acl$-dimension} of $X$ is the largest integer $m$ such that there exist $i_1, \ldots, i_m\in \{1, \ldots, n\}, i_1< \ldots < i_m$ and an element $a=(a_1, \ldots, a_n)\in X$ for which
\[
a_{i_j}\notin \acl(a_{i_1}, \ldots, a_{i_{j-1}}), j = 1, \ldots, m.
\]
We denote the dimension of $X$ by $\dim X$ and call $a$ a \emph{generic point of $X$}.
\end{defn}

We recall some basic properties of this dimension function.

\begin{prop}[$\acl$-dimension theory]\label{prop:acl.dim}
Let $\cT$ be pregeometric and let $M$ be a sufficiently saturated model of $\cT$.
Let $X\subset M^n, Y\subset M^m$ and $f\colon X\to Y$ be $\emptyset$-definable.
Then the following hold. 
\begin{enumerate}
\item \label{it:acl.fin} (Finiteness) $X$ is infinite if and only if $\dim X > 0$.
\item \label{it:acl.union} (Unions) If $m=n$ then $\dim (X\cup Y)  = \max\{\dim X, \dim Y\}$.
\item \label{it:acl.subset} (Inclusion) If $m=n$ and $X\subset Y$ then $\dim X\leq \dim Y$.
\item \label{it:acl.ind} (Independence of parameters) If $A\subset M$ parameter set such that $M$ is at least $\abs{A}^+$-saturated, then the dimension of $X$ in the language $\cL(A)$ is still equal to $\dim X$.
\item \label{it:acl.map} (Definable maps) If $f$ is injective then $\dim X\leq \dim Y$. If $f$ is surjective then $\dim X \geq \dim Y$.
\end{enumerate}
\end{prop}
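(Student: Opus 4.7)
The plan is to unpack the definition and apply standard consequences of the exchange principle, working through the five items in turn. The key reformulation is that, by exchange, $\dim X$ equals the maximum over $a \in X$ of the $\acl$-rank $\mathrm{rk}(a/\emptyset)$, that is, the size of any maximal $\acl$-independent subset of the coordinates of $a$ over $\emptyset$; this quantity is well-defined by the usual matroid argument. I will freely use the analogously defined rank $\mathrm{rk}(a/C)$ of a tuple over parameters $C$, together with the additivity formula $\mathrm{rk}(ab/C) = \mathrm{rk}(a/Cb) + \mathrm{rk}(b/C)$, which follows directly from exchange.

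Items (\ref{it:acl.union}) and (\ref{it:acl.subset}) are essentially formal: (\ref{it:acl.subset}) holds because any witness for $\dim X \geq m$ in $X$ is also a witness in $Y$, and for (\ref{it:acl.union}) any witness $a \in X \cup Y$ for $\dim(X \cup Y) \geq m$ lies in at least one of $X, Y$. For (\ref{it:acl.fin}), if $X$ is finite then $X \subseteq \acl(\emptyset)^n$ and every tuple in $X$ has rank $0$; conversely if $X$ is infinite then, by saturation, the partial type $\{x \in X\} \cup \{x \neq e : e \in \acl(\emptyset)^n\}$ is realized in $M$, yielding a point of $X$ with at least one coordinate outside $\acl(\emptyset)$, whence $\dim X \geq 1$.

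For (\ref{it:acl.ind}), the inequality $\dim_A X \leq \dim_\emptyset X$ is immediate from $\acl(\emptyset) \subseteq \acl(A)$. For the reverse, given $a \in X$ with $\acl$-independent coordinates $a_{i_1}, \ldots, a_{i_m}$ over $\emptyset$, the idea is to produce $a' \in X$ realizing $\tp(a/\emptyset)$ whose coordinates $a'_{i_1}, \ldots, a'_{i_m}$ are $\acl$-independent over $A$. I would build $a'$ inductively: since $a_{i_j} \notin \acl(a_{i_1}, \ldots, a_{i_{j-1}})$, the type $\tp(a_{i_j}/a_{i_1}\cdots a_{i_{j-1}})$ transported along the identification $a_{i_l} \mapsto a'_{i_l}$ has infinitely many realizations, and by $|A|^+$-saturation one can pick such a realization avoiding the bounded set $\acl(A, a'_{i_1}, \ldots, a'_{i_{j-1}})$. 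The remaining coordinates of $a'$ are then chosen to extend $a'_{i_1}, \ldots, a'_{i_m}$ to a full realization of $\tp(a/\emptyset)$, which automatically lies in $X$.

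Finally, (\ref{it:acl.map}) follows from additivity of rank. In the surjective case, pick a generic $b \in Y$ with $\mathrm{rk}(b) = \dim Y$ and any preimage $a \in X$ with $f(a) = b$; since $b \in \dcl(a) \subseteq \acl(a)$, additivity gives $\mathrm{rk}(a) = \mathrm{rk}(ab) \geq \mathrm{rk}(b) = \dim Y$, whence $\dim X \geq \dim Y$. In the injective case, for any $a \in X$, $\emptyset$-definable injectivity of $f$ yields $a \in \dcl(f(a))$, and the symmetric argument gives $\mathrm{rk}(f(a)) \geq \mathrm{rk}(a)$; taking $a$ generic in $X$ produces $\dim Y \geq \dim X$. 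The main obstacle is (\ref{it:acl.ind}), which is the only place where saturation has to be used in a nontrivial way; all other items are structural consequences of the definition and of exchange.
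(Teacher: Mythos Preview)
Your proof is correct and follows the standard route for these well-known properties of pregeometric theories. The paper's own proof is a two-line sketch that declares (\ref{it:acl.subset}) and (\ref{it:acl.map}) clear, says (\ref{it:acl.fin}) follows from compactness, and for the only nontrivial items (\ref{it:acl.union}) and (\ref{it:acl.ind}) simply cites \cite[Lem.\,2.3]{HrushovskiPillay}; your argument for (\ref{it:acl.ind}) is essentially the same realization-avoiding-$\acl(A)$ construction one finds there, just spelled out in full.
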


Note that by property (\ref{it:acl.ind}), the notion of $\acl$-dimension extends to all definable sets (with parameters).

\begin{proof}
Property (\ref{it:acl.fin}) follows from compactness, while (\ref{it:acl.subset}) and (\ref{it:acl.map}) are clear.
Properties (\ref{it:acl.union}) and (\ref{it:acl.ind}) are contained in~\cite[Lem.\,2.3]{HrushovskiPillay}.
\end{proof}

We also recall work by van den Dries~\cite{vdD89} on dimensions of definable sets.
Let $M$ be a sufficiently saturated model of $\cT$.
Denote by $\cS_n$ the set of definable subsets of $M^n$, with parameters.
A \emph{dimension function} is a function $d\colon \cup_n \cS_n\to \NN\cup\{-\infty\}$ satisfying the following:
\begin{itemize}
\item[(Dim 1)] $d(S) = -\infty $ if and only if $S = \emptyset$, $d(\{x\}) = 0$ for every $x\in M$, and $d(M) = 1$.
\item[(Dim 2)] $d(S_1\cup S_2) = \max\{d(S_1), d(S_2)\}$.
\item[(Dim 3)] For every coordinate permutation $\sigma\colon M^n\to M^n$ we have $d(S) = d(\sigma(S))$.
\item[(Dim 4)] Let $T\subset M^{n+1}$ be definable and for $x\in M^n$ let $T_x = \{y\in M\mid (x,y)\in T\}$. 
For $i\in \{0,1\}$ let $T(i) = \{x\in M^m\mid d(T_x) = i\}$. 
Then $T(i)$ is definable and 
\[
d(\{(x,y)\in T\mid x\in T(i)\}) = d(T(i)) + i.
\]
\end{itemize}

\begin{lem}\label{lem:geom.dim.function}
Assume that $\cT$ is geometric.
Then the $\acl$-dimension defines a dimension function in the sense of~\cite{vdD89}.
\end{lem}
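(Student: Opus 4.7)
The plan is to verify the four axioms (Dim~1)--(Dim~4) for the $\acl$-dimension, drawing directly on Proposition~\ref{prop:acl.dim} together with the $\exists^\infty$-elimination that comes with the geometric hypothesis. First, Proposition~\ref{prop:acl.dim}(\ref{it:acl.ind}) ensures that $\acl$-dimension makes sense on arbitrary definable sets (with parameters) in a sufficiently saturated model, which is what is required for the setup of~\cite{vdD89}.

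\textbf{(Dim 1) and (Dim 2).} For (Dim~1): the value $-\infty$ on the empty set is a convention; $d(\{x\}) = 0$ because $x \in \acl(x)$, so no coordinate can lie outside the algebraic closure of the previous ones; and $d(M) = 1$ follows by taking an element of $M$ outside $\acl(\emptyset)$, which exists by saturation in the non-degenerate case. Axiom (Dim~2) is immediate from Proposition~\ref{prop:acl.dim}(\ref{it:acl.union}).

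\textbf{(Dim 3).} I would first record the equivalent reformulation: in a pregeometric theory, $\dim S$ equals the largest $m$ such that some point $(a_1,\dots,a_n) \in S$ has a subtuple $(a_{i_1},\dots,a_{i_m})$ which is $\acl$-independent (with no ordering condition). The non-trivial direction uses the exchange principle to reorder any $\acl$-independent subtuple into one respecting the definitional ordering. Once this characterization is in place, invariance under coordinate permutation is obvious.

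\textbf{(Dim 4).} This is the main step and is where geometricity is used. For any definable $T \subset M^{n+1}$ and $x \in M^n$, the fibre $T_x \subset M$ satisfies $d(T_x) \in \{-\infty, 0, 1\}$, with $d(T_x) = 1$ iff $T_x$ is infinite and $d(T_x) = 0$ iff $T_x$ is finite and non-empty. By $\exists^\infty$-elimination the set $T(1) = \{x \mid T_x \text{ infinite}\}$ is definable, and clearly so is $T(0)$. For the dimension equality, set $S_i = \{(x,y)\in T\mid x\in T(i)\}$. The case $i=0$ is handled by Proposition~\ref{prop:acl.dim}(\ref{it:acl.map}) applied to the projection $S_0\to T(0)$, whose fibres are all finite and non-empty: surjectivity gives $d(S_0)\geq d(T(0))$, while a generic point argument using exchange (any $y$ in a finite fibre is algebraic over $x$) gives the reverse inequality. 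For $i=1$, pick a generic $x\in T(1)$, so $\dim x = d(T(1))$, and then a $y\in T_x$ with $y\notin \acl(x)$, possible since $T_x$ is infinite in a saturated model; this produces an $\acl$-independent $(d(T(1))+1)$-subtuple of $(x,y)\in S_1$, giving $d(S_1)\geq d(T(1))+1$. Conversely, any generic point of $S_1$ projects to a point of dimension $\leq d(T(1))$ over $\emptyset$, and the last coordinate contributes at most $1$ by the exchange principle, yielding the opposite inequality.

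The main obstacle will be the careful generic-point bookkeeping in (Dim~4): one must argue cleanly with parameters and saturation to ensure that generic elements of $T(1)$ admit lifts to algebraically independent pairs in $S_1$, and conversely that any generic point of $S_1$ splits in the expected way. Here the exchange principle is used essentially, and the dimension-over-parameters invariance from Proposition~\ref{prop:acl.dim}(\ref{it:acl.ind}) is needed to add the generic $x$ as a parameter without changing dimensions.
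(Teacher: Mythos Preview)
Your proposal is correct and follows essentially the same approach as the paper, just with considerably more detail. The paper's own proof is a one-liner: it declares (Dim~1)--(Dim~3) ``clear'' and says (Dim~4) follows from $\exists^\infty$-elimination, leaving the dimension equality in (Dim~4) as an implicit standard fact about pregeometries---precisely the generic-point and exchange arguments you spell out.
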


\begin{proof}
Properties (Dim 1), (Dim 2) and (Dim 3) from~\cite[p.\,189]{vdD89} are clear, while (Dim 4) follows from $\exists^\infty$-elimination.
\end{proof}

Let us also mention that if $K$ is a $1$-h-minimal field, then the structure on $K$ is in fact geometric and the resulting notion of dimension agrees with the dimension theory from Theorem~\ref{thm:dim.theory.VF}.
Indeed, this follows from~\cite[Lem.\,5.3.6]{CHR}.

\section{Effective theories}\label{sec:eff}

Throughout this section we will work in a complete theory $\cT$ of equicharacteristic zero valued fields in a language $\cL$ expanding the language of valued fields $\cL_{\val} = \{0,1,+,-,\cdot,\cO_K\}$. 
We will typically work in a fixed model $K$ of $\cT$ and assume that $K$ is sufficiently saturated.

\subsection{The definition and basic properties}

In this section we introduce a notion called effectivity, which will be needed to lift bijections between objects in $\RV^n$ to their pre-images in $K^n$. 
This notion is similar to the one introduced by Hrushovski--Kazhdan~\cite[Sec.\,3.4]{HK}, but allows parameters to make use of compactness. 

\begin{defn} \label{def:effectivity}
%Let $\cT$ be a theory of equicharacteristic zero valued fields in a language $\cL$ expanding the language of valued fields. Then $\cT$ is \emph{$\emptyset$-effective} if for every model $K$ of $\cT$, and every finite $\emptyset$-definable set $R\subset \RV$, there exists a finite $\emptyset$-definable set $X\subset K$ such that $X\subset \rv^{-1}(R)$ and $\rv: X\to R$ is a bijection. 
%
%Call $\cT$ \emph{effective} if for every model $K$ of $\cT$ and every $A\subset K$, the theory $\Th_{\cL(A)}(K)$ is $\emptyset$-effective.
%
Let $\cT$ be a theory of equicharacteristic zero valued fields in a language $\cL$ expanding the language of valued fields. Then $\cT$ is \emph{$\emptyset$-effective} if for every model $K$ of $\cT$, and every $\emptyset$-definable element $\xi \in \RV$, there exists a $\emptyset$-definable element $x\in K$ with $\rv(x) = \xi$.

Call $\cT$ \emph{effective} if for every model $K$ of $\cT$ and every $A\subset K$, the theory $\Th_{\cL(A)}(K)$ is $\emptyset$-effective.
\end{defn}

The parameters from $A$ allow us to use compactness and apply effectivity in families.
We first show that we can lift finite sets from $\RV$ to $\VF$.

\begin{lem}[Lifting finite sets] \label{lem:lift.fin}
Let $\cT$ be a theory of equicharacteristic zero valued fields.
Then $\cT$ is effective if and only if for every model $K$ of $\cT$, every $A\subset K$, and every finite $A$-definable set $R\subset \RV$, there exists a finite $A$-definable set $X\subset K$ such that $X\subset \rv^{-1}(R)$ and $\rv\colon X\to R$ is a bijection. 
\end{lem}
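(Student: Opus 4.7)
The backward direction is immediate: applying the finite-set lifting hypothesis to any singleton $R = \{\xi\}$, with $\xi$ an $A$-definable element of $\RV$, produces an $A$-definable singleton $X = \{x\} \subset K$ with $\rv(x) = \xi$, which is precisely the statement of $\emptyset$-effectivity for $\Th_{\cL(A)}(K)$.

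For the forward direction, assume $\cT$ is effective. The plan is to induct on $n := |R|$; the cases $n \leq 1$ coincide with effectivity itself. For the inductive step with $n \geq 2$, first decompose $R$ into its orbits under $\mathrm{Aut}(\tilde K / A)$. Each orbit is $A$-definable (as the realizations in $R$ of a complete type over $A$), and in a non-trivial decomposition every orbit has strictly smaller cardinality, so the inductive hypothesis applies to each piece and the lifts can be combined. This reduces the problem to the case where $R$ is a single $\mathrm{Aut}(\tilde K / A)$-orbit.

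For such an orbit of size $n \geq 2$, the strategy is to enrich the parameters and then descend back to $A$. First pick any $a \in K$ with $\rv(a) \in R$ (using surjectivity of $\rv$) and work over $A' := A \cup \{a\}$. Over $A'$, the element $\rv(a)$ is definable, so $R \setminus \{\rv(a)\}$ is $A'$-definable of size $n-1$, and by induction it lifts to an $A'$-definable finite set $X'_a \subset K$. Then $X_a := \{a\} \cup X'_a$ is an $A'$-definable lift of $R$. By compactness, there exist finitely many $\cL(A)$-formulas $\varphi_1(x,y), \ldots, \varphi_N(x,y)$ and an $A$-definable partition $\rv^{-1}(R) = D_1 \sqcup \cdots \sqcup D_N$ such that $X_a = \{x : \varphi_i(x,a)\}$ for every $a \in D_i$.

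The main obstacle will be removing the dependence on $a$ to obtain a single $A$-definable finite lift of $R$. I expect this to reduce to the auxiliary claim that $\acl(A) \cap K$ already contains a lift of some element of $R$: granted this, the $A$-orbit of such an algebraic lift is a finite $A$-definable subset of $K$ projecting into $R$, and combining this with the construction above (possibly iterating on the complement within $R$) produces the desired $X$. Proving the auxiliary claim looks like the delicate technical step; one plausible route is to apply effectivity to canonical $A$-definable $\RV$-elements functorially built from $R$, such as $\prod_{\xi \in R} \xi \in \RV^\times$ when $0 \notin R$, and to analyze the resulting algebraic $K$-elements against the $\rv$-fibers over $R$.
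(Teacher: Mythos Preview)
The backward direction is correct and matches the paper.

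For the forward direction, your induction correctly isolates the difficulty but does not resolve it. The ``auxiliary claim'' you flag as delicate is in fact the entire content of the lemma, and the route you suggest via $\prod_{\xi\in R}\xi$ is insufficient: lifting this single $A$-definable element of $\RV$ yields one $A$-definable point $b\in K$, but there is no mechanism to extract from $b$ an element of $\rv^{-1}(R)$. An $n$-th root of $b$, even when it exists in $K$, has $\rv$-value an $n$-th root of $\prod\xi_i$, which need not lie in $R$. One symmetric function of the $\xi_i$ is far too little information to recover any individual $\xi_i$, so the gap you identify is genuine and your sketch does not close it.

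The paper's argument avoids the induction entirely and instead uses \emph{all} the elementary symmetric functions at once. After reducing (via finite definable choice in $\Gamma$) to the case where $|R|\subset\Gamma$ is a singleton $\{\lambda\}$, one observes that $\{\zeta\in\RV:|\zeta|=\lambda\}\cup\{0\}$ carries a commutative associative addition $\oplus'$ (the partial $\RV$-addition, with ambiguous sums set to $0$). Interpreting the elementary symmetric polynomials $f_1,\ldots,f_n$ with $\oplus'$ in place of $+$ yields $A$-definable elements $\sigma_i=f_i(\xi_1,\ldots,\xi_n)\in\RV$. Effectivity lifts each $\sigma_i$ to an $A$-definable $b_i\in K$, and one forms $g(x)=x^n-b_1x^{n-1}+\cdots\pm b_n\in K[x]$. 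Since the $\xi_i$ are distinct, henselianity forces $g$ to have $n$ distinct roots in $K$ whose $\rv$-values are exactly $\xi_1,\ldots,\xi_n$; this root set is the desired $A$-definable lift. Your product $\prod\xi_i$ is only $\sigma_n$; the remaining $\sigma_i$ together with Hensel's lemma are what make the reconstruction work.
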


We will often use this as an alternative definition of effectivity.
By~\cite[Lem.\,3.19, 6.2]{HK}, this implies that every $\emptyset$-effective $V$-minimal theory is also effective in our sense, so there is no real clash of terminology. 
The fact that $\emptyset$-effective implies effective is specific to $V$-minimal theories, and does not hold in general.
For example, $\CC((t))$ in the language $\cL_\val\cup \{\ac, t\}$ is $\emptyset$-effective but not effective, as will be explained in Example~\ref{eg:ac.not.eff}.
%We will see later in Remark~\ref{rem:0.eff.implies.eff} that $\emptyset$-effectivity implies effectivity whenever the residue field is algebraically bounded over $\emptyset$.

\begin{proof}
If this condition holds then clearly $\cT$ is effective.

So assume that $\cT$ is effective, let $K$ be a model of $\cT$ and let $A\subset K$ be a parameter set.
Let $R\subset \RV$ be a finite $A$-definable set, say with $n$ elements.
Since we have finite definable choice in $\Gamma$, we may assume that the valuation $|R|\subset \Gamma$ consists of a single element $\lambda$.

Recall the partial addition $\oplus$ on $\RV$ from Section~\ref{sec:basic_notation}, and consider the following associated $\emptyset$-definable map
\[
\oplus'\colon \{(\zeta, \eta)\in \RV^2\mid |\zeta| = |\eta|\}\to \RV\colon (\zeta, \eta)\mapsto \begin{cases}
0 & \text{ if } 0\in \zeta\oplus \eta,\\
\zeta\oplus \eta & \text{ if } 0\notin \zeta\oplus \eta.
\end{cases}
\]
Note that if $\zeta\oplus'\eta\neq 0$, then $|\zeta\oplus'\eta| = |\zeta| = |\eta|$.
We extend the definition of $\oplus'$ by putting $\zeta\oplus' 0 = 0\oplus' \zeta = \zeta$ whenever $\zeta\in \RV$.
For $\gamma\in \Gamma$ denote $\RV_{=\gamma} = \{\zeta\in \RV\mid |\zeta| = \gamma\}\cup\{0\}$.
Then $\oplus'$ defines an associative addition on $\RV_{=\gamma}$.

Let $f_1, \ldots, f_n\in \ZZ[x_1, \ldots, x_n]$ denote the $n$ symmetric polynomials in $x_1, \ldots, x_n$, where $f_i$ has degree $i$.
By replacing the addition by the operator $\oplus'$, each $f_i$ determines a $\emptyset$-definable map $f_i\colon \RV_{=\lambda}^n\to \RV_{=\lambda^i}$.
Moreover, if $\sigma\in S_n$ is any permutation and $\zeta_1, \ldots, \zeta_n\in \RV_{=\lambda}$ then $f_i(\zeta_1, \ldots, \zeta_n) = f_i(\zeta_{\sigma(1)}, \ldots, \zeta_{\sigma(n)})$.
Hence if $R = \{\xi_1, \ldots, \xi_n\}$, the elements $\sigma_i = f_i(\xi_1, \ldots, \xi_n)$ are $A$-definable.
By effectivity there exist $A$-definable elements $b_i\in K$ with $\rv(b_i) = \sigma_i$.
Consider the polynomial $g = x^n - b_1x^{n-1} + \ldots \pm b_n\in K[x]$.
Since the $\xi_i$ are all distinct, henselianity ensures that $g$ has $n$ distinct roots in $K$.
Thus the set $X$ of roots of $g$ is an $A$-definable set lifting $R$.
\end{proof}

There are several simple reformulations of effectivity. 

\begin{lem} \label{le:effective.characterizations}
Let $\cT$ be a $1$-h-minimal theory of equicharacteristic zero valued fields. Then the following are equivalent:
\begin{enumerate}
\item\label{it:effective} (Effectivity) The theory $\cT$ is effective, 
\item\label{it:picking.points.in.balls} (Picking points in balls) For every model $K$ of $\cT$, for every set $A\subset K$, and every $A$-definable set which is a finite union of pairwise disjoint open and closed balls, there exists a finite $A$-definable set containing exactly one point from each of these open and closed balls,
\item\label{it:acl.rv.commute} ($\acl$ and $\rv$ commute) For every model $K$ of $\cT$ and for every subset $A\subset K$ we have that $\rv(\acl(A)\cap K) = \acl(A) \cap \RV$.
\item\label{it:lifting.families} (Lifting families) For every $\aleph_0$-saturated model $K$ of $\cT$, for every subset $A\subset K$, and for every $A$-definable family $(R_\xi)_{\xi\in \RV^n}$ of finite subsets in $\RV^m$, there exists an $A$-definable family $(C_a)_{a\in K^n}$ of finite sets in $K^m$ such that for every $a\in K^n$, $\rv$ maps $C_a$ bijectively to $R_{\rv a}$. 
\end{enumerate}
\end{lem}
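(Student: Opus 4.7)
The plan is to establish $(1) \Leftrightarrow (3)$ as the conceptual heart of the equivalence, and then to derive $(1) \Leftrightarrow (2)$ and $(1) \Leftrightarrow (4)$ from analogous ideas, relying throughout on the reformulation of effectivity given by Lemma~\ref{lem:lift.fin}.

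For $(1) \Rightarrow (3)$, let $\xi \in \acl(A) \cap \RV$; then $\xi$ lies in some finite $A$-definable set $R \subset \RV$, which by Lemma~\ref{lem:lift.fin} lifts to a finite $A$-definable $X \subset K$ with $\rv \colon X \to R$ a bijection, so that the preimage of $\xi$ lies in $\acl(A) \cap K$.  For the converse, let $\xi \in \RV$ be $A$-definable; by (3) there is $x \in \acl(A) \cap K$ with $\rv(x) = \xi$.  Writing $\{x_1, \ldots, x_n\}$ for the full $A$-orbit of $x$ (which is $A$-definable and finite), each $\rv(x_i)$ is an $A$-conjugate of $\xi$ and therefore equals $\xi$; since the residue characteristic is zero, $n$ is invertible and the $A$-definable element $\frac{1}{n}(x_1 + \cdots + x_n)$ satisfies $\rv\bigl(\tfrac{1}{n}\sum x_i\bigr) = \xi$ (all summands have the same $\rv$-class).

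For $(1) \Rightarrow (4)$, I would rerun the proof of Lemma~\ref{lem:lift.fin} uniformly in the parameter $\xi$.  For each $\xi \in \RV^n$ the symmetric-polynomial construction produces $A\xi$-definable elements $\sigma_i(\xi) \in \RV$; applying effectivity in the parameter set $A\xi$ lifts these to $b_i(\xi) \in K$, and the roots of the resulting polynomial provide the desired lift of $R_\xi$.  A compactness argument in the $\aleph_0$-saturated model reduces to finitely many candidate formulas and assembles them into a single $A$-definable family $(C_a)_{a \in K^n}$ with $\rv(C_a) = R_{\rv(a)}$.  The converse $(4) \Rightarrow (1)$ is the trivial specialization to the empty parameter space $\RV^0$.

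For $(1) \Rightarrow (2)$, each ball occurring in the given $A$-definable finite disjoint union is an $\rv_\lambda$-fibre of some $\eta \in \RV_\lambda$ for an appropriate $\lambda$ (distinguishing open and closed cases via the auxiliary sort), and the collection of pairs $(\lambda_i, \eta_i)$ forms a finite $A$-definable subset of a suitable imaginary sort.  Adapting the symmetric-polynomial argument of Lemma~\ref{lem:lift.fin} to $\RV_\lambda$, where effectivity in $\RV$ passes to effectivity in $\RV_\lambda$ via the natural projections and a choice of scaling, yields a finite $A$-definable set picking one point from each ball.  The reverse $(2) \Rightarrow (1)$ is immediate: applied to the single open ball $\rv^{-1}(\xi)$ for $A$-definable nonzero $\xi$, condition (2) produces an $A$-definable lift, while $\xi = 0$ is lifted by $0$.

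The main technical obstacle is the uniformity in $(1) \Rightarrow (4)$: one must verify that the lift produced by the symmetric-polynomial construction is itself $A$-definable in a way that varies definably with $\xi$, which is precisely where $\aleph_0$-saturation and compactness are invoked.  An analogous uniformity issue arises in $(1) \Rightarrow (2)$ when the radius $\lambda$, and hence the sort $\RV_\lambda$, varies from ball to ball, so some care is needed to glue the local constructions into a single $A$-definable set.
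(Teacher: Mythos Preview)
Your arguments for $(1)\Leftrightarrow(3)$, $(2)\Rightarrow(1)$, and $(4)\Rightarrow(1)$ are correct and match the paper's. For $(1)\Rightarrow(4)$, note that effectivity as defined requires the parameter set to lie in $K$, so you should apply it with parameters $Aa$ for $a\in K^n$ (lifting $R_{\rv(a)}$) rather than $A\xi$ with $\xi\in\RV^n$; with that adjustment your approach works, though the paper more simply invokes Lemma~\ref{lem:lift.fin} once for each $a$ and then compactness, without rerunning the symmetric-polynomial construction.

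The genuine gap is in $(1)\Rightarrow(2)$. Your assertion that ``effectivity in $\RV$ passes to effectivity in $\RV_\lambda$ via the natural projections and a choice of scaling'' is unjustified and essentially circular: lifting an $A$-definable $\xi\in\RV_\lambda$ \emph{is} picking an $A$-definable point in the open ball $\rv_\lambda^{-1}(\xi)$, which is the single-ball case of (2). Projecting $\xi$ to $\RV$ and lifting there only lands you in the strictly larger ball $\rv^{-1}(\text{image of }\xi)$, and no scaling bridges that gap without already knowing how to pick points in smaller balls. Moreover, closed balls in a densely valued field are never $\rv_\lambda$-fibres, so your reduction does not cover them either. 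The paper's argument is quite different and uses $1$-h-minimality essentially: one $1$-prepares $X$ by a finite $A$-definable set $C$; in the non-discrete case $C$ already meets every closed ball of $X$; and for each $c\in C\setminus X$ one applies effectivity for $\RV$ (not $\RV_\lambda$) to the finite set $\rv(X-c)$, whose lifts, translated by $c$, land in the balls $1$-next to $c$. The union over all $c\in C$ is $A$-definable and meets every open ball of $X$, after which averaging gives exactly one point per ball. Your sketch never invokes preparation, and without it the step you yourself flag as needing ``some care'' does not go through.
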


\begin{proof}
(\ref{it:effective} $\Rightarrow$ \ref{it:picking.points.in.balls}) Assume that $\cT$ is effective, we show that one can pick points in balls. Let $X$ be an $A$-definable subset of $K$ which is a finite union of open and closed balls, say $X = \cup_i B_i$. If the value group is discrete, then we may moreover assume that all $B_i$ are open balls. 

Let $C$ be a finite $A$-definable set $1$-preparing $X$. For each $c\in C\setminus X$, we use effectivity to find a finite $Ac$-definable set $D_c$ which contains exactly one point above each element of $\rv(X - c)$. Note that this is indeed possible since $\rv(X-c)$ is finite since $c\notin X$. Let $C_1 = X\cap (C\cup \bigcup_c D_c)$, which is a finite $A$-definable set. Then $C_1$ is contained in $X$, and contains a point in every ball $B_i$. Indeed, if the value group is non-discrete and $B_i$ is a closed ball then $C$ already contains a point from $B_i$. If $B_i$ is an open ball, then there exists some $c\notin X$ such that $B_i$ is $1$-next to $c$, and then $D_c$ must contain a point from $B_i$. 

To ensure that $C_1$ contains exactly one point in each $B_i$, we average. In more detail, there is an $A$-definable equivalence relation on $C_1$ for which $x\sim y$ if and only if $x$ and $y$ are contained in the same ball $B_i$. Let $C_2$ consist of the averages of each equivalence class. Then $C_2$ is a finite $A$-definable set which contains exactly one point in each ball $B_i$. Note that this uses that $K$ is of equicharacteristic zero.

(\ref{it:picking.points.in.balls} $\Rightarrow$ \ref{it:effective}) For the converse, it is clear that if one can pick points in balls, then $\cT$ is effective. Indeed, this follows from the fact that a pullback of a finite subset of $\RV$ is a finite union of open balls.

(\ref{it:effective} $\Rightarrow$ \ref{it:acl.rv.commute}) It is clear that $\rv(\acl(A) \cap K)\subset \acl(A)\cap \RV$. So let $\xi$ be an element of $\acl(A)\cap \RV$. Then there exists a finite $A$-definable set $R\subset \RV$ containing $\xi$. By effectivity there exists a finite $A$-definable set $C\subset K$ such that $\rv(C)\subset R$ and $\rv\colon C\to R$ is a bijection. Hence $\xi\in \rv(C)\subset \rv(\acl(A) \cap K)$.

(\ref{it:acl.rv.commute} $\Rightarrow$ \ref{it:effective}) Let $R$ be a finite $A$-definable subset of $\RV$. Then $R\subset \acl(A)\cap \RV = \rv(\acl(A)\cap K)$ and so there exists a finite $A$-definable set $C\subset K$ such that $\rv(C) = R$. By taking averages as above, we can again ensure that the map $\rv\colon C\to R$ is in fact a bijection.

(\ref{it:effective} $\Rightarrow$ \ref{it:lifting.families}) 
For each $a \in K^n$, effectivity provides an $a$-definable set $C_a$ lifting $R_{\rv(a)}$. Since $K$ is $\aleph_0$-saturated, the family $(C_a)_a$ may be taken $\emptyset$-definable. 

(\ref{it:lifting.families} $\Rightarrow$ \ref{it:effective}) This is trivial.
\end{proof}

A key property of effective theories is that maps can be lifted.

\begin{cor}[Lifting maps $\RV\to \RV$]\label{cor:lifting.maps}
Let $\cT$ be an effective 1-h-minimal theory, let $K$ be a model of $\cT$, and let $A\subset K$. If $f\colon \RV^n\to \RV^m$ is $A$-definable, then there exists an $A$-definable map $F\colon K^n\to K^m$ such that for all $x\in K^n$
\[
\rv(F(x)) = f(\rv(x)).
\]
\end{cor}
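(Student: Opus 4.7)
The plan is to reduce to the case $m=1$ by working one output coordinate at a time, and then to invoke the family-lifting reformulation of effectivity. Writing $f = (f_1,\dots,f_m)$ with each $f_i\colon \RV^n\to \RV$ being $A$-definable, I would lift each $f_i$ separately to an $A$-definable $F_i\colon K^n\to K$ and then set $F = (F_1,\dots,F_m)$. Hence we may assume $m=1$ and the goal becomes to produce a single $A$-definable $F\colon K^n\to K$ with $\rv(F(x)) = f(\rv(x))$ for all $x\in K^n$.

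For $m=1$, I would consider the $A$-definable family of singletons $(R_\xi)_{\xi\in \RV^n}$ defined by $R_\xi = \{f(\xi)\}\subset \RV$. By Lemma~\ref{le:effective.characterizations}(\ref{it:lifting.families}), after replacing $K$ by an $\aleph_0$-saturated elementary extension if necessary, there exists an $A$-definable family $(C_a)_{a\in K^n}$ of finite subsets of $K$ such that for each $a$ the map $\rv\colon C_a\to R_{\rv(a)}$ is a bijection. Since each $R_{\rv(a)}$ is a singleton, each $C_a$ is forced to be a singleton, say $C_a = \{F(a)\}$, and then $\rv(F(a)) = f(\rv(a))$ by construction. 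This defines $F$ as an $A$-definable function on the saturated model, and by completeness of $\cT$ the same defining formula yields an $A$-definable function with the required property in every model of $\cT$.

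The only mild subtlety I anticipate concerns values $\xi$ with $f(\xi) = 0$: the lift is then forced to equal $0$, but this is automatic since $\rv^{-1}(0) = \{0\}$ and the family-lifting condition requires $\rv$ to restrict to a bijection $C_a \to R_{\rv(a)}$, which in this case is the unique bijection $\{0\}\to \{0\}$. I do not expect any real obstacle: once the coordinate-wise reduction is in place, the corollary is essentially a direct application of the lifting-families characterization of effectivity to the graph of $f$.
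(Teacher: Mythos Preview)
Your proof is correct and follows essentially the same approach as the paper: reduce to an $\aleph_0$-saturated model and apply the lifting-families characterization of effectivity (Lemma~\ref{le:effective.characterizations}(\ref{it:lifting.families})) to the family of singletons $R_\xi = \{f(\xi)\}$. The only difference is that your coordinate-wise reduction to $m=1$ is unnecessary, since the lifting-families statement already allows the target to be $\RV^m$; the paper simply invokes that lemma directly.
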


We will call such a map $f$ a \emph{lift of $h$}.

\begin{proof}
It suffices to prove this when $K$ is $\aleph_0$-saturated. 
Then the claim follows directly from the previous lemma, since it is a specific instance of lifting families.
\end{proof}
Also, maps from $K$ to $\RV$ can be lifted.
\begin{lem}[Lifting maps $K\to \RV$] \label{lem:K.to.RV.lift}
	Let $\cT$ be an effective $1$-h-minimal theory and $K$ a model of $\cT$. 
	For $A \subset K$, let $f \colon K^n \to \RV^m$ be $A$-definable. 
	Then, there exists an $A$-definable map $F \colon K^n \to K^m$ such that $f = \rv \circ F$.
\end{lem}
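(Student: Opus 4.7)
The plan is to induct on $n$, using cell decomposition to reduce $f$ to a function of $\rv$-data on $K$-valued elements and then lifting via Corollary~\ref{cor:lifting.maps}. The base case $n = 0$ is immediate: $f$ is a constant element of $\RV^m$, and each coordinate lifts by applying effectivity $m$ times.

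For $n = 1$, I would apply uniform preparation to the graph $\{(x,\xi) \in K \times \RV^m : f(x) = \xi\}$, obtaining a finite $A$-definable set $C = \{c_1, \ldots, c_k\} \subset K$ that $1$-prepares every fibre, so $f$ is constant on each ball $1$-next to $C$. Since such a ball containing a point $x \notin C$ is characterized by the tuple $(\rv(x - c_i))_i$, there is an $A$-definable $g \colon \RV^k \to \RV^m$ with $f(x) = g(\rv(x - c_1), \ldots, \rv(x-c_k))$ off $C$. Corollary~\ref{cor:lifting.maps} gives an $A$-definable lift $\tilde g \colon K^k \to K^m$, and then $F(x) = \tilde g(x - c_1, \ldots, x - c_k)$ works for $x \notin C$. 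On the finite set $C$, effectivity yields the remaining pointwise lifts, which assemble into an $A$-definable map since $C$ is finite and $A$-definable.

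For the inductive step $n \geq 2$, I would take a cylindrical cell decomposition $(\chi_1,\ldots,\chi_n)$ from Theorem~\ref{thm:cell.decomp} preparing $f$. After refining $\chi_{n-1}$ if necessary so that the last centre function $c_n \colon K^{n-1} \to K$ of $\chi_n$ depends only on the twisted box of $\chi_{n-1}$ (not on the specific twisted box of $\chi_n$ above it), every twisted box of $\chi_n$ is determined by the pair $(\chi_{n-1}(x_{<n}), \rv(x_n - c_n(x_{<n})))$. Since $f$ is constant on these twisted boxes, we obtain an $A$-definable $g \colon \RV^{k_{n-1}} \times \RV \to \RV^m$ (defined on the image) with $f(x_{<n}, x_n) = g(\chi_{n-1}(x_{<n}), \rv(x_n - c_n(x_{<n})))$. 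Applying the inductive hypothesis to $\chi_{n-1} \colon K^{n-1} \to \RV^{k_{n-1}}$ yields an $A$-definable lift $\tilde\chi_{n-1}$, and Corollary~\ref{cor:lifting.maps} lifts $g$ to an $A$-definable $\tilde g \colon K^{k_{n-1}} \times K \to K^m$. Setting
\[ F(x_{<n}, x_n) = \tilde g\bigl(\tilde\chi_{n-1}(x_{<n}),\, x_n - c_n(x_{<n})\bigr) \]
and chasing $\rv$'s completes the step.

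The main obstacle is the refinement step producing a coherent $A$-definable centre function $c_n$: in general the centre tuple of a twisted box of $\chi_n$ depends on the full $\chi_n$-label, not only on the projection to $\chi_{n-1}$, so some care is needed to make the dependence collapse, for instance by adjoining to $\chi_{n-1}$ additional $\RV$-coordinates that record the centre choice. Once this refinement is in place the rest is a routine diagram chase of $\rv$'s.
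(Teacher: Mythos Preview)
Your route via cell decomposition and induction is much heavier than the paper's argument, which is two lines: for each $b \in K^n$ the value $f(b) \in \RV^m$ is $Ab$-definable with $Ab \subset K$, so effectivity (applied coordinate-wise) yields an $Ab$-definable lift $F(b) \in K^m$; compactness in a saturated model then makes $F$ uniformly $A$-definable. This is precisely the pointwise-plus-compactness mechanism behind Lemma~\ref{le:effective.characterizations}(\ref{it:lifting.families}) and hence Corollary~\ref{cor:lifting.maps}, so since you already invoke the latter you could have applied the same idea directly to $f$ instead of first factoring through a cell decomposition.

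Your argument also has a real gap already at $n=1$. You enumerate $C = \{c_1,\ldots,c_k\}$ and use the map $x \mapsto (x - c_1,\ldots,x - c_k)$, but the paper explicitly does \emph{not} assume algebraic Skolem functions, so the individual $c_i$ need not be $A$-definable and this ordered tuple is not an $A$-definable function of $x$. Repairing this is not automatic: your $g$ is only specified on the image of $x \mapsto (\rv(x-c_i))_i$, which is not $S_k$-stable, so naively symmetrizing the lift $\tilde g$ does not produce an $F$ with $\rv \circ F = f$. The same issue recurs in the inductive step: above a fixed twisted box of $\chi_{n-1}$ there is in general a finite $A$-definable \emph{set} of last-coordinate centres rather than a single $c_n$, so the ``main obstacle'' you flag is really this ordering problem again, not a refinement question. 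One can work around it (e.g.\ via an $\RV$-parametrization of the centres), but at that point one is essentially re-implementing the compactness step inside the cell-decomposition machinery.
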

\begin{proof} 
	We may again assume that $K$ is $\aleph_0$-saturated.
	For each $b \in K^n$, we find an $A$-definable element $F(b) \in K $ such that $\rv(F(b)) = f(b)$.
	By compactness, $F$ can be taken $A$-definable. 
\end{proof}

By definition, effectivity is preserved when adding constants from $K$.
Let us note that effectivity is also preserved after adding constants from $\Gamma$, see Lemma~\ref{lem:eff.Gamma}.
This will only be needed when comparing derivatives in Section~\ref{sec:intrinsic.jac} for developing integration with measures.
Moreover, the proof uses dimension theory on $\RV$, which will be developed in Section~\ref{sec:dim.RV}.
Hence we postpone the proof to Section~\ref{sec:intrinsic.jac}

\subsection{Examples of effective theories}\label{sec:effective.examples}

In this section we give several natural examples of effective theories. 
In particular this covers all $\emptyset$-effective V-minimal theories, and power-bounded $\cT_{\mathrm{omin}}$-convex fields. 
In full generality one cannot expect h-minimal theories to be effective, since h-minimality says nothing about the structure on $\RV$. 
For example, there are h-minimal structures in which a bijection $\RV\to \RV^2$ is definable, and such structures can never be effective. 
This follows from the results on dimension theory in $\RV$ from Section~\ref{sec:dim.RV}.
To show that certain theories are effective, we use a a special case of a relative quantifier elimination statement for short exact sequences of abelian groups from \cite{ACGZ22}.
We first fix some notation.

Let $\cT$ be a theory of valued fields in some language $\cL$ expanding the theory of valued fields.
We consider the residue field $k$ and value group $\Gamma$ with their induced structures. More precisely, define the languages $\cL_{\RF}$ and $\cL_{\VG}$ as follows.
\begin{itemize}
	\item $\cL_{\RF}$ is the expansion of the ring language $\{0,1,+,-,\cdot\}$ by a predicate for each $\emptyset$-definable subset of each Cartesian power $k^n$, and
	
	\item $\cL_{\VG}$ is the expansion of $\{0,1,\cdot,(\cdot)^{-1},<\}$ by a predicate for each $\emptyset$-definable subset of each Cartesian power $\Gamma^n$. Recall that we denote the group operation on $\Gamma^{\times}$ multiplicatively.
\end{itemize}
Consider now the three-sorted language $\cL_{\RRV}$ with sorts $(\RF,\RV,\Gamma)$, consisting of
\begin{itemize}
	\item the language $\cL_{\RF}$ on $\RF$,
	\item the language $\cL_{\VG}$ on $\Gamma$,
	\item the language $\{1,\cdot, (\cdot)^{-1} \}$ on $\RV$,
	\item a function symbol $\iota \colon \RF \to \RV$, and
	\item a function symbol $\abs{\cdot} \colon \RV \to \Gamma$.
\end{itemize}
Given a valued field $K$, let $\cL_{\RF}$, $\cL_{\VG}$ be as above, and define $\iota,\abs{\cdot}$ respectively as the injection and surjection (extended by $0$ at $0$) in the short exact sequence of abelian groups
\begin{equation} \label{eq:ses}
	1 \to k^{\times} \xrightarrow{\iota} \RV^{\times} \xrightarrow{\abs{\cdot}} \Gamma^{\times}_K \to 1. 
\end{equation}
Note that the structure on $\RV$ induced from $K$ may be richer than the one induced by the $\cL_{\RRV}$-structure on $(\RF,\RV,\Gamma)$.

Now consider the expansion $\cL_{\RF}'$ of $\cL_{\RF}$ by new sorts $P^{(n)}$ and maps $\pi_n \colon \RF \to P^{(n)}$. The predicates $P^{(n)}$ are interpreted as the $n$-th power residue classes $k^{\times}/(k^{\times})^{(n)} \cup \{0\}$ and the maps $\pi_n$ are the canonical projections, extended by mapping zero to zero. Note that the corresponding $\cL_{\RF}'$-structure is interpretable in the $\cL_{\RF}$-structure on $k$.

We also need certain maps $\rho_n \colon \RV \to P^{(n)}$ for each $n \geq 0$. 
If $\xi \in \RV$ is such that $\abs{\xi}$ belongs to the subgroup of $n$-th powers $\Gamma^{\times(n)}$, then $\rho_n(\xi)$ is the unique class $\zeta \cdot k^{\times(n)}$ such that $\xi$ and $\zeta$ have the same image in $\RV^{\times}/(\RV^{\times})^{(n)}$. 
For all other $\xi$, we set $\rho_n(\xi) = 0$.
Note that the map $\rho \coloneqq \rho_0$ is simply the inverse to $\iota$ on $\iota(k)$, and zero else.
Finally, write $\res^n \colon K \to k^{\times}/k^{\times(n)} \cup \{0\}$ for the composition $ \rho_n \circ \rv$.

\begin{prop}[{\cite[Corollary 4.8]{ACGZ22}}] \label{prop:RVQE}
	Let $K$ be a valued field, considered in a language $\cL$ expanding $\cL_{\val}$. 
	Let $\cL_{\VG}$, $\cL_{\RF}'$ and $\cL_{\RRV}$ be as described above.
	Then each $\cL_{\RRV}$-definable subset of $\RV^m$ can be defined by a disjunction of formulas of the form 
	\[ \phi_{\RF'}( \rho_{n_0}(t_0(\xi)),\dots,\rho_{n_k}(t_k(\xi)) ) \land \phi_{\VG}( \abs{t_0'(\xi)},\dots,\abs{t_{\ell}'(\xi)}   ), \]
	where $\phi_{\RF'}$ is an $\cL_{\RF}'$-formula, $\phi_{\VG}$ is an $\cL_{\VG}$-formula, $\xi$ is an $m$-tuple of $\RV$-variables, and the $t_i,t_i'$ are terms in the language of groups with inversion $\{1, \cdot, (\cdot)^{-1} \}$.
\end{prop}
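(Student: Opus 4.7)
The plan is to derive this from the relative quantifier elimination theorem for short exact sequences of abelian groups proved in \cite{ACGZ22}, applied to the sequence \eqref{eq:ses}. The three-sorted $\cL_{\RRV}$-structure on $(\RF,\RV,\Gamma)$ is set up precisely so that the hypotheses of loc.\,cit.\ are met: the middle sort $\RV^{\times}$ carries only the pure group language $\{1,\cdot,(\cdot)^{-1}\}$, while the kernel $k^{\times}$ and the quotient $\Gamma^{\times}$ are allowed to carry arbitrary induced structure (here $\cL_{\RF}$ on $k$ and $\cL_{\VG}$ on $\Gamma$). So the first step is simply to match notation and verify that (\ref{eq:ses}) is a pure short exact sequence in the sense required, noting that $\RV^{\times}/k^{\times}\cong \Gamma^{\times}$ is torsion-free and that $k^{\times}$ has enough divisibility in each $n$-th power layer.

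The second step is the actual reduction of an arbitrary $\cL_{\RRV}$-formula to one of the stated form. This is done by a standard back-and-forth on types, inducting on the complexity of group terms. Given a formula $\varphi(\xi)$ defining a subset of $\RV^m$, one decomposes any occurrence of a term $t(\xi)$ into two pieces of data: its valuation $|t(\xi)|\in \Gamma$, which lives in the $\cL_{\VG}$-part, and its class modulo $n$-th powers for each $n$, which is recorded by $\rho_n(t(\xi))\in P^{(n)}$. Using saturation, one shows that if two tuples $\xi,\xi'$ satisfy the same formulas of the stated form, then the partial map sending $t(\xi)$ to $t(\xi')$ (for all group terms $t$) extends to an isomorphism of the generated $\cL_{\RRV}$-substructures, hence $\xi\equiv \xi'$ in $\cL_{\RRV}$.

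The main obstacle is exactly the interaction between the group structure on $\RV^{\times}$ and the embedded subgroup $\iota(k^{\times})$: a group term $t(\xi)$ may have trivial valuation yet carry nontrivial information in $k^{\times}$, so $|t(\xi)|$ alone is insufficient to pin down its type. This is precisely the reason for introducing the predicates $P^{(n)}$ together with the maps $\pi_n,\rho_n$. The exactness of \eqref{eq:ses} ensures that when $|t(\xi)|\in \Gamma^{\times(n)}$, the element $t(\xi)$ is determined modulo $\iota(k^{\times})^{(n)}$ by its image in $\Gamma^{\times}/\Gamma^{\times(n)}$; the residue $\rho_n(t(\xi))$ then captures the remaining data inside $k^{\times}/k^{\times(n)}$. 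Taking all $n\geq 0$ together (including $n=0$, where $\rho_0=\rho$ identifies $\iota(k)$) is exactly enough to eliminate quantifiers over $\RV$ in favor of quantifiers over $\RF$ and $\Gamma$ separately, which yields the disjunctive normal form asserted in the proposition.
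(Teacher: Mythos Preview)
The paper does not give a proof of this proposition at all: it is stated as a direct citation of \cite[Corollary~4.8]{ACGZ22}, followed only by the remark that the cited result applies in slightly greater generality. So there is no ``paper's own proof'' to compare against; your proposal is really a sketch of why the cited result holds.

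As such a sketch, your outline is broadly correct and captures the right ideas: the short exact sequence \eqref{eq:ses} has torsion-free quotient $\Gamma^{\times}$, the middle sort carries only the group language, and the auxiliary sorts $P^{(n)}$ with the maps $\rho_n$ are exactly what is needed to record, for each group term $t(\xi)$, the missing coset information in $\RV^{\times}/(\RV^{\times})^{(n)}$ once the valuation $|t(\xi)|$ is known. One small point: your phrase ``$k^{\times}$ has enough divisibility in each $n$-th power layer'' is misleading---no divisibility of $k^{\times}$ is assumed, and indeed the whole reason for introducing the $P^{(n)}$ is that $k^{\times}/k^{\times(n)}$ may be nontrivial. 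The relevant structural input is purely that $\Gamma^{\times}$ is torsion-free (so the sequence is pure), which forces $\RV^{\times(n)}\cap \iota(k^{\times}) = \iota(k^{\times(n)})$ and makes $\rho_n$ well-defined. With that correction, your back-and-forth description is an accurate summary of the mechanism behind the quantifier elimination in \cite{ACGZ22}.
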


Note that \cite[Cor.\,4.8]{ACGZ22} actually applies to a slightly more general setting, as it allows for nontrivial relation symbols using variables over $\RF$ and $\VG$ simultaneously.
\begin{prop}{\label{prop:effective-sufficient}}
	Let $\cT$ be a theory of equicharacteristic zero henselian valued fields. Assume that for every model $K$ of $\cT$ and every parameter set $A \subset K$ with $A = \acl(A) \cap K$ the following hold
	\begin{enumerate}
		
		\item\label{it:determined} every finite $A$-definable subset of $\RV$ is already $\cL_{\RRV}(\rv(A))$-definable, and
		\item\label{it:weak_RF-effective} every finite $\cL_{\RF}'( \bigcup_n \res^n(A) )$-definable subset of $k$ is contained in $\res(A)$.
	\end{enumerate}
	Then $\cT$ is effective.
\end{prop}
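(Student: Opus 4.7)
The plan is to verify the equivalent form of effectivity from Lemma~\ref{lem:lift.fin}: given any model $K$ of $\cT$, any $A \subset K$, and any $A$-definable $\xi \in \RV$, to produce an $A$-definable $x \in K$ with $\rv(x) = \xi$. The strategy is to apply the quantifier elimination of Proposition~\ref{prop:RVQE} to the formula defining $\{\xi\}$, extract from it an equation $\xi^N = \iota(\alpha)\cdot\rv(c)$ with $c \in A$ and $N \geq 1$, and then solve this equation in $K$ using henselianity. First I would reduce to the case $A = \acl(A) \cap K$: given a lift $x \in K$ of $\xi$ over $A' := \acl(A) \cap K$, all finitely many $A$-conjugates of $x$ map to $\xi$ under $\rv$ (since $\xi$ is $A$-definable), and their average, which is well defined in equicharacteristic zero, is an $A$-definable element still mapping to $\xi$. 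Under this reduction the $A$-definable elements of $K$ coincide with $A$ itself, so the task becomes $\xi \in \rv(A)$; the case $\xi = 0$ is trivial, so assume $\xi \in \RV^\times$.

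By assumption~(1), $\{\xi\}$ is $\cL_{\RRV}(\rv(A))$-definable, so Proposition~\ref{prop:RVQE} provides, after passing to a single disjunct, a defining formula
\[
\Phi(\zeta) = \phi_{\RF'}(\rho_{n_0}(t_0(\zeta,\vec\eta)),\dots) \land \phi_{\VG}(|t'_0(\zeta,\vec\eta)|,\dots),
\]
with $\vec\eta = \rv(\vec a)$ for some tuple $\vec a$ from $A$ and the $t_i, t'_j$ group-theoretic monomials in $\zeta, \vec\eta$. The $\cL_{\VG}$-part of $\Phi$ pins down $|\xi|$ as an $\cL_{\VG}(|A|)$-definable singleton in the ordered abelian group $\Gamma$, and using the OAG structure of $\Gamma$ together with the fact that the reduction $A = \acl(A) \cap K$ absorbs any $\emptyset$-definable values of $\Gamma$ into $|A|$, I would extract an equation $|\xi|^N = |c|$ with $c \in A$ and $N \geq 1$. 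Setting $\alpha := \iota^{-1}(\xi^N/\rv(c)) \in k^\times$, the element $\alpha$ is $A$-definable, and restricting the QE to $\iota(k) \subset \RV$---where $\phi_{\VG}$ becomes a trivial condition on $|\vec\eta|$ and the identity $\rho_n(\iota(\beta)\cdot\eta) = \pi_n(\beta)\cdot\rho_n(\eta)$ converts $\phi_{\RF'}$ into an $\cL_{\RF}'$-formula on $\alpha$ with parameters from $\bigcup_n \res^n(A)$---shows that $\{\alpha\}$ is $\cL_{\RF}'(\bigcup_n \res^n(A))$-definable in $k$.

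Assumption~(2) then yields $\alpha \in \res(A)$, so $\alpha = \bar b$ for some $b \in A$ with $|b| = 1$, whence $\xi^N = \rv(b)\rv(c) = \rv(bc)$. To finish, I would pick any $y \in K$ with $\rv(y) = \xi$; then $bc/y^N \in 1+\cM_K$, and Hensel's lemma in equicharacteristic zero gives a unique $N$-th root $u$ of $bc/y^N$ in $1+\cM_K$, so $z := yu$ satisfies $z^N = bc$ and $\rv(z) = \xi$. Since $z$ is a root of $X^N - bc \in A[X]$, it lies in $\acl(A) \cap K = A$, giving $\xi = \rv(z) \in \rv(A)$. The main obstacle I anticipate is the extraction $|\xi|^N = |c|$ from $\phi_{\VG}$: because $\cL_{\VG}$ carries predicates for every $\emptyset$-definable subset of $\Gamma$, one has to argue carefully that under our reduction the divisible hull of $|A|$ captures all $\cL_{\VG}(|A|)$-definable singletons, which is the step that genuinely uses that $A$ is algebraically closed in $K$.
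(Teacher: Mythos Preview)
Your overall shape is right and matches the paper: reduce to $A=\acl(A)\cap K$, apply Proposition~\ref{prop:RVQE}, obtain a relation $\xi^N=\rv(bc)$ with $b,c\in A$, and lift via Hensel. The reduction by averaging and the endgame are fine, as is the translation showing that $\alpha=\xi^N/\rv(c)$ is $\cL_{\RF}'(\bigcup_n\res^n(A))$-definable.

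The gap is exactly the step you flag as the main obstacle, and your proposed resolution does not work. First, $\phi_{\VG}$ alone need not pin down $|\xi|$: the constraint on $|\zeta|$ can come entirely from the $\phi_{\RF'}$ side, since $\rho_0(t)\neq 0$ already forces $|t|=1$. Second, even if $|\xi|$ were an $\cL_{\VG}(|A|)$-definable singleton, it need not lie in the divisible hull of $|A^\times|$: $\cL_{\VG}$ carries predicates for \emph{every} $\emptyset$-definable subset of $\Gamma$, so for instance when $\Gamma\cong\ZZ$ every element of $\Gamma$ is $\cL_{\VG}$-definable, while $|A^\times|$ can be trivial. There is no assumption (and no reason) that $A=\acl(A)\cap K$ forces $\emptyset$-definable values of $\Gamma$ into $|A|$; indeed that would beg the question. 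The correct argument, as in the paper, extracts $|\xi|^m=|a_0|$ from the $\phi_{\RF'}$ part together with finiteness of the defined set: if no $n_i=0$ then multiplying $\xi$ by any $\lcm(n_i)$-th power in $k^\times$ leaves all $\rho_{n_i}(t_i(\xi,\rv(a)))$ unchanged, contradicting that the formula isolates $\xi$; and once some $n_i=0$, if $|\xi|$ were outside the divisible hull of $|A^\times|$ then every $\rho_0(t_i(\xi,\rv(a)))$ with $t_i$ involving $\xi$ would vanish, and the same scaling trick again produces infinitely many solutions. This is what forces $|\xi|$ into the divisible hull and yields your $c$ and $N$.
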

\begin{proof}
	Let $K$ be a model of $\cT$ and let $R \subset \RV$ be a nonempty finite set definable over parameters $A \subset K$ with $A  = \acl(A) \cap K$.
	We need to show that $R \subset \rv(A)$.
	Condition \ref{it:determined} guarantees that $R$ is already $\cL_{\RRV}(\rv(A))$-definable.
	By Proposition \ref{prop:RVQE}, $R$ can be defined by a disjunction of formulas of the form
	\begin{equation*}
		\phi_{\RF'}( \rho_{n_0}(t_0(\xi,\rv(a))),\dots,\rho_{n_k}(t_k(\xi,\rv(a))) ) \land \phi_{\VG}( \abs{t_0'(\xi,\rv(a))},\dots,\abs{t_{\ell}'(\xi,\rv(a))}),
	\end{equation*} 
	where $\phi_{\RF'}$ is an $\cL_{\RF}'$-formula, $\phi_{\VG}$ is an $\cL_{\VG}$-formula, $\xi$ is a single $\RV$-variable, $a \in A^n$, and $t_i,t_i'$ are $\{1,\cdot, (\cdot)^{-1} \}$-terms. 
	It suffices to consider the case where $R$ is defined by a single such formula.
	Also, up to passing to a finite definable subset of $R$, we may assume that all elements of $R$ have the same image in $\Gamma$.
	
	We claim that there must be at least one $i\in \{1, \ldots, k\}$ for which $n_i = 0$.
	Indeed, otherwise let $N$ be the least common multiple of all $n_i$'s and let $\eta$ be an $N$-th power in $k^{\times}$. Then by construction $\rho_{n_i}(\eta) = 1$ for all $i = 1,\dots,k$ . Since all $t_i$ are terms in the language $\{1,\cdot,(\cdot)^{-1}\}$, we can factor out each occurrence of $\eta$ to observe that $\rho_{n_i}(t_i( \xi \cdot \eta , \rv(a)) )= \rho_{n_i}( t_i( \xi,\rv(a)))$ for $i = 1,\dots,k$. Combined with the fact that $\abs{\eta} =1$, it follows that $\xi \cdot \eta \in R$. This yields a a contradiction, as $k^{\times (N)}$ is infinite.
	
	Next, we argue that the singleton $\abs{R}$ is contained in the relative divisible hull of $\abs{A^{\times}}$ inside $\Gamma_K^{\times}$. Indeed, else $\rho_{0}(t_i(\xi,\rv(a)))$ is zero for each $i$ with $n_i = 0$.
	But then, as before, there would be some sufficiently large integer $N$ such that $R \cdot k^{\times (N)} \subset R$, contradicting the finiteness of $R$

	Hence, there must be some minimal $m \in \ZZ_{>0}$ and $a_0 \in A$ such that $\abs{\xi}^m = \abs{a_0}$ for all $\xi \in R$.
	Fix such an $a_0$ and define a new set
	\[ S \coloneqq \{ \xi^m/\rv(a_0) \mid  \xi \in R \}  . \]
	By construction, it holds that if $\rv(y) \in S$, then $\rv(a_0 y) = \xi^m$ for some $\xi \in R$. Since $K$ is henselian of equicharacteristic zero there exists some $x \in K$ with $\rv(x) = \xi$ such that  $x^m = a_0 y$. So it suffices to show that $S$ is contained in $\res(A)$.
	
	A defining formula for $S$ (or rather $\iota(S)$) is given by plugging in terms of the form $\rho_n(\zeta^{m_1} \rv(a)^{m_2})$ into an $\cL_{RF}'$-formula, for $n \geq 0$, $a \in A$ and $m_1,m_2 \in \ZZ$. As $\abs{\zeta} = 1$ for $\zeta \in S$ it follows that $\rho_n(\zeta^{m_1} \rv(a)^{m_2}) = \pi_n(\zeta^{m_1}) \res^n(a^{m_2})$. Hence, $S$ is $\cL_{\RF}'(\bigcup_{n \in \NN} \res^n(A))$-definable, and we may conclude by condition (\ref{it:weak_RF-effective}). 
\end{proof}
We show in some situations that the second condition is not necessary.

\begin{lem} \label{lem:Pn.fin}
Let $K$ be any valued field.
Suppose that the quotient groups $k^{\times}/(k^{\times})^{(N)}$ are finite for all $N > 0$. Then, for each parameter set $A \subset K$ with $A = \acl(A)\cap K$, any finite $\cL_{\RF}'(\bigcup_{n \in \NN} \res^n(A) )$-definable subset of $\RF$ is contained in a finite $\cL_{\RF}(\res(A))$-definable set.
\end{lem}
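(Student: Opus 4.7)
My plan is a finite-sort-elimination argument, exploiting that the parameters in $\bigcup_n \res^n(A)$ living outside of $\RF$ range over finite sets by hypothesis. First I would express the finite set $R$ as the solution set of some $\cL_{\RF}'$-formula $\phi(x, \bar{r}, \bar{c})$, where the parameters split as $\bar{r} \subset \res(A) \subset \RF$ (those coming from $\res^0 = \res$) and $\bar{c} = (c_1, \dots, c_k)$ with each $c_i = \res^{n_i}(a_i)$ belonging to the sort $P^{(n_i)}$ for some $n_i \geq 1$ and $a_i \in A$. By the hypothesis, each $P^{(n_i)} = k^{\times}/(k^{\times})^{(n_i)} \cup \{0\}$ is finite, hence the product $D := \prod_{i=1}^k P^{(n_i)}$ is a finite $\emptyset$-definable set in $\cL_{\RF}'$.

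Next I would construct a finite superset $S \supset R$ by existentially quantifying over $D$ while bounding fiber sizes. Setting $N := |R|$, define
$$
S := \{ x \in \RF \mid \exists \bar{c}' \in D : \phi(x, \bar{r}, \bar{c}') \wedge \text{``}|\phi(\RF, \bar{r}, \bar{c}')| \leq N\text{''} \},
$$
where the cardinality bound is the standard $\exists^{\leq N}$-formula $\exists y_1,\dots,y_N\, \forall y\,(\phi(y,\bar{r},\bar{c}') \to \bigvee_i y = y_i)$. Then $R \subset S$ by taking $\bar{c}' = \bar{c}$, and $S$ is finite since it is a union of at most $|D|$ sets, each of size at most $N$.

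Finally I would verify that $S$ is $\cL_{\RF}(\res(A))$-definable, not merely $\cL_{\RF}'(\res(A))$-definable. The only parameters appearing in the formula for $S$ are $\bar{r} \subset \res(A) \subset \RF$, since the original parameters $\bar{c}$ have been replaced by a quantifier over $D$. Since $\cL_{\RF}'$ extends $\cL_{\RF}$ only by the imaginary sorts $P^{(n)}$ and the projection maps $\pi_n \colon \RF \to P^{(n)}$ — with no extra structure on the $P^{(n)}$ — each sort $P^{(n)}$ is canonically interpretable in $\cL_{\RF}$ via the $\emptyset$-definable equivalence relation $y \sim_n z \iff \exists w \in \RF : y = z w^n$ (with $0$ forming its own class). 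Hence each quantifier over $P^{(n_i)}$ can be rewritten as a quantifier over $\RF$ modulo $\sim_{n_i}$, and the map $\pi_{n_i}$ as the quotient, all without introducing new parameters. This produces an equivalent $\cL_{\RF}(\bar{r})$-formula defining $S$, as required.

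The only delicate point is the interpretability step in the last paragraph, but I do not expect a real obstacle there because $\cL_{\RF}'$ is a pure expansion by imaginary sorts; everything else is a routine finite-sort-elimination combined with the cardinality-bound trick that ensures $S$ stays finite.
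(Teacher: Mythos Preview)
Your proposal is correct and follows essentially the same approach as the paper: the paper's proof simply states that $\cL_{\RF}'$ is interpretable in $\cL_{\RF}$ and that one can ``quantify away the finitely many parameters of the form $\res^n(a)$ for $n>0$,'' which is exactly your finite-sort-elimination argument. Your version is more explicit, in particular spelling out the cardinality-bound trick needed to guarantee that the resulting superset $S$ remains finite after quantifying over the finite product $D$.
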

\begin{proof}
This follows from the fact that the $\cL_{\RF}'$-structure is interpretable in the $\cL_{\RF}$-structure on $k$ and that we can quantify away the finitely many parameters of the form $\res^n(a)$, for $a \in A$ and $n > 0$.
%Let $\phi(x, y_1,\dots,y_n)$ be an $\cL_{RF}'(\res(A))$-formula and $a_1,\dots,a_\ell \in A$ and $n_1,\dots,n_\ell \in \ZZ_{> 0}$ be such that $\phi(x,\res^{n_1}(a_1),\dots,\res^{n_\ell}(a_\ell))$ defines a finite set $R \subset k$.
%We argue by induction on $\ell$ that $R$ is already $\cL_{\RF}(\res(A))$ definable. 
%If $\ell = 0$, this follows from the fact that the maps $\pi_n$ and sorts $P^{(n)}$ are interpretable in $\cL_{\RF}$.
%So let $\ell > 0$ and write, for ease of notation, $\phi(x,\res^{n_1}(a_1),\dots,\res^{n_\ell}(a_\ell)) = \psi(x,\res^{n}(a))$, where $n = n_\ell$ and $a = a_\ell$.
%Let $M$ be the number of elements in $k^{\times}/(k^{\times})^{(n)} \cup \{0\}$.
%Then we can write down a formula defining a finite definable set $S$ with $R \subset S$ by quantifying away $\res^n(a)$. 
%
%In more detail, let $\chi(y)$ be the formula expressing that $\psi(z,\pi_n(y))$ has exactly $\# R$ solutions for $z$. Then define $S$ by
%
%\begin{equation*}
% \forall y_1,\dots,y_M \in \RF \colon 	 
% \left(\bigwedge_{i \neq j} \pi_{n}(y_i) \neq \pi_{n}(y_j) \right) 
% \rightarrow \left( \bigvee_{i = 1}^M  \chi(y_i)   \land \psi(x,\pi_n(y_i)) \right)  \qedhere
%\end{equation*}
%
\end{proof}

\begin{defn}[{{\cite{vdD89}}}] \label{def:alg.bdd}
Let $\cL$ be a language expanding $\cL_\ring$ and let $k$ be a field with $\cL$-structure. 
Let $\ell\subset k$ be a subfield.
Then we say that $k$ is \emph{algebraically bounded over $\ell$} if for any $\cL$-formula $\phi(x,y)$ with $x$ a tuple and $y$ a single variable, there exist polynomials $P_1, \ldots, P_m\in \ell[x,y]$ such that for any $a\in k$, if $\phi(a,k)$ is finite then $\phi(a,k)$ is contained in the zero set of $P_i(a,y)$ for some $i$ for which $P_i(a,y)$ is non-zero.

We say that $k$ is \emph{algebraically bounded over $\emptyset$} if it is algebraically bounded over the prime subfield, and that $k$ is \emph{algebraically bounded} if it is algebraically bounded over $k$.
\end{defn}

By \cite{JY23}, a field $k$ (possibly with extra structure) is algebraically bounded over $\emptyset$ if and only if for every elementary extension $k^*$ and every $A\subset k^*$ we have that $\acl(A) = A^\alg \cap k^*$.
In other words, model-theoretic algebraic closure and field-theoretic algebraic closure coincide.
More generally, it is proven in loc.\ cit.\ that $k$ is algebraically bounded over $\ell$ if and only if for every elementary extension $k^*$ and every $A\subset k^*$ we have $\acl(A\ell) = (A\ell)^{\alg}\cap k^*$.
Moreover, \cite[Prop.\,2.17]{JY23} shows that if $k$ is algebraically bounded then it is already algebraically bounded over $\dcl_{\cL}(\emptyset)$.

\begin{lem}\label{lem:alg.bdd.Pn}
Let $k$ be a field of characteristic $0$ which is algebraically bounded for some language $\cL$ and let $A\subset k$.
Let $\cL'$ be the expansion of $\cL$ with new sorts $P^n$ and corresponding projection maps as above, where $P^n$ is interpreted as $k^\times / k^{\times (n)}$.
Then every finite $\cL'(A\cup_{n\geq 2} P^n)$-definable subset of $k$ is contained in a finite $\cL(A)$-definable set.
\end{lem}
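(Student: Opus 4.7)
The plan is to translate an $\cL'$-definition of $X$ to an $\cL$-definition by choosing representatives of the $P^{n_i}$-parameters, and then exploit the invariance of $X$ under $n_i$-th power rescaling of those representatives to force the elementary symmetric functions of $X$ into $\acl_{\cL}(A)$.

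First, pass to a sufficiently saturated elementary extension $k^* \succ k$; since $X$ is finite it is unchanged. Fix a defining $\cL'(A)$-formula for $X$ using parameters $p_1, \ldots, p_m$ from $P^{n_1}, \ldots, P^{n_m}$, and choose representatives $\alpha_i \in k^*$ of each $p_i$. Since each $P^{n_i}$ and $\pi_{n_i}$ is interpretable in $(k^*, \cL)$, translating the $\cL'$-formula through the interpretation yields an $\cL$-formula $\phi$ with $X = \phi(k^*, \bar\alpha)$. The key point is that $\phi$ uses each $\alpha_i$ only through its class $\pi_{n_i}(\alpha_i)$, so
\[
X = \phi(k^*, \alpha_1 \gamma_1^{n_1}, \ldots, \alpha_m \gamma_m^{n_m}) \text{ for every } \bar\gamma \in ((k^*)^\times)^m,
\]
with coordinates where $\alpha_i = 0$ handled trivially.

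Using saturation, I would choose $\gamma_i \in (k^*)^\times$ inductively transcendental over $A(\bar\alpha, \gamma_1, \ldots, \gamma_{i-1})$. In characteristic zero each $\gamma_i^{n_i}$ remains transcendental over that field (as $\gamma_i$ is algebraic over $\gamma_i^{n_i}$), so the tuple $(\alpha_i \gamma_i^{n_i})_i$ is algebraically independent over $A(\bar\alpha)$. The elementary symmetric functions $s_0, \ldots, s_{N-1}$ of $X$ lie in $k \subset k^*$ and are $\cL$-definable both from $A\bar\alpha$ and from $A(\alpha_1\gamma_1^{n_1}, \ldots, \alpha_m\gamma_m^{n_m})$. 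Algebraic boundedness transfers to $k^*$, so
\[
s_j \in A(\bar\alpha)^{\alg} \cap A(\alpha_1 \gamma_1^{n_1}, \ldots, \alpha_m \gamma_m^{n_m})^{\alg}.
\]

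The main obstacle is to verify that this intersection equals $A^{\alg}$. This should follow from a standard transcendence-degree argument: setting $K_1 := A(\bar\alpha)^{\alg}$ and $K_2 := A(\alpha_1\gamma_1^{n_1},\ldots,\alpha_m\gamma_m^{n_m})^{\alg}$, which are algebraically closed and algebraically independent over $A$ by the choice of $\bar\gamma$, any $x \in (K_1\cap K_2)\setminus A^{\alg}$ would be transcendental over $A$, forcing the transcendence degree of $K_1K_2$ over $K_1$ to be strictly less than that of $K_2$ over $A$, contradicting independence. Thus $s_j \in A^\alg \cap k = \acl_\cL(A)$ by algebraic boundedness of $k$. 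Each $s_j$ then lies in some finite $\cL(A)$-definable $S_j \subset k$, and the set of roots of the nonzero polynomials $y^N + c_{N-1}y^{N-1} + \cdots + c_0$ with $(c_0,\ldots,c_{N-1}) \in S_0 \times \cdots \times S_{N-1}$ is a finite $\cL(A)$-definable set containing $X$.
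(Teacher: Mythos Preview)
Your argument is correct and takes a genuinely different route from the paper's. You pass to a saturated extension, pick a generic translate $\beta_i=\alpha_i\gamma_i^{n_i}$ of each representative, and use a transcendence-degree computation to force the symmetric functions of $X$ into $A^{\alg}$. The paper instead stays in $k$ and argues geometrically: for each choice of representatives $b\in B=\prod_i\pi_{n_i}^{-1}(\xi_i)$, algebraic boundedness gives polynomials $P_i(b,z)$ over $A$ whose product $P$ vanishes on $B\times X$; since each factor of $B$ is an $n_i$-th power coset and hence Zariski dense in characteristic $0$, $P(\,\cdot\,,z_0)$ vanishes identically for every $z_0\in X$, so $X$ sits inside the finite $\cL(A)$-definable set $\{z:\forall b\,P(b,z)=0\}$. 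The Zariski-density argument is more direct and yields an explicit containing set without passing to a saturated model; your approach trades this for a cleaner model-theoretic picture via independence, and has the advantage that the intersection lemma $K_1\cap K_2=A^{\alg}$ is a reusable fact. One small point: you invoke $\acl_\cL(B)=B^{\alg}\cap k^*$, but the hypothesis is only that $k$ is algebraically bounded, i.e.\ over $\dcl_\cL(\emptyset)$. This is harmless---replace $A$ by $\dcl_\cL(A)$ at the outset, as the paper does---but it should be said.
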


\begin{proof}
Let $Z \subset k$ be a finite $\cL'(AR)$-definable set, where $R = \{\xi_1,\ldots,\xi_m\} \subset P^n$ for some $n \geq 2$. %Indeed, may easily reduce to the case where all \xi belong to same P^n 
Then there exists an $\cL$-formula $\phi(x,y,z)$ and a tuple $a$ over $A$ such that for every $m$-tuple $b \in B = \pi_n^{-1}(\xi_1,\dots,\xi_n)$ it holds that $\phi(a,b,k) = Z$.
Now, we may without loss of generality assume that $A = \dcl_{\cL}(A)$, whence $k$ is algebraically bounded over $A$.
In other words, there exists finitely many non-zero polynomials $P_1,\ldots,P_\ell \in A[y,z]$ such that for each $b \in B$ there is at least one $i$ such that $P_i(b,z) \neq 0$ and $Z$ is contained in the vanishing locus of $P_i(b,z)$.
Now define the (nonzero) polynomial $P = \prod_{i} P_i$ and denote by $U$ the Zariski open set containing those $b \in k^m$ for which $P(b,z)$ is not the zero polynomial.
Since $n \geq 2$, $B$ is Zariski dense, whence so is $U \cap B$.
As for each $z_0 \in Z$, one has for all $b \in U \cap B$ that $P(b,z_0) = 0$, it follows that $P(b,z_0) = 0$ for all $b \in k^m$ 
%Indeed, for each $z \in Z$, have $P(y,z)  = 0$ on a Zariski dense subset, hence for all $y \in k^m$.
Hence, intersection of all vanishing loci of the $P(b,z)$ with $b \in k^m$ is finite, $\cL(A)$-definable, and contains $Z$.
\end{proof}

\subsection{Henselian valued fields} We can now conclude that various theories of henselian valued fields are effective, either in the language of valued fields or augmented with analytic structure.
The key property is that the residue field should be algebraically bounded.

\begin{cor} \label{cor:vf.effective}
Each of the following theories is effectively 1-h-minimal.
\begin{enumerate}
\item\label{it:V-min.effective} Any $\emptyset$-effective V-minimal theories of equicharacteristic zero valued fields. In particular $\mathrm{ACVF_{(0,0)}}$ and $\mathrm{ACVF}^{\mathrm{an}}_{(0,0)}$.
\item \label{it:val.effective} The $\cL_{\val}$-theory of any equicharacteristic zero henselian valued field whose residue field is algebraically bounded over $\emptyset$. In particular this applies when the residue field is algebraically closed, real closed, $p$-adically closed, pseudofinite or more generally bounded PAC.
\item \label{it:an.effective} Any of the previous theories may be augmented with analytic structure, as in \cite[Definition~2.7]{CLR}.
\end{enumerate}
\end{cor}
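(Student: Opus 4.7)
I would handle the three parts in turn, with (2) and (3) both going through Proposition~\ref{prop:effective-sufficient}. For all three, 1-h-minimality is already known: V-minimal theories are 1-h-minimal by~\cite{CHR}, the $\cL_{\val}$-theory of any equicharacteristic zero henselian valued field is 1-h-minimal, and adding analytic structure as in~\cite{CLR} preserves this.

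Part (1) should be essentially bookkeeping. As noted in the paper just after Lemma~\ref{lem:lift.fin}, for V-minimal theories $\emptyset$-effectivity already implies effectivity via~\cite[Lem.\,3.19,\,6.2]{HK}, and $\mathrm{ACVF}_{(0,0)}$ together with its analytic expansion is classically $\emptyset$-effective. So (1) reduces to citing these two facts.

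For (2), fix a model $K$ and a parameter set $A = \acl(A)\cap K$, and verify the hypotheses of Proposition~\ref{prop:effective-sufficient}. For condition~(\ref{it:determined}) — that every finite $A$-definable subset of $\RV$ is already $\cL_{\RRV}(\rv(A))$-definable — I would invoke the relative quantifier elimination for henselian equicharacteristic zero valued fields in $\cL_{\val}$ (Basarab--Kuhlmann), in the sharp form recorded in~\cite{ACGZ22}: the structure induced on $\RV$ from $K$ coincides with the $\cL_{\RRV}$-structure built from $k$ and $\Gamma$ via the short exact sequence $1\to k^\times\to\RV^\times\to\Gamma^\times\to 1$. For condition~(\ref{it:weak_RF-effective}), take a finite $\cL_{\RF}'(\bigcup_n \res^n(A))$-definable $Z\subset k$. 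By Lemma~\ref{lem:alg.bdd.Pn}, $Z$ is contained in a finite $\cL_{\RF}(\res(A))$-definable set, so algebraic boundedness over $\emptyset$ forces each $b\in Z$ to be field-theoretically algebraic over $\res(A)$. Lifting a minimal polynomial of $b$ to a polynomial over $A\subset \cO_K$ and applying Hensel's lemma produces an $a\in K$ with $\res(a)=b$ algebraic over $A$; hence $a\in \acl(A)\cap K = A$, giving $b\in \res(A)$. The listed examples (algebraically closed, real closed, $p$-adically closed, pseudofinite, bounded PAC residue fields) are all algebraically bounded over $\emptyset$ by standard results recorded in~\cite{JY23, vdD89}.

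Part (3) follows the same blueprint, and I expect the main obstacle to be upgrading condition~(\ref{it:determined}) to the analytic setting: one needs a relative quantifier elimination over $\RV$ for $\cL_{\val}$ augmented by analytic structure. For strict analytic structures as in~\cite{CLR}, Weierstrass preparation for separated power series allows analytic terms evaluated in $\cO_K$ to be rewritten, modulo polynomial algebra, in a form whose $\RV$-image factors through $\cL_{\RRV}$ together with the induced structure on $k$ and $\Gamma$. Once this is established, condition~(\ref{it:weak_RF-effective}) proceeds verbatim as in (2), noting that the induced structure on the residue field in these analytic settings is algebraically bounded over $\emptyset$ (the analytic terms become polynomial after specialization), and Hensel's lemma still lifts the relevant field-theoretic algebraic elements to $\acl(A)\cap K = A$.
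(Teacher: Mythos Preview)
Your proposal is correct and follows essentially the same route as the paper: parts (1)--(3) are handled exactly as you outline, with (2) and (3) going through Proposition~\ref{prop:effective-sufficient} via relative quantifier elimination to $\RV$ and then Hensel-lifting field-theoretic algebraic elements. The only place where the paper is sharper than your sketch is in (3): rather than a general appeal to Weierstrass preparation, the paper uses the quantifier elimination from~\cite{Rid} together with the explicit observation (from~\cite{CLip}) that every unit $u$ in the separated power series ring has the form $u_0 + \sum_i b_i g_i + \sum_j \rho_j h_j$ with $u_0\in E^\times$ and $b_i\in I$, so that $\rv(u(x,y))$ is the constant $\rv(u_0)$ on $\cO_K^m\times\cM_K^n$; this pins down that the induced $\RV$-language is $\{0,1,\cdot,\oplus\}$ plus constants already in $\res(A)$, which is what makes condition~(\ref{it:weak_RF-effective}) go through verbatim.
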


\begin{proof}
Note that all the mentioned theories are 1-h-minimal by~\cite[Thm.\,6.2.1,Prop.\,6.4.2]{CHR}. We check that they are effective.

(\ref{it:V-min.effective}) This follows from~\cite[Lem.\, 3.29, 6.2]{HK}.

(\ref{it:val.effective}) We verify the conditions of Proposition \ref{prop:effective-sufficient}.
If $K$ is a henselian valued field, then it has quantifier elimination relative to $\RV$, where the latter is equipped with the structure from $\cL_\RV = \{0,1,\cdot, \oplus\}$, by~\cite[Proposition~4.3]{Flen}. 
Hence, for each $A \subset K$ and each $A$-definable $R \subset \RV$, there is an $\cL_\RV$-formula $\phi_{\RV}$ and a collection of polynomials $p_i \in \ZZ[x_1, \ldots, x_n]$ such that $R$ is defined by
\[ \phi_{\RV}( \xi, \rv( p_1(a) ),\dots,\rv( p_n(a) )  ), \]
for some $a \in A^n$. Since $\oplus$ is definable in $\cL_{\RRV}$ (see e.g.\ ~\cite[Lem.\,5.17]{ACGZ22}), condition (\ref{it:determined}) of Proposition~\ref{prop:effective-sufficient} is satisfied.

As for condition (\ref{it:weak_RF-effective}), by Lemma~\ref{lem:alg.bdd.Pn}, it suffices to show that $\acl(\res(A)) \cap k \subset \res(A)$.
Now recall that the induced structure on the residue field in this case is just the pure field structure and that since $k$ is algebraically bounded over $\emptyset$ the model-theoretic and relative field-theoretic algebraic closure agree. It follows that
\[  \acl(\res(A)) \cap k = \res(A)^{\alg} \cap k \subset \res(A^{\alg} \cap K) = \res(A),  \]
where we used that $K$ is henselian of equicharacteristic zero.

As for the named examples of theories, the facts that algebraically closed fields, real closed fields and $p$-adically closed fields are algebraically bounded over $\emptyset$ is classical and follows from quantifier elimination in the correct language.
See~\cite{vdD89} for details. 
Perfect bounded PAC fields are algebraically bounded over $\emptyset$ by work of Chatzidakis--Hrushovski~\cite{ChatzidakisHrushovski}.

(\ref{it:an.effective}) The arguments for the pure valued field case apply, mutatis mutandis, since we have quantifier elimination in a suitable two-sorted language with sorts $(\VF,\RV)$ (see e.g. \cite[Definition~3.3]{Rid}, but with $\cR = \cO_K$).
Let us recall this language in some more detail.
We start from a noetherian ring $E$ with some ideal $I$ such that $E$ is complete and separated for its $I$-adic topology. The ring of separated power series $E\langle \xi \rangle [[\rho]] $ is defined as in \cite[Definition~2.1]{CLR}.
The language on the valued field sort $\VF$ is an expansion of the valued field language by function symbols for the elements of $E\langle\xi\rangle [[\rho]]$ and one additional 2-ary function symbol $Q(x,y)$ for division in the field, extended by zero when $y =0$. 
The language on $\RV$ is an expansion of $\{0,1,\cdot,\oplus\}$ by constant symbols. Indeed, a priori one has to add a function symbol for the function  induced on $\RV$ by $\rv(u(x,y))$, for each unit $u(\xi,\rho) \in (E\langle \xi \rangle [[\rho]])^{\times}$. 
However, as in \cite[Remark~4.1.7]{CLip}, each such unit is of the form $u = u_0 + \sum_{i} b_i g_i + \sum_{j} \rho_j h_j$, for some $u_0 \in E^{\times}$, certain $g_i,h_j \in E\langle\xi\rangle[[\rho]]$ and $b_i \in I$.
Hence, on $\cO_K^{m} \times \cM_K^n$, $\rv(u(x,y))$ takes a constant value $\rv(u_0) \eqqcolon \xi_{u}$, and it is zero elsewhere. It thus suffices to add constant symbols for the $\xi_{u}$ rather than the function symbols $\bar{u}$.

By relative quantifier elimination as in \cite[Prop.\,3.10]{Rid}, condition (1) is again fulfilled.
Condition (2) also follows from an almost identical argument as in the pure valued field case. Indeed, let $R \subset \RV$ be finite and $\res(A)$-definable with $A = \acl(A) \cap K$. Then $R$ is $\cL_{\ring}$-definable over $\res(A)$, together with all the constants $\xi_{u}$. But since each $u_0$ belongs to  $E$, it follows that $\res(A)$ already includes these $\xi_u$.
\end{proof}
In fact, for pure henselian valued fields effectivity is equivalent to having algebraically bounded residue field over $\emptyset$.

\begin{prop}\label{prop:eff.pure.val.field}
Let $K$ be a henselian equicharacteristic zero field.
Then $\Th_{\cL_\val}(K)$ is effective if and only if the residue field $k$ of $K$ is algebraically bounded over $\emptyset$.
\end{prop}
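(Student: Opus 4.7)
The forward direction ($k$ algebraically bounded over $\emptyset$ $\Rightarrow$ $\cT$ effective) is exactly Corollary~\ref{cor:vf.effective}(\ref{it:val.effective}), so my plan is to prove the contrapositive: assuming $k$ is not algebraically bounded over $\emptyset$, I will exhibit a finite definable subset of $\RV$ that does not lift definably to $K$, contradicting Lemma~\ref{lem:lift.fin}.

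Using the characterization of \cite{JY23} recalled in the preliminaries, the failure of algebraic boundedness yields, after replacing $K$ by an elementary extension (which does not affect $\cT$), a parameter set $A \subset k$, an element $b \in \acl_{\cL_\ring}(A) \setminus A^{\alg}$, and a finite $A$-definable set $S \subset k$ containing $b$. The crucial preparatory move is to pick the lift $\tilde{A} \subset \cO_K$ of $A$ carefully so that $F := \QQ(\tilde{A})$ embeds isomorphically into $k$ via $\res$. This can be arranged by Hensel's lemma using equicharacteristic zero: lift any transcendence basis of $\QQ(A)/\QQ$ arbitrarily, then lift the remaining algebraic elements through their minimal polynomials. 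With this choice every nonzero element of $F$ has valuation zero, so $F \subset \cO_K^\times \cup \{0\}$ and $\res|_F \colon F \to \QQ(A)$ is a field isomorphism.

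Now assume for contradiction that $\cT$ is effective. The set $\iota(S) \subset \RV$ is finite and $\tilde{A}$-definable (one can arrange $0 \notin S$ since $b \neq 0$), so Lemma~\ref{lem:lift.fin} produces a finite $\tilde{A}$-definable $T \subset \cO_K^\times$ with $\res \colon T \to S$ a bijection; let $t \in T$ be the unique element with $\res(t) = b$. I next claim that $t$ is field-theoretically algebraic over $F$. To see this, I plan to invoke Flenner's relative quantifier elimination for henselian equicharacteristic zero valued fields in $\cL_\val$ (used already in the proof of Corollary~\ref{cor:vf.effective}(\ref{it:val.effective})), which expresses $T$ as the set defined by a Boolean combination of conditions on $\rv(h_1(x)),\dots,\rv(h_m(x))$ for polynomials $h_i \in F[x]$. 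Since $\rv$ is locally constant on $K\setminus\{0\}$, if every nonzero $h_i$ satisfied $h_i(t) \neq 0$ then the defining condition of $T$ would be constant on a small open ball around $t$, forcing that ball into $T$ and contradicting finiteness. Hence some nonzero $h_j \in F[x]$ vanishes at $t$, so $t \in F^{\alg}$.

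The final step is to reduce the monic minimal polynomial $p(x) \in F[x]$ of $t$ over $F$ modulo $\cM_K$: since $F \subset \cO_K^\times \cup \{0\}$, this reduction gives a monic nonzero polynomial $\bar{p}(x) \in \QQ(A)[x]$ with $\bar{p}(b) = \res(p(t)) = 0$, whence $b \in \QQ(A)^{\alg} = A^{\alg}$, contradicting the choice of $b$. The main subtlety I anticipate is precisely the choice of lift $\tilde{A}$: for a naive lift one only obtains that $\res(t)$ is algebraic over $\res(\cO_F)$, and this residue subring can strictly exceed $\QQ(A)$ (for instance via ratios of equivaluation elements whose residues are transcendental over $\QQ(A)$), which would not yield a contradiction. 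The Hensel-based coefficient-field choice is what promotes the conclusion $t \in F^{\alg}$ into the required membership $b \in A^{\alg}$.
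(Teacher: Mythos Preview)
Your proof is correct and follows the same overall architecture as the paper's: lift the residue-field parameters into a partial coefficient field, use effectivity to lift the finite definable set, show the lifted element is field-theoretically algebraic over the lifted parameters, and reduce modulo $\cM_K$ to contradict $b \notin A^{\alg}$.

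The differences are in execution rather than strategy. For the lift of parameters, the paper simply invokes the existence of a full coefficient field $k' \subset K$ (available in equicharacteristic zero) and takes $A \subset k'$, whereas you build the partial coefficient field $F = \QQ(\tilde A)$ by hand via a transcendence basis and Hensel; these amount to the same thing. The substantive divergence is in the algebraicity step: the paper cites the black box that henselian equicharacteristic-zero valued fields are very slim in $\cL_\val$ (\cite{JKslim}, \cite{JY23}), which immediately yields $x \in \acl(A)\cap K = A^{\alg}\cap K$. Your argument instead unpacks this using Flenner's relative QE and the local constancy of $\rv$ on $K^\times$, giving a self-contained proof of exactly the instance of algebraic boundedness needed. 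The paper's route is shorter and cleaner; yours is more explicit and avoids the additional references. The concluding reductions coincide: the paper's observation that $k'$ is relatively algebraically closed in $K$ (so $\res(A^{\alg}) \subset \res(A)^{\alg}$) is precisely your reduction of the monic minimal polynomial of $t$ over $F$.
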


\begin{proof}
If $k$ is algebraically bounded over $\emptyset$ then this follows from the previous corollary.

So assume that $k$ is not algebraically bounded over $\emptyset$.
This means that there exists some set $B\subset k$ for which $\acl(B)$ contains an element $\xi$ which is not in $B^\alg$.
Let $R\subset k$ be a finite $B$-definable set containing $\xi$.
Let $k'\subset K$ be a lift of the residue field, which exists since we are in equicharacteristic zero, and take $A\subset k'$ for which $\res(A) = B$.
Assume that the finite $A$-definable set $R$ has a finite $A$-definable lift $X\subset K$.
Then $X$ contains an element $x$ with $\res(x) = \xi$.
Now, the proof of \cite[Thm.\,5.5]{JKslim} shows that $K$ is very slim in the language $\cL_{\val}$, whence it is algebraically bounded by~\cite[Thm.\,2.13]{JY23}.
It follows that $\res(x)\in \res(A^\alg)$.
Since $k'$ is relatively algebraically closed in $K$ we conclude that $\res(x)$ is contained in $\res(A)^\alg = B^\alg$, contradiction.
\end{proof}

\begin{remark}
Let $K$ be a henselian equicharacteristic zero valued field, considered in the language $\cL_{\val}$.
Even if the residue field $k$ is algebraically bounded, it may happen that $\Th_{\cL_{\val}}(K)$ is not effective.
Consider for example the field $k$ from~\cite[Ex.\,4.30]{JY25} which is algebraically bounded, but not algebraically bounded over $\emptyset$. 
Then the above proposition shows that $\Th_{\cL_\val}(K)$ is not effective.
Of course, this situation is easy to rectify by adding enough constants to the language.
In particular $\Th_{\cL_\val(K)}(K)$ will be effective.

In general, if the residue field $k$ is algebraically bounded over some subfield $\ell\subset k$, and $A\subset K$ is some subset for which $\ell$ is $\cL(A)$-definable, then $\Th_{\cL(A)}(K)$ will be effective.
For $A$ one may take for example a lift of $\ell$ to $K$.
\end{remark}

\subsection{$\cT_{\omin}$-convex valued fields}
Let $\cT_{\omin}$ be a complete o-minimal theory expanding the theory of real closed fields in a language $\cL_{\omin}$ containing $\{0,1,+,\cdot,<\}$.
Call a nonzero subring $\cO_K \subsetneq K$ \emph{$\cT_{\omin}$-convex} if it is convex and $f(\cO_K) \subset \cO_K$ for every $\emptyset$-definable continuous function $f \colon K \to K$. Note that since $\cO_K$ is convex, it is automatically a valuation subring of $K$.

Let $\cL = \cL_{\omin} \cup \{\cO\}$ be an expansion of $\cL_{\omin}$ by a unary relation symbol for a valuation ring. The corresponding \emph{$\cT_{\omin}$-convex theory $\cT$} is the $\cL$-theory of models $K$ of $\cT_{\omin}$ equipped with a $\cT_{\omin}$-convex subring $\cO_K$.
If $\cT_{\omin}$ has quantifier elimination and a universal axiomatiation in $\cL_{\omin}$, then by \cite[Theorem~3.10, Corollary~3.13]{vdDL95} $\cT$ is complete and has quantifier elimination in $\cL$.
Since $\cT_{\omin}$ has definable Skolem functions, we can always extend $\cL_{\omin}$ by definitions to achieve this.

By \cite[Theorem~2.12]{vdDL95} each maximal $\cL_{\omin}$-elementary substructure $K_0 \preccurlyeq K$ with $K_0\subset \cO_K$ maps bijectively onto the residue field under $\res \colon K \to k$. 
Such a bijection induces an $\cL_{\omin}$-structure on $k$, independent from the choice of $K_0$ by \cite[Theorem~2.15]{vdDL95}. 
The full induced structure on $k$ has the same definable sets as this $\cL_{\omin}$-structure \cite[Theorem~A]{vdD97}. 

\begin{lem} \label{lem:res.isom}
Let $\cT_{\omin}$ be a theory of o-minimal fields, and let $\cT$ be the corresponding $\cT_{\omin}$-convex theory.
For any model $K \models \cT$ and any $A \subset K$ with $A = \dcl_{\cL}(A)$ and $\abs{A^{\times}} = 1$, it holds that $\res(A)$ is an elementary $\cL_{\omin}$-substructure of $k$ with $A \cong_{\cL_{\omin}} \res(A)$.
\end{lem}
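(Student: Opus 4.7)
The plan is to show first that $A$ itself is an $\cL_\omin$-elementary substructure of $K$ contained in $\cO_K$, then embed $A$ into a maximal such substructure and invoke~\cite[Theorem~2.12]{vdDL95}.

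First I would observe that $A \subseteq \cO_K$: since $0 \in A$ and every nonzero element of $A$ satisfies $|a|=1$ by the hypothesis $|A^\times|=1$, every element of $A$ lies in $\cO_K$. Next, recall that we may assume $\cT_\omin$ has $\emptyset$-definable Skolem functions, as explained just before the lemma. Any $\cL_\omin$-definable Skolem function is a fortiori $\cL$-definable, so the hypothesis $A = \dcl_\cL(A)$ forces $A$ to be closed under $\cL_\omin$-definable Skolem functions. By the Tarski--Vaught test, this gives $A \preceq_{\cL_\omin} K$.

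Then I would use Zorn's lemma to choose a maximal $\cL_\omin$-elementary substructure $K_0$ of $K$ with $A \subseteq K_0 \subseteq \cO_K$; such a $K_0$ exists since $A$ itself is such a substructure and the union of any chain of such substructures is again one. By~\cite[Theorem~2.12]{vdDL95} the restriction $\res|_{K_0} \colon K_0 \to k$ is an $\cL_\omin$-isomorphism (this is exactly what defines the induced $\cL_\omin$-structure on $k$ used in the sequel).

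Finally, I would combine these facts. From $A \preceq_{\cL_\omin} K$, $K_0 \preceq_{\cL_\omin} K$ and $A \subseteq K_0$ it follows immediately by Tarski--Vaught that $A \preceq_{\cL_\omin} K_0$. Transporting along the isomorphism $\res|_{K_0} \colon K_0 \cong_{\cL_\omin} k$, we see that $\res|_A \colon A \to \res(A)$ is an $\cL_\omin$-isomorphism and $\res(A) \preceq_{\cL_\omin} k$, which is exactly the conclusion. The main subtle point is the first step: one has to be careful that the $\dcl_\cL$-closedness of $A$ (with the richer language $\cL$) genuinely gives $\cL_\omin$-elementary substructure status, but this is precisely where the availability of $\emptyset$-definable $\cL_\omin$-Skolem functions is used.
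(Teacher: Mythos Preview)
Your proof is correct and follows essentially the same approach as the paper: show $A \preceq_{\cL_\omin} K$ (the paper just asserts this; you justify it via Skolem functions), extend $A$ by Zorn to a maximal $K_0 \preceq_{\cL_\omin} K$ inside $\cO_K$, invoke \cite[Theorem~2.12]{vdDL95} to identify $K_0$ with $k$, and conclude. If anything, you spell out the final step ($A \preceq K_0$, hence $\res(A) \preceq k$) more explicitly than the paper does.
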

\begin{proof}
%We may assume that $\cT_{\omin}$ has quantifier elimination and is universally axiomatizable. 
Note that $A$ is an elementary $\cL_{\omin}$-substructure of $K$. 
Zorn's lemma now provides a maximal $K_0 \preccurlyeq K$ with $A \subset K_0 \subset \cO_K$.
The restricted residue map $\res_{| K_0}$ is an $\cL_{\omin}$-isomorphism onto $k$.
Hence the restriction of the residue map to $A$ determines an $\cL_{\omin}$-isomorphism onto its image.
\end{proof}
We recall some facts from \cite{Mil.powBd} on power-bounded o-minimal structures.
A \emph{power function} on $K$ is an $\cL_{\omin}$-definable endomorphism of the multiplicative group $K^{\times}$. 
One calls $K$ \emph{power-bounded} if for each $\cL_{\omin}(K)$-definable function $f \colon K \to K$ there is a power function $g$ such that $\abs{f(x) } \leq |g(x)|$ for all sufficiently large $x \in K$. A theory $\cT$ is power-bounded if all, equivalently any, of its models are.
In any case, one has the \emph{field of exponents}
\[ E \coloneqq \{ f'(1) \mid  f \text{ is a power function on } K \}. \]
See \cite{Mil.powBd} or \cite{vdD97} for more details. 
\begin{prop} \label{prop:t.convex.effective}
Let $\cT_{\omin}$ be a power-bounded theory of o-minimal fields, and let $\cT$ be the corresponding $\cT_{\omin}$-convex theory. 
Then $\cT$ is effectively 1-h-minimal.
\end{prop}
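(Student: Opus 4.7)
The plan is first to observe that 1-h-minimality of any power-bounded $\cT_{\omin}$-convex theory is already in the literature (see~\cite{CHR} and~\cite{Yin.tcon}), so the only real content lies in verifying effectivity via Proposition~\ref{prop:effective-sufficient}. Fix $K\models\cT$ and $A\subset K$ with $A=\acl(A)\cap K$. Since any $\cT_{\omin}$-convex theory inherits $\acl=\dcl$ from its o-minimal reduct, we also have $A=\dcl_{\cL}(A)$.

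For condition~(\ref{it:weak_RF-effective}) of Proposition~\ref{prop:effective-sufficient}, I would use that the residue field $k$, equipped with its fully induced structure, is itself a model of $\cT_{\omin}$ by \cite[Thms.\,2.12,~2.15]{vdDL95} together with \cite[Thm.\,A]{vdD97}. In particular $k$ is o-minimal, algebraically bounded over $\emptyset$, and the groups $k^{\times}/(k^\times)^{(n)}$ are finite (at most two elements, since $k$ is real closed). Lemma~\ref{lem:Pn.fin} together with Lemma~\ref{lem:alg.bdd.Pn} then reduces the task to showing that every finite $\cL_{\RF}(\res(A))$-definable subset of $k$ lies in $\res(A)$. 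By o-minimality of $k$, such a finite set is contained in $\dcl_{\cL_{\omin}}(\res(A))$, and since $\cO_K$ is closed under $\cL_{\omin}$-definable continuous functions, every element of $\dcl_{\cL_{\omin}}(\res(A))$ is the residue of an element of $\dcl_{\cL}(A)=A$, hence lies in $\res(A)$.

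For condition~(\ref{it:determined}), the strategy is to invoke a relative quantifier elimination for power-bounded $\cT_{\omin}$-convex theories asserting that every $A$-definable subset of $\RV^m$ is already $\cL_{\RRV}(\rv(A))$-definable, possibly after enriching $\cL_{\VG}$ by the $E$-scalar multiplication predicates encoding the field of exponents, which are part of the induced $\Gamma$-structure and hence of $\cL_{\VG}$ by construction. Concretely, power-boundedness forces every $\cL_{\omin}$-definable endomorphism of $K^{\times}$ to behave asymptotically as a power function with exponent in $E$, so the trace on $\RV$ of any $\emptyset$-definable map $K^n\to \RV^m$ factors through terms built from $\rv$, $\iota$, $\abs{\cdot}$ and the $\rho_n$. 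The needed QE can be extracted from the analysis of van den Dries--Lewenberg~\cite{vdDL95} together with Yin's cell decomposition in~\cite{Yin.tcon}.

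The main obstacle is precisely condition~(\ref{it:determined}): it fails outside the power-bounded case, because a definable exponential would produce a bijection $\RV^{\times}\to\RV^{\times}$ genuinely mixing the residue-field and value-group data and yielding an $\RV$-structure strictly richer than $\cL_{\RRV}$. Extracting the precise form of relative QE needed in the power-bounded setting, and in particular verifying that every $\emptyset$-definable map $K\to \RV^m$ is $\RV$-determined by an $\cL_{\RRV}$-formula over $\rv(A)$, is the technical heart of the argument and the place where the acknowledgement to Rideau--Kikuchi in the introduction is most relevant.
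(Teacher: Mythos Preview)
Your approach via Proposition~\ref{prop:effective-sufficient} is natural, since that is how the paper handles the pure henselian and analytic cases, but the paper's own proof of Proposition~\ref{prop:t.convex.effective} takes a genuinely different route that sidesteps condition~(\ref{it:determined}) entirely. Instead of analysing the induced structure on $\RV$, the paper works directly with the quantifier elimination for $\cT$ in $\cL=\cL_{\omin}\cup\{\cO\}$ (van den Dries--Lewenberg) and splits into two cases. If $A$ contains an element of nontrivial valuation, then $A$ is itself a model of $\cT$, hence $A\preccurlyeq K$ by QE, and any $A$-definable $\xi\in\RV_K$ already lies in $\rv(A)$. If $|A^\times|=1$, the paper adjoins a single element $b$ whose valuation is $E$-linearly independent from everything $A$-definable in $\Gamma_K$, so that $B=\langle A,b\rangle_{\cL_{\omin}}\preccurlyeq K$, and then uses the Wilkie inequality \cite[Cor.\,5.5]{vdD97} as a rank argument to force $|\xi|=1$ and $\res(B)=\res(A)$. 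Power-boundedness enters only through this rank inequality, not through any structural description of $\RV$.

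The gap in your proposal is precisely condition~(\ref{it:determined}). You assert that a suitable relative QE ``can be extracted from'' \cite{vdDL95} and \cite{Yin.tcon}, but neither reference contains the statement that every $A$-definable subset of $\RV$ is $\cL_{\RRV}(\rv(A))$-definable, and this is not a routine consequence of the cell decomposition in those papers. Such a statement amounts to saying that the induced structure on $\RV$ is exactly the short-exact-sequence structure over the full induced structures on $k$ and $\Gamma$, with no additional definable relations mixing the two; this is a genuine orthogonality result that, if true in the power-bounded case, would itself require an argument comparable in difficulty to the Wilkie-inequality computation the paper carries out. Your treatment of condition~(\ref{it:weak_RF-effective}) is fine, but without an independent proof of condition~(\ref{it:determined}) the argument is incomplete. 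The paper's approach has the advantage of never needing to understand what is definable in $\RV$.
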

\begin{proof}
The $\cL$-theory $\cT$ is 1-h-minimal by \cite[Theorem~6.3.4]{CHR}.

To prove effectivity, we shall assume that $\cT_{\omin}$ is given in a language $\cL_{\omin}$ in which it admits quantifier elimination and a universal axiomatization (cf. \cite[\S~2]{vdDL95}).
Take $A \subset K$ such that $A = \acl_{\cL}(A)$ and first assume that $A$ contains an element $a$ with $\abs{a} \neq 1$.
Then $A$ is a model of $\cT_{\omin}$ with a nontrivial $\cT_{\omin}$-convex subring $\cO_K \cap A$. 
By quantifier elimination for $\cT$, $A$ is an elementary $\cL$-substructure of $K$.
In particular, if $\xi \in \RV_K$ is $\cL(A)$-definable, then $\xi \in \RV_A = \rv(A)$.

Now suppose that $\abs{A^{\times}} = 1$.
Without loss of generality, $A$ is generated by a finite tuple $A = \acl_{\cL}(a_1,\dots,a_n)$.
Note that then $A = {\langle a_1,\dots, a_n \rangle}_{\cL_{\omin}}$, the substructure generated by $a_1,\dots,a_n$ by \cite[Lemma~6.3.9]{CHR} (which builds on \cite[Lemma~3.3]{Yin.tcon}, going back to \cite[Lemma~2.6]{vdD97}).
Let $\rank(A)$ be the minimal number $m$ such that there exists $a_1',\dots,a_m' \in A$ for which $A = \langle a_1',\dots,a_m'\rangle_{\cL_{\omin}}$. Hence, its o-minimal rank, denoted by $\rank(A)$, is equal to $m$.
Write $E$ for the field of exponents of $\Th_{\cL}(K)$. Replacing $K$ by a sufficiently saturated extension, we find some $b \in K$ such that $\abs{b}$ is $E$-linearly independent from any $A$-definable element of $\Gamma_K^{\times}$.
Then, as before, $B = \langle A,b\rangle_{\cL_{\omin}}$ is an elementary $\cL$-substructure of $K$.
In particular, for any $A$-definable $\xi \in \RV_K$, it holds that $\xi \in \rv(B)$.
We will show that even $\xi \in \res(B)$ and $\res(A) = \res(B)$ to complete the proof.

Suppose, towards a contradiction that $\abs{\xi} \neq 1$.
Then the $E$-vector space spanned by $\abs{\xi}$ and $\abs{b}$ is two-dimensional.
Since $\rank(B) = \rank(A) + 1$ is finite, we may use \cite[Corollary~5.5]{vdD97} to see that 
\begin{equation*} \label{eq:Wilkie-ineq-absurd}
	\rank(B) \geq \rank(\res(B)) + 2.
\end{equation*}
On the other hand, $\res(A) \subset \res(B)$ and $\res(A) \cong_{\cL_{\omin}} A$ by Lemma \ref{lem:res.isom}, implying that $\rank(\res(B)) \geq \rank(A)$.
This contradicts $\rank(B) = \rank(A) + 1$.

Now assume that $\abs{\xi} = 1$, implying that $\xi \in k \subset \RV$.
%By construction $\rank(A)$ is finite, $\rank(B) = \rank(A) + 1$ and $\abs{B^{\times}}$ is nontrivial. 
Applying \cite[Corollary~5.5]{vdD97} again, we find that
\[ \rank(A) + 1 =  \rank(B) \geq \rank(\res(B)) + 1 \geq \rank(\res(A)) + 1  \]
Again using that $A \cong_{\cL_{\omin}} \res(A)$ it follows that $\res(A)$ and $\res(B)$ have the same rank and thus coincide.
\end{proof}

\subsection{Ordered fields with real analytic structure}\label{ss:or.field.real.an}
Under some mild conditions, ordered fields with real analytic structure as in \cite[\S3]{CLip} are effectively 1-h-minimal.
This gives explicit examples of effectively 1-h-minimal theories where the structure on the residue field is a nontrivial expansion of the ring structure.
We use the notation of \cite{NSV24} and refer to that paper for precise definitions. We recall that if $\cB$ is a real Weierstrass system, then $\cL_{\cB}$ is an expansion of the language of ordered rings $\cL_{\ring} \cup \{<\}$ by function symbols for elements of $B_{m,\alpha}$ with $m \in \NN$ and $\alpha \in \RR_{>1}$, and a function symbol for inversion $(\cdot)^{-1}$. The language $\cL_{\val,\cB}$ is a further expansion of $\cL_{\cB}$ by a relation symbol $\cO_K$ for a convex valuation ring.
\begin{prop} \label{prop:real.an.effective}
Let $K$ be an ordered valued field with convex valuation ring.
Assume $K$ is equipped with real analytic structure from a strong and rich real Weierstrass system $\cB$ as in \cite[Definition~2.1.1]{NSV24}. Then $\Th_{\cL_{\val,\cB}}(K)$ is effectively 1-h-minimal.
\end{prop}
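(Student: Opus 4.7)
The plan is to apply Proposition~\ref{prop:effective-sufficient} in direct analogy with Corollary~\ref{cor:vf.effective}(\ref{it:an.effective}). That $\Th_{\cL_{\val,\cB}}(K)$ is $1$-h-minimal is already established in~\cite{NSV24} under the strong and rich hypotheses on $\cB$, so only effectivity remains to be verified.

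For condition~(1) of Proposition~\ref{prop:effective-sufficient}, I would invoke a relative quantifier elimination for $\Th_{\cL_{\val,\cB}}(K)$ in a two-sorted language with sorts $(\VF,\RV)$, analogous in spirit to~\cite[Prop.\,3.10]{Rid}. As in the proof of Corollary~\ref{cor:vf.effective}(\ref{it:an.effective}), the key observation is that each analytic unit $u\in B_{m,\alpha}^{\times}$ arising from $\cB$ decomposes as $u = u_0 + (\text{small corrections})$ with $u_0$ a unit constant, so that $\rv\circ u$ takes the constant value $\rv(u_0)$ on the relevant polyannulus. After enriching $\cL_{\RRV}$ by constant symbols for these leading terms, the relative QE yields that every finite $A$-definable subset of $\RV$ is already $\cL_{\RRV}(\rv(A))$-definable.

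For condition~(2), I would argue via an analogue of Lemma~\ref{lem:res.isom} adapted to the analytic setting: any maximal $\cL_{\cB}$-elementary substructure of $K$ contained in $\cO_K$ maps isomorphically onto $k$ under $\res$, so that $k$ inherits the structure of a polynomially bounded o-minimal expansion of a real closed field with real analytic structure. Such a structure is algebraically bounded in its induced $\cL_{\cB}$-structure, with base of algebraicity contained in $\dcl_{\cL}(\emptyset)\cap K$ and hence in $\res(A)$ whenever $A=\acl(A)\cap K$. Since $k$ is real closed, the quotients $k^{\times}/k^{\times(n)}$ have order at most~$2$, so Lemma~\ref{lem:alg.bdd.Pn} applies and the argument of Corollary~\ref{cor:vf.effective}(\ref{it:val.effective}) transfers verbatim to give $\acl(\res(A))\cap k = \res(A)^{\alg}\cap k = \res(A)$, using henselianity in equicharacteristic zero.

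The main obstacle will be setting up the relative quantifier elimination in condition~(1) cleanly in the ordered real analytic setting. One must check that the \emph{strong and rich} hypotheses on $\cB$ are precisely what is needed to ensure that the interaction between the order, the valuation, and the analytic terms does not produce new definable subsets of $\RV$ beyond those allowed by $\cL_{\RRV}$ enriched with constants for leading terms of analytic units; the rest is then a straightforward adaptation of the pure henselian and convex cases treated above.
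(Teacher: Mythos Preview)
Your proposal has two substantive gaps that prevent it from going through.

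First, your treatment of condition~(1) imports the wrong analogy. In the separated analytic setting of Corollary~\ref{cor:vf.effective}(\ref{it:an.effective}), the units $u\in(E\langle\xi\rangle[[\rho]])^\times$ really do have constant leading term on the polyannulus, because the ``small corrections'' lie in the maximal ideal. In the \emph{real} analytic setting of~\cite{NSV24} this fails: the functions live on $[-1,1]_K^n$, and for a unit $v\in B_{m,\alpha}^\times$ with $\norm{v}=1$ the induced map on the residue field is a genuine analytic function, not a constant. The paper handles this by building a truncated Weierstrass system $\tr_H(\cB)$ over the convex subgroup $H\leq\Omega$ determined by $\cO_K$, showing that the residue field carries $\tr_H(\cB)$-analytic structure, and proving relative quantifier elimination in a four-sorted language $\cL_{\rv,\cB}$ where $\RF$ carries $\cL_{\tr_H(\cB)}$ (not merely constants). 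Your proposed enrichment of $\cL_{\RRV}$ by constant symbols is therefore insufficient.

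Second, your argument for condition~(2) rests on the claim that the residue field, as an o-minimal expansion with real analytic structure, is algebraically bounded. This is false: already in $\RR_{\an}$ one has $\exp(a)\in\dcl(a)\setminus\QQ(a)^{\alg}$ for generic $a$, so model-theoretic and field-theoretic algebraic closure do not coincide. The paper instead argues directly (still under the additional assumption that $k$ is real closed) that the $\cL_{\tr_H(\cB)}$-substructure generated by $\res(A)$ is contained in $\res(A)$, by lifting each analytic term symbol-by-symbol back to $A$; it then uses quantifier elimination for the o-minimal theory on $k$ and Lemma~\ref{lem:Pn.fin}.

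Finally, you implicitly assume $k$ is real closed throughout, but the hypothesis is only that $K$ is ordered with convex valuation ring. The paper treats the non-real-closed case separately, by passing to the finer valuation whose residue field is $\RR$ and invoking Proposition~\ref{prop:coarsening.eff} on coarsenings; effectivity then transfers back because the two languages differ only by an $\RV$-expansion.
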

For the proof of this theorem, we will first assume that $K$ has real closed residue field.
In that case, we follow the strategy of Corollary \ref{cor:vf.effective}, which essentially boils down to relative quantifier elimination in an appropriate language.
The case when $K$ does not have real closed residue field will be proved in Section~\ref{sec:coarsenings} and is based on coarsenings.

We need to slightly extend some results from \cite{NSV24}, which we do below. First, we fix some notation.
Whenever $H \leq G$ are additive ordered abelian groups, we write $t^{H}$ for the corresponding ordered multiplicative subgroup $\{t^h \mid h \in H \}$ of $\RR(G)$.
As usual, we consider $\RR(G)$ as an ordered subfield of $\RR((G))$, ordered such that $t$ is positive and smaller than any positive real number.

For $n\geq 0, \alpha \in \RR_{>0}$ we denote by $A_{n, \alpha}$ the ring of real power series in $\RR\llbracket \xi_1, \ldots, \xi_n\rrbracket$ with radius of convergence strictly larger than $\alpha$ (in each of the coordinates simultaneously). Given an additive ordered abelian group $\Omega$, write
\[
A_{n, \alpha}((\Omega)) = \left\{ \sum_{i\in I} f_i t^i \mid f_i \in A_{n, \alpha}, I\subset \Omega \text{ well-ordered}\right\},
\]
and let $\cA((\Omega)) = \{A_{n, \alpha}((\Omega))\}_{n, \alpha}$. There is a Gauss norm on each $A_{n, \alpha}((\Omega))$, defined by 
\[
\big|\big|\sum_{i\in I} f_i t^i\big|\big| = \min \{i\in I\mid f_i\neq 0\}.
\]

For any subgroup $H \leq \Omega$ and $f \in A_{n,\alpha}((\Omega))$, write $\norm{f} \leq t^H$ if there exists some $h \in H$ such that $\norm{f} \leq t^h$.
Similarly, say that $\norm{f} < t^H$ if $\norm{f} < t^h$ for all $h \in H$.
If $H$ is convex, there are natural inclusions
\[ \{  \norm{f} \leq t^H \}/\{ \norm{f} < t^H \}  \subset A_{n,\alpha}((H)) \subset A_{n,\alpha}((\Omega)) . \]
\begin{defn} \label{def:truncation}
Let $H$ be a convex subgroup of an ordered abelian group $\Omega$.
The \emph{truncation map} $\tr_H \colon A_{n,\alpha}((\Omega)) \to A_{n,\alpha}((H))$ is the extension by zero of the quotient map
\[ \{  \norm{f} \leq t^H \} \to  \{ \norm{f} \leq t^H \}/\{ \norm{f} < t^H \}  \subset A_{n,\alpha}((H))   \]
\end{defn}
If $\cB = \{B_{n,\alpha} \}_{n,\alpha}$ is a real Weierstrass system over $\Omega$, then write $\tr_H(\cB) = \{ \tr_H(B_{n,\alpha}) \}_{n,\alpha}$.
\begin{lem}
Let $\cB$ be a real Weierstrass system over $\Omega$ and let $H \leq \Omega$ be a convex subgroup.
Then $\tr_H(\cB)$ is a real Weierstrass system over $H$.
Moreover, if $\cB$ is rich, then so is$\tr_H(\cB)$.
\end{lem}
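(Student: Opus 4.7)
The plan is to verify that truncation is compatible with the ring structure and the closure operations that define a real Weierstrass system, and then check each axiom in turn.

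First, I would set up the algebraic framework. For each $n,\alpha$, put $R_{n,\alpha}^H \coloneqq \{f \in A_{n,\alpha}((\Omega)) : \norm{f} \leq t^H\}$ and $J_{n,\alpha}^H \coloneqq \{f \in A_{n,\alpha}((\Omega)) : \norm{f} < t^H\}$. Since $H$ is convex in $\Omega$ and both conditions are controlled by the Gauss norm, which is multiplicative and submultiplicative with respect to products and sums respectively, $R_{n,\alpha}^H$ is a subring and $J_{n,\alpha}^H$ is an ideal of $R_{n,\alpha}^H$. Concretely, for $f = \sum_{i} f_i t^i \in R_{n,\alpha}^H$, the truncation $\tr_H(f)$ is obtained by keeping only the terms with $i \in H$ (this uses convexity of $H$: the support of $f$ lies in $[h_0,\infty)$ for some $h_0 \in H$, so it decomposes into a part in $H$ and a part strictly above $H$). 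Hence $\tr_H$ descends to a ring homomorphism $R_{n,\alpha}^H/J_{n,\alpha}^H \hookrightarrow A_{n,\alpha}((H))$, and in particular $\tr_H(B_{n,\alpha})$ is a subring of $A_{n,\alpha}((H))$.

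Next, I would walk through the Weierstrass system axioms for $\tr_H(\cB)$. The compatibility under the inclusions $\alpha \leq \beta$, under permutations and restrictions of the variables, and under the derivation axioms is immediate from the coefficient-wise nature of $\tr_H$ and the corresponding axiom for $\cB$. For the constants, every element of $t^H$ already lies in $\cB$ and is fixed by $\tr_H$, so the base contents of the system carry over. For closure under composition/substitution (plugging elements of $\ifin{\cO}$ into series of $\cB$), one uses that if the inputs have Gauss norm at most $t^H$, then so does the output, and since substitution is given by a convergent rearrangement of coefficients, it commutes with $\tr_H$ modulo $J_{n,\alpha}^H$.

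The main obstacle will be Weierstrass preparation and division. Given $f \in B_{n,\alpha} \cap R_{n,\alpha}^H$ regular in the last variable, its Weierstrass decomposition $f = U \cdot P$ in $B_{n,\alpha}$ must be shown to satisfy $U, P \in R_{n,\alpha}^H$ and $\tr_H(f) = \tr_H(U) \cdot \tr_H(P)$ in $A_{n,\alpha}((H))$. I would argue this by tracking how the Gauss norm behaves through the standard iterative construction of $U$ and $P$: since the Gauss norm is multiplicative in the distinguished factor and the unit, and since $H$ is convex, the bound $\norm{f} \leq t^H$ propagates to both factors. An analogous argument handles Weierstrass division: the quotient $q$ and remainder $r$ in $f = q g + r$ inherit the bound from $f$ and $g$, and $\tr_H$ then yields the corresponding decomposition in $\tr_H(\cB)$.

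Finally, for the richness claim, I would check the defining conditions of richness (such as containing enough power-like terms $t^h$ for $h \in H$ and closure under certain analytic constructions) by noting that each such element or operation pulls back to a corresponding object of $\cB$, to which richness applies, and then pushes forward under $\tr_H$ to give the required element in $\tr_H(\cB)$. This step is essentially bookkeeping once the ring-theoretic axioms and the commutation of $\tr_H$ with the Weierstrass operations have been established.
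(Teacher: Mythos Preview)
Your overall strategy matches the paper's: establish that $\tr_H$ restricted to $\{\norm{f} \leq t^H\}$ is an $\RR$-algebra homomorphism, and then transfer each axiom from $\cB$ to $\tr_H(\cB)$. For axioms (1)(a)--(e) and for richness the paper's proof is essentially a terse version of what you wrote.

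However, your treatment of Weierstrass division has the direction backwards. You write: ``Given $f \in B_{n,\alpha} \cap R_{n,\alpha}^H$ regular in the last variable, its Weierstrass decomposition $f = U \cdot P$ in $B_{n,\alpha}$ must be shown to satisfy $U, P \in R_{n,\alpha}^H$ \dots'' But this is not what you are given. To verify Weierstrass division in $\tr_H(\cB)$, you start with an element $g \in \tr_H(B_{n,\alpha})$ that is regular; all you know is that $g = \tr_H(f)$ for \emph{some} $f \in B_{n,\alpha}$, with no a priori regularity assumption on $f$. The step you are missing is precisely the one the paper singles out: if $\norm{\tr_H(f)} = 1$ and $\tr_H(f)$ is regular of degree $s$, then the same already holds for $f$ itself. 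This is true because both conditions are read off from the $t^0$-coefficient of $f$, which $\tr_H$ does not alter. Once you have this lifting, Weierstrass division in $\cB$ applies directly, and pushing forward via the ring morphism $\tr_H$ gives the decomposition in $\tr_H(\cB)$; your proposed norm-tracking through the iterative construction of $U$ and $P$ becomes unnecessary, since everything involved already has norm $1$.
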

\begin{proof}
By construction $\tr_H(\cB)$ is a system of subalgebras of those in $\cA((H))$.
We verify the conditions of \cite[Definition~2.1.1]{NSV24} (see also \cite[Definition~3.1.1]{CLip}).
Conditions (1)(a)-(e) follow from the corresponding conditions for $\cB$, using that the restriction of $\tr_H$ to $\{ \norm{f} \leq t^H \}$ is an $\RR$-algebra morphism.
Weierstrass division follows similarly, using the fact that if $\norm{\tr_H(f)} = 1$ and $\tr_H(f)$ is regular of degree $s$ in $\xi_1$, then the same holds for $f$.
Finally, if $\cB$ is rich, then $\tr_H(B_0)  \supset \RR(H)$.
\end{proof}
\begin{notn} \label{not:real.an}
For the rest of this subsection, $\Omega$ is an ordered abelian group, $\cB$ is a rich real Weierstrass over $\Omega$ and $K$ is an ordered field with $\cB$-analytic structure and nontrivial convex valuation ring $\cO_K$. Let $\sigma$ denote the corresponding family of $\RR$-algebra morphisms from each $\cB$ to functions $[-1,1]_K^n \to K$. 
Write $H \leq \Omega$ for the maximal convex subgroup of $\Omega$ such that $\RR(H) \subset \cO$. Note that $H$ is well-defined since $\RR(\Omega) \subset B_0$ by the richness assumption.

To avoid potential confusion with the absolute value on the ordered field $K$, we instead write $\norm{\cdot}_K$ in this section for the multiplicative valuation with respect to $\cO_K$.
Additionally, the convex hull of $\RR$ in $K$ also forms a valuation ring and we denote the corresponding valuation by $\norm{\cdot}$ and call it the \emph{natural valuation}. 
Note that $\norm{\cdot}_K$ is a coarsening of $\norm{\cdot}$.
For a tuple $a = (a_1,\dots,a_n)$, write $\norm{a} = \max\{\norm{a_1},\dots,\norm{a_n}\}$, and similarly for $\norm{\cdot}_K$.  
\end{notn}
\begin{def-lem}
The residue field $k$ has $\tr_H(\cB)$-analytic structure $\sigma_k$ by
\[ \sigma_k(\tr_H(f))(\res(a)) \coloneqq \res(\sigma(f)(a)), \]
for $a \in [-1,1]_K^m$, and $f \in B_{m,\alpha}$ with $\norm{f} \in t^{H} \cup \{0\}$ and $\alpha > 1$.
\end{def-lem}
\begin{proof}
Let $f \in B_{m,\alpha}$ with $\norm{f} \leq t^H$.
We may assume that $\norm{f} \in t^H$ for else $\tr_H(f) = 0$.
First note that $\sigma(f)(a) \in \cO_K$.
Indeed, let $b_0 \in B_0$ be such that $\norm{f/b_0} = 1$.
By \cite[Lemma~2.1.8]{NSV24} it follows that $\norm{ \sigma(f)(a) } \leq \norm{b_0}$.
As $b_0 \in \cO_K^{\times}$ by construction of $H$, it follows that $\norm{ \sigma(f)(a) }_K \leq 1$. 
Also observe that $\sigma_k$ induces an order-preserving ring morphism from $\tr_H(B_0)$ to $k$.

We need to show that $\res(\sigma(f)(a))$ depends only on $\tr_H(f)$ and $\res(a)$.
So take any $g \in B_{m,\alpha}$ for which $\tr_H(g) = \tr_H(f)$. 
Then $\norm{f - g} < t^H$. 
Let $t^{\delta} \in B_0$ be such that $\norm{f - g} = t^{\delta}$ and note that $t^{\delta} \in \cM_K$ by construction of $H$.
By \cite[Lemma~2.1.8]{NSV24}, it follows that $ \norm{ \sigma(f -g)(a)}_K \leq t^{\delta}$ for all $a \in [-1,1]_K^m$.
In particular, $\res( \sigma(f)(a) -\sigma(g)(a) ) = 0$.

Now let $a,b \in [-1,1]_K^m$ be such that $a-b \in \cM_K^n$. 
We show that $\sigma(f)(a)  - \sigma(f)(b) \in \cM_K$.
Up to rescaling with a factor from $\RR(H)$, we may assume that $\norm{f} = 1$.
Furthermore, using condition 1(c) from \cite[Definition~2.1.1]{NSV24}, we may also assume that $\norm{a}, \norm{b} < 1$.
By iterated Weierstrass division
\[ f(x) = \sum_{i = 1}^n (x_i - y_i) g_i(x,y) + f(y), \]
with $\norm{g_i(x,y)} \leq 1$ for $i=1,\dots,n$.
Hence, the ultrametric inequality and \cite[Lemma~2.1.8]{NSV24} imply that
\[ \norm{ \sigma(f(x) - f(y))(a,b)}_K \leq \norm{a - b}_K <1 . \qedhere \] 
\end{proof}
We now fill in the details to a question which was partially left open in \cite[Remarks~4.2.5,5.3.3]{NSV24}. We continue using Notation \ref{not:real.an}

Consider the four-sorted language $\cL_{\rv,\cB}$ with sorts $(\VF,\RV,\RF,\VG)$. The language on the valued field sort $\VF$ is simply $\cL_{\cB}$ (see \cite{NSV24}). The language $\cL_{\RV}$ on the leading term sort $\RV$  is $\{0,1,\cdot, (\cdot)^{-1}\}$. 
The language on the residue field sort $\RF$ is $\cL_{\tr_H(\cB)}$.
The language on the value group sort $\VG$ is the language of (multiplicative) ordered abelian groups extended by an absorbing element 0.
Finally, there are function symbols $\rv \colon \VF \to \RV$, $\iota \colon k \to \RV$ and $\norm{\cdot}_K \colon \RV \to \VG$ for the leading term map, natural inclusion and natural projection respectively. 
Then $(K,\RV_K,k,\Gamma_K)$ is naturally an $\cL_{\rv,\cB}$-structure. Recall that the partial addition $\oplus$ on $\RV$ is definable in this structure \cite[Lem\,.5.17]{ACGZ22}.
%
%Consider the language $\cL_{\ac,\cB}$ which is a variation of the 3-sorted language from \cite[\S~4.2]{NSV24} with sorts $\VF,\RF$ and $\VG$ for the valued field, residue field and value group respectively.
%The language of $\VF$ is $\cL_{\cB}$, containing the language of ordered rings, and function symbol for each element of each $B_{m,\alpha}$ with $\alpha > 1$.
%On $\RF$ we consider the similarly defined language $\cL_{\tr_H(\cB)}$.
%The language $\cL_{\VG}$ is the language of ordered abelian groups.
%There is also a function symbol $\abs{\cdot}$ for the valuation map $\VF \to \VG$ and a function symbol $\ac \colon \VF \to \RF$.
%
%Now $(K,k,\Gamma_K)$ is naturally an $\cL_{\ac,\cB}$-structure, where $\ac$ is interpreted as a compatible angular component map ($\ac(t^{\omega}) = 1$ for all $\omega \in \Omega$).
%
\begin{prop} \label{prop:real.an.QE}
The theory $\Th_{\cL_{\rv,\cB}}(K)$ eliminates valued field quantifiers.
\end{prop}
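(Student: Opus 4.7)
The plan is to reduce to a known two-sorted relative quantifier-elimination statement and then describe the induced structure on $\RV$ using the $\RF$- and $\VG$-sorts with their just-introduced analytic languages.

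Concretely, I would first invoke the known relative quantifier elimination for henselian valued fields with separated analytic structure in a two-sorted $(\VF,\RV)$-language where $\RV$ carries the full induced structure from $K$. This is in the spirit of \cite[Prop.\,3.10]{Rid} applied with $\cR = \cO_K$, adapted to the present real analytic setting via the richness of $\cB$ and Weierstrass preparation/division within $\cB$. Equivalently, one could run a back-and-forth argument on the $\VF$-sort: starting from a partial isomorphism $f\colon A \to A'$ between substructures of two sufficiently saturated models together with an element $b \in K$, one uses Weierstrass preparation together with henselianity to reduce extending $f$ to $A\langle b\rangle$ to matching the quantifier-free $\RV$-type of $b$ over $A$.

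Second, I would re-express the full induced $\emptyset$-definable structure on $\RV$ in terms of the chosen $\cL_{\rv,\cB}$-structure on $(\RV,\RF,\VG)$. The purely algebraic part is already handled by Proposition~\ref{prop:RVQE}: every $\cL_{\RRV}$-definable subset of $\RV^n$ is a Boolean combination of $\RF$-definable and $\VG$-definable conditions pulled back via the maps $\rho_n$ and $\abs{\cdot}_K$. The new analytic input is that if $t$ is an $\cL_{\cB}$-term and $a$ a tuple on which $t$ is defined, then $\rv(t(a))$ is determined from $\rv(a)$ by operations already present in the $\tr_H(\cB)$-analytic structure on $\RF$ (via the def-lem just proved) combined with the ordered abelian group structure on $\VG$. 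The key inputs are that for $f$ with $\norm{f} = 1$ and $a$ in the unit polydisc, $\res(\sigma(f)(a))$ depends only on $\res(a)$ and is computed by $\sigma_k(\tr_H(f))$; and that a general analytic term can be normalised, by rescaling against an element of $\RR(\Omega) \subset B_0$, into the product of a value-group scalar and a unit of norm one whose leading part lives in $\RF$.

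The main obstacle I expect is this second step: verifying that the analytic structure on $\VF$ does not produce any extra $\emptyset$-definable structure on $\RV$ beyond what is captured by the proposed four-sorted language. After rescaling and Weierstrass division, however, each analytic term reduces to a controlled combination of unit behaviour (visible in $\tr_H(\cB)$ on $\RF$) and pure valuation data (visible in $\VG$), so this should be a careful but essentially mechanical extension of Proposition~\ref{prop:RVQE} to the analytic setting, at which point the overall VF-quantifier elimination follows by composing the two steps.
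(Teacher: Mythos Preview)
Your two-step factorization is considerably more roundabout than the paper's approach, and parts of it are imprecise. The paper simply observes that the proof of \cite[Prop.\,4.2.3]{NSV24} goes through verbatim: that argument already yields $\VF$-quantifier elimination provided the $\RF$-sort carries, for each unit $v \in B_{m,\alpha}^\times$ with $\norm{v}=1$, a function symbol for the induced map $\bar{v}$ on $k$. The only thing to check is that these $\bar{v}$ are present in $\cL_{\tr_H(\cB)}$, and they are, namely as $\tr_H(v)$. That is the entire proof.

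Your step~1 is not a reduction but essentially a restatement of the goal: adapting Rideau's result to the real analytic setting is exactly the content of \cite[Prop.\,4.2.3]{NSV24}, so citing Rideau and waving at ``richness and Weierstrass division'' defers rather than resolves the work. Your step~2 then invokes Proposition~\ref{prop:RVQE}, which is not the relevant tool: that proposition describes $\cL_{\RRV}$-definable subsets of $\RV$ via the maps $\rho_n$, but it says nothing about whether the full $K$-induced structure on $\RV$ is $\cL_{\RRV}$-definable in the first place. What one actually needs after a two-sorted relative QE is only to check that the \emph{specific} symbols produced (namely $\oplus$ and the $\bar{v}$) live in $\cL_{\rv,\cB}$; the paper already records that $\oplus$ is definable via \cite[Lem.\,5.17]{ACGZ22}, so only the $\bar{v}$ remain.

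A further imprecision: your claim that ``$\rv(t(a))$ is determined from $\rv(a)$'' is false as stated (take $t = x+y$ with $\rv(x) = -\rv(y)$). The correct and sufficient statement is the narrower one you do mention, that for a unit $v$ of norm $1$ the value $\res(\sigma(v)(a))$ depends only on $\res(a)$ and equals $\sigma_k(\tr_H(v))(\res(a))$. This is exactly the paper's single observation; the surrounding scaffolding in your proposal obscures rather than supports it.
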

\begin{proof}
This follows exactly as in the proof of \cite[Proposition~4.2.3]{NSV24}.
For each unit $v \in B_{m,\alpha}^{\times}$ with $\norm{v} = 1$, we need a corresponding definable function $\bar{v}$ on $\RF$. 
These are already included as function symbols in $\cL_{\tr_H(\cB)}$.
\end{proof}
Recall the language $\cL_{\val,\cB} = \cL_{\cB} \cup \{\cO\}$ from \cite{NSV24}.
\begin{cor} \label{cor:induced.structure}
For any $\cL_{\val,\cB}$-definable $X \subset K^n$, we have that $\res(X \cap \cO_K^n)$ is a $\cL_{\tr_H(B)}$-definable subset of $k^n$.
\end{cor}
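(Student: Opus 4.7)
The plan is to combine Proposition~\ref{prop:real.an.QE} with a careful analysis of how $\cL_{\cB}$-terms restrict to $\cO_K^n$ and then to $k^n$ via $\res$. By Proposition~\ref{prop:real.an.QE}, the defining formula for $X$ is equivalent to an $\cL_{\rv,\cB}$-formula $\phi(x)$ without $\VF$-quantifiers. Put in disjunctive normal form, $\phi(x)$ is a Boolean combination of (i) $\cL_{\cB}$-atomic formulas of the shape $t(x) \, R \, 0$ with $t$ an $\cL_{\cB}$-term and $R \in \{=,<,>,\leq,\geq\}$, and (ii) formulas of the shape $\psi(\rv(t_1(x)),\dots,\rv(t_m(x)))$, where $\psi$ is an $\cL_{\rv,\cB}$-formula involving only the sorts $(\RV,\RF,\VG)$.

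The technical heart of the argument is an inductive analysis of each $\cL_{\cB}$-term $t$ on $\cO_K^n$, by induction on term complexity. The base case is $t = \sigma(f)$ for $f \in B_{m,\alpha}$: if $\norm{f} \in t^H$, the def-lemma defining $\sigma_k$ yields $\res(\sigma(f)(x)) = \sigma_k(\tr_H(f))(\res(x))$, which is an $\cL_{\tr_H(\cB)}$-term in $y \coloneqq \res(x)$; if $\norm{f}$ is larger than $t^H$, then $\sigma(f)(x) \in \cM_K$ uniformly on $\cO_K^m$; and if $\norm{f}$ is smaller than $t^H$, then $\sigma(f)(x) \notin \cO_K$ uniformly. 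Sums, products, and the (globally extended) division symbol are then handled by case analysis on whether each subterm's value lies in $\cO_K^\times$, $\cM_K \setminus \{0\}$, $\{0\}$, or $K \setminus \cO_K$, each branch producing $\cL_{\tr_H(\cB)}$-conditions on $y$ together with possibly additional data in $\VG$.

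Finally, taking the image under $\res$ amounts to existentially quantifying over the $\cM_K^n$-fibre above $y$. Substituting the term analysis into $\phi$ yields a formula for $\res(X \cap \cO_K^n)$ with free variables $y \in k^n$ and quantifiers over $\RV,\RF,\VG$. Since the induced $\cL_{\rv,\cB}$-structure on $\RF$ is $\cL_{\tr_H(\cB)}$ by construction, and since the short exact sequence~\eqref{eq:ses} leaves $\VG$ and $\RF$ essentially independent (communicating only via the maps $\iota$ and $\abs{\cdot}_K$ on $\RV$), the $\RV$- and $\VG$-quantifiers can be absorbed into purely $\cL_{\tr_H(\cB)}$ statements on $k$. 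The main obstacle is the bookkeeping of the case analysis in the term breakdown, particularly for inversion near $\cM_K$ and for terms whose residue vanishes (so that the sign is not determined by $y$ alone), together with the formal justification that $\RV$- and $\VG$-quantifiers can be eliminated against $\RF$-free variables.
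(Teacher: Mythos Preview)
Your overall strategy matches the paper's, but the paper's proof is a two-line invocation of Proposition~\ref{prop:real.an.QE} together with Proposition~\ref{prop:RVQE}. The obstacle you flag at the end --- the formal justification that $\RV$- and $\VG$-quantifiers can be eliminated against $\RF$-free variables --- is precisely what Proposition~\ref{prop:RVQE} provides: relative quantifier elimination for the short exact sequence $1 \to k^\times \to \RV^\times \to \Gamma_K^\times \to 1$ shows that any $\cL_{\RRV}$-definable subset of $\RV^m$ (and hence, via $\iota$, of $k^n$) splits into an $\cL_{\RF}'$-part and an $\cL_{\VG}$-part. Since the $\VG$-terms $\abs{t(\iota(y))}$ become constants when $y$ ranges over $k^n$, and the $\cL_{\RF}'$-structure is interpretable in the $\cL_{\RF}$-structure (which here is $\cL_{\tr_H(\cB)}$), the result follows. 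So rather than arguing informally that $\RF$ and $\VG$ are ``essentially independent,'' you should simply cite Proposition~\ref{prop:RVQE}.

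Your term analysis in the middle is doing explicitly what the paper absorbs into the black-box QE statements; it is not wrong in spirit, though one should double-check the direction of the inequalities in the trichotomy for $\norm{f}$ relative to $t^H$ (from the def-lemma, $\norm{f} < t^H$ corresponds to $\sigma(f)(x) \in \cM_K$, not the other way round). In any case, once Proposition~\ref{prop:RVQE} is in hand, the detailed case-splitting on subterms becomes unnecessary: the VF-quantifier-free formula from Proposition~\ref{prop:real.an.QE} already has all VF-content channelled through $\rv$ of $\cL_\cB$-terms, and the resulting $(\RV,\RF,\VG)$-formula is handled uniformly by the short-exact-sequence QE.
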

\begin{proof}
This follows from Proposition \ref{prop:real.an.QE} and the relative quantifier elimination for $\RV$ as in Proposition~\ref{prop:RVQE}.
\end{proof}
\begin{proof}[Proof of Proposition \ref{prop:real.an.effective} when $k$ is real closed]
The $\cL_{\val,\cB}$-theory of $K$ is $\omega$-h-minimal by \cite[Theorem~1.0.1]{NSV24}, and so in particular it is $1$-h-minimal.
Let $H$ be the maximal convex subgroup of $\Omega$ for which $\RR(H) \subset \cO_K$.
Let $L$ be a model of $\Th_{\cL_{\val,\cB}}(K)$ and let $A \subset L$ be such that $A = \acl(A) \cap L$.
We verify the two conditions of Proposition \ref{prop:effective-sufficient}.

Note that $L$ extends uniquely to an $\cL_{\rv,\cB}$-structure that is a model of $\Th_{\cL_{\rv,\cB}}(K)$.
Then, by Proposition \ref{prop:real.an.QE} and the construction of $\cL_{\rv,\cB}$ it follows that condition (\ref{it:determined}) is fulfilled

We now verify condition (\ref{it:weak_RF-effective}). 
Write $k_L$ for the residue field of $L$.
Since the quotients $k_L^{\times}/(k_L^{\times})^{(N)}$ consist of at most 2 points, for each $N \in \NN$, it suffices to check that every $\cL_{\tr_H(\cB)}(\res(A))$-definable element of $k_L$ has an $\cL_{\val,\cB}(A)$-definable lift in $L$.

Write $C = \langle \res(A)\rangle_{\cL_{\tr_H(\cB)}}$ for the substructure in the residue field generated by $\res(A)$.
As $k_L$ is real closed, the real closure $C^{\rcl}$ of $C$ is contained in $k_L$.
Since both are models of a common $\cL_{\tr_H(\cB)}$-theory with quantifier elimination (\cite[Lem.\,2.1.10]{NSV24} and \cite[Thm\,3.4.3]{CLip}), it follows that $C^{\rcl} \preccurlyeq k_L$. 
In particular, Corollary~\ref{cor:induced.structure} shows that any $\res(A)$-definable element of $k_L$ is already included in $C^{\rcl}$. 
By henselianity, it then suffices to show that $C \subset \res(A)$.

We now show that for any $\cL_{\cB}$-term $t$ and tuple $\res(a) \in \res(A)^n$ there exists some $b \in A$ such that $\res(b) = t(\res(a))$.
By induction, it suffices to consider the case where $t$ is a function symbol in $\cL_{\tr_H(\cB)}$.
If $t$ is addition, multiplication or field inversion, this is clear.
So let $t = \tr_H(f)$ and take $\res(a) \in [-1,1]_{k_L}^n$ for some $a \in A^n$.
By performing reflections around 1, we may assume that $a \in [-1,1]_{L}^n \cap A$.
Now, by construction of the $\cL_{\tr_H(\cB)}$-structure on $\RF$, it follows that $\res((\sigma f)(a)) = t(\res(a))$.
\end{proof}

\subsection{Coarsenings}\label{sec:coarsenings}

If $K$ is a valued field with valuation ring $\cO_K$ and $\cO_c\supset \cO_K$ is a valuation ring of $K$, then we can consider $K$ also as a valued field with valuation ring $\cO_c$.
The valued field $(K,\cO_c)$ is called a \emph{coarsening} of $(K, \cO_K)$.
We say that the coarsening is \emph{nontrivial} if $\cO_c\neq K$, and in that case we define the maximal ideal $\cM_c$, value group $\Gamma_c$, valuation $|\cdot|_c\colon K\to \Gamma_c$ and $\rv_c\colon K\to \RV_c$ in the usual way.
The coarsening is called \emph{proper} if $\cO_c\neq \cO_K$.

If $K$ is a $1$-h-minimal field in the language $\cL$ and $\cO_c$ is a nontrivial coarsening of the valuation ring, then the theory of $K$ in the language $\cL\cup\{\cO_c\}$ is still $1$-h-minimal for the coarsened valuation ring~\cite[Thm.\,2.2.8]{CHRV}. 
This leads to a way to transfer results from equicharacteristic zero to mixed characteristic.
In fact, the entire theory of h-minimality in mixed characteristic is built upon coarsening, and we refer to~\cite{CHRV} for more details.

As for effectivity, we show that any proper nontrivial coarsening of a $1$-h-minimal structure is automatically effective.
Such a statement makes sense if one interprets effectivity as saying that the residue field is in some sense geometric.
Indeed, the residue field of the coarsened valuation is itself a henselian valued field, and hence has dimension theory as a valued field.
On the other hand, such a result may seem surprising since the structure on the original $\RV$-sort can be essentially arbitrary. 
For example, every subset of $\RV_K$ may be $\cL$-definable, but we still obtain effectivity after coarsening.

\begin{prop}\label{prop:coarsening.eff}
Let $K$ be a $1$-h-minimal field in some language $\cL$, possibly of mixed characteristic. Let $\cO_c$ be a proper nontrivial coarsening of the valuation ring with $\QQ\subset \cO_c$ and let $\cL_c =\cL \cup \{\cO_c\}$. Then the $\cL_c$-theory of $K$ is effectively $1$-h-minimal for the coarsened valuation.
\end{prop}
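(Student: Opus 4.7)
The $1$-h-minimality of $\Th_{\cL_c}(K)$ for the coarsened valuation is~\cite[Thm.\,2.2.8]{CHRV}, so what requires proof is the effectivity statement. By Lemma~\ref{lem:lift.fin}, I would reduce to lifting any finite $A$-definable (in $\cL_c$) set $R\subset \RV_c$ to a finite $A$-definable subset of $K$ mapping bijectively to $R$ via $\rv_c$; equivalently, by Lemma~\ref{le:effective.characterizations}~(\ref{it:picking.points.in.balls}), to picking points definably inside finite unions of coarse balls.

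My plan is to verify effectivity via the sufficient condition in Proposition~\ref{prop:effective-sufficient}, applied to the coarsened structure. The analogue of the short exact sequence \eqref{eq:ses} for the coarse valuation is
\[
1 \to k_c^\times \to \RV_c^\times \to \Gamma_c^\times \to 1,
\]
and $\cL_c$ supplies all the necessary sorts and maps. First, I would establish a relative quantifier-elimination result for $\RV_c$ analogous to Proposition~\ref{prop:RVQE}, in the three-sorted language $(k_c, \RV_c, \Gamma_c)$ with the induced $\cL_c$-structure. This should follow from the relative quantifier elimination for $\RV_K$ in the original $1$-h-minimal theory together with the natural surjection $\RV_K \twoheadrightarrow \RV_c$ coming from $\cO_K \subset \cO_c$, and yields condition~(\ref{it:determined}) of Proposition~\ref{prop:effective-sufficient}.

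For condition~(\ref{it:weak_RF-effective}), the crucial observation is that the coarse residue field $k_c = \cO_c / \cM_c$ is itself a Henselian valued field of equicharacteristic zero, with its own valuation ring $\cO_K / \cM_c$, residue field $k$, and value group $H = \abs{\cO_c^\times}$. Since the coarsening is proper, $H$ is non-trivial, so $k_c$ really is a non-trivially valued equicharacteristic-zero Henselian field. I would then check~(\ref{it:weak_RF-effective}) by running the argument from Corollary~\ref{cor:vf.effective}~(\ref{it:val.effective}) inside this induced valuation on $k_c$: any finite definable subset of $k_c$ lifts to $\cO_c$ via Hensel's lemma applied to this inner valuation, without the original residue field $k$ needing to satisfy any algebraic-boundedness hypothesis. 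This is precisely why no assumption on $k$ appears in the statement.

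The main obstacle I anticipate is the first step: establishing relative quantifier elimination for $\RV_c$ cleanly from the $\cL_c$-structure. Definable subsets of $\RV_c$ may a priori involve both the inherited fine structure from $\cL$ and the coarse valuation, and one has to argue that they admit a normal form as in Proposition~\ref{prop:RVQE}. The full strength of the coarsened $1$-h-minimality from~\cite[Thm.\,2.2.8]{CHRV}, combined with a careful tracking of definable sets through the quotient $\RV_K \twoheadrightarrow \RV_c$, should suffice, but this is where the technical care is needed.
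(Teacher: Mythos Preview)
Your approach via Proposition~\ref{prop:effective-sufficient} has a genuine gap, and it is not the obstacle you identify. The deeper problem is that there is no ``relative quantifier elimination for $\RV_K$ in the original $1$-h-minimal theory'' to invoke: $1$-h-minimality constrains definable subsets of $K$ via preparation, but places essentially no restriction on the induced structure on $\RV$. In particular, condition~(\ref{it:determined}) of Proposition~\ref{prop:effective-sufficient} asks that the $\cL_c$-structure on $\RV_c$ be no richer than the group structure together with the full induced structures on $k_c$ and $\Gamma_c$; for an arbitrary $1$-h-minimal $\cL$ this simply need not hold, and no amount of tracking the quotient $\RV_K \twoheadrightarrow \RV_c$ will produce it. Your argument for condition~(\ref{it:weak_RF-effective}) has a parallel gap: Hensel's lemma lifts roots of polynomials, not arbitrary finite sets definable in the full induced $\cL_c$-structure on $k_c$, and that structure may be far from algebraic. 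The appeal to Corollary~\ref{cor:vf.effective}(\ref{it:val.effective}) is exactly what fails here, since that argument relies on the residue field being algebraically bounded, which is precisely the hypothesis you are trying to avoid.

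The paper's proof takes a completely different and much shorter route that sidesteps Proposition~\ref{prop:effective-sufficient} entirely. The key observation is that $\cO_c$ is an $\RV_K$-pullback, so $\cL_c$ remains $1$-h-minimal for the \emph{original} fine valuation. Given an $\cL_c(A)$-definable $\xi \in \RV_c$, one $|m|$-prepares the coarse ball $X = \rv_c^{-1}(\xi)$ by a finite $\cL_c(A)$-definable set $C \subset K$ using the fine valuation. If $C$ missed $X$, then the fine ball $|m|$-next to $C$ through any $a \in X$ would have radius of coarse norm at least $|\xi|_c$ (since every $c \in C$ lies outside $X$ and $|m|_c = 1$); because the coarsening is proper, such a fine ball necessarily contains points outside $X$, contradicting preparation. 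Hence $C \cap X \neq \emptyset$, and averaging gives the required $A$-definable lift. The whole argument is three lines and uses only the interplay between the two valuations---no quantifier elimination, no analysis of $k_c$.
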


\begin{proof}
Let $\xi \in  \RV_c$ be $\cL_c(A)$-definable for some $A\subset K$ and put $X = \rv_c^{-1}(\xi)$. 
Note that $\cO_c$ is a pullback of a subset of $\RV_K$ (for the original valuation), so that $\cL_{c}$ is also 1-h-minimal for the original valuation $\abs{\cdot}$ by~\cite[Prop.\,2.6.5]{CHRV}.
Hence, there exists a finite $\cL_c(A)$-definable set $C\subset K$ and $m \in \NN$ such that $X$ is $\abs{m}$-prepared by $C$.
Now note that any ball $\abs{m}$-next to $C$ contains infinitely many balls $\abs{1}_c$-next to $C$. 
We thus necessarily have that $C \cap \rv_c^{-1}(\xi) \neq \emptyset$. 
%We claim that $\rv_c(C)$ contains $R$, so that $C$ is the desired lift of $R$.
%Take $\xi\in R$, let $B = \rv^{-1}(\xi)$ and suppose that $C$ were disjoint from $B$.
%Then $B$ would be a single open ball for the valuation $|\cdot |$, which cannot be true because the coarsening is proper.
%Hence $C\cap B$ is non-empty and we conclude that $\rv_c(C)  = R$.
\end{proof}
%Mathias; note to self: assume C = {c} and B a single rv_c - fibre. If c not in B, then |x - c| > \lambda |x| for any x in B and \lambda \in \Delta, where \Delta is the convex subgroup associated to the coarsening. Now rv_c(x) = rv_c(y) iff |x - y| < \lambda |x|. Hence, the ball |x - y| < |x - c| contains infinitely many translates of the ball rv_c^{-1}(rv_c(x)). 
Note that Proposition~\ref{prop:coarsening.eff} encompasses our previous example of a pure henselian valued field whose residue field is itself a pure henselian valued field, as in Corollary~\ref{cor:vf.effective}.
In fact this holds more generally when the residue field is t-henselian, even without having both valuations around.
Recall that a field $k$ is \emph{t-henselian} if it is elementarily equivalent to a field which admits a henselian valuation. 
We refer to~\cite{PZ-t-hen} for more details.

\begin{cor}
Let $K$ be a henselian valued field of equicharacteristic zero whose residue field is t-henselian.
Then $\Th_{\cL_{\val}}(K)$ is effective $1$-h-minimal.
\end{cor}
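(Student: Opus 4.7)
The plan is to reduce the corollary to Proposition~\ref{prop:eff.pure.val.field}. By Corollary~\ref{cor:vf.effective}(\ref{it:val.effective}) the $\cL_{\val}$-theory of $K$ is automatically $1$-h-minimal, so the content lies in establishing effectivity, and by Proposition~\ref{prop:eff.pure.val.field} this reduces to showing that the residue field $k$ is algebraically bounded over $\emptyset$ in the pure ring language $\cL_{\ring}$.

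First I would note that algebraic boundedness over $\emptyset$ is a property of the $\cL_{\ring}$-theory: for each formula $\phi(x,y)$ and each candidate tuple of bounding polynomials with coefficients in $\QQ$, the required bounding statement is first-order in the parameter tuple of $\phi$. Hence this property is invariant under $\cL_{\ring}$-elementary equivalence. Since $k$ is t-henselian, it is $\cL_{\ring}$-elementarily equivalent to a field $k^{*}$ that actually carries a nontrivial henselian valuation $w$; it therefore suffices to prove algebraic boundedness over $\emptyset$ for $k^{*}$.

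Using the characterization from~\cite{JY23} recalled after Definition~\ref{def:alg.bdd}, this amounts to verifying that $\acl_{\cL_{\ring}}(A) = A^{\alg}\cap L$ for every $\cL_{\ring}$-elementary extension $L \succeq k^{*}$ and every $A \subseteq L$. For such a pair $(L,A)$ I would pass to a sufficiently saturated further $\cL_{\ring}$-elementary extension of $L$ (still t-henselian, hence equipped with a henselian valuation $w'$ by saturation) and enrich the language to $\cL_{\val}=\cL_{\ring}\cup\{\cO_{w'}\}$. In this enriched language the ambient field is a henselian valued field of characteristic zero, in which the classical identity $\acl_{\cL_{\val}}(A)=A^{\alg}\cap L$ holds. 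Combined with the trivial inclusions
\[
A^{\alg}\cap L \;\subseteq\; \acl_{\cL_{\ring}}(A) \;\subseteq\; \acl_{\cL_{\val}}(A),
\]
this forces $\acl_{\cL_{\ring}}(A)=A^{\alg}\cap L$, as required, and preservation of $\acl$ under elementary substructures then descends the equality back to $L$ itself.

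The main obstacle I anticipate is justifying the classical identity $\acl_{\cL_{\val}}(A)=A^{\alg}\cap L$ in a henselian valued field of characteristic zero, including the case in which the henselian valuation $w'$ has positive residue characteristic. This fact is standard and implicitly underpins several arguments already used earlier in the paper, such as the proof of Proposition~\ref{prop:eff.pure.val.field}, so I would simply cite a standard reference. An alternative route via Proposition~\ref{prop:coarsening.eff}, applied to a composite valuation $\cO'\subseteq\cO_{K}$ built from a henselian valuation on the saturated residue field, only gives effectivity in the enriched language containing $\cO'$ and does not obviously descend to $\cL_{\val}$; the algebraic boundedness route therefore seems more direct.
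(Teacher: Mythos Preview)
Your argument is correct: reducing to Proposition~\ref{prop:eff.pure.val.field} and then verifying that a t-henselian field of characteristic zero is algebraically bounded over $\emptyset$ via $\acl_{\cL_{\val}}(A)=A^{\alg}$ in a saturated elementary extension carrying a henselian valuation is a valid route. The paper in fact remarks just before the proof that this route works, citing \cite[Thm.\,5.5]{JKslim} for the statement that t-henselian fields are algebraically bounded over $\emptyset$.

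However, the paper's actual proof is precisely the coarsening route you dismiss in your final paragraph, and it is worth seeing how the descent issue you flag is resolved. Working in a saturated model, the residue field $k$ carries a nontrivial henselian valuation; composing with the valuation on $K$ yields a finer henselian valuation $|\cdot|'$ on $K$. One adds a predicate for the valuation ring in $k$ to obtain a language $\cL'$, checks that $(K,|\cdot|')$ is $1$-h-minimal in $\cL'$, and then Proposition~\ref{prop:coarsening.eff} gives effectivity of $(K,|\cdot|)$ in $\cL'$. The descent to $\cL_{\val}$ uses a single observation you overlooked: $\cL'$ is an $\RV$-expansion of $\cL_{\val}$ (the new predicate lives on $k\subset\RV$), and by \cite[Cor.\,4.1.17]{CHR} such expansions do not change $\acl(A)\cap K$ for $A\subset K$. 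Since effectivity is characterised by $\rv(\acl(A)\cap K)=\acl(A)\cap\RV$ and the right-hand side can only shrink when passing from $\cL'$ to $\cL_{\val}$, effectivity in $\cL'$ immediately yields effectivity in $\cL_{\val}$.

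Your approach has the advantage of being conceptually direct once one knows the relevant fact about $\acl$ in henselian valued fields of characteristic zero, but it outsources the substance to that external fact. The paper's approach stays within the coarsening machinery already developed and avoids invoking \cite{JKslim} directly; the price is the extra $\RV$-expansion step for descent.
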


Since t-henselian fields are algebraically bounded over $\emptyset$ by~\cite[Thm.\,5.5]{JKslim}, this in fact already follows from Proposition~\ref{prop:eff.pure.val.field}.
However, the proof is much simpler and uses a trick based on coarsening.

\begin{proof}
The fact that $\Th_{\cL_{\val}}(K)$ is $1$-h-minimal follows from~\cite[Cor.\,6.2.6]{CHR}, so let us focus on effectivity.
We may assume that $K$ is sufficiently saturated, and then \cite[Thm.\,7.2(b)]{PZ-t-hen} shows that $k$ admits a nontrivial henselian valuation $|\cdot|_k\colon k\to \Delta$ with valuation ring $\cO_k$.
By composition we obtain a nontrivial henselian valuation $|\cdot|'\colon K\to \Delta'$.
Let $\cL = \cL_\val$ and $\cL' = \cL \cup \{\cO_k\}$.
By~\cite[Cor.\,6.2.6]{CHR} and~\cite[Thm.\,4.1.19]{CHR} $(K, |\cdot|')$ is $1$-h-minimal in the language $\cL'$.
Now $(K, |\cdot|)$ is a nontrivial proper coarsening of $(K, |\cdot|')$, and so by Proposition~\ref{prop:coarsening.eff} it is effective in $\cL'$.
But $\cL'$ is an $\RV$-expansion of $\cL$, and so~\cite[Cor.\,4.1.17]{CHR} shows that $\acl_\cL(A)\cap K = \acl_{\cL'}(A)\cap K$ for any $A\subset K$.
Therefore $(K, |\cdot|)$ is also effective in $\cL$, as desired.
\end{proof}

We use similar ideas to finish up the proof of Proposition~\ref{prop:real.an.effective} when the residue field is not real closed.

\begin{proof}[Proof of Proposition~\ref{prop:real.an.effective} without restriction on the residue field]
The residue field $k$ of $K$ is itself almost real closed so let $\cO_k\subset k$ be the convex closure of $\RR$.
Then $\cO_k$ is a nontrivial henselian valuation ring with residue field $\RR$.
Let $\cO_K'\subset K$ be the pull-back of $\cO_k$ under the residue map, and note that this is again a nontrivial henselian valuation ring with residue field $\RR$.
Let $\cL' = \cL\cup \{\cO'\}$ so that $K$ has $\cL'$-structure by interpreting $\cO'$ as $\cO_K'$.
Then $\Th_{\cL'}(K)$ with the valuation $\cO_K$ is 1-h-minimal and effective by Proposition~\ref{prop:coarsening.eff}.
Now $\cL'$ is an $\RV$-expansion of $\cL$ and so by~\cite[Cor.\,4.4.17]{CHR} we have $\acl_\cL(A)\cap K = \acl_{\cL'}(A)\cap K$ for any $A\subset K$.
Therefore $\Th_{\cL}(K)$ is also effective.
\end{proof}

To end, we note for later use that a coarsening of a 1-h-minimal structure induces a 1-h-minimal structure on the residue field. If $|\cdot|_c\colon K\to \Gamma_c$ is a nontrivial coarsening of the valuation, then the residue field $k_c$ for $|\cdot|_c$ comes equipped with a valuation $|\cdot|_{k_c}\colon k_c^\times\to k$, where $k$ is the residue field of $K$ for $|\cdot |$.

\begin{prop}\label{prop:coarsening.res.1.h.min}
Consider $k_c$ as a valued field with the full induced $\cL_c$-structure from $K$. Then $k_c$ is $1$-h-minimal.
\end{prop}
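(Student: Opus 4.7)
The plan is a lift-prepare-descend argument: lift a definable subset of $k_c$ to a definable subset of $K$, apply 1-h-minimality of $K$ for the original (fine) valuation, and push the resulting preparation back down to $k_c$. Note that by \cite[Prop.\,2.6.5]{CHRV}, since $\cO_c$ is a pullback of a subset of $\RV_K$, the theory $\Th_{\cL_c}(K)$ remains 1-h-minimal for the original valuation $|\cdot|$, so this version of 1-h-minimality is at our disposal.

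First I would set up the identifications between the valued-field data of $k_c$ and that of $K$. The value group $\Gamma_{k_c}$ is canonically the convex subgroup $\Delta \leq \Gamma_K$ given by the kernel of $\Gamma_K \to \Gamma_c$, and for each $\lambda\in \Delta$ with $\lambda\leq 1$ there is a natural embedding $\RV_{\lambda,k_c}\setminus\{0\} \hookrightarrow \RV_{\lambda,K}$ obtained by lifting cosets via the inclusion $\cO_c^\times \hookrightarrow K^\times$. Moreover, $k_c$ is interpretable in the $\cL_c$-structure on $K$ as the imaginary sort $\cO_c/\cM_c$.

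Now let $A\subset k_c$, let $\lambda\in \Delta$ with $\lambda\leq 1$, and let $X\subset k_c$ be $A\cup\RV_{\lambda,k_c}$-definable. Set $\tilde X = \pi^{-1}(X)\subset \cO_c$, where $\pi\colon \cO_c \to k_c$ is the quotient. Viewing $A$ as a set of imaginary parameters over $K$ and $\RV_{\lambda,k_c}$-parameters as $\RV_{\lambda,K}$-parameters via the embedding above, $\tilde X$ is $A\cup \RV_{\lambda,K}$-definable in $K$. Applying 1-h-minimality of $K$ in the language $\cL_c$ yields a finite $A$-definable set $\tilde C\subset K$ that $\lambda$-prepares $\tilde X$. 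Setting $C = \pi(\tilde C\cap \cO_c)$ then produces a finite $A$-definable subset of $k_c$.

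The remaining task is to verify that $C$ $\lambda$-prepares $X$. The crucial observation is that any $\tilde c\in \tilde C\setminus \cO_c$ satisfies $|\tilde c|_c > 1$ in $\Gamma_c$, hence $|\tilde c - c'| > \delta$ for every $c'\in \cO_c$ and every $\delta\in \Delta$; such a $\tilde c$ therefore imposes only a trivial condition on the preparation of $\tilde X\cap \cO_c$ at depth $\lambda\in \Delta$. Moreover, $\tilde X$ is $\cM_c$-saturated by construction, so every ball $\lambda$-next to $\tilde C$ that meets $\cO_c$ is $\cM_c$-saturated and projects under $\pi$ to a ball $\lambda$-next to $C$ in $k_c$. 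The dichotomy $\tilde B\subset \tilde X$ or $\tilde B\cap \tilde X=\emptyset$ thus descends to the required dichotomy $B\subset X$ or $B\cap X=\emptyset$ in $k_c$. The main obstacle I expect is the careful bookkeeping between ball structures at depth $\lambda$ in $K$ and in $k_c$, together with ensuring that the imaginary parameters arising from $A$ and $\RV_{\lambda,k_c}$ are handled correctly so that the finite set $\tilde C$ is genuinely $A$-definable (and hence so is $C$); this should follow from the identifications set up above, but it is the point that demands the most attention to depth and radius conventions.
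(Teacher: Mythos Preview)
Your lift–prepare–descend strategy matches the paper's, and the verification that the pushdown $\lambda$-prepares $X$ is along the right lines. But the obstacle you flag at the end is the real content of the proof, and it does \emph{not} follow from the identifications you set up.

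The identifications you describe correctly embed $\RV_{\lambda,k_c}$ into $\RV_{\lambda,K}$, so those parameters are harmless when applying 1-h-minimality of $(K,|\cdot|)$. But $A \subset k_c = \cO_c/\cM_c$ does not embed into any $\RV_{K,\lambda}$: the coset $a' + \cM_c$ representing $a\in k_c$ is not a ball of any fixed fine radius, so $a$ is a genuine imaginary from the point of view of the fine valuation. Since 1-h-minimality only produces preparing sets definable over parameters from $K$, you cannot directly obtain an $A$-definable $\tilde C$. What you actually get, after choosing lifts $B \subset \cO_c^\times$, is a $B$-definable $\tilde C$ --- and different lifts give genuinely different sets $\tilde C$, so it is not automatic that $\res_c(\tilde C)$ depends only on $A$.

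The paper resolves this by choosing $B$ to be finite and $\acl$-independent with $\res_c(B)\subset A\subset \acl(B)\cap k_c$, and then running an automorphism argument. If $\sigma$ fixes $A$ pointwise then $\rv_c(\sigma(b)) = \rv_c(b)$ for each $b\in B$, and an inductive application of \cite[Lem.~2.4.4]{CHR} (the type over $\RV$ of an element outside $\acl$ of the base is determined by its $\rv$-image), using the $\acl$-independence of $B$, shows that $B$ and $\sigma(B)$ have the same type over $\RV_c$. Since $D=\res_c(\tilde C)$ lives in $k_c\subset \RV_c$, this forces $\sigma(D)=D$, hence $D$ is $A$-definable. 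The choice of $\acl$-independent lifts and the appeal to that lemma are precisely the ingredients missing from your sketch.
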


\begin{proof}
We may assume that $K$ is sufficiently saturated in both languages $\cL$ and $\cL_c$.
We write $\RV_K, \RV_{k_c}, \RV_{K_c}$ for the leading term sorts for $K$ with $|\cdot |$, $k_c$ with $|\cdot|_{k_c}$ and $K$ with $|\cdot |_c$ respectively. Note that $k_c\subset \RV_{K_c}$ and that $\RV_{k_c}\subset \RV_K$. Let $\lambda\in \Gamma^\times_{k_c}\subset \Gamma^\times_K$, take $\xi\in \RV_{k_c, \lambda}\subset \RV_{K, \lambda}$, let $A\subset k_c^\times$ be finite and let $X\subset k_c$ be $A\xi \cup \RV_{k_c}$-definable. We have to $\lambda$-prepare $X$ by a finite $A$-definable set $D\subset k_c$. 

Recall that $\acl$ is a pregeometry on $K$ in the language $\cL_c$, by~\cite[Lem.\,5.3.5]{CHR}.
Take $B\subset \cO_{K_c}^\times$ a finite set which is independent for the $\acl$ pregeometry and such that $\res_c(B)\subset A\subset \acl(B)\cap k_c$.
Consider $Y = \res_c^{-1}(X)\subset \cO_{K_c}^\times$, which is a $\acl(B)\xi\RV_K$-definable set.
By $1$-h-minimality of $K$ in the language $\cL_c$ with the valuation $|\cdot|$, there exists a finite $\acl(B)\cap K$-definable $C\subset K$ which $\lambda$-prepares $Y$.
After enlarging $C$, we may assume that $C$ is in fact $B$-definable and that $0\in C$.
Then $D = \res_c(C)\subset k_c$ will $\lambda$-prepare $X$.

We claim that $D$ is $A$-definable, which will finish the proof.
To see this, let $\sigma$ be any automorphism of $K$ fixing $A$ point-wise.
Write $B = \{b_1, \ldots, b_n\}$.
Since $\res_c(B)\subset A$, we have that $\rv_c(\sigma(b_i)) = \res_c(\sigma(b_i)) = \res_c(b_i) = \rv_c(b_i)$.
Hence an inductive procedure with~\cite[Lem.\,2.4.4]{CHR} using that $B$ is $\acl$-independent, shows that $(b_1, \ldots, b_n)$ and $(\sigma(b_1), \ldots, \sigma(b_n))$ have the same type over $\RV_c$.
If $\phi(x,B)$ is the $\cL$-formula defining $C$, we then have that 
\[
D = \res_c (\phi(K, B)) = \res_c (\phi(K, \sigma(B)),
\]
and therefore $\sigma(D) = D$. 
\end{proof}

The above result also holds for $\ell$-h-minimality for any $\ell\in \NN\cup\{\omega\}$, with an identical proof.
Together with~\cite[Thm.\,2.2.8]{CHRV} this shows that if $K$ is $1$-h-minimal, then so are $K_c$ and $k_c$. It is natural to wonder whether the converse holds as well.

\begin{question}
Let $K$ be a valued field of characteristic zero in some language $\cL$ expanding the language of valued fields.
Let $\cO_c$ be a nontrivial coarsening of $\cO_K$ and let $\cL_c = \cL \cup\{\cO_c\}$ as above.
Assume that $K_c$ is $1$-h-minimal in the language $\cL_c$, and that $k_c$ with its induced structure is $1$-h-minimal.
Is $K$ then $1$-h-minimal, either in the language $\cL$, or in the language $\cL_c$?
\end{question}

\section{Dimension theory on $\RV$}\label{sec:dim.RV}

Effectivity naturally yields some geometric consequences on $\RV$. It allows us to define derivatives, dimension theory, and so on. In fact, one can think of effectivity as saying that the structure on $\RV$ is in a sense geometric. The key idea is that effectivity tells us how to lift certain definable subsets in $\RV^n$ to $K^n$, and there is already a good dimension theory for definable subsets of $K^n$.

The theory of $\RV$-derivatives will be developed in Section~\ref{sec:measures}, when we deal with integration with measures.

\subsection{The exchange property}

Recall that we work in an effective 1-h-minimal theory $\cT$ of equicharacteristic zero, and $K$ is a model of $\cT$, which we may take to be sufficiently saturated, where necessary.
We will show that under effectivity the $\acl$-dimension defines a geometry on $\RV$.

\begin{defn}
A point $\xi\in (\RV^\times)^n$ is \emph{full-dimensional} if for every $\RV$-definable $X\subset K^n$ we have either $X\cap \rv^{-1}(\xi) = \emptyset$ or $\rv^{-1}(\xi)\subset X$.
\end{defn}

We will later see that in effective theories a full-dimensional point is the same as being a generic point of an $n$-dimensional set.

\begin{lem}
Let $Z\subset K^n$ be $\RV$-definable of dimension $m$. Then there exists a $\emptyset$-definable $Z'\subset K^n$ of dimension $m$ with $Z\subset Z'$.
\end{lem}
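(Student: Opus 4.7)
The plan is to realize $Z$ as a fibre of a $\emptyset$-definable family indexed by $\RV^k$, take $Z'$ to be the union of all fibres of dimension $m$, and verify $\dim Z' = m$ by a Fubini computation carried out after lifting the parameter space to $K^k$. First, write $Z = Z_\xi$ for some $\emptyset$-definable family $(Z_\eta)_{\eta \in \RV^k}$. By discarding those coordinates at which $\xi$ vanishes (which gives a $\emptyset$-definable subfamily parametrized by a smaller power of $\RV^\times$), we may assume $\xi \in (\RV^\times)^k$. Applying the definability-of-dimension clause of Theorem~\ref{thm:dim.theory.VF} to the second projection of the $\emptyset$-definable set $\{(x,y) \in K^n \times (K^\times)^k : x \in Z_{\rv(y)}\}$ shows that $\{y \in (K^\times)^k : \dim Z_{\rv(y)} = m\}$ is $\emptyset$-definable; since it is $\rv$-saturated, its image
\[
W = \{\eta \in (\RV^\times)^k \mid \dim Z_\eta = m\}
\]
is a $\emptyset$-definable subset of $(\RV^\times)^k$. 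Set $Z' = \bigcup_{\eta \in W} Z_\eta$; this is $\emptyset$-definable, contains $Z = Z_\xi$, and therefore has dimension at least $m$.

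To obtain $\dim Z' \leq m$, I would apply the Fibrations property of Theorem~\ref{thm:dim.theory.VF} to the two natural projections of
\[
X = \{(x,y) \in K^n \times (K^\times)^k \mid \rv(y) \in W, \ x \in Z_{\rv(y)}\}.
\]
The projection $p \colon X \to (K^\times)^k$ has image $\rv^{-1}(W)$ of dimension $k$, and every fibre over a point $y \in \rv^{-1}(W)$ is of the form $Z_{\rv(y)} \times \{y\}$ with $\rv(y) \in W$, hence of dimension exactly $m$. Fibrations gives $\dim X = m + k$. The projection $q \colon X \to K^n$ has image $Z'$, and the fibre over $x \in Z'$ is $\{x\} \times \rv^{-1}(F_x)$, where $F_x = \{\eta \in W : x \in Z_\eta\}$ is a nonempty subset of $(\RV^\times)^k$. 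For any $\eta \in F_x$ the fibre contains the open polydisc $\rv^{-1}(\eta) \subset (K^\times)^k$, which has dimension $k$, so each such fibre has dimension exactly $k$. Applying Fibrations once more yields $\dim X = \dim Z' + k$, and comparing the two expressions gives $\dim Z' = m$, as required.

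The main delicate point is verifying that the fibres of both projections have constant dimension so that Fibrations applies: constancy for $p$ comes directly from the definition of $W$, and for $q$ from the reduction to $\xi \in (\RV^\times)^k$, which guarantees that each relevant $\rv$-fibre in $K^k$ is a full-dimensional open polydisc. The argument uses only standard dimension theory in the valued field sort and does not require effectivity.
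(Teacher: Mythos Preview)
Your argument is correct. The reduction to $\xi\in(\RV^\times)^k$, the definability of $W$, and the two applications of Fibrations all go through as you describe; in particular, the constancy of the fibre dimensions of $p$ and $q$ is exactly what you need and you verify it cleanly.

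Your route is genuinely different from the paper's. The paper takes a single $\emptyset$-definable cell decomposition $\psi\colon K^n\to\RV^N$ adapted to the entire family $(Z_\eta)_\eta$ (Theorem~\ref{thm:cell.decomp}(\ref{it:adapted})), and lets $Z'$ be the union of all twisted boxes of $\psi$ of dimension at most $m$; since $Z$ is itself a union of such boxes, $Z\subset Z'$, and $\dim Z'\le m$ is immediate from the Unions property. This is a one-line argument once uniform cell decomposition is available. Your approach instead avoids explicit cell decomposition and works purely with the dimension axioms (Definability and Fibrations), at the cost of the auxiliary lift to $K^k$ and the double Fubini count. The paper's proof is shorter and yields a somewhat more explicit $Z'$; yours is more portable, since it would work verbatim in any setting where the $\RV$-parametrised family can be replaced by a family over a base whose nonempty definable subsets all have full dimension.
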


\begin{proof}
By item (\ref{it:adapted}) of Theorem~\ref{thm:cell.decomp} there exists a $\emptyset$-definable cell decomposition $\psi \colon K^n \to \RV^N$ for $Z$. 
Now let $Z'$ be the union of all twisted boxes of $\psi$ of dimension at most $m$, which contains $Z$.
By dimension theory on $K$, this set is $\emptyset$-definable and of dimension at most $m$.
\end{proof}

\begin{lem}\label{lem:full.dim.lower.dim}
A point $\xi\in (\RV^\times)^n$ is full-dimensional if and only if for every $\emptyset$-definable $Z\subset K^n$ with $\dim Z < n$ we have $Z\cap \rv^{-1}(\xi) = \emptyset$.
\end{lem}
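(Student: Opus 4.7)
The forward direction is immediate: if $\xi$ is full-dimensional and $Z\subset K^n$ is $\emptyset$-definable, hence in particular $\RV$-definable, then either $\rv^{-1}(\xi)\cap Z=\emptyset$ or $\rv^{-1}(\xi)\subset Z$; the latter is impossible since $\dim \rv^{-1}(\xi)=n > \dim Z$.

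For the backward direction, let $X\subset K^n$ be $\RV$-definable with $\rv^{-1}(\xi)\cap X\neq\emptyset$; the goal is to show $\rv^{-1}(\xi)\subset X$. Writing $X = Y_\eta$ for some $\emptyset$-definable $Y\subset K^n\times \RV^m$, I would combine Theorem~\ref{thm:cell.decomp}(1) with Theorem~\ref{thm:cell.decomp}(\ref{it:adapted}) and the cylindrical refinement to produce a $\emptyset$-definable cylindrical cell decomposition $(\chi_1,\ldots,\chi_n)$ adapted to $Y$ and refining $\rv$. Then $X$ is a union of twisted boxes of $\chi_n$, and by Lemma~\ref{lem:cyl.cell.decomp}, $\rv^{-1}(\xi)$ is either a single twisted box of $\chi_n$---in which case $\rv^{-1}(\xi)\cap X\neq\emptyset$ forces $\rv^{-1}(\xi)\subset X$---or $\rv^{-1}(\xi)$ contains a twisted box of $\chi_n$ of dimension strictly less than $n$.

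To rule out the second alternative, consider the set
\[
E \;=\; \{x\in K^n \mid \dim \chi_n^{-1}(\chi_n(x)) < n\},
\]
which is $\emptyset$-definable since the type of a twisted box is a $\emptyset$-definable function of its $\chi_n$-index. In the bad case $\rv^{-1}(\xi)\cap E\neq\emptyset$, so it suffices to show $\dim E<n$ in order to contradict the hypothesis.

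The main obstacle is establishing $\dim E<n$, since \emph{a priori} an $\RV$-indexed union of lower-dimensional twisted boxes could itself be $n$-dimensional. Writing $(i_1,\ldots,i_n)$ for the type of a twisted box as in Definition~\ref{def:cells}, decompose $E = E_1\cup\cdots\cup E_n$ with $E_j = \{x\in K^n : i_j(\chi_n(x)) = 0\}$. The cylindrical structure yields $E_j\subset E_j'\times K^{n-j}$, where $E_j' = \{x'\in K^j : i_j(\chi_j(x')) = 0\}$. For each $b\in K^{j-1}$, the slice $(E_j')_b$ consists of the values $c_j^\zeta(b)$ as $\zeta$ ranges over those $\chi_j$-indices with $i_j(\zeta)=0$ and $b\in\pi_{<j}(\chi_j^{-1}(\zeta))$, exhibiting $(E_j')_b$ as an $\RV$-indexed union of singletons. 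Since 1-h-minimality guarantees that such $\RV$-unions of finite sets remain finite, $(E_j')_b$ is finite for every $b$, so $\pi_{<j}|_{E_j'}\colon E_j'\to K^{j-1}$ has finite fibres, whence $\dim E_j'\leq j-1$ by Theorem~\ref{thm:dim.theory.VF}. Consequently $\dim E_j\leq n-1$ for each $j$, and therefore $\dim E\leq n-1<n$.
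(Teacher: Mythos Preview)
Your proof is correct and follows the same strategy as the paper: apply Lemma~\ref{lem:cyl.cell.decomp} to a cylindrical cell decomposition refining $\rv$, and rule out the lower-dimensional alternative by showing that the union $E$ of all lower-dimensional twisted boxes has dimension $<n$. The paper organizes this last step slightly differently---it first invokes the preceding lemma to extend the hypothesis to $\RV$-definable $Z$, and handles the dimension bound via the cell-dimension remark after Theorem~\ref{thm:dim.theory.VF} rather than your explicit $E_j$ argument---but the content is the same.
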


\begin{proof}
Assume first that $\xi$ is full-dimensional and let $Z\subset K^n$ be $\emptyset$-definable of dimension $m<n$.
Since $\rv^{-1}(\xi)$ has dimension $n$, it cannot be contained in $Z$.
Therefore $Z\cap \rv^{-1}(\xi) = \emptyset$.

Assume conversely that $\rv^{-1}(\xi)$ is disjoint from every $\emptyset$-definable subset of $K^n$ of dimension at most $n-1$.
By the previous lemma, this then in fact holds for every $\RV$-definable subset of $K^n$ of dimension at most $n-1$.
Now let $X\subset K^n$ be any $\RV$-definable set. 
Take a cylindrical cell decomposition $(\chi_1, \ldots, \chi_n)$ of $K^n$ for $X$ and for the map $\rv\colon K^n\to \RV^n$. 
In other words, both $X$ and every fibre of $\rv$ are a union of twisted boxes of $\chi_n$. Let $Z$ be the union of all twisted boxes of $\chi_n$ of dimension strictly less than $n$. Then $Z$ is an $\RV$-definable set of dimension strictly less than $n$. 
Hence $\rv^{-1}(\xi)\cap Z = \emptyset$. 
But then by Lemma~\ref{lem:cyl.cell.decomp} we must also have that either $\rv^{-1}(\xi)\subset X$ or $\rv^{-1}(\xi)\cap X = \emptyset$.
\end{proof}

\begin{lem}
If $\xi = (\xi_1, \ldots, \xi_n)\in (\RV^\times)^n$ satisfies $\xi_i\notin \acl(\xi_1, \ldots, \xi_{i-1})$ for $i = 1, \ldots, n$, then $\xi$ is full-dimensional.
\end{lem}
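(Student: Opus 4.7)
By Lemma~\ref{lem:full.dim.lower.dim}, it suffices to show that $\rv^{-1}(\xi) \cap Z = \emptyset$ for every $\emptyset$-definable set $Z \subset K^n$ of dimension strictly less than $n$. My plan is to prove this by induction on $n$, the case $n = 0$ being vacuous. For the inductive step I would suppose towards a contradiction that some $x \in Z \cap \rv^{-1}(\xi)$ exists, and derive that $\xi_j \in \acl(\xi_1,\ldots,\xi_{j-1})$ for some $j$, contradicting the hypothesis.

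The setup is to use Theorem~\ref{thm:cell.decomp} together with its cylindrical refinement to obtain a $\emptyset$-definable cylindrical cell decomposition $(\chi_1,\ldots,\chi_n)$ that is adapted to $Z$ and refines $\rv \colon K^n \to \RV^n$. The twisted box $F$ of $\chi_n$ containing $x$ then sits inside $Z \cap \rv^{-1}(\xi)$ and has dimension less than $n$, so its type $(i_1,\ldots,i_n)$ has at least one zero entry. Letting $j$ be the smallest such index, $\pi_{<j}(F)$ is a top-dimensional twisted box of $\chi_{j-1}$ contained in the $\rv$-fibre $\rv^{-1}(\xi_1,\ldots,\xi_{j-1})$, where I use that cylindricity forces $\chi_{j-1}$ itself to refine $\rv \colon K^{j-1} \to \RV^{j-1}$. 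The proof then bifurcates according to whether this containment is proper.

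If $\pi_{<j}(F) \subsetneq \rv^{-1}(\xi_1,\ldots,\xi_{j-1})$, then the fibre cannot be a single twisted box of $\chi_{j-1}$, and Lemma~\ref{lem:cyl.cell.decomp} applied to $(\chi_1,\ldots,\chi_{j-1})$ produces a twisted box of $\chi_{j-1}$ of dimension strictly less than $j-1$ inside this fibre. Gathering all such lower-dimensional twisted boxes of $\chi_{j-1}$ into a $\emptyset$-definable set $W \subset K^{j-1}$ with $\dim W < j-1$, and applying the inductive hypothesis to $(\xi_1,\ldots,\xi_{j-1})$ together with Lemma~\ref{lem:full.dim.lower.dim}, gives $\rv^{-1}(\xi_1,\ldots,\xi_{j-1}) \cap W = \emptyset$, the desired contradiction.

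If instead $\pi_{<j}(F) = \rv^{-1}(\xi_1,\ldots,\xi_{j-1})$, then the $\chi_{j-1}$-index $\eta_{j-1} \coloneqq \chi_{j-1}(\pi_{<j}(x))$ is $\dcl(\xi_1,\ldots,\xi_{j-1})$-definable. The crux of the argument, and the step I expect to be the principal obstacle, is to show that the twisted boxes of $\chi_j$ of type $(1,\ldots,1,0)$ sitting over $\chi_{j-1}^{-1}(\eta_{j-1})$ form a finite set, with bound uniform in the base. For this I would fix a point $y_0 \in \chi_{j-1}^{-1}(\eta_{j-1})$, restrict $\chi_j$ to the slice $\{y_0\} \times K$ to obtain a one-dimensional cell decomposition, and apply one-dimensional preparation to bound the number of type-$0$ cell centres in terms of a finite $y_0$-definable set; $\exists^\infty$-elimination on $K$ (available since $K$ is geometric, as recorded in Section~\ref{ssec:acl.dim.prelims}) then makes this bound uniform in $y_0$, and hence transfers into a uniform bound on the number of such cells over the base. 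Since $\chi_j$ refines $\rv$, the $\rv$-value of each such centre is constant on the whole base, producing a finite $\eta_{j-1}$-definable subset of $\RV$ that contains $\xi_j$; hence $\xi_j \in \acl(\eta_{j-1}) \subset \acl(\xi_1,\ldots,\xi_{j-1})$, the desired contradiction.
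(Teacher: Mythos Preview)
Your proof is correct and follows the paper's approach: induct on $n$, take a cylindrical cell decomposition refining $Z$ and $\rv$, locate a type-$(1,\ldots,1,0)$ twisted box inside $\rv^{-1}(\xi)$, and read off $\xi_j \in \acl(\xi_1,\ldots,\xi_{j-1})$ from the constant $\rv$-value of its last centre. The paper streamlines your argument by observing that the inductive hypothesis already makes $\xi' = (\xi_1,\ldots,\xi_{n-1})$ full-dimensional, so the $\RV$-definable set $\pi_{<n}(F)$ must equal $\rv^{-1}(\xi')$ outright---hence your Case~1 never occurs and one may take $j = n$ throughout; the finiteness you flag as the principal obstacle is then immediate from the fact that $\RV$-unions of finite subsets of $K$ remain finite (recorded just after the uniform preparation statement), without needing $\exists^\infty$-elimination.
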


\begin{proof}
We induct on $n$.
For $n=1$, let $X\subset K$ be any $\RV$-definable set.
By $1$-h-minimality, there exists a finite $\emptyset$-definable set $C\subset K$ $1$-preparing $X$.
Since $\rv(C)$ is finite, it cannot contain $\xi$.
Hence $\rv^{-1}(\xi)$ must be contained in a single ball $1$-next to $X$, and so $\rv^{-1}(\xi)$ is either contained in $X$, or disjoint from $X$.

Now assume that $n>1$, and assume that $\xi$ is not full-dimensional.
Write $\xi' = (\xi_1, \ldots, \xi_{n-1})\in (\RV^\times)^{n-1}$, which is full-dimensional by induction on $n$.
By the previous lemma, there exists some $\emptyset$-definable $Z\subset K^n$ of dimension at most $n-1$ such that $\rv^{-1}(\xi)\cap Z\neq \emptyset$.
Let $(\chi_1, \ldots, \chi_n)$ be a cylindrical cell decomposition of $K^n$ which refines $Z$ and $\rv$.
Take any $z\in Z\cap \rv^{-1}(\xi)$ and denote by $F$ the twisted box of $\chi_n$ which contains $z$.
Then $F$ is contained in $\rv^{-1}(\xi)$ and has dimension at most $n-1$.
Since $\xi'$ is full-dimensional, $\pi_{<n}(F) = \rv^{-1}(\xi')$.
Hence $F$ is a cell of type $(1, \ldots, 1, 0)$.

Denote by $Y$ the union of all twisted boxes $F$ of $\chi_n$ of type $(1, \ldots, 1, 0)$ for which $\pi_{<n}(F) = \rv^{-1}(\xi')$.
This is a $\xi'$-definable set, and the restriction of $\pi_{<n}$ to $Y$ is finite-to-one onto $\rv^{-1}(\xi')$.
Take $x\in \rv^{-1}(\xi')$ and let $\{y_1, \ldots, y_N\} = Y\cap (\{x\}\times K)$.
We claim that $\rv(\{y_1, \ldots, y_N\})\subset \RV$ is independent of $x$.
To see this, take $x'\in \rv^{-1}(\xi')$ and let $\{y_1', \ldots, y_M'\} = Y\cap (\{x'\}\times K)$.
Fix $i$ and let $F$ be a twisted box of $\chi_n$ of type $(1, \ldots, 1, 0)$ which contains $(x, y_i)$.
Since $\pi_{<n}(F) = \rv^{-1}(\xi')$ there exists some $y_j'\in K$ such that $(x',y_j')\in F$.
But $F$ is contained in a single $\rv$-fibre and so $\rv(y_i) = \rv(y_j')$.
Hence $\{y_1, \ldots, y_N\} = \{y_1', \ldots, y_M'\}$ is $\xi'$-definable and $\xi_n\in \{\rv(y_1), \ldots, \rv(y_N)\}\subset \acl(\xi')$.
\end{proof}

The converse holds under effectivity.

\begin{lem}
Assume that $\cT$ is effective.
Then $\xi\in (\RV^\times)^n$ is full-dimensional if and only if $\xi_i\notin \acl(\xi_1, \ldots, \xi_{i-1})$ for $i = 1, \ldots, n$.
\end{lem}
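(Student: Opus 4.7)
The plan is to argue by contrapositive. Suppose $\xi_i \in \acl(\xi_1,\ldots,\xi_{i-1})$ for some $i$, and produce a $\emptyset$-definable subset $Z \subset K^n$ of dimension at most $n-1$ meeting $\rv^{-1}(\xi)$; by Lemma~\ref{lem:full.dim.lower.dim}, this will contradict full-dimensionality of $\xi$.

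First, since $\xi_i \in \acl(\xi_1,\ldots,\xi_{i-1})$, there is a finite $(\xi_1,\ldots,\xi_{i-1})$-definable set $R \subset \RV$ containing $\xi_i$. Absorbing the defining parameters into the formula, we obtain a $\emptyset$-definable family $(R_\eta)_{\eta \in \RV^{i-1}}$ of finite subsets of $\RV$ such that $\xi_i \in R_{(\xi_1,\ldots,\xi_{i-1})}$. After intersecting with a uniform upper bound on cardinality, we may assume all fibres $R_\eta$ are finite of uniformly bounded size.

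Now we invoke effectivity in the form of item~(\ref{it:lifting.families}) of Lemma~\ref{le:effective.characterizations} (lifting families), which applies since we may enlarge $K$ to be sufficiently saturated. This gives a $\emptyset$-definable family $(C_a)_{a \in K^{i-1}}$ of finite subsets of $K$ such that for every $a \in K^{i-1}$, the map $\rv$ sends $C_a$ bijectively onto $R_{\rv(a)}$. Define
\[
Z \;=\; \{(x_1,\ldots,x_n) \in K^n \mid x_i \in C_{(x_1,\ldots,x_{i-1})}\},
\]
which is $\emptyset$-definable. The projection onto the remaining $n-1$ coordinates $(x_1,\ldots,x_{i-1},x_{i+1},\ldots,x_n)$ is finite-to-one on $Z$, so $\dim Z \leq n-1$ by Theorem~\ref{thm:dim.theory.VF}.

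It remains to check that $Z \cap \rv^{-1}(\xi) \neq \emptyset$. Pick any $x = (x_1,\ldots,x_n) \in \rv^{-1}(\xi)$. Then $\rv(x_1,\ldots,x_{i-1}) = (\xi_1,\ldots,\xi_{i-1})$, hence $\rv(C_{(x_1,\ldots,x_{i-1})}) = R_{(\xi_1,\ldots,\xi_{i-1})} \ni \xi_i$. Choose the unique $x_i' \in C_{(x_1,\ldots,x_{i-1})}$ with $\rv(x_i') = \xi_i$; then $(x_1,\ldots,x_{i-1},x_i',x_{i+1},\ldots,x_n)$ lies in $Z \cap \rv^{-1}(\xi)$, giving the desired contradiction. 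The conceptual core is simply translating algebraicity in $\RV$ to a lower-dimensional $\emptyset$-definable subset of $K^n$; effectivity is what makes this translation possible, and is essential since without it algebraic elements of $\RV$ need not come from algebraic elements of $K$.
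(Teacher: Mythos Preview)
Your proof is correct and follows essentially the same approach as the paper: both use effectivity (via lifting families) to pull back a finite $\emptyset$-definable $\RV$-family witnessing $\xi_i \in \acl(\xi_1,\ldots,\xi_{i-1})$ to a $\emptyset$-definable set in $K^n$ of dimension at most $n-1$ that meets $\rv^{-1}(\xi)$. The only cosmetic difference is that the paper first permutes coordinates to reduce to the case $i=n$, whereas you work directly with the general index $i$.
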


\begin{proof}
The ``if'' direction is the previous lemma. 
For the other direction, we may assume after swapping some coordinates that $\xi_n\in \acl(\xi_1, \ldots, \xi_{n-1})$.
Denote by $\pi = \pi_{<n}\colon \RV^n\to \RV^{n-1}$ the projection onto the first $n-1$ coordinates.
Then there exists a $\emptyset$-definable $R\subset \RV^n$ containing $\xi$ such that the restriction of $\pi$ to $R$ is finite-to-one onto its image.

Consider the set
\[
X = \{(x,\zeta)\in K^{n-1}\times \RV\mid (\rv(x), \zeta)\in R\}.
\]
For every $x\in K^{n-1}$ the fibre $X_x\subset \RV$ is finite, and so by effectivity there exists a $\emptyset$-definable family $Y_x\subset K$ such that $\rv(Y_x) = X_x$ for every $x\in K^{n-1}$.
Then $Y = \cup_{x\in K^{n-1}}\{x\}\times Y_x$ is a $\emptyset$-definable set of dimension at most $n-1$ which intersects $\rv^{-1}(\xi)$.
\end{proof}

\begin{lem}\label{lem:RV.acl.exchange}
If $\cT$ is effective then $\acl$ satisfies the exchange principle.
In particular $\cT$ is pregeometric.
\end{lem}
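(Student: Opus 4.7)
The approach is to read off a symmetric notion of $\acl$-independence on $\RV$ from the full-dimensional characterization of the previous two lemmas, and to derive exchange as a formal consequence of this symmetry.

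First I would relativize the characterization to an arbitrary parameter set $A$ (from $K$, and more generally from $K \cup \RV$). The proofs of the previous two lemmas go through verbatim with $A$-definable sets replacing $\emptyset$-definable ones throughout, since 1-h-minimality and effectivity both give $A$-definable versions of the preparation and lifting statements they use. The resulting equivalence is: for every such $A$ and every $\xi \in (\RV^\times)^n$,
\[
\xi_i \notin \acl(A\xi_1\cdots\xi_{i-1})\ \forall i \iff \rv^{-1}(\xi)\cap Z = \emptyset\ \text{for every } A\text{-definable } Z\subset K^n \text{ with } \dim Z < n.
\]

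The crucial observation is that the right-hand condition is intrinsic to the set $\rv^{-1}(\xi)\subset K^n$ and is therefore invariant under permutation of the coordinates of $\xi$; hence so is the left-hand condition. Specialising to $n=2$ this yields, for any $\alpha,\beta\in \RV^\times$ and any parameter set $A$, the symmetry
\[
\alpha\notin\acl(A)\ \text{and}\ \beta\notin\acl(A\alpha) \iff \beta\notin\acl(A)\ \text{and}\ \alpha\notin\acl(A\beta).
\]

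Exchange on $\RV$ is now immediate. Given $\alpha,\beta\in\RV$ with $\alpha\in\acl(A\beta)\setminus\acl(A)$, the degenerate cases $\alpha=0$ or $\beta=0$ are trivial since $0$ is $\emptyset$-definable, so assume $\alpha,\beta\in\RV^\times$. If, for a contradiction, $\beta\notin\acl(A\alpha)$, then also $\beta\notin\acl(A)$ (else $\alpha\in\acl(A\beta)=\acl(A)$); the symmetry applied to $(\alpha,\beta)$ then forces $\alpha\notin\acl(A\beta)$, contradicting the hypothesis. For the ``in particular'' clause, pregeometricity requires exchange between elements of any sort: the case of two valued-field elements is already a consequence of 1-h-minimality (see e.g.\ \cite[Lem.\,5.3.6]{CHR}), and mixed cases reduce to these by lifting an $\RV$-element $\xi$ to any $x\in K$ with $\rv(x)=\xi$ and using that $\xi\in\dcl(x)$.

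The main technical point to verify carefully is the relativization in the first step: one must confirm that naming parameters, especially $\RV$-parameters, does not destroy the preparation and lifting statements on which the proofs of the two preceding lemmas rely. This is harmless because adding $\RV$-constants preserves 1-h-minimality and because any $\RV$-parameter can, at the cost of potentially shrinking to a subsort of possibilities, be absorbed into a $K$-parameter via $\xi\in\dcl(x)$; but it is the place where one has to be most careful.
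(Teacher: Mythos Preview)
Your core idea—that full-dimensionality is permutation-invariant and hence yields a symmetric independence relation—is exactly the paper's approach. The gap is in how you handle parameters.

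You propose to relativize the characterization of full-dimensionality to an arbitrary parameter set $A\subset K\cup\RV$. The direction ``$\xi_i\notin\acl(A\xi_{<i})$ for all $i$ $\Rightarrow$ $A$-full-dimensional'' does go through, since uniform preparation tolerates $\RV$-parameters. But the converse uses effectivity to lift a finite $Ax$-definable subset of $\RV$ to $K$ (for $x\in K^{n-1}$), and effectivity is only granted for parameter sets contained in $K$. Your proposed fix—absorbing an $\RV$-parameter $\eta$ into some $a\in K$ with $\rv(a)=\eta$—does not work: it replaces $\acl(\eta\,\cdot)$ by the possibly strictly larger $\acl(a\,\cdot)$, whereas the symmetry you need is for the original algebraic closures. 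So the step you yourself flag as ``the place where one has to be most careful'' is genuinely incomplete.

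The paper sidesteps this entirely by encoding the (finite) $\RV$-parameter set as extra coordinates of the tuple rather than relativizing. After discarding dependent parameters so that $R=\{\eta_1,\ldots,\eta_n\}$ has $\eta$ full-dimensional (this needs no exchange: if $\eta_i\in\acl(\eta_{<i})$ simply drop it, since $\acl$ is idempotent), and noting $\zeta\notin\acl(R)$ (else $\xi\in\acl(R)$), one applies the $\emptyset$-level characterization to the length-$(n{+}2)$ tuples $\eta\zeta\xi$ and $\eta\xi\zeta$. The first fails to be full-dimensional since $\xi\in\acl(R\zeta)$; by coordinate symmetry so does the second; and since $\eta\xi$ \emph{is} full-dimensional (as $\xi\notin\acl(R)$), the only way this can fail is $\zeta\in\acl(R\xi)$. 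No relativization—and hence no effectivity over $\RV$-parameters—is required.

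A minor point: your reading of the ``in particular'' clause is too strong. In context the claim is just that the $\RV$-sort with its induced structure is pregeometric; there is nothing to prove about mixed $K$/$\RV$ exchange.
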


\begin{proof}
Let $R\subset \RV^\times, \xi, \zeta\in \RV^\times$ and assume that $\xi\in \acl(R\zeta)\setminus \acl(R)$.
We may assume that $R = \{\eta_1, \ldots, \eta_n\}$ is finite and that $\eta\zeta = (\eta_1, \ldots, \eta_n, \zeta)$ is full-dimensional.
Note that since $\xi\notin \acl(R)$, also $\eta\xi = (\eta_1, \ldots, \eta_n, \xi)$ is then full-dimensional.
Now $\eta\zeta\xi$ is not full-dimensional, and so neither is $\eta\xi\zeta$.
But this can only happen if $\zeta\in \acl(\eta\xi)\subset \acl(R\xi)$, as desired.
\end{proof}

Since $\acl$ is pregeometric it defines a notion of dimension for definable subsets of $\RV^n$ as in Section~\ref{ssec:acl.dim.prelims}.
In particular, recall that the $\acl$-dimension is also well-defined for sets definable over $\RV$-parameters even though $K$ may not be effective in the language $\cL(\RV)$.
\begin{defn}
Let $R\subset (\RV^\times)^n$ be definable.
Then we define the \emph{$\RV$-dimension of $R$} to be the $\acl$-dimension of $R$.
We denote the $\RV$-dimension by $\dim_\RV R$.
\end{defn}

\begin{lem}\label{lem:RV.acl.geometric}
If $\cT$ is effective then $\RV$ eliminates $\exists^\infty$.
In particular $\cT$ is geometric.
\end{lem}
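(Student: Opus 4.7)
The plan is to bootstrap $\exists^\infty$-elimination on $\RV$ from the same property already established for the valued field $K$ (part of Theorem~\ref{thm:dim.theory.VF}), using effectivity as the bridge between the two sorts.

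First I would reformulate what needs to be shown: by a standard compactness argument, $\exists^\infty$-elimination for $\emptyset$-definable families $(R_\xi)_{\xi \in \RV^n}$ of subsets of $\RV^m$ amounts to the existence of a uniform bound $N \in \NN$ such that $\abs{R_\xi} \leq N$ whenever $R_\xi$ is finite. A straightforward induction on $m$, splitting the last coordinate off and applying the inductive hypothesis to both the projection of $R_\xi$ and each fiber over it, reduces the problem to $m = 1$.

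For the main case $m = 1$, I work in an $\aleph_0$-saturated model $K$ and first treat the subcase where every $R_\xi$ is finite. Lemma~\ref{le:effective.characterizations}(\ref{it:lifting.families}) then produces a $\emptyset$-definable family $(C_z)_{z \in K^n}$ of finite sets in $K$ with $\rv \colon C_z \to R_{\rv(z)}$ a bijection. Since $K$ itself is geometric (by Theorem~\ref{thm:dim.theory.VF}, in the form that $\acl$ on $K$ yields a dimension function), $\exists^\infty$-elimination on $K$ applied to the family $(C_z)_z$ gives a uniform integer bound on $\abs{C_z}$, and this transfers through the bijection to the desired uniform bound on $\abs{R_\xi}$.

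To move from the ``all fibers finite'' subcase to the general statement, I would argue by contradiction, combining the pregeometric structure on $\RV$ (Lemma~\ref{lem:RV.acl.exchange}) with effectivity in the form $\rv(\acl(A) \cap K) = \acl(A) \cap \RV$ from Lemma~\ref{le:effective.characterizations}(\ref{it:acl.rv.commute}). Were there no uniform bound, a compactness argument in a sufficiently saturated extension would produce parameters $\xi$ realising arbitrarily large finite fibers as well as some $\xi^\ast$ with $R_{\xi^\ast}$ infinite; one then uses the exchange property on $\RV$ to carve out a $\emptyset$-definable subfamily of $R$ with all fibers finite but of unbounded size, contradicting the previous paragraph.

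The hardest part, as ever in this type of compactness plus dimension argument, will be this last reduction. Isolating the ``algebraic part'' of each fiber $R_\xi$ in a uniformly $\emptyset$-definable way requires combining the lifting of $\acl$ from $\RV$ to $K$ provided by effectivity with the geometric structure already available on $K$; once this subfamily is in hand, the uniform bound from the ``all finite'' subcase applies and contradicts the assumed failure of $\exists^\infty$-elimination.
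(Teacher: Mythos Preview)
Your step 2 is correct and gives a clean argument for the subcase where every fiber $R_\xi$ is finite: lifting the family via Lemma~\ref{le:effective.characterizations}(\ref{it:lifting.families}) and invoking $\exists^\infty$-elimination on $K$ (which is geometric, as noted at the end of Section~\ref{ssec:acl.dim.prelims}) is exactly right.

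However, step 3 --- reducing the general case to step 2 --- is a genuine gap. You propose to ``carve out a $\emptyset$-definable subfamily of $R$ with all fibers finite but of unbounded size'' using exchange and the identity $\rv(\acl(A)\cap K) = \acl(A)\cap \RV$. But isolating the finite fibers, or equivalently the ``algebraic part'' $R_\xi \cap \acl(\xi)$ of each fiber, in a uniformly definable way is precisely what $\exists^\infty$-elimination would give you; as written the argument is circular. Neither the exchange principle nor the compatibility of $\acl$ with $\rv$ makes algebraic closure over a varying parameter uniformly definable --- pregeometric structures need not be geometric in general, and in this step you have used nothing about $\RV$ beyond its being pregeometric. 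Lemma~\ref{le:effective.characterizations}(\ref{it:lifting.families}) itself only applies once the family is already known to consist of finite sets, so it cannot be invoked before the separation has been made.

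The paper sidesteps this entirely by writing down an explicit first-order condition equivalent to $\dim_\RV R = 1$ for $R\subset \RV^\times$. The idea is that if $R$ is infinite then some translate $\xi^{-1}R \cap k^\times$ into the residue field is still infinite (this needs a separate dimension-counting argument), and then one exploits the \emph{field} structure on $k$: the set of secant slopes $\{(y-y')/(x-x') : x,y,x',y' \in \xi^{-1}R\cap k^\times,\ x\neq x'\}$ must be all of $k$, since a missing slope $\alpha$ would yield a definable injection $(\xi^{-1}R\cap k)^2 \hookrightarrow k$ via $(x,y)\mapsto x+\alpha y$, contradicting the dimension theory already in hand. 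This use of the additive and multiplicative structure on $k$ is the ingredient your approach lacks.
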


The idea of the proof is to reduce from $\RV$ to $k$ and use that pregeometric theories of fields are automatically geometric, see e.g.\ ~\cite{JY23}.

\begin{proof}
Let $R\subset \RV^\times$ be definable with parameters.
Then we claim that $\dim_\RV R = 1$ if and only if the following holds:
\begin{equation}\label{eq:Einfty.elim}
\exists \xi\in \RV^\times \forall \alpha\in k \exists x,y,x',y'\in \xi^{-1}R\cap k^\times\colon x\neq x'\wedge \alpha = \frac{y-y'}{x-x'}.
\end{equation}
Clearly this condition is uniformly definable in families.

Assume first that $R$ has dimension $0$. 
Then $R$ is finite and so the set
\[
\left\{\frac{y-y'}{x-x'}\mid x,y,x',y'\in R, x\neq x', |x|=|y|=|x'|=|y'|\right\} \subset k
\]
is also finite.

Now assume that $R$ has dimension $1$.
We claim that there exists some $\xi\in R$ for which $\xi^{-1}R\cap k^\times$ has dimension $1$.
Suppose this is not the case, so that the map 
\[
|\cdot |\colon R\to \Gamma
\]
is finite-to-one.
Consider the definable set
\[
S = \{(\xi, \eta/\xi)\in \RV^\times \times k^\times \mid \eta\in R, |\eta| = |\xi|\}.
\]
We compute the $\RV$-dimension of $S$ in two different ways to obtain a contradiction.
Let $\pi_1\colon S\to \RV^\times$ be the projection onto the first coordinate and $\pi_2\colon S\to k^\times$ the projection onto the second coordinate.

By assumption $\pi_1$ is finite-to-one, and so $S$ has dimension $1$. 
On the other hand, by varying $\xi\in \RV^\times$ we see that $\pi_2(S) = k^\times$.
Also, if $\alpha\in k^\times$ then $\pi_2^{-1}(\alpha)$ is infinite, since $\dim_\RV(R) = 1$.
Hence $\pi_2^{-1}(\alpha)$ has dimension $1$ for every $\alpha\in k^\times$.
Take $\alpha\in k^\times \setminus \acl(\emptyset)$.
Since $\pi^{-1}_2(\alpha)$ has dimension $1$, there exists a $\beta\in \pi^{-1}_2(\alpha)$ for which $\beta\notin \acl(\alpha)$.
Then $(\alpha, \beta)\in S$ is full-dimensional, and so $S$ has dimension $2$.
This is the desired contradiction and we conclude that for some $\xi\in R$ the set $\xi^{-1}R\cap k^\times$ has dimension $1$.

Now consider \eqref{eq:Einfty.elim} again and assume that it is false.
Then there exists some $\alpha \in k$ such that for all $x,y,x',y'\in \xi^{-1}R\cap k$ with $x\neq x'$ we have $\alpha \neq \frac{y-y'}{x-x'}$.
But then the map 
\[
(\xi^{-1}R\cap k)^2\to k\colon (x,y)\mapsto x+\alpha y
\]
is a definable injection of a two-dimensional set into a one-dimensional set.
\end{proof}

\subsection{$\RV$-dimension}

We summarize the main properties of the dimension function on $\RV$.

\begin{theorem}[Dimension theory on $\RV$]\label{thm:dim.theory.RV}
Let $\cT$ be effectively $1$-h-minimal, let $R\subset (\RV^\times)^n, S\subset (\RV^\times)^m$ and $f\colon R\to (\RV^\times)^k$ be $\emptyset$-definable.
Then the following hold.
\begin{enumerate}
\item \label{it:finiteness} (Finiteness) We have $\dim_\RV R > 0$ if and only if $R$ is infinite.
\item \label{it:unions} (Unions) If $n=m$ then $\dim_\RV(R\cup S) = \max\{\dim_\RV R, \dim_\RV S\}$.
\item \label{it:coordinate.projections} (Coordinate projections) There exists a partition of $R$ into finitely many $\emptyset$-definable $R_i$ and for each $i$ a coordinate projection $\pi_i\colon \RV^n\to \RV^{\dim_{\RV}R}$ which is finite-to-one on $R_i$.
\item \label{it:generic.points} (Generic points) We have $\dim_\RV R=n$ if and only if $R$ contains a full-dimensional point.
\item \label{it:independence.of.parameters} (Independence of parameters) Let $A\subset K$. Then the $\RV$-dimension of $R$ in the language $\cL(A)$ is still equal to $\dim_\RV R$. 
%In particular, if $T\subset \RV^n$ is $\RV$-definable, then its $\RV$-dimension is well-defined.
\item \label{it:definable.maps} (Definable maps) We have $\dim_\RV R\geq \dim_\RV f(R)$.
\item \label{it:families} (Families) Let $(T_\xi)_{\xi\in R}$ be a $\emptyset$-definable family of subsets of $(\RV^{\times})^{n'}$ and let $q$ be a positive integer. 
Then the set
\[
R_q = \{\xi\in R\mid \dim_\RV T_\xi = q\}
\]
is $\emptyset$-definable and
\[
\{(\xi, \eta)\in \RV^n\times \RV^{n'}\mid \xi\in R_q, \eta\in T_\xi\}
\]
has $\RV$-dimension $q + \dim_\RV R_q$.
\item \label{it:fibrations} (Fibrations) Assume that $\dim_\RV f^{-1}(\xi) = q$ for all $\xi\in f(R)$. Then $\dim_\RV R = \dim_\RV f(R) + q$.
\item \label{it:finite-to-one.maps}(Finite-to-one maps) If $f$ is finite-to-one, then $\dim_\RV R = \dim_\RV f(R)$.
\item \label{it:products} (Products) The $\RV$-dimension of $R\times S$ is $\dim_\RV R + \dim_\RV S$.
\item \label{it:local.dimension} (Local dimension) There exists an element $\xi \in R$ such that $\dim_\RV (R\cap \xi(k^\times)^n) = \dim_\RV R$.
\end{enumerate}
\end{theorem}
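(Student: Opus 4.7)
The plan is to derive all the listed properties from the general theory of $\acl$-dimension. By Lemmas~\ref{lem:RV.acl.exchange} and~\ref{lem:RV.acl.geometric}, effectivity makes the restriction of $\cT$ to $\RV$ pregeometric and geometric, so $\dim_\RV$ is an instance of $\acl$-dimension; by Lemma~\ref{lem:geom.dim.function} it is a dimension function in the sense of~\cite{vdD89}. Thus essentially nothing specific to the valued field needs to intervene again: the content of Theorem~\ref{thm:dim.theory.RV} is almost entirely extracted from the abstract framework of Section~\ref{ssec:acl.dim.prelims}.

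Items (\ref{it:finiteness}), (\ref{it:unions}), (\ref{it:independence.of.parameters}) and (\ref{it:definable.maps}) are immediate from Proposition~\ref{prop:acl.dim}. Item (\ref{it:finite-to-one.maps}) follows from (\ref{it:definable.maps}) by partitioning $R$ into finitely many pieces on which $f$ admits a definable section. Item (\ref{it:generic.points}) combines the definition of generic point with Lemma~\ref{lem:full.dim.lower.dim}, which identifies full-dimensional points with $\acl$-independent $n$-tuples. For (\ref{it:families}), definability of $R_q$ follows from $\exists^\infty$-elimination in families, while the dimension formula is a direct $\acl$-rank computation on a generic point $(\xi,\eta)$ of the graph (with $\xi$ generic in $R_q$ and $\eta$ generic in $T_\xi$ over $\xi$). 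Items (\ref{it:fibrations}) and (\ref{it:products}) follow from (\ref{it:families}) applied, respectively, to $f\colon R \to f(R)$ with constant fibre dimension $q$, and to the projection $R \times S \to R$.

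For item (\ref{it:coordinate.projections}), I would induct on $d = \dim_\RV R$. For each $d$-subset $I \subset \{1, \ldots, n\}$, the set $R_I = \{\xi \in R : \pi_I^{-1}(\pi_I(\xi)) \cap R \text{ is finite}\}$ is $\emptyset$-definable by $\exists^\infty$-elimination, and $\pi_I$ is finite-to-one on $R_I$ by construction. Every generic point of $R$ lies in some $R_I$ (take $I$ to index its $\acl$-independent coordinates), so $R \setminus \bigcup_I R_I$ has $\RV$-dimension strictly less than $d$ and the induction hypothesis applies to it; any finite-to-one projection onto $d' < d$ coordinates extends to a finite-to-one projection onto $d$ coordinates by appending further indices.

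Finally, for (\ref{it:local.dimension}), working in a sufficiently saturated model, take any generic point $\xi \in R$; its coordinates have $\acl$-rank $d$. Since $\xi \in R \cap \xi(k^\times)^n$ and $\xi$ itself witnesses $\acl$-dimension $d$, the intersection has $\RV$-dimension at least $d$, while the reverse inequality is immediate from the inclusion $R \cap \xi(k^\times)^n \subset R$. No special choice of $\xi$ is needed beyond genericity. The main technical input beyond the abstract dimension theory is the $\exists^\infty$-elimination of Lemma~\ref{lem:RV.acl.geometric}, which underpins both (\ref{it:families}) and (\ref{it:coordinate.projections}); once that is available, the rest of the theorem is a formal consequence.
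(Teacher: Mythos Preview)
Your treatment of items (\ref{it:finiteness})--(\ref{it:products}) follows the paper's approach: reduce to the abstract $\acl$-dimension theory of Section~\ref{ssec:acl.dim.prelims}, invoke $\exists^\infty$-elimination (Lemma~\ref{lem:RV.acl.geometric}) and the van den Dries axioms to get (\ref{it:families}), and read off (\ref{it:fibrations}), (\ref{it:finite-to-one.maps}), (\ref{it:products}). One small correction: your route to (\ref{it:finite-to-one.maps}) via ``partitioning $R$ into pieces on which $f$ admits a definable section'' does not work in general --- think of squaring on $k^\times$ when $k$ is algebraically closed --- but this is harmless since (\ref{it:finite-to-one.maps}) is immediate from (\ref{it:fibrations}) with $q=0$, as the paper notes.

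The genuine gap is in (\ref{it:local.dimension}). You assert that for generic $\xi\in R$, the element $\xi$ itself witnesses $\dim_\RV(R\cap\xi(k^\times)^n)\ge d$. But $R\cap\xi(k^\times)^n$ is only $\xi$-definable, so its $\acl$-dimension is computed \emph{over the parameter $\xi$} --- and over $\xi$, the tuple $\xi$ has rank $0$, not $d$. What you would need is some $\eta\in R$ with $|\eta_i|=|\xi_i|$ for all $i$ and $\operatorname{rank}(\eta/\xi)=d$, and nothing in your argument produces such an $\eta$. The paper's proof is accordingly indirect: assuming every fibre of $|\cdot|\colon R\to(\Gamma^\times)^n$ has dimension $<d$, one forms
\[
S=\{(\xi,\eta_1/\xi_1,\ldots,\eta_n/\xi_n)\in(\RV^\times)^n\times(k^\times)^n : \eta\in R,\ |\eta_i|=|\xi_i|\text{ for all }i\}
\]
and computes $\dim_\RV S$ two ways. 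The projection to $(\RV^\times)^n$ has fibres of dimension at most $d-1$, giving $\dim_\RV S\le n+d-1$; the projection to $(k^\times)^n$ is surjective with each fibre a translate $\alpha^{-1}R$ of dimension $d$, giving $\dim_\RV S=n+d$. This double-counting trick --- already used in the proof of Lemma~\ref{lem:RV.acl.geometric} --- is the missing idea, and it genuinely exploits the multiplicative action of $(k^\times)^n$ on $(\RV^\times)^n$ rather than pure pregeometry.
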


\begin{proof}
Since $\acl$ defines a pregeometry by Lemma~\ref{lem:RV.acl.exchange}, properties (\ref{it:finiteness}, \ref{it:unions}, \ref{it:coordinate.projections}, \ref{it:generic.points}, \ref{it:independence.of.parameters}, \ref{it:definable.maps}) all follow from Proposition~\ref{prop:acl.dim}.

By Lemma~\ref{lem:RV.acl.geometric} the theory of $\RV$ is geometric, and hence Lemma~\ref{lem:geom.dim.function} shows that the $\RV$-dimension is a dimension function in the sense of~\cite{vdD89}.
Therefore (\ref{it:families}) follows immediately from~\cite[Prop.\,1.4]{vdD89}. 
Properties (\ref{it:fibrations}, \ref{it:finite-to-one.maps}, \ref{it:products}) follow directly from the statement about definable families.

So let us prove (\ref{it:local.dimension}), for which we reason similarly as in the proof of Lemma~\ref{lem:RV.acl.geometric}.
Suppose the statement is false, so that every fibre of the map
\[
f\colon R\to (\Gamma^\times)^n\colon \xi\mapsto |\xi|
\]
has $\RV$-dimension strictly less than $m = \dim_\RV R$.
Consider the following $\emptyset$-definable set
\[
S = \{(\xi_1, \ldots, \xi_n, \eta_1/\xi_1, \ldots, \eta_n/\xi_n)\in (\RV^\times)^n\times (k^\times)^n\mid \eta\in R, |\eta_i| = |\xi_i| \text{ for } i = 1, \ldots, n \}.
\]
We will compute the $\RV$-dimension of $S$ in two different ways to obtain a contradiction. 

Let $\pi\colon S\to (\RV^\times)^n$ be the projection onto the first $n$ coordinates. By assumption, every fibre of the map $\pi$ has $\RV$-dimension at most $m - 1$. Hence by (\ref{it:fibrations}), $S$ has $\RV$-dimension at most $n+m-1$.

On the other hand, let $\rho\colon S\to (k^\times)^n$ be the projection onto the last $n$ coordinates. 
By varying $\xi$, we see that $\rho(S) = (k^\times)^n$, which has $\RV$-dimension $n$. 
Since $R$ has dimension $m$, every fibre of $\rho$ also has $\RV$-dimension $m$. But then (\ref{it:fibrations}) yields that $S$ has $\RV$-dimension $n+m$, contradiction.
\end{proof}

We note that the $\RV$-dimension interacts well with the dimension theory on $\VF$ itself.

\begin{prop}
Let $\cT$ be effective.
Let $X\subset K^n$ be a $\emptyset$-definable set of dimension $m$.
Then $\rv(X)\subset \RV^n$ has $\RV$-dimension at most $m$.
\end{prop}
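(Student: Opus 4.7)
The plan is to reduce the statement to the case where $\dim_\RV \rv(X)$ equals the ambient dimension $n$, and then conclude using the characterization of $\RV$-dimension via full-dimensional points.

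For the reduction, set $k = \dim_\RV \rv(X)$ and apply Theorem~\ref{thm:dim.theory.RV}(\ref{it:coordinate.projections}) to produce a coordinate projection $\pi\colon \RV^n \to \RV^k$ with $\dim_\RV \pi(\rv(X)) = k$: partition $\rv(X)$ as in that theorem, pick a piece of $\RV$-dimension $k$ (which exists by (\ref{it:unions})), and use that finite-to-one definable maps preserve $\RV$-dimension (\ref{it:finite-to-one.maps}) while definable maps cannot increase it (\ref{it:definable.maps}). Let $\pi'\colon K^n \to K^k$ denote the corresponding coordinate projection. Then $\pi(\rv(X)) = \rv(\pi'(X))$ and $\dim \pi'(X) \leq \dim X = m$, so it is enough to establish the inequality for $\pi'(X)$ in ambient dimension $k$. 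This reduces the proof to the case $\dim_\RV \rv(X) = n$, where the goal becomes $\dim X \geq n$.

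In the reduced situation, the complement $\rv(X) \setminus (\RV^\times)^n$ is contained in finitely many coordinate hyperplanes and so has $\RV$-dimension at most $n-1$ by Theorem~\ref{thm:dim.theory.RV}(\ref{it:unions}). Hence $\rv(X) \cap (\RV^\times)^n$ still has $\RV$-dimension $n$, and Theorem~\ref{thm:dim.theory.RV}(\ref{it:generic.points}) yields a full-dimensional point $\xi \in \rv(X) \cap (\RV^\times)^n$, after passing to a sufficiently saturated model if needed (which is harmless by (\ref{it:independence.of.parameters})).

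Finally, since $X$ is $\emptyset$-definable (hence in particular $\RV$-definable) and $\xi \in \rv(X)$, the set $\rv^{-1}(\xi) \cap X$ is non-empty; the defining property of a full-dimensional point then forces $\rv^{-1}(\xi) \subset X$. Because every coordinate of $\xi$ lies in $\RV^\times$, the fibre $\rv^{-1}(\xi)$ is a product of $n$ maximal open balls of $K$, hence an open subset of $K^n$ of dimension $n$. Therefore $\dim X \geq n$, completing the reduced case. The main obstacle is essentially organisational: the reduction step and the passage to $(\RV^\times)^n$ must be arranged so that a full-dimensional witness is available, after which the argument is a one-step exchange between the two dimension theories, with effectivity entering only indirectly through Theorem~\ref{thm:dim.theory.RV} (via the pregeometry of Lemma~\ref{lem:RV.acl.exchange}).
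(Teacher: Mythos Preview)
Your proof is correct and follows essentially the same route as the paper: reduce via a coordinate projection to the case where $\dim_\RV \rv(X)$ equals the ambient dimension, then use a full-dimensional point of $\rv(X)$ to force $\rv^{-1}(\xi)\subset X$ and hence $\dim X\geq n$. The paper phrases this as a contradiction (assume $k>m$, project, and invoke Lemma~\ref{lem:full.dim.lower.dim} to get $\rv^{-1}(\xi)\cap Y=\emptyset$), whereas you argue directly using the defining property of full-dimensional points; these are contrapositives of one another and logically identical.
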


\begin{proof}
Assume that $\dim_\RV \rv(X) = k > m$.
By the previous theorem there exists a coordinate projection $\pi\colon \RV^n\to \RV^k$ such that $\pi(\rv(X))$ has $\RV$-dimension $k$.
Write also $\pi\colon K^n\to K^k$ for the coordinate projection onto the same coordinates, and let $Y = \pi(X)\subset K^k$.
Then $\dim Y \leq m < k$, while $\rv(Y)$ has $\RV$-dimension $k$.
Now note that $\rv(Y)$ has a full-dimensional point $\xi$, while Lemma~\ref{lem:full.dim.lower.dim} shows that $\rv^{-1}(\xi)\cap Y = \emptyset$, contradiction.
\end{proof}

\subsection{Non-examples of effective theories}\label{sec:non.examples}

Dimension theory immediately gives some examples of non-effective theories.

\begin{example}
Let $K$ be any $1$-h-minimal field in some language $\cL$ with residue field $\QQ$. Then the theory of $K$ in $\cL$ is not effective. Indeed, if this theory was effective, then by Theorem~\ref{thm:dim.theory.RV}(\ref{it:products}) the $\RV$-dimension of $\QQ^2$ would be $2$. However, there is a $\emptyset$-definable bijection $\QQ^2\to \QQ$ showing that $\dim_\RV (\QQ^2) = 1$. 
%In fact, in this case $\dim_\RV$ is completely degenerate in the sense that every $\emptyset$-definable infinite subset of $k^n$ has $\RV$-dimension $1$.
\end{example}

In an effective theory there can be no angular component maps. This gives a large class of natural 1-h-minimal theories which are not effective. Recall that an \emph{angular component map} is a multiplicative map $\ac\colon K\to k$ which coincides with the residue map on $\cO_K^\times$. 

\begin{example}\label{eg:ac.not.eff}
let $K$ be a $1$-h-minimal field in some language $\cL$ and assume that there is a definable angular component $\ac\colon K\to k$. Then the theory of $K$ in $\cL$ cannot be effective. Since effectivity is preserved under adding constants from $K$, we may as well assume that $\ac$ is $\emptyset$-definable. 

The data of an angular component is identical to a splitting of the sequence
\[
1\to k^\times  \to \RV^\times \to \Gamma^\times\to 1,
\]
or in other words to a function $h\colon \RV^\times \to k^\times$ which is a section of the inclusion $k\to \RV$. If $\ac$ is $\emptyset$-definable then so is the map $h$.

The image of $h$ is $k^\times$, which has $\RV$-dimension $1$. If $a\in k^\times$, then $h^{-1}(a)$ maps bijectively to $\Gamma^\times$. Hence $h^{-1}(a)$ is an infinite subset of $\RV$, and so has $\RV$-dimension $1$. But then Theorem~\ref{thm:dim.theory.RV}(\ref{it:families}) would show that $\RV^\times$ has $\RV$-dimension $2$, contradiction.

Noet that this gives examples of theories which are $\emptyset$-effective, but not effective.
As a concrete example, consider $\CC((t))$ in the language $\cL = \cL_\val\cup \{\ac, t\}$, where $\ac$ is interpreted as the usual angular component for which $\ac(t) = 1\in k$.

One can also prove more directly that there cannot be a definable angular component by lifting $h$ to a $\emptyset$-definable map $K^\times \to \cO_K^\times$, and showing that such a lift must have infinite discrete fibres.
Hence it cannot exist in any $1$-h-minimal theory.
\end{example}

More generally, we show that there cannot be any definable quasi-sections for $\abs{\cdot} \colon \RV^{\times} \to \Gamma$ in an effectively $1$-h-minimal theory. A \emph{quasi-section} for $\abs{\cdot} \colon \RV^{\times} \to \Gamma$ is an infinite subset $P\subset (\RV^\times)^n$ such that the projection $(\RV^\times)^n\to (\Gamma^\times)^n$ restricts to a finite-to-one map on $P$. Note that an angular component yields a quasi-section $P \subset \RV^\times \to \Gamma$ in a definable way via $P = \{\xi\in \RV^\times \mid \ac(\xi) = 1\}$. 

\begin{prop}
Let $\cT$ be an effective $1$-h-minimal theory of equicharacteristic zero valued fields. Then there does not exist any definable quasi-section.
\end{prop}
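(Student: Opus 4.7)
The plan is to reach a contradiction by combining the local dimension property from Theorem~\ref{thm:dim.theory.RV}(\ref{it:local.dimension}) with the quasi-section assumption. Suppose, towards a contradiction, that $P \subset (\RV^\times)^n$ is a definable quasi-section; after adding parameters to the language, we may assume $P$ is $\emptyset$-definable. Since $P$ is infinite, Theorem~\ref{thm:dim.theory.RV}(\ref{it:finiteness}) guarantees $\dim_\RV P \geq 1$.

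Next I would apply the local dimension property to $P$: this produces an element $\xi \in P$ with
\[
\dim_\RV\bigl(P \cap \xi(k^\times)^n\bigr) \;=\; \dim_\RV P \;\geq\; 1.
\]
The key observation is that $P \cap \xi(k^\times)^n$ is precisely the set of those $\eta \in P$ with $|\eta| = |\xi|$, i.e.\ the fibre over $|\xi| \in (\Gamma^\times)^n$ of the projection $|\cdot|\colon P \to (\Gamma^\times)^n$. By the quasi-section assumption this projection is finite-to-one, so $P \cap \xi(k^\times)^n$ is finite, and therefore has $\RV$-dimension $0$ by Theorem~\ref{thm:dim.theory.RV}(\ref{it:finiteness}). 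This contradicts the displayed equation.

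There is no real obstacle here once Theorem~\ref{thm:dim.theory.RV} is in hand: the argument is a two-line calculation, as the statement of the local dimension property has been tailored precisely to rule out such $(k^\times)^n$-transverse configurations. The substance of the result lies upstream, in establishing the full dimension theory on $\RV$ (in particular the exchange principle and $\exists^\infty$-elimination) under the hypothesis of effectivity.
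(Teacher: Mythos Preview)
Your proof is correct and follows essentially the same route as the paper's own argument: reduce to $P$ being $\emptyset$-definable using that effectivity is preserved under adding constants, then apply the local dimension property Theorem~\ref{thm:dim.theory.RV}(\ref{it:local.dimension}) to force $\dim_\RV P = 0$, contradicting that $P$ is infinite. Your commentary that the real content lies in the upstream dimension theory is also apt.
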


\begin{proof}
Let $P\subset (\RV^\times)^n$ be a definable quasi-section. Since effectivity is preserved by adding constants from $K$, we may assume that $P$ is $\emptyset$-definable. By Theorem~\ref{thm:dim.theory.RV}(\ref{it:local.dimension}), there exists an element $\xi\in P$ for which $P\cap \xi\cdot (k^\times)^n$ has the same $\RV$-dimension as $P$. But $P\cap \xi\cdot (k^\times)^n$ is exactly a single fibre of $|\cdot|\colon P\subset \RV^n\to \Gamma^n$, which is finite. Hence
\[
\dim_\RV(P) = \dim_\RV (P\cap \xi\cdot (k^\times)^n) = 0.\qedhere
\]

\end{proof}

Again, one can also more directly prove that there are no definable quasi-sections in effective theories.
Take the family $R$ appearing in the above proof and lift it to a $\emptyset$-definable set  $X\subset K^n\times (\cO_K^\times)^n$. Then show that such a lift cannot exist in any $1$-h-minimal theory.

\section{The integration map}\label{sec:int}

In this section we set up the relevant categories over $\VF$ and $\RV$, develop integration, and in particular prove Theorems~\ref{thm:main.not.eff} and~\ref{thm:main.eff}. 
Let $\cT$ be a 1-h-minimal theory of equicharacteristic zero valued fields. 
Throughout this section we work in a sufficiently saturated model $K$ of $\cT$.

\subsection{Definitions and construction of the lifting map}

We first define our categories of definable sets over $\VF$ and $\RV$.

\begin{defn} \label{def:RV[n].VF[n]}
Let $\cT$ be a 1-h-minimal theory of equicharacteristic zero, and let $n$ be a non-negative integer. 
\begin{enumerate}
\item We denote by $\VF[n]$ the category of $\emptyset$-definable sets $X\subset \VF^n\times \RV^m$ for which the projection $X\to \VF^n$ is finite-to-one. The morphisms in this category are arbitrary $\emptyset$-definable maps.
\item We denote by $\VF$ the colimit over all $\VF[n]$. Its objects are thus $\emptyset$-definable $X\subset \VF^n\times \RV^m$ sets for which the projection $X\to \VF^n$ is finite-to-one and where $n$ can be any non-negative integer. The morphisms are again arbitrary $\emptyset$-definable maps. 
\item We denote by $\RV[n]$ the category of $\emptyset$-definable sets $R\subset (\RV^\times)^n\times \RV^m$ for which the projection $R\to (\RV^\times)^n$ is finite-to-one. The morphisms in this category are arbitrary $\emptyset$-definable maps.
\item We denote by $\RV[\leq n]$ the formal direct sum of categories $\bigoplus_{i=0}^n \RV[i]$. Its objects are formal sums $\sum_{i = 0}^n R_i$ with $R_i \in \RV[i]$. The morphisms $ \sum_{i=1}^nf_i\colon  \sum_{i=0}^n R_i \to \sum_{i = 0}^n S_i$ are given by collections of morphisms $f_i \colon X_i \to Y_i$ in $\RV[i]$, for $i = 0,\dots,n$.
\item We denote by $\RV[*]$ the formal direct sum of all $\RV[n]$. This is the colimit of all $\RV[\leq n]$. In particular, every object and morphism of $\RV[*]$ already belongs to some $\RV[\leq n]$.
\end{enumerate}
\end{defn}

\begin{notn} \label{not:RV[<n]}
	For $R \in  \Ob \RV[k]$ and $n \geq k$, we will also write $[R]_k$ for its class in $\K_+ \RV[\leq n]$. This is simply to clarify the ambient dimension of $R$. For example, $[1]_1$ is the point in ambient dimension 1, with lift $\LL[1]_1 = [\cM_K]$, while $[1]_0$ denotes a single point in $\RV[0]$, with lift $\LL[1]_0 = [\{1\}]$. 
\end{notn}

It is important to note that a morphism in both $\VF[n]$ and $\RV[n]$ is an arbitrary $\emptyset$-definable map, in particular it need not commute with the natural map $X\to \VF^n$ or $R\to (\RV^\times)^n$. 
The category $\VF[n]$ is equivalent to the category of $\emptyset$-definable subsets of $\VF^m$ (for some $m$) of $\VF$-dimension at most $n$. 
Similarly, the category $\RV[n]$ is equivalent to the category of $\emptyset$-definable sets of $\RV$-dimension at most $n$, by Proposition~\ref{thm:dim.theory.RV}(\ref{it:coordinate.projections}). 
In particular, the Grothendieck semirings of these categories are the same. We stick with the above presentation of $\VF[n]$ and $\RV[n]$ since it is easier to work with.

\begin{remark} \label{rem:RV[<=n]}
In practice, we shall often use the following equivalent presentation for $\RV[\leq n]$.
Let $\cC_n$ be the category whose objects are $\emptyset$-definable sets $R \subset \RV^n \times \RV^m$ with finite-to-one coordinate projection onto $\RV^n$.
For each $(i_1,\dots,i_n) \in \{0,1\}^n$ consider the $\emptyset$-definable subsets 
\[R_{(i_1,\dots,i_n)}  = \{ (\xi,\zeta) \in R \mid \xi_j \in i_j  \cdot \RV^{\times}   \},\]
where $0\cdot \RV^\times = \{0\}$ and $1\cdot \RV^\times = \RV^\times$.
The morphisms in $\cC_n$ are generated by
\begin{enumerate}
	\item maps $R \to R$ such that each restriction $R_{(i_1,\dots,i_n)} \to R$ is a coordinate permutation,
	\item maps $R \to S$ which are a disjoint union of $\emptyset$-definable maps $R_{(i_1,\dots,i_n)} \to S_{(i_1,\dots,i_n)}$.
\end{enumerate}
The categories $\RV[\leq n]$ and $\cC_n$ are equivalent via the functor $\RV[\leq n] \to \cC_n$ that maps an object $\sum_{i=0}^n R_i$, with $R_i \subset (\RV^{\times})^i \times \RV^m$ to 
\[
\bigcup_{i=0}^n \{0\}^{n-i} \times R_i \subset \RV^n \times \RV^m.
\]
\end{remark}

\begin{defn}\label{defn:lifting.map}
Let $n$ be a non-negative integer. We define the \emph{lifting map} $\LL\colon \Ob \RV[n]\to \Ob \VF[n]$ which maps $R\subset (\RV^{\times})^n\times \RV^m$ to the lift
\[
\LL R = \{(x,\xi)\in K^n\times \RV^m\mid (\rv(x), \xi)\in R\}.
\]
\end{defn}

Note that $\LL$ is only defined on the objects of the category $\RV[n]$, and in particular it is not a functor.
Our goal is to show that in an effective theory, this lifting map descends to a surjective morphism on the level of Grothendieck semirings, and to identify the kernel precisely. 
First, we show that maps $R \to S$ lift to maps $\LL R \to \LL S$, similar to Corollary~\ref{cor:lifting.maps}. 
To make notation a bit lighter, write $\rv \colon K^n \times \RV^m \to \RV^{n + m}$ for the map $(x,\xi) \mapsto (\rv(x),\xi)$.

\begin{lem} \label{lem:lifting.maps.RV[n]}
Assume that $\cT$ is effective.
Let $R \in  \RV[n]$, $S \in  \RV[m]$ and let $h \colon R \to S$ be $\emptyset$-definable. Then there exists a $\emptyset$-definable $f \colon \LL R \to \LL S$ such that
\[ \rv(f(x)) = h(\rv(x)) \] 
\end{lem}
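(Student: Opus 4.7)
Write $R\subset (\RV^\times)^n\times \RV^{m_1}$ and $S\subset (\RV^\times)^m\times \RV^{m_2}$ explicitly, and decompose $h=(h_1,h_2)$ into its $(\RV^\times)^m$-valued and $\RV^{m_2}$-valued components. I plan to construct $f=(f_1,f_2)$ of the same shape, with $f_1\colon \LL R\to K^m$ and $f_2\colon \LL R\to \RV^{m_2}$. The component $f_2$ is essentially free: the formula $f_2(x,\xi)\coloneqq h_2(\rv(x),\xi)$ is manifestly $\emptyset$-definable and automatically gives the required compatibility on $\RV^{m_2}$-coordinates. All the work is therefore concentrated in constructing $f_1$.

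The crucial step is to produce a $\emptyset$-definable \emph{section} $\sigma\colon \LL R\to K^{m_1}$ of $\rv$, i.e.\ a $\emptyset$-definable map satisfying $\rv(\sigma(x,\xi))=\xi$ for every $(x,\xi)\in \LL R$. Here the finite-to-one projection condition built into the definition of $\RV[n]$ is essential: for each $\rho\in \RV^n$ the fibre $R_\rho\coloneqq\{\xi\in\RV^{m_1}\mid(\rho,\xi)\in R\}$ is finite, and these form a $\emptyset$-definable family of finite subsets of $\RV^{m_1}$ indexed by $\rho\in \RV^n$. Applying Lemma~\ref{le:effective.characterizations}(\ref{it:lifting.families}) to this family yields a $\emptyset$-definable family $(C_x)_{x\in K^n}$ of finite subsets of $K^{m_1}$ such that $\rv\colon C_x\to R_{\rv(x)}$ is a bijection, and I take $\sigma(x,\xi)$ to be the unique element of $C_x$ with $\rv$-value $\xi$.

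With $\sigma$ in hand, I would then apply Corollary~\ref{cor:lifting.maps} to a $\emptyset$-definable extension-by-zero of $h_1$ to a map $\RV^{n+m_1}\to\RV^m$, obtaining a $\emptyset$-definable $H_1\colon K^{n+m_1}\to K^m$ with $\rv(H_1(x,y))=h_1(\rv(x),\rv(y))$ whenever $(\rv(x),\rv(y))\in R$. Setting $f_1(x,\xi)\coloneqq H_1(x,\sigma(x,\xi))$, the required identity $\rv(f_1(x,\xi))=h_1(\rv(x),\xi)$ follows at once from $\rv(\sigma(x,\xi))=\xi$, and it remains only to observe that the resulting $f=(f_1,f_2)$ automatically lands in $\LL S$ because $h(R)\subset S$. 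The main obstacle is the section construction: it is precisely the finite-to-one hypothesis on $R$ that allows effectivity, in its family-lifting form, to supply a $\emptyset$-definable $\sigma$; without it one could not expect such a global $\emptyset$-definable lift of $\rv$ to exist, as it would amount to a definable angular-component-type structure, which is incompatible with effectivity (cf.\ Example~\ref{eg:ac.not.eff}).
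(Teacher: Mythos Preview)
Your argument is correct and follows essentially the same approach as the paper: both use the family-lifting form of effectivity (Lemma~\ref{le:effective.characterizations}(\ref{it:lifting.families})) to obtain a section $\sigma\colon \LL R\to K^{m_1}$, and then a second application of effectivity to lift $h$ itself. The only cosmetic differences are that you handle the $\RV^{m_2}$-component directly via $f_2(x,\xi)=h_2(\rv(x),\xi)$ rather than lifting it to $K$ and projecting back, and you invoke Corollary~\ref{cor:lifting.maps} on an extension of $h_1$ rather than lifting the graph of the induced map $X\to S$; these are equivalent packagings of the same idea.
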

\begin{proof}
Write $R \subset \RV^n \times \RV^k$ and $S \subset \RV^m \times \RV^k$.
%We use Lemma \ref{le:effective.characterizations} (\ref{it:lifting.families}) multiple times to lift families.
By Lemma~\ref{le:effective.characterizations}, there exists a $\emptyset$-definable family $(C_x)_{x\in K^n}$ of finite subsets of $K^k$ such that $C_x$ is a lift of the fibre $R_{\rv(x)}\subset \RV^k$.
Then $h$ induces a map 
\[X = \bigsqcup_{x\in K^n} \{x\} \times C_x \to S.\] 
The graph of this map is a $\emptyset$-definable family over $X$. Lifting it using Lemma~\ref{le:effective.characterizations} yields a $\emptyset$-definable map $f_1 \colon X \to K^m \times K^{k}$.
Now $f$ can be constructed as the $\emptyset$-definable map with graph
\[\gr(f) = \{ (x_1,\rv(x_2),y_1,\rv(y_2)) \mid  f_1(x_1,x_2) = (y_1,y_2) \}. \qedhere \]
\end{proof}

\subsection{Lifting bijections} \label{sec:lifting.dim.theory}
We first show that bijections lift from $\RV[n]$ to $\VF[n]$. Our proof is similar to~\cite[Lemma~5.12]{Yin.tcon}, and uses the dimension theory on $\RV$ from Theorem~\ref{thm:dim.theory.RV} as a crucial ingredient.

\begin{lem}\label{lem:lift.bijection}
Assume that $\cT$ is effective.
Let $R,S\subset (\RV^\times)^n\times \RV^m$ be objects in $\RV[n]$ and let $h\colon R\to S$ be a $\emptyset$-definable bijection. Then there exists a $\emptyset$-definable bijection $f\colon \LL R\to \LL S$ lifting $h$.
\end{lem}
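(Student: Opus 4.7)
The plan is to build $f$ explicitly by rescaling each $\rv$-fibre of $\LL R$ multiplicatively onto the corresponding $\rv$-fibre of $\LL S$. Since $h$ is a bijection, the decompositions
\[
\LL R = \bigsqcup_{(\xi,\zeta)\in R}\rv^{-1}(\xi)\times\{\zeta\}, \quad \LL S = \bigsqcup_{(\xi',\zeta')\in S}\rv^{-1}(\xi')\times\{\zeta'\}
\]
are paired off one-to-one by $h$, so it suffices to find, uniformly in $(\xi,\zeta)\in R$, a $\emptyset$-definable bijection $\rv^{-1}(\xi) \to \rv^{-1}(h_1(\xi,\zeta))$, where $h = (h_1, h_2)$ splits into its $(\RV^\times)^n$- and $\RV^m$-valued components.

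A natural candidate is coordinate-wise multiplication: if $c(\xi,\zeta) \in (K^\times)^n$ satisfies $\rv(c(\xi,\zeta)) = h_1(\xi,\zeta)\cdot \xi^{-1}$, then $x \mapsto x\cdot c(\xi,\zeta)$ is a bijection $\rv^{-1}(\xi) \to \rv^{-1}(h_1(\xi,\zeta))$. To produce such a scaling factor uniformly in $(\xi,\zeta)$, I invoke effectivity, packaged in the compactness statement of Lemma~\ref{le:effective.characterizations}(\ref{it:lifting.families}), to obtain a $\emptyset$-definable function $c \colon R \to (K^\times)^n$ with this prescribed $\rv$-value. I then set
\[
f(x,\zeta) \;=\; \bigl(x \cdot c(\rv(x),\zeta),\; h_2(\rv(x),\zeta)\bigr),
\]
the multiplication being coordinate-wise in $K^n$. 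A direct computation shows $\rv$ of the first component equals $h_1(\rv(x),\zeta)$, so $f(x,\zeta) \in \LL S$ and $\rv \circ f = h \circ \rv$, verifying that $f$ is a $\emptyset$-definable lift of $h$.

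For bijectivity, the restriction of $f$ to each stratum $\rv^{-1}(\xi)\times\{\zeta\}$ is coordinate-wise multiplication by the constant vector $c(\xi,\zeta) \in (K^\times)^n$, hence a bijection onto $\rv^{-1}(h_1(\xi,\zeta))\times\{h_2(\xi,\zeta)\}$. Because $h$ matches strata of $\LL R$ and $\LL S$ bijectively, the local bijections assemble into a bijection $f \colon \LL R \to \LL S$, and an explicit two-sided inverse is given by the analogous construction applied to $h^{-1}$. The main technical point of the argument is the uniform definable existence of the scaling function $c$ on $R$; this is precisely what effectivity, in the guise of Lemma~\ref{le:effective.characterizations}(\ref{it:lifting.families}), is designed to supply, and once granted everything else reduces to a routine calculation.
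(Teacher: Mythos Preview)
There is a genuine gap. Your scaling function $c\colon R\to (K^\times)^n$ would be a $\emptyset$-definable map from an $\RV$-sort into the valued field, and in any $1$-h-minimal theory such maps necessarily have \emph{finite} image: the image $\{c(\xi,\zeta):(\xi,\zeta)\in R\}$ is an $\RV$-parametrized union of singletons in $K^n$, hence finite by the ``$\RV$-unions stay finite'' principle recorded just after the uniform preparation statement. But $\rv\circ c$, which you require to equal $(\xi,\zeta)\mapsto h_1(\xi,\zeta)\cdot\xi^{-1}$, typically takes infinitely many values (e.g.\ $n=1$, $m=0$, $R=\RV^\times_{<1}$, $h(\xi)=\xi^{-1}$ gives $\rv\circ c(\xi)=\xi^{-2}$), so no such $c$ can exist. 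Lemma~\ref{le:effective.characterizations}(\ref{it:lifting.families}) does not provide it either: the lifted family $(C_a)_{a\in K^n}$ it produces is indexed by points of the \emph{valued field}, not by $\RV$. If you follow that route and allow $c$ to depend on $x\in K^n$, then on a fibre $\rv^{-1}(\xi)\times\{\zeta\}$ your map becomes $x\mapsto x\cdot c(x,\zeta)$ with $c(x,\zeta)$ varying within a single $\rv$-fibre of $K^n$, and there is no reason this should be a bijection.

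The paper circumvents this by inducting on the ambient dimension $n$ and splitting on $\dim_\RV R$. When $\dim_\RV R<n$, one uses the $\RV$-dimension theory of Section~\ref{sec:dim.RV} to project finite-to-one onto $(\RV^\times)^{n-1}$, lifts the bijection there by induction, and only then rescales the remaining single $K$-coordinate; the scaling factor is now allowed to depend on the first $n-1$ valued-field coordinates, which sidesteps the finiteness obstruction. When $\dim_\RV R=n$, one takes arbitrary lifts of $h$ and $h^{-1}$ via Lemma~\ref{lem:lifting.maps.RV[n]} and uses the characterisation of full-dimensional points to show that over a generic $(\xi,\zeta)\in R$ the lift is already a fibrewise bijection; the complement has $\RV$-dimension $<n$ and is handled by the first case.
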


For the proof we need some more terminology. 
For $f\colon X\subset K^n\to K^m$ a $\emptyset$-definable map, and $x\in X$, we denote by 
\[
\fibdim(f, x) = \min_{\lambda\in \Gamma^\times} \dim\Big(f^{-1}(f(x)) \cap B_{<\lambda}(x))\Big) \in \{0, \ldots, n\}
\]
the \emph{local fibre dimension of $f$ at $x$}. Note that by dimension theory the set of $x\in X$ of a fixed local fibre dimension is $\emptyset$-definable. 
\begin{proof}
Write the map $h$ as
\[h \colon R \to S \colon (\xi,\zeta) \to ( h_0(\xi,\zeta), h_1(\xi,\zeta) ).\]
We induct on the ambient dimension $n$, the case $n = 0$ being trivial.  
We distinguish two cases: $\dim_{\RV}(R) < n$ and $\dim_{\RV}(R) = n$. 
%Throughout this proof, $x$ will denote a tuple in $K^{n-1}$, $y$ an element of $K$ and $z = (x,y)$.

Assume first that $\dim_{\RV}(R) < n$.
By Theorem~\ref{thm:dim.theory.RV}(\ref{it:coordinate.projections}) we may partition $R$ into $\emptyset$-definable pieces $R_i$ such that both $R_i$ and $h(R_i)$ have a finite-to-one coordinate projection onto $(\RV^\times)^{n-1} \times \RV^m$.
Working piecewise and permuting coordinates, we may assume that $R$ and $S$ have a finite-to-one projection onto their first $n-1$ coordinates.
Let $R',S' \subset \RV^{n-1} \times \RV^{1 + m}$ be isomorphic copies of $R$ and $S$, viewed as objects in $\RV[n-1]$.

The inductive hypothesis provides a $\emptyset$-definable bijection $g \colon \LL R' \to \LL S'$ lifting $h$.
Since each fibre $(\LL R)_x \subset \RV^{1+m}$ is finite, we are essentially reduced to the zero-dimensional case. 
We give some details. 
First write
\begin{align*}
	g \colon \LL R'\subset K^{n-1}\times \RV^\times \times \RV^m 
	&\to \LL S'\subset K^{n-1}\times \RV^\times \times \RV^m  \colon   \\
	 (x,\xi_n,\zeta) &\mapsto (g_0(x,\xi_n,\zeta),g_1(x,\xi_n,\zeta),h_1(\rv(x),\xi_n,\zeta)).
\end{align*}
Now use Lemma~\ref{lem:lifting.maps.RV[n]} to find $\emptyset$-definable maps $c \colon \LL R' \to K $ lifting $(\xi,\zeta) \to \xi_n$ and $d \colon \LL R' \to K$ lifting $g_1$. These lifts can be used to find bijections between the $\rv$-fibres $\rv^{-1}(\xi_n) \subset K$ and $\rv^{-1}(g_1(x,\xi,\zeta)) \subset K$.
Explicitly, we can define $f$ as follows:
\[ f \colon \LL R \to \LL S \colon (x,y,\zeta) \mapsto \left(g_0(x,\rv(y),\zeta), \frac{d(x,\rv(y),\zeta)  }{c(x,\rv(y),\zeta) } y, h_1(\rv(x),\rv(y),\zeta)   \right). \]

Now assume that $\dim_\RV R = n$. 
Lemma~\ref{lem:lifting.maps.RV[n]}, provides $\emptyset$-definable lifts $f\colon \LL R\to \LL S$ of $h$ and $g\colon \LL S\to \LL R$ of $h^{-1}$.
Let $R'$ be the $\emptyset$-definable set such that for each $(\xi,\alpha) \in R'$, with $h(\xi,\alpha) = (\zeta,\beta) \in S$, the restriction of $f$ to 
\[ \rv^{-1}(\xi) \times \{\alpha\} \to \rv^{-1}(\zeta) \times \{\beta\} \]
is already a bijection.
It suffices to show that $\dim_{\RV}(R \setminus R') < n$ to reduce the problem to the previous case.
Since $R'$ is $\emptyset$-definable, we may as well assume $R' = \emptyset$. 

Let $\pi \colon \RV^n \times \RV^m \to \RV^n$ be the projection onto the first $n$ coordinates.
By Theorem~\ref{thm:dim.theory.RV}(\ref{it:finite-to-one.maps}) it is enough to show that $\dim_{\RV}(\pi(R)) < n$.
But now by item (\ref{it:generic.points}) of that theorem, this will be true unless $\pi(R)$ contains some $\xi$ with the property that each $\RV$-definable set $Y \subset K^n$ either contains $\rv^{-1}(\xi)$ or is disjoint from it.

Working towards a contradiction, assume there is some $\xi \in \pi(R)$ with that property. 
Let $\alpha \in \RV^m$ be such that $(\xi,\alpha) \in R$, and write $h(\xi,\alpha) = (\zeta,\beta) \in S$.
Further write $B = \rv^{-1}(\xi)$ and $F = \rv^{-1}(\zeta)$ 
Then $f$ induces an $\RV$-definable map $B \to F$, and $g$ induces an $\RV$-definable map in the opposite direction. We continue denoting these induced maps by $f$ and $g$.

First suppose that $f$ is not surjective. 
Then $f(B)$ is strictly contained in $F$. 
But then $F$ must contain an $\RV$-definable set $Z$ of lower dimension. 
This would yield a nomempty $\RV$-definable $g(Z) \subset B$ of dimension at most $n-1$, a contradiction.

We now show that $f$ is injective.
By genericity of $\xi$ and definability of the fibre dimension, $f$ must have the same fibre dimension at all points $x \in B$.
Moreover, this fibre dimension must be 0, for else $g(f( B )))$ would be a nonempty $\RV$-definable subset of lower dimension.

Thus for each $z \in F$, $f^{-1}(z)$ must be finite and necessarily equal to some fixed number $N \in \NN$. 
We now argue that $N = 1$. Indeed by~\cite[Lem.\,2.5.3]{CHR} we may find some $k \in \NN$ and an $\RV$-definable family of injections for $z \in F$
\[ \iota_z \colon f^{-1}(z) \to \RV^k \]
The union of $\iota_z$ over all $z \in F$ determines an $\RV$-definable map $\iota \colon B \to \RV^k$. 
If $N \geq 2$, then this map would not be constant, contradicting the genericity of $\xi$. So $N = 1$ and $f$ is injective and thus bijective. This gives the desired contradiction, as we assumed that $R' = \emptyset$. 
\end{proof}

We immediately obtain the following.

\begin{cor}
Assume that $\cT$ is effective.
Then the map $\LL$ descends to a well-defined morphism of semi-groups
\[
\LL\colon \K_+ \RV[\leq n]\to \K_+ \VF[n].
\]
\end{cor}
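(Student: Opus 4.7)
The plan is to invoke the universal property of the Grothendieck semigroup: it suffices to define an assignment on objects that respects isomorphism classes and coproducts. The essential ingredient, namely the lifting of bijections from $\RV[i]$ to $\VF[i]$, is already the content of Lemma~\ref{lem:lift.bijection}; the rest is formal bookkeeping about the direct-sum structure of $\RV[\leq n]$.

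In more detail, I would proceed as follows. By construction, $\RV[\leq n] = \bigoplus_{i=0}^n \RV[i]$, so by the universal property $\K_+ \RV[\leq n] = \bigoplus_{i=0}^n \K_+ \RV[i]$ and it suffices to produce compatible semigroup morphisms $\LL\colon \K_+ \RV[i]\to \K_+ \VF[n]$ for each $i\in\{0,\dots,n\}$. For $i\leq n$, the assignment $X\mapsto \{0\}^{n-i}\times X$ is a fully faithful functor $\VF[i]\to \VF[n]$ which preserves disjoint unions (one checks directly that the projection to $K^n$ of $\{0\}^{n-i}\times X$ is still finite-to-one). Its induced semigroup morphism $\K_+ \VF[i]\to \K_+ \VF[n]$ therefore reduces the task to defining $\LL\colon \K_+ \RV[i]\to \K_+ \VF[i]$ for each $i$.

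For this, I would apply the universal property of $\K_+ \RV[i]$ to the map of objects $R\mapsto [\LL R]$. Two verifications are needed. \emph{(Isomorphism invariance.)} If $h\colon R\to S$ is an isomorphism in $\RV[i]$, then Lemma~\ref{lem:lift.bijection} produces a $\emptyset$-definable bijection $\LL R\to \LL S$ lifting $h$, which is automatically an isomorphism in $\VF[i]$ since morphisms there are arbitrary $\emptyset$-definable maps; hence $[\LL R] = [\LL S]$. \emph{(Additivity.)} If $R = R_1\sqcup R_2$ in $\RV[i]$, then from the formula $\LL T = \rv^{-1}(T)$ and the fact that $\rv^{-1}$ commutes with intersections we immediately get $\LL R_1\cap \LL R_2 = \rv^{-1}(R_1\cap R_2) = \emptyset$, so $\LL R = \LL R_1\sqcup \LL R_2$ and hence $[\LL R] = [\LL R_1] + [\LL R_2]$ in $\K_+ \VF[i]$.

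The substantive work has already been carried out in Lemma~\ref{lem:lift.bijection}, which in turn relied on the dimension theory on $\RV$ from Section~\ref{sec:dim.RV}. Consequently, the only ``hard part'' of this corollary is recognising that nothing further is required beyond the formal universal-property argument sketched above.
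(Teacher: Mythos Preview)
Your proposal is correct and matches the paper's approach: the paper gives no explicit proof, treating the corollary as an immediate consequence of Lemma~\ref{lem:lift.bijection}, and what you have written is exactly the formal unpacking of that word ``immediately''. The only point worth noting is that the paper is agnostic about the particular embedding $\VF[i]\hookrightarrow\VF[n]$ (any choice yields the same map on $\K_+$ since morphisms in $\VF[n]$ are arbitrary $\emptyset$-definable maps), so your specific choice $X\mapsto\{0\}^{n-i}\times X$ is harmless but not canonical.
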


\subsection{Surjectivity of the lifting map}

We next show that $\LL$ is surjective on objects up to isomorphism. In short, this follows from cell decomposition. 
\begin{lem} \label{lem:L.surjective}
	Let $X \subset K^n \times \RV^m$ be an object in $\VF[n]$ and take any $\emptyset$-definable cell decomposition $\chi \colon K^n \to \RV^N$ adapted to $X$. 
	Then there exists an $R \in \Ob\RV[\leq n]$ and a $\emptyset$-definable bijection $g \colon X \to \LL R$ which descends to a bijection
	\[ 
	\{ (\chi(x),\xi) \mid (x,\xi) \in X   \} \xrightarrow{\sim} R.
	\]
\end{lem}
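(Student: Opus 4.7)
The plan is to cell-decompose $X$ into twisted boxes, apply the canonical cell parameterization to each, and reassemble into the bijection $g$. First I would use the fact that $\chi$ is adapted to $X$ in the sense of Theorem~\ref{thm:cell.decomp}(\ref{it:adapted}) to introduce the $\emptyset$-definable set
\[
R_0 = \{(\eta,\xi) \in \chi(K^n)\times \RV^m \mid \chi^{-1}(\eta) \times \{\xi\} \subset X\},
\]
which yields the disjoint-union decomposition $X = \bigsqcup_{(\eta,\xi)\in R_0} \chi^{-1}(\eta) \times \{\xi\}$. For each twisted box $\chi^{-1}(\eta) = \rtimes(c(\eta),r(\eta))$, the canonical parameterization
\[
\phi_\eta \colon \chi^{-1}(\eta) \to \{y \in K^n \mid \rv(y_i) = r_i(\eta)\ \forall i\}, \qquad x \mapsto \bigl(x_i - c_i(\eta,\pi_{<i}(x))\bigr)_{i=1}^n,
\]
is a $\emptyset$-definable bijection, with the understanding that $r_i(\eta) = 0$ forces the $i$-th coordinate to vanish on both sides.

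Next I would glue these parameterizations into a $\emptyset$-definable injection
\[
g \colon X \to K^n \times \RV^N \times \RV^m,\qquad (x,\xi) \mapsto \bigl(\phi_{\chi(x)}(x),\chi(x),\xi\bigr),
\]
and set $R = \{(r(\eta),\eta,\xi) \mid (\eta,\xi) \in R_0\} \subset \RV^n \times \RV^{N+m}$. Viewing $R$ in its $\cC_n$-presentation via Remark~\ref{rem:RV[<=n]}, a direct check from the definitions gives that the image of $g$ equals $\LL R$, and the evident map $(\eta,\xi) \mapsto (r(\eta),\eta,\xi)$ is a bijection from $\{(\chi(x),\xi) \mid (x,\xi) \in X\}$ onto $R$ to which $g$ descends.

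The main obstacle will be verifying that $R$ really is an object of $\RV[\leq n]$, i.e., producing the requisite finite-to-one projections. I would handle this by splitting $R$ according to the type $\tau \in \{0,1\}^n$ of the underlying twisted box and treating each piece $R_\tau$ separately. Writing $E_\tau \subset \chi(K^n)$ for the parameter space of type-$\tau$ cell labels, a fibre-dimension computation using $X \in \VF[n]$ should give $|\tau| + \dim_\RV E_\tau \leq n$. The coordinate-projection part of the $\RV$-dimension theorem (Theorem~\ref{thm:dim.theory.RV}(\ref{it:coordinate.projections})) would then let me partition $E_\tau$ into finitely many $\emptyset$-definable pieces on each of which some coordinate projection to $(\RV^\times)^{\dim_\RV E_\tau}$ is finite-to-one; combined with the $r$-coordinates, this makes the corresponding piece of $R_\tau$ finite-to-one onto $(\RV^\times)^{|\tau| + \dim_\RV E_\tau}$. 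After coordinate permutations in $\cC_n$, each such piece is an object of $\RV[|\tau| + \dim_\RV E_\tau] \subset \RV[\leq n]$, and summing over types yields the desired $R \in \Ob \RV[\leq n]$.
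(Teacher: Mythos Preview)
Your construction of $g$ and $R$ in the first two paragraphs is correct and coincides with the paper's. The trouble is entirely in your third paragraph, where you try to verify $R \in \Ob\RV[\leq n]$.

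First, Theorem~\ref{thm:dim.theory.RV} requires effectivity, but Lemma~\ref{lem:L.surjective} does not assume it (and is later used in the non-effective setting, e.g.\ for Theorem~\ref{thm:non.eff.int}). So you cannot appeal to $\RV$-dimension theory here. Second, and independently, the inequality $|\tau| + \dim_\RV E_\tau \leq n$ is simply false: take $X = K$, $n=1$, $\chi = \rv$. Then $E_{(1)} = \RV^\times$ has $\RV$-dimension $1$, giving $|\tau| + \dim_\RV E_\tau = 2 > 1$. The point is that the $r$-coordinates and the $\eta$-coordinates are far from independent: $r(\eta)$ is a function of $\eta$, so combining them overcounts.

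The paper's argument is much more direct and avoids both issues. One shows that the projection of $R$ onto its first $n$ coordinates, namely $(r(\eta),\eta,\xi)\mapsto r(\eta)$, is already finite-to-one. For fixed $\rho\in\RV^n$, the fibre consists of those $(\eta,\xi)\in R_0$ with $r(\eta)=\rho$; equivalently, of those twisted boxes of $\chi$ which your map $\phi$ sends to $\rv^{-1}(\rho)$, together with the finitely many $\xi$ over each (finiteness of the latter uses $X\in\VF[n]$). That only finitely many twisted boxes land on a given $\rv$-fibre is exactly the statement that $f = \bigcup_\eta \phi_\eta$ is finite-to-one, and this follows from the fact that $\RV$-unions of finite sets stay finite in $1$-h-minimal theories (\cite[Cor.~2.6.7]{CHR}). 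Via the $\cC_n$-presentation of Remark~\ref{rem:RV[<=n]}, this immediately gives $R\in\Ob\RV[\leq n]$, with no effectivity and no partition needed.
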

\begin{proof}
	Consider a single fibre $F$ of $\chi$. It is of the form
	\[ \chi^{-1}(\xi) =  \{x \in K^n  \mid (\rv( x_i - c_{\xi, i}( \pi_{<i}(x) ) ) )_{i = 1}^n =r_\xi \}, \]
	for some $\xi$-definable $r_\xi \in \RV^n$ and $\xi$-definable centres $c_{\xi,i}$. 
	Consider the map $f\colon K^n \to K^n$ which maps $x$ contained in $F$ to the element 
	\[
	(x_1-c_{\xi, 1}, x_2-c_{\xi, 2}(x_1), \ldots, x_n - c_{\xi,n}(\pi_{<n}(x))).
	\]
	Then $f$ maps each fibre of $\chi$ bijectively to an $\rv$-fibre in $K^n$. 
	Hence, since $\RV$-unions remain finite by~\cite[Cor.\,2.6.7]{CHR}, $f$ is finite-to-one.
	
	Now consider the $\emptyset$-definable map 
	\[
	g\colon X\to K^n\times \RV^n \times \RV^N\colon (x, \zeta) \mapsto (f(x), \zeta, \chi(x)).
	\]
	Then $g$ is bijective onto its image $Y$, and the projection $Y\to K^n$ is finite-to-one since $f$ maps only finitely many twisted boxes to the same $\rv$-fibre. 
	Hence $Y$ is an object in $\VF[n]$ and $X$ and $Y$ are isomorphic in $\VF[n]$. 
	
	Defining
	\[
	R = \{(\rv(y), \eta) \in \RV^n\times \RV^{m+N} \mid (y, \eta)\in Y\},
	\]
	we see that $R$ is an object in $\RV[\leq n]$, and $\LL R = Y$ is isomorphic to $X$. Additionally, $f$ maps fibres of $\chi$ bijectively to fibres of $\rv$, so $g$ descends to a bijection ${\chi(X) \to R}$.
\end{proof}
In effective theories we obtain surjectivity on the level of Grothendieck semirings.
\begin{cor} \label{cor:L.surjective}
	Assume that $\cT$ is effective.
	Then the map $\LL\colon \K_+ \RV[\leq n]\to \K_+ \VF[n]$ is surjective.
\end{cor}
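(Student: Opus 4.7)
The plan is that this corollary should follow almost immediately from the preceding lemma together with the well-definedness of $\LL$ on Grothendieck semirings established earlier.

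First I would recall that by construction, $\K_+\VF[n]$ is generated as a semigroup by the isomorphism classes $[X]$ of objects $X \in \Ob \VF[n]$. Hence to show surjectivity of $\LL \colon \K_+\RV[\leq n] \to \K_+\VF[n]$, it suffices to show that every such class $[X]$ lies in the image.

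Now given $X \in \Ob\VF[n]$, I would apply cell decomposition (Theorem~\ref{thm:cell.decomp}) to obtain a $\emptyset$-definable cell decomposition $\chi \colon K^n \to \RV^N$ adapted to $X$. Then Lemma~\ref{lem:L.surjective} produces an object $R \in \Ob\RV[\leq n]$ together with a $\emptyset$-definable bijection $g \colon X \to \LL R$. In particular, $X$ and $\LL R$ are isomorphic in $\VF[n]$, so $[X] = [\LL R]$ in $\K_+\VF[n]$.

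Finally, by the corollary following Lemma~\ref{lem:lift.bijection}, the lifting map descends to a well-defined semigroup morphism $\LL \colon \K_+\RV[\leq n] \to \K_+\VF[n]$ sending $[R]$ to $[\LL R]$. Combining with the previous paragraph, $[X] = \LL([R])$ lies in the image of $\LL$. Since this holds for every generator of $\K_+\VF[n]$, the map $\LL$ is surjective. There is no real obstacle here; the work has already been carried out in Lemmas~\ref{lem:lift.bijection} and~\ref{lem:L.surjective}, and this corollary is essentially a bookkeeping consequence.
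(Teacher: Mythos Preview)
Your argument is correct and matches the paper's intended reasoning: the corollary is stated without proof because it follows immediately from Lemma~\ref{lem:L.surjective} (which provides the $R$ with $X\cong\LL R$) together with the well-definedness of $\LL$ on $\K_+\RV[\leq n]$ established in the corollary after Lemma~\ref{lem:lift.bijection}, the latter being the only place where effectivity is used.
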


\subsection{Integration in dimension one}\label{sec:kernel.of.LL.dim.one}

%We now compute the kernel of $\LL$, where we first treat the one-dimensional. 
Essentially, $\LL$ is the inverse to the map $\Ob\VF[n] \to K_{+}\RV[\leq n]$ induced by cell decomposition. 
This latter map is not actually well-defined as $\cM_K \cong 1 + \cM_K$ in $\VF[1]$, while $\rv(\cM_K) = \RV_{< 1}^{\times} \sqcup \{0\}$ is clearly not in bijection with $\rv(1 + \cM_K) = \{1\}$.
Switching perspectives, it follows that the kernel of $\LL$ must contain congruence generated by $(\rv(\cM_K),\rv(1 + \cM_K))$.
Amazingly, it turns out that this is the only quotient we have to take.
We prove this following the strategy of Hrushovski--Kazhdan~\cite{HK}, starting with the one-dimensional case.
Let us also mention that the results from this and the following section do not need the assumption of effectivity.

To properly state the following definition, recall from Notation \ref{not:RV[<n]} that elements $[1]_0,[1]_1 \in \RV[\leq 1]$ denote a single point in ambient dimension zero and one respectively.
\begin{defn} \label{def:Isp}
Denote by $I_{\mathrm{sp}}$ the congruence on the semiring $\K_+\RV[*]$ generated by $([\RV_{< 1}^{\times}]_1 + [1]_0,[1]_1)$. 

Denote the induced semigroup congruence on $\K_+ \RV[\leq n]$ by $I_{\mathrm{sp}}[n]$, or simply by $I_{\mathrm{sp}}$, when $n$ is clear from the context.

For any parameter set $A\subset K \cup \RV$. write $I_{\mathrm{sp},A}$ for the same relation, but relative to the language $\cL(A)$.
\end{defn}
Thus, by definition, $\K_+\RV[\leq n]/I_{\mathrm{sp}}$ is the image of $\K_+ \RV[\leq n]$ in $\K_+ \RV[*]/I_\mathrm{sp}$.

For the following lemma, recall that we assume $K$ to be sufficiently saturated (at least $\aleph_0$-saturated).
\begin{lem}[$I_\mathrm{sp}$ in families]\label{lem:compactness.Isp}
	Let $R,S$ be objects in $\RV[\leq n]$ and let $f\colon R \to \RV^N, g\colon S\to \RV^N$ be $\emptyset$-definable.
	Assume that for every $\eta\in \RV^N$ we have that $([f^{-1}(\eta)], [g^{-1}(\eta)])$ belongs to $I_{\mathrm{sp},\eta}$. Then $([R],[S])\in I_\mathrm{sp}$.
\end{lem}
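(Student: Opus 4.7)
The plan is to use the $\aleph_0$-saturation of $K$ to obtain a uniform bound on the length of a witnessing chain fibre by fibre, then lift the chains to a $\emptyset$-definable family and glue.

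First I would unwind the definition. Since $I_\mathrm{sp}$ is the congruence generated by the single pair $(u,v)$ with $u = [\RV^\times_{<1}]_1 + [1]_0$ and $v = [1]_1$, a relation $(a,b) \in I_{\mathrm{sp},\eta}$ between classes of $\eta$-definable objects is witnessed by a chain $a = x_0, x_1, \ldots, x_k = b$ of classes of $\eta$-definable objects in $\RV[\leq n]$ such that for each $i$ there exist $\eta$-definable classes $s_i, t_i$ with $\{x_i, x_{i+1}\} = \{s_i + t_i u,\, s_i + t_i v\}$. Writing $a_\eta = [f^{-1}(\eta)]$ and $b_\eta = [g^{-1}(\eta)]$, the hypothesis yields a minimal such length $k(\eta) \in \NN$ for every $\eta \in \RV^N(K)$.

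Next I would bound $k(\eta)$ uniformly on $\RV^N(K)$. If no such bound existed, then $\aleph_0$-saturation of $K$ (applied to the partial type asserting, for each $m$, that no chain of length $\leq m$ witnesses the membership) would produce $\eta^* \in \RV^N(K)$ at which no chain of any finite length witnesses $(a_{\eta^*}, b_{\eta^*}) \in I_{\mathrm{sp},\eta^*}$, contradicting the hypothesis. Fix such a uniform bound $N_0$. For each $k \leq N_0$ and each ``combinatorial shape'' (directions of the swaps and ambient data for the intermediate classes $x_i$), existence of a chain of that shape from $a_\eta$ to $b_\eta$ is a first-order condition on $\eta$, since each of $x_i, s_i, t_i$ can be represented as an $\eta$-definable subset of some fixed $(\RV^\times)^{\leq n} \times \RV^M$ and the ``swap'' equality is first-order. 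This partitions $\RV^N$ into finitely many $\emptyset$-definable pieces $E_j$, and on each $E_j$ the witnessing data can be chosen $\emptyset$-definably in $\eta$, again by saturation (equivalently, by definable choice on uniformly bounded families of $\RV$-classes).

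Gluing the fibrewise chains over a fixed piece $E_j$ then produces, at each step, $\emptyset$-definable classes $X_i, S_i, T_i$ obtained as disjoint unions over $\eta \in E_j$ of the $x_i^\eta, s_i^\eta, t_i^\eta$, satisfying $\{X_i, X_{i+1}\} = \{S_i + T_i u,\, S_i + T_i v\}$ in $\K_+\RV[\leq n]$; this is a chain witnessing that the restrictions of $[R]$ and $[S]$ to fibres over $E_j$ agree modulo $I_\mathrm{sp}$. Summing over the finitely many pieces $E_j$ yields $([R],[S]) \in I_\mathrm{sp}$. The main obstacle will be the third step: making precise that the combinatorial shape of bounded-length chains is first-order definable in $\eta$ in a way that admits uniform $\emptyset$-definable choices of the witnessing data $(x_i, s_i, t_i)$. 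This ultimately amounts to bookkeeping of the ambient dimensions of the intermediate classes, all of which are controlled by $N_0$.
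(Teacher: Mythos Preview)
Your argument is correct and shares the compactness core with the paper's proof. The paper packages the fibrewise witness for $([R_\eta],[S_\eta]) \in I_{\mathrm{sp},\eta}$ into a closed normal form $[R_\eta] = \sum_{i,j}[T_{ij,\eta}]\,u^i v^j$ and $[S_\eta] = \sum_{i,j}[T_{ij,\eta}]\,u^j v^i$ (with $u = [\RV^\times_{<1}]_1 + [1]_0$, $v = [1]_1$), then applies compactness once to make the $T_{ij,\eta}$ uniformly definable in $\eta$ and takes $T_{ij} = \bigcup_\eta T_{ij,\eta}\times\{\eta\}$. You instead unwind the congruence as a finite chain of elementary swaps $s + tu \leftrightarrow s + tv$, bound the chain length and the template by saturation, and glue the fibrewise chains over a finite $\emptyset$-definable partition of $\RV^N$. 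Your route makes the handling of transitivity in the congruence explicit, whereas the paper's normal form absorbs it tersely; conversely, the paper avoids the bookkeeping of chain shapes. One small remark: you do not actually invoke ``definable choice on $\RV$'' (which is unavailable in general); what you need, and have, is that for each \emph{fixed} tuple of formulas the condition ``this template yields a valid witnessing chain at $\eta$'' is first-order in $\eta$, so compactness alone produces the finite cover by templates and hence the uniform witnesses.
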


\begin{proof}
	%This is essentially compactness, where we recall our assumption that $K$ is sufficiently saturated. 
	By replacing $R$ by the isomorphic set $\{(\xi,f(\xi)) \mid \xi \in R \}$, we may assume that $f$ is simply the projection onto the last $N$ coordinates. Similarly, we may assume that $g$ is the projection onto the last $N$ coordinates of $S$.
	For $\eta \in \RV^N$, write $R_{\eta}, S_{\eta} \subset \RV^n \times \RV^{m}$ for their respective fibres.
	
	Since $[R_{\eta}]$ and $[S_{\eta}]$ are congruent modulo $I_{\mathrm{sp},\eta}$, there exists $\eta$-definable $T_{ij,\eta} \subset \RV^n \times \RV^{m'}$ such that 
	\begin{equation} \label{eq:kernel.eta}
		\left\{ \begin{array}{ll}
			[R_{\eta}] 	=& \sum_{i,j=0}^n [T_{ij,\eta}] \cdot  ([\RV_{<1}^{\times}]_1 + [1]_0)^i \cdot ([1]_1)^j, \\%do not remove the whitespace below these equations. For some reason that breaks these equations
			
			[S_{\eta}]  =& \sum_{i,j=0}^n [T_{ij,\eta}] \cdot  ([\RV_{<1}^{\times}]_1 + [1]_0)^j \cdot ([1]_1)^i .	
		\end{array} \right.  
	\end{equation}
	By compactness, we may assume that all objects in the above equations are uniformly definable in $\eta$.
	Now define $T_{ij} = \bigcup_{\eta} T_{ij,\eta} \times \{\eta\}$ for all $i,j = 0,\ldots,n$. 
	By the above equation \ref{eq:kernel.eta}, each $T_{ij,\eta} \times \{1\}_0^i \times \{1\}_1^j $ is naturally a subobject of $R_\eta$, whence so is $T_{ij}$. In particular, $T_{ij} \in \Ob\RV[\leq n]$.
	Moreover, the union over $\eta$ of the pieces of $T_{ij}$ in $\RV[k]$ gives rise to a piece of $T_{ij}$ in $\RV[k]$, for each $k = 0,\dots,n$.
	Thus we obtain a version of Equation \ref{eq:kernel.eta}, without subscripts $\eta$, showing that $([R],[S]) \in I_\mathrm{sp}$.
\end{proof}
\begin{defn} \label{def:I.dim1}
	Let $X \subset K \times \RV^m$ be an object in $\VF[1]$. Given a cell decomposition $\chi \colon K \to \RV^N$ adapted to $X$ define
	\[ I_{\chi} X \coloneqq \{ (\chi(x),\xi) \mid (x, \xi) \in X \}. \]
	By Lemma~\ref{lem:L.surjective}, $I_{\chi} X$ is (isomorphic to) a well-defined object object in $\RV[\leq 1]$.
\end{defn}
\begin{remark} \label{rem:I.dim1}
	With notation as in Definition~\ref{def:I.dim1} above, if for each twisted box $F = \rtimes (c_{\xi},r_{\xi})$ and each $x \in F$ one has that the first coordinate of $\chi(x)$ is equal to $r_{\xi}$, then $I_{\chi} X$ is literally an element of $\RV[\leq 1]$. In any case, the class $[I_{\chi} X] \in \RV[\leq 1]$ is well-defined, but dependent on $\chi$.
\end{remark}
%
%Let $\chi,\psi \colon K^n \to \RV^N$ be two $\emptyset$-definable cell decompositions such that $\psi$ refines $\chi$. Then we obtain a map $\theta \colon I_{\psi}(K) \to I_\chi(K)$, induced by sending each twisted box of $\psi$ to the unique twisted box of $\chi$ containing it. We call this the \emph{restriction map}. 
%When $\psi$ is a proper refinement of $\chi$, the map $\theta$ is not an isomorphism. 
%In fact it is not even a morphism in $\RV[\leq 1]$, as it must send a center of $\psi$ to a 1-dimensional twisted box of $\chi$. 
%The following lemma shows that if $n = 1$, this failure is entirely explained by $I_\mathrm{sp}$.
In general, $I_{\chi} X$ and $I_{\psi} X$ are not isomorphic in $\RV[\leq 1]$ for two different cell decompositions $\chi,\psi$. The following lemma shows that this failure is completely explained by $I_\mathrm{sp}[1]$.

\begin{lem} \label{lem:cell.refinement}
Let $\chi,\psi \colon K \to \RV^N$ be two $\emptyset$-definable cell decompositions adapted to some $X \in \VF[1]$.
Then $[I_{\chi} X] \equiv [I_{\psi} X]$ modulo $I_\mathrm{sp}[1]$.
\end{lem}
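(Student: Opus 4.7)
I would reduce the statement to a single elementary subdivision and then invoke the defining generator of $I_\mathrm{sp}$. First I would build a common refinement $\varphi$ of $\chi$ and $\psi$---for instance a cell decomposition refining $x\mapsto(\chi(x),\psi(x))$---after which, by symmetry, it suffices to show $[I_\chi X]\equiv [I_\varphi X]\pmod{I_\mathrm{sp}[1]}$ whenever $\varphi$ refines $\chi$, and to apply this twice.

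I would record the refinement ``$\varphi$ refines $\chi$'' as, for each $\chi$-cell $F=\chi^{-1}(\xi)$, a finite $\xi$-definable set of additional centres $C(\xi)\subset F$; type-$0$ cells are points and need no subdivision. Inducting on $|C(\xi)|$ (uniformly after partitioning the cell labels by this cardinality) reduces to the atomic case where a single new centre $c_1(\xi)\in F$ is added to one type-$1$ cell $F=c(\xi)+\rv^{-1}(r(\xi))$, with $r(\xi)\in\RV^\times$ the $\rv$-datum of $F$ encoded by the first coordinate of $\xi$. In this atomic case the balls $1$-next to $\{c(\xi),c_1(\xi)\}$ inside $F$ are precisely $c_1(\xi)+\rv^{-1}(\eta)$ for $\eta\in\RV^\times$ with $|\eta|<|r(\xi)|$, yielding
\[
F=\{c_1(\xi)\}\sqcup\bigsqcup_{\eta\in\RV^\times,\,|\eta|<|r(\xi)|}\bigl(c_1(\xi)+\rv^{-1}(\eta)\bigr).
\]
The $\xi$-parameterized contribution to $[I_\chi X]$ is the single point $[\{r(\xi)\}]_1=[1]_1$, while that to $[I_\varphi X]$ is $[1]_0+[\{\eta\in\RV^\times:|\eta|<|r(\xi)|\}]_1$. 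Multiplication by $r(\xi)^{-1}$ is an $\cL(\xi)$-definable isomorphism from $\{\eta:|\eta|<|r(\xi)|\}$ to $\RV^\times_{<1}$ in $\RV[1]$, so these two contributions differ precisely by the defining generator of $I_{\mathrm{sp},\xi}$. Lemma~\ref{lem:compactness.Isp}, applied to the definable family indexed by $\xi$, then packages these fibrewise congruences into a single relation in $I_\mathrm{sp}[1]$.

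The main obstacle I anticipate is the bookkeeping around the centre data: a cell decomposition is the pair of its indexing map and its centre tuples, so ``$\varphi$ refines $\chi$'' must be implemented so that one can recover the centres of $\chi$ from those of $\varphi$ and strip off the extra centres one at a time, all uniformly in the cell parameter $\xi$. This can be arranged by first ensuring every $\varphi$-cell is contained in a $\chi$-cell (and all $\chi$-centres appear among the $\varphi$-centres), after which the atomic computation above and Lemma~\ref{lem:compactness.Isp} combine to conclude.
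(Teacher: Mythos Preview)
Your overall architecture matches the paper's: pass to a common refinement, reduce to the case of one refinement of the other, work cell by cell using Lemma~\ref{lem:compactness.Isp}, and compute the effect of inserting a single extra centre. Your atomic computation is correct and is exactly the one the paper uses.

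The gap is in the sentence ``Inducting on $|C(\xi)|$ \ldots\ reduces to the atomic case where a single new centre $c_1(\xi)\in F$ is added.'' When $|C(\xi)|\ge 2$, the elements of $C(\xi)$ need not be individually $\xi$-definable --- think of two Galois-conjugate roots of a $\xi$-definable quadratic --- so there is no $\xi$-definable way to strip them off one at a time, and hence no $\xi$-definable intermediate cell decomposition with $|C(\xi)|-1$ extra centres. Your final paragraph flags bookkeeping of centre data as the obstacle, but the real issue is definability of a single centre, and the proposed fix (making $\varphi$-cells sit inside $\chi$-cells with compatible centres) does not address it.

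The paper's proof circumvents this with an averaging trick. After reducing to a single $\emptyset$-definable twisted box $X$ with extra centres $\{d_1,\dots,d_k\}\subset X$, it takes $a=\frac{1}{k}\sum_i d_i$, which \emph{is} $\emptyset$-definable, and performs your atomic step at $a$ on both $\chi$ and $\psi$. The point is that not all $d_i$ can lie in the same ball $1$-next to $a$: if $\rv(d_i-a)$ were independent of $i$ then $\rv\bigl(\sum_i(d_i-a)\bigr)=k\cdot\rv(d_1-a)\ne 0$ in characteristic zero, contradicting $\sum_i(d_i-a)=0$. Hence each resulting sub-cell carries strictly fewer of the $d_i$, and over its new label the surviving $d_i$'s become definable (as the intersection of a definable finite set with that cell), so the induction goes through. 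This averaging step is the missing idea in your argument.
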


\begin{proof}
It suffices to prove this when $\psi$ is a refinement of $\chi$.
Note that for each $\xi \in \RV^m$ the fibre $(I_{\chi} X)_{\xi} \subset \RV^N$ coincides with $(I_{\chi} X_{\xi})$, and similarly for $\psi$. By Lemma~\ref{lem:compactness.Isp}, it then suffices to consider the case where $X \subset K$.
By a similar argument, we may work separately on each of the twisted boxes of $\chi$. So we may reduce to the case where $X$ is a single $\emptyset$-definable twisted box $\rtimes(c,r)$ and $\psi \colon K \to \RV^N$ is a $\emptyset$-definable cell decomposition for $X$. 

First suppose that $X$ does not contain any centre of $\psi$. 
Then $X$ is already a single twisted box of $\psi$ and thus we are done. 
So let $\{d_1,\ldots,d_k\}$ be the set of all centres of $\psi$ in $X$. We proceed by induction on $k$, and may assume $k \geq 1$.
Let $a$ be the $\emptyset$-definable average of $d_{1},\ldots,d_k$. Then also $a \in D$, since $K$ has characteristic zero.
Now consider the following refinement $\chi'$ of $\chi$:
\[ 
\chi' \colon K \to \RV^{N + 1} \colon  
x \mapsto	
\begin{cases}
	(\chi(x),\rv(x - a)/\rv(c - a)) &\text{if } x \in X \\
	(\chi(x),0) &\text{if } x \notin X. 
\end{cases}
\]
Then $\chi'$ replaces the single twisted box $X$ by $\RV_{<1}^\times \sqcup \{0\}$, whence modulo $I_\mathrm{sp}$
\[ [I_{\chi} X] =  [1]_1 \equiv  [\RV^{\times}_{<1}]_1 + [1]_0 =  [I_{\chi'} X ] . \]
If $k = 1$, then $d = a$, so $[I_{\chi'} X] = [I_{\psi} X]$ and we are done.
Else, consider the corresponding refinement $\psi'$ of $\psi$, where we replace the $\emptyset$-definable twisted box of $\psi$ which contains $a$ by $\RV_{<1}^\times \sqcup \{ 0\}$. Then also $[I_{\psi'} X] \equiv [I_\psi X]$ modulo $I_{\mathrm{sp}}$. Since not all ${d_1,\ldots,d_k}$ can live in the same ball 1-next to $a$, we may conclude by induction on $k$ that $[I_{\psi'} X] \equiv [I_{\chi'} X]$.
\end{proof}

The computation of the kernel of $\LL \colon K_+\RV[\leq 1] \to \VF[1]$ now follows from the following reformulation of compatible domain and image preparation from~\cite[Thm.\,5.7.3]{CHR}. 
%We need a minor modification that keeps track of $\RV$-parameters.
%
\begin{lem}[{Compatible domain and image preparation}] \label{lem:compatible.domain.image}
	Let $X, Y \subset K \times \RV^m$ be objects in $\VF[1]$. Let $\chi_0,\psi_0 \colon  K \to \RV^N$ be $\emptyset$-definable cell decompositions for $X$ and $Y$ respectively. If $f \colon X \to Y$ is a $\emptyset$-definable bijection, then there exist refinements $\chi,\psi$ of $\chi_0,\psi_0$, such that $f$ descends to an isomorphism $I_{\chi}(X) \to I_{\psi}(Y)$ in $\RV[\leq 1]$.
\end{lem}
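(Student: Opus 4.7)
The plan is to iteratively refine $\chi_0$ and $\psi_0$ using the various parts of cell decomposition (Theorem~\ref{thm:cell.decomp}) so that the bijection $f = (f_1, f_2)$ descends cleanly to a bijection on cells, and hence to an isomorphism in $\RV[\leq 1]$.

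First, apply uniform preparation (Theorem~\ref{thm:cell.decomp}(2)) to the $\emptyset$-definable set
\[
Z = \{(x, \xi, \eta) \in K \times \RV^m \times \RV^m : (x, \xi) \in X,\ f_2(x, \xi) = \eta\},
\]
obtaining $\chi_1$ refining $\chi_0$ such that for every twisted box $F$ of $\chi_1$ and every $\xi \in \RV^m$ with $F \subset X_\xi$, the function $f_2$ takes a constant value $\eta(F, \xi)$ on $F \times \{\xi\}$. Symmetrically, one obtains $\psi_1$ refining $\psi_0$ with the analogous property for $f^{-1}_2$.

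Next, we apply a family version of compatible domain-image preparation (Theorem~\ref{thm:cell.decomp}(4)) to the parametric family of maps $f_1(\cdot, \xi)\colon X_\xi \to K$. The uniform version follows from the non-uniform statement combined with part~(2) of the same theorem and the fact that $\RV$-unions of finite sets remain finite. This yields refinements $\chi$ of $\chi_1$ and $\psi$ of $\psi_1$ with the following property: for every twisted box $F$ of $\chi$ with $F \subset X_\xi$, the image $f_1(F, \xi)$ is a single twisted box $G$ of $\psi$, necessarily contained in $Y_{\eta(F, \xi)}$.

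The induced map
\[
\phi\colon I_\chi(X) \to I_\psi(Y), \quad (\chi(x), \xi) \mapsto (\psi(f_1(x, \xi)), f_2(x, \xi))
\]
is then well-defined by the previous steps. Bijectivity is automatic: injectivity follows from $f$ being a bijection and the cells $F \times \{\xi\}$ in $X$ being pairwise disjoint with pairwise disjoint images $G \times \{\eta\}$ in $Y$; surjectivity follows from $\psi$ being adapted to $Y$ together with the preimage argument placing any $y \in G \subset Y_\eta$ into $f_1(F, \xi)$, where $F$ is the twisted box of $\chi$ containing the $K$-component of $f^{-1}(y, \eta)$. Finally, $\phi$ respects the $\RV[0]/\RV[1]$-grading: by the Jacobian property, $f_1(\cdot, \xi)$ maps type~$1$ twisted boxes (open balls) to type~$1$ twisted boxes and type~$0$ twisted boxes (points) to type~$0$ twisted boxes. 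The main technical obstacle is the uniform-in-$\xi$ version of Theorem~\ref{thm:cell.decomp}(4), which requires gluing together the per-$\xi$ refinements produced by the non-uniform statement via $\RV$-union finiteness.
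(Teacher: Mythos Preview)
Your proposal is correct and follows essentially the same approach as the paper's proof: the paper first handles the $m=0$ case by direct citation of \cite[Thm.~5.7.3]{CHR} (addenda 1 and 3) and then says the general case follows ``from compactness and keeping track of the $\RV^m$-coordinates,'' which is precisely the family/uniform version you spell out (first freezing $f_2$ on twisted boxes, then applying domain--image preparation fibrewise and gluing via compactness and finiteness of $\RV$-unions). Your explicit verification of well-definedness, bijectivity, and type preservation is more detailed than the paper's; note that for type preservation you do not actually need the Jacobian property---the mere fact that $f_1(\cdot,\xi)|_F$ is a bijection $F\to G$ already forces $F$ and $G$ to have the same type, by cardinality.
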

\begin{proof}
%Write $\pi_{\RV} \colon K \times \RV^m$ for the projection onto the $\RV$-coordinates.
%For $\xi,\zeta \in \RV^{m}$, write $X_{\xi,\zeta} \subset K$ for those $x \in K$ such that the projection onto the  $\pi_{\RV}f(x,\xi) = \zeta$. Similarly define $Y_{\xi,\zeta}$, using $f^{-1}$ in the place of $f$.
%We refine $\chi_0$, $\psi_0$ such that $\pi_{\RV} \circ f$ and $\pi_{\RV} \circ f^{-1}$ factor through $I_{\chi_0} X$ and $I_{\psi_0} Y$. It now suffices to show the statement with $X$ and $Y$ replaced by $X_{\xi,\zeta}$ and $Y_{\xi,\zeta}$, but this is addendum 3 of \cite{CHR}[Thm.\,5.7.3].
%
If $m = 0$, then we need to find refinements $\chi,\psi$ of $\chi_0,\psi_0$ such that $f$ maps a single twisted box of $\chi$ to a single twisted box of $\psi$. This is possible by addenda 1 and 3 of~\cite[Thm.\,5.7.3]{CHR}. The general case follows from compactness and keeping track of the $\RV^m$-coordinates.
\end{proof}
%
%If $X\subset (\RV^\times)^k\times \RV^m$ is an object in $\RV[k]$, then for $n \geq k$ we denote by $[X]_k$ the class of $X$ in $\K_+ \RV[\leq n]$. This is imply to avoid confusion and make the ambient dimension clear, e.g.\ $[1]_0$ and $[1]_1$ are different objects.
%
\begin{def-lem} \label{def-lem:integral.dim.1}
	For any $X \in \VF[1]$ and any $\emptyset$-definable cell decomposition $\chi \colon K \to \RV^N$ adapted to $X$, the class
	\[ I [X]  \coloneqq [I_{\chi} X] \in \RV[\leq 1]/I_{\mathrm{sp}}  ,\]
	is well-defined and independent of $\chi$.
\end{def-lem}
\begin{proof}
	This follows immediately from Lemma~\ref{lem:cell.refinement} and Lemma~\ref{lem:compatible.domain.image}.
\end{proof}
\begin{lem}\label{lem:kernel.dim.1}
Let $\cT$ be effective. Then $\LL$ induces an isomorphism of semigroups
\[
\LL\colon \K_+ \RV[\leq 1]/I_{\mathrm{sp}} \to \K_+\VF[1],
\]
inverse to the map $I$ from Lemma-Definition~\ref{def-lem:integral.dim.1}.
\end{lem}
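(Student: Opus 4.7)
The plan is to verify four assertions: $I$ descends to a well-defined semigroup morphism $\K_+\VF[1] \to \K_+\RV[\leq 1]/I_{\mathrm{sp}}$; $\LL$ factors through $I_{\mathrm{sp}}$; and the two maps are mutually inverse. The key technical inputs are already at hand: Lemma~\ref{lem:cell.refinement}, Lemma~\ref{lem:compatible.domain.image}, and Lemma~\ref{lem:L.surjective}.

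First, I would check that $I$ is well-defined on $\K_+\VF[1]$. Independence of $I[X] = [I_\chi X] \bmod I_\mathrm{sp}$ from the choice of $\chi$ is Lemma-Definition~\ref{def-lem:integral.dim.1}. For isomorphism-invariance, given a $\emptyset$-definable bijection $f \colon X \to Y$ with cell decompositions $\chi_0, \psi_0$ adapted to $X, Y$, Lemma~\ref{lem:compatible.domain.image} produces refinements $\chi, \psi$ such that $f$ descends to an isomorphism $I_\chi X \to I_\psi Y$ in $\RV[\leq 1]$; combining this with Lemma~\ref{lem:cell.refinement} (applied twice) gives $[I_{\chi_0} X] \equiv [I_{\psi_0} Y] \bmod I_\mathrm{sp}$. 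Additivity under disjoint unions follows by taking a common cell decomposition adapted to both pieces.

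Second, I would check that $\LL$ factors through $I_\mathrm{sp}$. By direct computation, $\LL[1]_1 = [1 + \cM_K]$, while $\LL\bigl([\RV^\times_{<1}]_1 + [1]_0\bigr) = [\cM_K \setminus \{0\}] + [\{0\}] = [\cM_K]$. The $\emptyset$-definable translation $x \mapsto x - 1$ identifies $1 + \cM_K$ with $\cM_K$ in $\VF[1]$, so these two classes coincide in $\K_+ \VF[1]$.

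Third, for $I \circ \LL = \id$, I would use the tautological cell decomposition $\chi \colon K \to \RV$, $x \mapsto \rv(x)$, whose twisted boxes are the singleton $\{0\}$ (with constant centre $0$, type $(0)$) and the open balls $\rv^{-1}(\alpha)$ for $\alpha \in \RV^\times$ (with constant centre $0$, type $(1)$). For any $R \in \RV[\leq 1]$, embedded into $\RV \times \RV^m$ as in Remark~\ref{rem:RV[<=n]}, this $\chi$ is adapted to $\LL R$, and unwinding the definitions yields $I_\chi \LL R = R$ literally, hence $I[\LL R] = [R]$. For the reverse composition $\LL \circ I = \id$, Lemma~\ref{lem:L.surjective} supplies, for each $X \in \VF[1]$ with cell decomposition $\chi$, a $\emptyset$-definable bijection $X \to \LL I_\chi X$, giving $\LL \circ I [X] = [X]$ in $\K_+ \VF[1]$.

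The main technical obstacle is the isomorphism-invariance of $I$, and more precisely it is the content of Lemma~\ref{lem:compatible.domain.image}: without the ability to refine any pair of cell decompositions of $X$ and $Y$ so that a given bijection actually descends to an $\RV$-isomorphism, one would not be able to control how two definably isomorphic one-dimensional objects interact with cell decomposition. Everything else — the identification of the two lifts of $\cM_K$, the tautological cell decomposition computation, and the surjectivity argument — is essentially bookkeeping on top of the preparatory lemmas.
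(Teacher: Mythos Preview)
Your proof is correct and follows essentially the same approach as the paper's: verify that $\LL$ kills the generator of $I_{\mathrm{sp}}$ via the translation $\cM_K \cong 1 + \cM_K$, then use the tautological cell decomposition $\chi = \rv$ to show $I\LL[R] = [R]$, and invoke Lemma~\ref{lem:L.surjective} for the other direction. The paper's own proof is simply terser, packaging the last step as ``surjective with a one-sided inverse'' rather than computing $\LL\circ I = \id$ explicitly, and it omits your recap of the well-definedness of $I$ since that is already the content of Lemma-Definition~\ref{def-lem:integral.dim.1}.
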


\begin{proof}
As remarked at the beginning of this section, 
\[\LL[1]_1 = [1 + \cM_K] = [\cM_K] = \LL( [\RV_{<1}^{\times}]_1 + [1]_0 ])\]
and hence $\LL$ descends to a map  $\K_+ \RV[\leq 1]/I_{\mathrm{sp}}[1] \to \K_+\VF[1]$, which is surjective by Corollary~\ref{cor:L.surjective}. As for injectivity, note that for $R \in \RV[\leq 1]$ one has by definition that
\[ I \LL [R] = [\rv(\LL R) ] = [R] . \qedhere \]
\end{proof}

\subsection{Integration in higher dimensions}\label{sec:int.higher.dim}

We now determine the kernel of $\LL$ in higher dimensions. We continue following the strategy of~\cite{HK} and start by showing that definable bijections can be written piecewise as a composition of relatively unary maps.
This allows for a good induction on dimension.

\begin{defn}[Relatively unary maps]
Let $f\colon K^n \times \RV^m \to K^n \times \RV^{m'}$ be any map. We say that $f$ is \emph{relatively unary} if all but one of the $K$-coordinates of $f$ are constant. In other words, $f$ is of the form
\[
(x_1, \ldots, x_n,\xi)  \mapsto (x_1, \ldots, x_{k -1}, f_k(x,\xi), x_{k+1},  \ldots, x_{n},\bar{f}(x,\xi)) .
\]
\end{defn}

\begin{lem} \label{lem:transpositions}
	For any permutation $\sigma$ of $\{1,\dots,n\}$, the map
	\[ K^n \to K^n \colon (x_1,\ldots,x_n) \mapsto (x_{\sigma(1)}, \ldots,x_{\sigma(n)}) \]
	is a composition of $\emptyset$-definable relatively unary maps.
\end{lem}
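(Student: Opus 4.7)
The plan is to reduce to the case of transpositions and then write down an explicit decomposition.

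First, I would observe that every permutation $\sigma \in S_n$ is a product of transpositions, and that the composition of relatively unary maps is closed under composition in an obvious way. Hence it suffices to prove the statement when $\sigma$ is a transposition. Moreover, by composing with obvious bookkeeping relabelings (which themselves follow from the same argument on just two coordinates), it is enough to treat the case where $\sigma$ swaps two fixed coordinates, say the first two, leaving the rest fixed.

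Next, I would write down the classical ``swap via additions'' trick. For the map
\[
\tau \colon K^n \to K^n \colon (x_1, x_2, x_3, \ldots, x_n) \mapsto (x_2, x_1, x_3, \ldots, x_n),
\]
I would factor $\tau = \tau_4 \circ \tau_3 \circ \tau_2 \circ \tau_1$, where
\begin{align*}
\tau_1 &\colon (x_1, x_2, x_3, \ldots, x_n) \mapsto (x_1 + x_2, x_2, x_3, \ldots, x_n),\\
\tau_2 &\colon (y_1, y_2, x_3, \ldots, x_n) \mapsto (y_1, y_2 - y_1, x_3, \ldots, x_n),\\
\tau_3 &\colon (y_1, y_2, x_3, \ldots, x_n) \mapsto (y_1 + y_2, y_2, x_3, \ldots, x_n),\\
\tau_4 &\colon (y_1, y_2, x_3, \ldots, x_n) \mapsto (y_1, -y_2, x_3, \ldots, x_n).
\end{align*}
A direct computation shows that this composition sends $(x_1, x_2, x_3, \ldots, x_n)$ successively to $(x_1+x_2, x_2, \ldots)$, then $(x_1+x_2, -x_1, \ldots)$, then $(x_2, -x_1, \ldots)$, and finally $(x_2, x_1, \ldots)$, as desired. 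Each $\tau_i$ alters only one $K$-coordinate and has no $\RV$-components, so each is $\emptyset$-definable and relatively unary.

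There is no real obstacle here: the argument is a three-line reduction plus an explicit computation. The only point worth flagging is that one must rely on $K$ being of characteristic zero, or at least on $2 \in K^\times$, only insofar as the coordinate $-y_2$ is well-defined; this is automatic in our setting. All ingredients used (addition, negation, projections) are in $\cL_{\val} \subset \cL$, so the decomposition is $\emptyset$-definable as required.
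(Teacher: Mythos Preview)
Your proof is correct and follows essentially the same approach as the paper: reduce to a single transposition and decompose it via successive additions and subtractions. The paper's decomposition is marginally shorter (three maps instead of four: $(x_1,x_2)\mapsto(x_1,x_1+x_2)\mapsto(x_2,x_1+x_2)\mapsto(x_2,x_1)$), and your remark about needing $2\in K^\times$ is unnecessary since negation is defined in any ring, but these are cosmetic points.
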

\begin{proof}
	We consider the case $n = 2$. The general case follows similarly. 
	The map $(x_1,x_2) \mapsto (x_2,x_1)$ decomposes as
	\[ (x_1,x_2) \mapsto (x_1,x_2 + x_1) \mapsto (x_2,x_2 + x_1) \mapsto (x_2,x_1). \qedhere \]
\end{proof}
%To inductively integrate out a single coordinate, we need to write definable bijections as a composition of relatively unary ones. 
We now show that definable bijections can be made relatively unary.
Hrushovski--Kazhdan prove this result based on the exchange principle~\cite[Lem.\,7.11]{HK} for the algebraic closure operator. Such a statement is also implicit in \cite{CLoes}, to obtain a change of variables formula (see also \cite[Thm.\,5.4.10]{CLNV24}).
We give a detailed proof in the language of cell decomposition to keep this paper self-contained.
\begin{lem}\label{lem:rel.unary}
Let $X,Y \in \Ob\VF[n]$ and let $f\colon X\to Y$ be a $\emptyset$-definable bijection. Then there exists a finite partition of $X$ into $\emptyset$-definable sets $(X_i)_i$, such that the restriction of $f$ to $X_i$ is a finite composition of $\emptyset$-definable relatively unary bijective morphisms in $\VF[n]$.
\end{lem}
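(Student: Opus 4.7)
The plan is to induct on $n$. The base case $n=1$ is immediate, since any $\emptyset$-definable bijection in $\VF[1]$ involves only one $K$-coordinate and is automatically relatively unary.

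For the inductive step, let $f\colon X \to Y$ be a $\emptyset$-definable bijection in $\VF[n]$ with $X \subset K^n \times \RV^m$, and write $f = (f_1, \ldots, f_n, \bar f)$. I first apply Theorem~\ref{thm:cell.decomp}, together with its addenda on $C^1$ centres, function preparation, and the Jacobian property, to obtain a cell decomposition of $X$ adapted to $f_1, \ldots, f_n$ with each $f_i$ satisfying the Jacobian property in each coordinate direction on each cell. It then suffices to work piecewise. The key ingredient is the exchange principle for $\acl$ on $\VF$, which holds since $\cT$ is geometric by~\cite[Lem.\,5.3.6]{CHR}. Since the projections $X\to K^n$ and $Y\to K^n$ are finite-to-one, we have $\xi \in \acl(x)$ and $\bar f(x,\xi) \in \acl(y)$, so at a generic point of the cell the tuples $x = (x_1, \ldots, x_n)$ and $y = (f_1(x,\xi), \ldots, f_n(x,\xi))$ are two $\acl$-bases of the same rank-$n$ closure. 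Exchange applied to $y_1$ then yields some index $i$ such that $x_i \in \acl(x_1, \ldots, \hat{x}_i, \ldots, x_n, y_1)$. Partitioning $X$ accordingly and permuting coordinates via Lemma~\ref{lem:transpositions}, I may assume $i = n$.

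On this piece, the Jacobian property for $f_1$ in the variable $x_n$ allows me to refine further so that the relatively unary map
\[ \phi(x,\xi) = (x_1, \ldots, x_{n-1}, f_1(x,\xi), \xi) \]
is a bijection onto its image. Then $f \circ \phi^{-1}$ sends an input of the form $(x_1, \ldots, x_{n-1}, y_1, \xi')$ to an output whose first $K$-coordinate is exactly the last input $K$-coordinate $y_1$. A permutation via Lemma~\ref{lem:transpositions} moves $y_1$ into the first input position, producing a bijection that acts as the identity on its first $K$-coordinate. Such a bijection may be viewed as a $\emptyset$-definable family of bijections in $\VF[n-1]$ parametrized by $y_1$ and the $\RV$-data. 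The inductive hypothesis, applied uniformly via compactness (the partition being given by a single $\cL$-formula), then yields a decomposition into relatively unary bijections in $\VF[n-1]$; these lift to relatively unary bijections in $\VF[n]$ by keeping $y_1$ fixed.

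The main obstacle is promoting the generically finite-to-one map $\phi$ to an actual bijection on each piece of $X$. The Jacobian property controls the generic behaviour, but definably separating the remaining finitely many preimages of $\phi$ requires a further refinement of the cell decomposition; fortunately the $\rv$-invariants attached to cells provide enough uniformly definable distinction, so this step goes through without any effectivity assumption on $\cT$.
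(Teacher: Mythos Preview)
Your approach via the exchange principle is essentially the Hrushovski--Kazhdan argument that the paper explicitly cites as an alternative but chooses not to follow. The paper instead runs a double induction on the number $k$ of already-fixed coordinates and on the fibre dimension $d$, and at the key step argues directly (via the constant-or-injective dichotomy of \cite[Lem.\,2.8.2]{CHR}) that on any cell of positive dimension some coordinate map $f_{j,\hat{x}_i,\xi}$ must be injective---no generic point or pregeometry machinery is invoked. Your route is a legitimate alternative, and the compactness-based transfer of the inductive hypothesis over the parameter $y_1$ is sound.

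There are, however, two places where your write-up is genuinely loose. First, the assertion that $x$ and $y$ are ``$\acl$-bases of the same rank-$n$ closure'' is only correct on an $n$-dimensional cell; on a cell of dimension $d<n$ the rank is $d$, and exchange as you invoke it need not apply (though the desired conclusion $x_i\in\acl(\hat{x}_i,y_1)$ still holds trivially for any $i$ with $x_i\in\acl(\hat{x}_i)$). Second, and more seriously, you commit to the coordinate $y_1$ throughout. On a lower-dimensional cell $y_1$ may well be constant, in which case the map $x_n\mapsto f_1(x,\xi)$ is constant and neither the Jacobian property nor exchange produces anything: your $\phi$ collapses the $x_n$-fibre and cannot be made injective by $\RV$-tags alone while still advancing toward $y_1$. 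The fix is precisely what the paper does implicitly: search over all pairs $(i,j)$ rather than only $(i,1)$, and pick one for which the unary map $x_i\mapsto f_j(x,\xi)$ is injective (such a pair exists whenever $d\geq 1$, since $f$ is a bijection and hence not all $f_j$ can be constant). Once that adjustment is made, your remaining argument---adding the cell-tag $\chi(x)$ as an $\RV$-coordinate to force injectivity, permuting via Lemma~\ref{lem:transpositions}, and inducting---goes through.
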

\begin{proof}
We will reorder the coordinates on $X$ and $Y$ multiple times throughout this proof, which is allowed by Lemma~\ref{lem:transpositions}.

We have $X,Y \subset K^n \times \RV^m$ for some suitable $m \in \NN$. Write $x = (x_1,\ldots,x_n)$ for the $K$-coordinates of $X$ and $\xi$ for the coordinates on $\RV^m$. Further write
\[
f(x,\xi) = (f_{1}(x,\xi),\ldots,f_{n}(x,\xi),\bar{f}(x,\xi)).
\]
We work by induction on the number of $f_{j}(x,\xi)$ for which $f_j(x,\xi) = x_j$ on $X$. 
If $k = n$, then we are done. So assume that $k < n$. 
By working relative to those $k$ coordinates, we reduce by compactness to the case $k = 0$.

%Having made this reduction, we continue by induction on the ambient dimension $n$, where the case $n = 1$ is clear. 
We may moreover assume that for every $\xi \in \RV^m$, both $X_{\xi},Y_{\xi} \subset K^n$ are of the same dimension $d$, independent of $\xi$, and that the projection onto the first $d$ coordinates of $X,Y$ is a finite to-one map to $K^d$. We now proceed by a further induction on $d$, where the case $d = 0$ follows from the fact that finite sets are $\RV$-parametrized (\cite[Lem~2.5.3]{CHR}): we can arrange that already $\bar{f}(x,\xi)$ is injective.

For $i \in \{1,\ldots,n\}$ and $x \in K^n$, write $\hat{x}_i \in K^{n-1}$ for the tuple $x$ without $x_i$.
For each $(x,\xi) \in X$ and $j \in \{1,\ldots,n\}$ we may consider the map induced by $f_{j}$ on the fibre over $(\hat{x}_i,\xi)$:
\[ f_{j,\hat{x}_i,\xi}\colon X_{\hat{x}_i,\xi} \subset K \to K \colon x_i \mapsto f_j(x,\xi).  \]
Now take a cell decomposition $\chi \colon K^n \to \RV^N$ adapted to both $X$ and $Y$ such that for each twisted box $D$ of $\chi$ and all $\xi \in \RV^m$ with $D \times \{\xi\} \subset X$ every map $f_{j,\hat{x}_i,\xi}$ is either constant or injective on $X_{j,\hat{x}_i,\xi} \subset K$. 
Such a cell decomposition exists in view of~\cite[Lem.\,2.8.2]{CHR}.
Moreover, up to taking a finite partition of $X$, we may assume that whether $f_{j,\hat{x}_i,\xi}$ is constant or injective only depends on $i,j$ and $D$ (and not on $\hat{x}_i$ or $\xi$).

Now fix a twisted box $D$, take any $x \in D$ and write $R \coloneqq X_x \subset \RV^m$. By construction of $\chi$, $R$ does not depend on $x$ and is $\chi(D)$-definable. 
We first consider the situation when $X = D \times R$.
Suppose then that each $f_{j,\hat{x_i},\xi}$ is constant on the corresponding fibre in $ D \times R$, for each $i,j$.
As $f$ is a bijection, this would imply that $\dim(X) = 0$, a case which we already considered.

So without loss of generality, $f_{1,\hat{x}_1,\xi}$ is injective. Now replace $D \times R$ by its image under the injective map
\[ (x,\xi) \mapsto (f_{1}(x,\xi),x_2,\ldots,x_n,\bar{f}(x,\xi), \xi,\chi(x)). \]
This works uniformly in $D$, and thus for $X$. We may conclude by our induction on $n - k$.  
\end{proof}

To induct on dimension, we require objects interpolating between $\RV[\leq n]$ and $\VF[n]$. Recall Definition \ref{def:RV[n].VF[n]} for the definition of these categories, as well as the more convenient presentation of $\RV[\leq n]$ from Remark \ref{rem:RV[<=n]}.

\begin{defn}
Let $k,\ell$ be non-negative integers. Denote by $\VFR[k, \ell]$ the category with objects $\emptyset$-definable sets $X\subset K^k\times \RV^\ell\times \RV^m$ such that the projection onto $K^k\times \RV^\ell$ is finite-to-one. The morphisms are $\emptyset$-definable maps $f \colon X \to Y$ over $K^k$, such that for each $x \in K^k$ the induced map between the fibres $f(x,\cdot) \colon X_{x} \subset \RV^{\ell} \times \RV^m \to Y_x$ is a morphism in $\RV[\leq \ell]$.

If $A\subset K\cup \RV$ is some parameter set, we denote by $\VFR_A[k, \ell]$ the corresponding category for the language $\cL(A)$.
\end{defn}

\begin{remark}
	By definition, the category $\VFR[0,\ell]$ is equivalent to $\RV[\leq \ell]$, for any $\ell \in \NN$.
	On the other hand, since morphisms in $\VFR[k,\ell]$ are over $K^k$, the category $\VFR[k,0]$ is not equivalent to $\VF[k]$, for any $k > 0$. The corresponding Grothendieck ring $\K_+ \VFR[k,0]$ keeps track of cut-and-paste relations, but not isomorphisms. One may rather think of $\K_+ \VFR[k,\ell]$ as a ring of ``functions'' from $K^k$ to $\K_+ \RV[\leq \ell]$ (denoted by $\Fn(K^k,\K_+\RV[\leq n])$ in \cite{HK}). 
\end{remark}
\begin{example}
 Consider the classes $[1 + \cM_K] \neq [2 + \cM_K]$ in $\VFR[1,0]$. One can think of them as representing indicator functions for $\rv^{-1}(1)$ and $\rv^{-1}(2)$, respectively. Intuitively, these are distinct functions whose supports have the same volume. More formally, one observes that the integration map from Lemma-Definition~\ref{def-lem:integral.dim.1} maps them both these classes to $[1]_1 = [2]_1 \in \K_+\RV[\leq 1]/I_{\mathrm{sp}}$. 
\end{example}

To compute the kernel of the integration map on $\VF[k]$, we also need a relative version of $I_\mathrm{sp}$ on the interpolating semirings of ``functions'' $\K_+\VFR[k,\ell]$. Note that for $X\in  \VFR[k, \ell]$, if $a\in K^k$ then $X_a\subset \RV^\ell\times \RV^m$ is an object of $\RV_a[\leq \ell]$ in the expanded language $\cL(a)$.

\begin{defn}
Let $I_\mathrm{sp}$ be the equivalence relation on $\K_+ \VFR[k,\ell]$ given by $([X],[Y])\in I_\mathrm{sp}$ if for each $a\in K^k$, we have that $([X_a], [Y_a])\in I_{\mathrm{sp},a}$. 
\end{defn}
The integration map in ambient dimension 1 from Lemma-Definition~\ref{def-lem:integral.dim.1} relativizes to $\K_+\VFR[k,\ell]$. 
Write $x = (x_1,\ldots,x_k)$ for the coordinates on $K^k$ and $\hat{x}_j$ for the same tuple, but with $x_j$ omitted, for $j \in \{1,\ldots,k\}$. 
\begin{defn} \label{def:Ij_chi}
Let $X$ be an object in $\VFR[k, \ell]$ and take $j \in \{1,\ldots,k\}$. Then let $\chi \colon K^n \to \RV^N$ be a $\emptyset$-definable map such that for every $(x,\xi) \in K^k \times \RV^{\ell + m}$ the induced map $\chi(\hat{x}_j,\cdot) \colon K \to \RV^{N}$ is a cell decomposition for the fibre $X_{\hat{x}_j,\xi} \subset K$.
Given such $X$, $j$ and $\chi$, define
\[ I^{j}_{\chi} X \coloneqq \{ (\hat{x}_j,\chi(x),\xi) \in K^{k -1} \times \RV^{ N + \ell + m } \mid (x,\xi) \in X  \} .\]
\end{defn}
One should think of $I^j_\chi X$ as the integral of $X$ with respect to the $j$-th coordinate.
Note that this depends on the chosen cell decomposition. 
\begin{remark}
	As in Definition~\ref{def:I.dim1} and Remark~\ref{rem:I.dim1}, note that $I^j_\chi X$ is naturally in bijection with a well-defined object of $\VFR[k-1,\ell +1]$, and that can always slightly adapt $\chi$ and the used presentation of the $\RV[\leq n]$ such that $I^j_\chi X$ is literally in $\VFR[k-1,\ell+1]$. As in the mentioned remark: $[I_{\chi} X] \in \K_+ \VFR[k-1,\ell +1]$ is thus well-defined, but depends on $\chi$.
\end{remark}
\begin{lem}[Integration is well-defined] \label{lem:I^j.well-defined}
Let $X,Y$ be isomorphic objects in $\VFR[k, \ell] $ and let $j \in \{1,\ldots,k\}$. If $\chi,\psi \colon K^k \to \RV^N$ are $\emptyset$-definable maps such that for each $(x,\xi) \in K^k \times \RV^{\ell + m}$ the induced maps $\chi(\hat{x}_j,\cdot)$ and $\psi(\hat{x}_j,\cdot)$ are cell decompositions of respetively $X_{\hat{x}_j,\xi}$ and $Y_{\hat{x}_j,\xi}$, then $[I^j_{\chi}X] \equiv [I^j_{\psi} Y]$ modulo $I_{\mathrm{sp}}$.
\end{lem}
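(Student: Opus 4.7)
The plan is to reduce to the one-dimensional integration result from Lemma-Definition \ref{def-lem:integral.dim.1} by a double fibration argument: first fibre over the remaining $K$-coordinates $\hat{a} \in K^{k-1}$ to reduce the ambient $K$-dimension to one, and then fibre over the ``frozen'' $\RV^\ell$-coordinates to land in the $\VF[1]$-setting to which the already-proved one-dimensional statement applies. Finally, glue the resulting fibrewise congruences back together using Lemma \ref{lem:compactness.Isp} and the very definition of $I_\mathrm{sp}$ on $\K_+ \VFR[k-1,\ell+1]$.

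Concretely, I would fix $\hat{a} \in K^{k-1}$ and $\zeta \in \RV^\ell$ and pass to the expanded language $\cL(\hat{a},\zeta)$, which is still 1-h-minimal of equicharacteristic zero. The fibre $X_{\hat{a},\zeta}$ sits in $K\times\RV^m$, and its projection onto $K$ is finite-to-one because the projection of $X$ onto $K^k\times\RV^\ell$ is; hence $X_{\hat{a},\zeta}$ is an $\hat{a}\zeta$-definable object of $\VF_{\hat{a}\zeta}[1]$, and similarly for $Y_{\hat{a},\zeta}$. The given isomorphism $f\colon X\to Y$ in $\VFR[k,\ell]$ is over $K^k$, so it restricts fibrewise to an $\hat{a}\zeta$-definable bijection $X_{\hat{a},\zeta}\to Y_{\hat{a},\zeta}$ in $\VF_{\hat{a}\zeta}[1]$. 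Moreover, by hypothesis, $\chi(\hat{a},\cdot)$ and $\psi(\hat{a},\cdot)$ are $\hat{a}$-definable cell decompositions of $K$ adapted to $X_{\hat{a},\zeta}$ and $Y_{\hat{a},\zeta}$ respectively.

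Applying Lemma-Definition \ref{def-lem:integral.dim.1} in the theory $\cL(\hat{a},\zeta)$ then yields
\[
[I_{\chi(\hat{a},\cdot)} X_{\hat{a},\zeta}] \;\equiv\; [I_{\psi(\hat{a},\cdot)} Y_{\hat{a},\zeta}] \pmod{I_{\mathrm{sp},\hat{a}\zeta}}.
\]
By construction these two classes are exactly the $\zeta$-fibres of $(I^j_\chi X)_{\hat{a}}$ and $(I^j_\psi Y)_{\hat{a}}$ in $\RV_{\hat{a}}[\leq \ell+1]$. Applying Lemma \ref{lem:compactness.Isp} in $\cL(\hat{a})$ to the two natural projections onto $\RV^\ell$ (or rather onto the coordinates that record $\zeta$) therefore gives
\[
[(I^j_\chi X)_{\hat{a}}] \;\equiv\; [(I^j_\psi Y)_{\hat{a}}] \pmod{I_{\mathrm{sp},\hat{a}}}.
\]
Since this holds for every $\hat{a} \in K^{k-1}$, the definition of the relation $I_\mathrm{sp}$ on $\K_+ \VFR[k-1,\ell+1]$ directly produces $[I^j_\chi X] \equiv [I^j_\psi Y]$ modulo $I_\mathrm{sp}$.

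The proof has essentially no creative content beyond invoking the correct prior result; the only point to watch is careful bookkeeping, namely the identification $(I^j_\chi X)_{\hat{a},\zeta} = I_{\chi(\hat{a},\cdot)}(X_{\hat{a},\zeta})$ up to the trivial inclusion of the frozen $\zeta$-coordinate, and verifying at each stage of the fibration that the object considered lands in the intended subcategory ($\VF_{\hat{a}\zeta}[1]$ at the innermost level, $\RV_{\hat{a}}[\leq \ell+1]$ after integrating, and $\VFR[k-1,\ell+1]$ after all fibres are assembled). The essentially unique ``hard'' step is the fact that Lemma \ref{lem:compactness.Isp} is strong enough to assemble the fibrewise congruences over $\RV^\ell$ into a single congruence modulo $I_{\mathrm{sp},\hat{a}}$, which is exactly what the $\aleph_0$-saturation of $K$ in that lemma provides.
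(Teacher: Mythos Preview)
Your argument has a genuine gap at the step where you claim that $f$ ``restricts fibrewise to an $\hat{a}\zeta$-definable bijection $X_{\hat{a},\zeta}\to Y_{\hat{a},\zeta}$ in $\VF_{\hat{a}\zeta}[1]$.'' Morphisms in $\VFR[k,\ell]$ are over $K^k$, but \emph{not} over $\RV^\ell$: for each fixed $x\in K^k$ the induced map $f(x,\cdot)\colon X_x\to Y_x$ is merely a morphism in $\RV[\leq\ell]$, which need not preserve the $\RV^\ell$-coordinate $\zeta$. A trivial counterexample in $\VFR[1,1]$ with $m=0$: take $X=K\times\{1\}$, $Y=K\times\{2\}$, and $f(x,1)=(x,2)$. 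Then $X_{\zeta=1}=K$ while $Y_{\zeta=1}=\emptyset$, so no bijection between these fibres exists at all. Your appeal to Lemma-Definition~\ref{def-lem:integral.dim.1} therefore cannot get started.

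The paper's proof avoids this by separating the two ingredients you try to handle simultaneously. After reducing to $k=1$, it first chooses a common refinement $\phi$ of $\chi$ and $\psi$ so fine that on each twisted box of $\phi$ the fibres $X_a,Y_a\subset\RV^{\ell+m}$ depend only on $\phi(a)$ and are $\phi(a)$-definably isomorphic via $f$; this gives $I^1_\phi X\cong I^1_\phi Y$ \emph{on the nose} in $\VFR[0,\ell+1]$, with no $I_\mathrm{sp}$ needed. The one-dimensional result and Lemma~\ref{lem:compactness.Isp} are then only invoked to compare $I^1_\chi X$ with $I^1_\phi X$ (and $I^1_\psi Y$ with $I^1_\phi Y$), where one is comparing two cell decompositions of the \emph{same} object and can safely fibre over all of $\RV^{\ell+m}$ because no isomorphism needs to be carried along. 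Introducing this intermediate $\phi$ is precisely the missing idea in your attempt.
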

\begin{proof}
	Everything is relative to $\hat{x}_j \in K^{k-1}$ so by compactness it suffices to prove this lemma for $\VFR[1,\ell]$. 
	Take a $\emptyset$-definable refinement $\phi \colon K \to \RV^M$ of both $\chi$ and $\psi$, such that for all $\eta \in \RV^M$ and $a \in \phi^{-1}(\eta)$ the corresponding fibres $X_a, Y_a \subset \RV^{\ell + m}$ depend only on $\eta$ and are $\eta$-definably isomorphic. It follows that $I^1_{\phi} Y \cong I^1_{\phi} X$ in $\VFR[0,\ell + 1]$.
	
	It thus suffices to show that $[I^1_{\phi} X] \equiv [I^1_{\chi} X]$ modulo $I_{\mathrm{sp}}$. To this end, take any $\xi \in \RV^{\ell + m}$ apply Lemma-Definition~\ref{def-lem:integral.dim.1} to obtain the following chain of equalities in $\RV_{\xi}[\leq 1]/I_{\mathrm{sp},\xi}$
	\[ [ (I^1_{\chi} X)_{\xi} ] =  [I^1_{\chi} (X_{\xi})] = I [ X_{\xi} ] = [ I^1_{\phi} (X_{\xi}) ] = [ (I^1_{\phi} X)_{\xi} ]. \]
	Now Lemma~\ref{lem:compactness.Isp} completes the proof.
\end{proof}
\begin{defn} \label{def:I^j}
	Let $X \in \Ob\VFR[k,\ell]$ and $j\in \{1,\ldots,k\}$. Write $I^j[X] \in \VFR[k,\ell]/I_{\mathrm{sp}}$ for the class of $I^j_{\chi} X$, for any $\chi$ as in Definition~\ref{def:Ij_chi}.
\end{defn}
Let $X\subset K^k\times \RV^\ell\times \RV^m$ be an object in $\VFR[k, \ell]$, and suppose that we are given a $\emptyset$-definable family $(V_x)_{x\in X}$ of sets where $V_x$ is an object in $\VFR[k', \ell']$ in the expanded language $\cL(x)$. We define
\[
\sum_{x\in X} [V_x] = \left[\{(x, y)\in K^k\times \RV^\ell\times \RV^m\times K^{k'}\times \RV^{\ell'}\times \RV^{m'}\mid x\in X, y\in V_x\}\right],
\]
which is an element of $\K_+ \VFR[k+k', \ell+\ell']$. By compactness, if $(V'_x)_x$ is such that $[V_x] = [V'_x]$ for each $x \in X$, then $\sum_{x \in X} [V_x] = \sum_{x \in X} [V_x'] $.

The following is a form of Fubini between $\VF$ and $\RV$.

\begin{lem}[Integration commutes with $\rv$]\label{lem:integration.commutes.rv}
	Let $X \in \Ob\VFR[k, \ell]$ and let $f\colon X\to \RV^n$ be a $\emptyset$-definable map. Then, for each $j \in \{1,\ldots,k\},$
	\[
	I^j [X] = \sum_{\xi\in \RV^n} I^j [f^{-1}(\xi)]
	\]
	in $\K_+ \VFR[k,\ell]/I_{\mathrm{sp}}$. 
\end{lem}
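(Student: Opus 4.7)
The plan is to choose one cell decomposition $\chi$ that simultaneously resolves both $X$ and all fibres $f^{-1}(\zeta)$ in the $j$-th variable, and then to realize the desired identity by an explicit $\emptyset$-definable bijection rather than by an $I_\sp$-argument.

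Using items (1) and (2) of Theorem~\ref{thm:cell.decomp} together with compactness, I would first find a $\emptyset$-definable $\chi\colon K^k\to \RV^N$ such that for every $(\hat{x}_j,\xi,\zeta)\in K^{k-1}\times \RV^{\ell+m}\times \RV^n$, the map $\chi(\hat{x}_j,\cdot)\colon K\to \RV^N$ is a cell decomposition of both $X_{\hat{x}_j,\xi}$ and $(f^{-1}(\zeta))_{\hat{x}_j,\xi}\subset K$. Such a $\chi$ is produced by applying uniform cell decomposition (in the $j$-th variable, relative to the remaining $K$-coordinates) to the set $W = \{(x,\xi,\zeta) : (x,\xi)\in X,\ f(x,\xi)=\zeta\}\subset K^k\times \RV^{\ell+m+n}$, and then uniformizing in $\hat{x}_j\in K^{k-1}$ by compactness.

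Because the fibres of $f$ partition $X$, the requirement on $\chi$ forces $f(\hat{x}_j,\cdot,\xi)$ to be constant on each twisted box of $\chi(\hat{x}_j,\cdot)$. Hence $f$ descends to a well-defined $\emptyset$-definable map $\bar{f}\colon I^j_\chi X\to \RV^n$, and taking graphs yields a $\emptyset$-definable bijection
\[
I^j_\chi X \xrightarrow{\sim} \bigl\{ (\hat{x}_j,\chi(x),\xi,f(x,\xi)) : (x,\xi)\in X \bigr\}.
\]
Viewed as an $\RV^n$-parametrized family via the projection onto the last coordinate, the right-hand side is by construction equal to $\sum_{\zeta\in \RV^n} I^j_\chi(f^{-1}(\zeta))$, since its fibre over $\zeta$ is precisely $I^j_\chi(f^{-1}(\zeta))$. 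Note that this bijection is a morphism in the appropriate $\VFR$-category, as it is the identity on the $K^{k-1}$-coordinate and is given fibrewise by a graph map over $\RV$.

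To conclude, I would pass to classes modulo $I_\sp$ and invoke Lemma~\ref{lem:I^j.well-defined} to replace each $[I^j_\chi(-)]$ by $I^j[-]$, yielding
\[
I^j[X] \;=\; [I^j_\chi X] \;=\; \sum_{\zeta\in \RV^n} [I^j_\chi(f^{-1}(\zeta))] \;=\; \sum_{\zeta\in \RV^n} I^j[f^{-1}(\zeta)].
\]
The main technical step is producing the simultaneous cell decomposition $\chi$ with both preparation properties; once that is in hand, the identity comes from an exact bijection and no nontrivial $I_\sp$-equivalence is needed.
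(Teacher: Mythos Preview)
Your proposal is correct and takes essentially the same approach as the paper: both arguments choose a cell decomposition $\chi$ such that $f$ factors through $I^j_\chi X$, after which the identity is realized by the graph bijection and Lemma~\ref{lem:I^j.well-defined} handles the passage to $I^j[-]$. The only cosmetic difference is that the paper first reduces to $k=1$ by compactness before choosing $\chi$, whereas you work directly in $\VFR[k,\ell]$ relative to $\hat{x}_j$.
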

\begin{proof}
	Since everything is again relative to the $K$-coordinates at positions distinct from $j$, it suffices by compactness to show this for $\VFR[1,\ell]$. 
	Take a cell decomposition $\chi \colon K \to \RV^N$ such that $f$ factors through
	$I_{\chi}^1 X$. The lemma now follows from Lemma~\ref{lem:I^j.well-defined} and the observation that
	\[ I^1[X] = [I_{\chi}^1 X] = \sum_{\xi \in \RV^n} [I^1_{\chi} f^{-1}(\xi) ] = \sum_{\xi \in \RV^n} I^1 [f^{-1}(\xi)]. \qedhere   \]
\end{proof}
\begin{lem}[Integration and $I_\sp$]\label{lem:int.Isp}
	Let $X,Y$ be objects in $\VFR[k,\ell]$ such that $[X] \equiv [Y]$ modulo $I_\mathrm{sp}$. Then, for each $j \in \{1,\ldots,k\}$
	it holds that $I^j[X] \equiv I^j[Y]$ modulo $I_\mathrm{sp}$. Hence, $I^j$ descends to a map
	\[ I^j \colon \K_+ \VFR[k,\ell]/I_{\mathrm{sp}} \to  \K_+ \VFR[k-1,\ell +1]/I_{\mathrm{sp}}. \]
\end{lem}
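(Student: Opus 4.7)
My plan is to reduce the statement to the case $k = j = 1$ by a fibrewise argument over the $k-1$ coordinates distinct from the $j$-th, and then to establish the one-dimensional case by using a compactness/structure description of $I_\sp$ in the spirit of Lemma~\ref{lem:compactness.Isp} to write $[X]$ and $[Y]$ explicitly in terms of common building blocks, after which $I^1$ can be applied termwise.

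For the reduction, note that $I_\sp$ on $\K_+\VFR[k-1,\ell+1]$ is defined fibrewise over $K^{k-1}$, so it suffices to show for each $\hat a \in K^{k-1}$ that $[(I^j X)_{\hat a}] \equiv [(I^j Y)_{\hat a}]$ modulo $I_{\sp,\hat a}$ in $\K_+\RV_{\hat a}[\leq \ell+1]$. Unwinding Definition~\ref{def:Ij_chi}, one has $(I^j_\chi X)_{\hat a} = I^1_{\chi(\cdot;\hat a)}(X_{\hat a})$, where $X_{\hat a} \in \VFR_{\hat a}[1,\ell]$ is the fibre of $X$ above $\hat a$, and the restriction $\chi(\cdot;\hat a)$ is a valid cell decomposition for computing $I^1$ in the language $\cL(\hat a)$. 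The hypothesis also restricts fibrewise to give $[X_{\hat a}] \equiv [Y_{\hat a}]$ modulo $I_{\sp,\hat a}$, so one is reduced to the case $k = j = 1$ over $\cL(\hat a)$.

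In this setting, the hypothesis becomes: $[X_a] \equiv [Y_a]$ modulo $I_{\sp,a}$ in $\K_+\RV_a[\leq \ell]$ for every $a \in K$. Repeating the compactness argument of Lemma~\ref{lem:compactness.Isp}, but with the parameter $a$ ranging over $K$ rather than over $\RV^N$, one produces uniformly $\emptyset$-definable families $T_{ij} \in \VFR[1,\ell_{ij}]$ together with equalities
\[
[X] = \sum_{i,j}[T_{ij}] \cdot u^i \cdot v^j, \quad [Y] = \sum_{i,j}[T_{ij}] \cdot u^j \cdot v^i
\]
in $\K_+\VFR[1,\ell]$, where $u = [\RV^\times_{<1}]_1 + [1]_0$ and $v = [1]_1$, and multiplication by an $\RV$-class $[R]$ is understood as taking a product with the constant family $K \times R$. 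One checks directly from Definition~\ref{def:Ij_chi} that $I^1$ commutes with multiplication by $\RV$-classes: if $\chi$ is adapted to $T$, then the same $\chi$ is adapted to $T \times R$ and $I^1_\chi(T \times R) = I^1_\chi(T) \times R$. Applying $I^1$ termwise to the two decompositions yields
\[
I^1[X] = \sum_{i,j}[I^1 T_{ij}] \cdot u^i v^j, \quad I^1[Y] = \sum_{i,j}[I^1 T_{ij}] \cdot u^j v^i
\]
in $\K_+\RV[\leq \ell+1]/I_\sp$. Since $(u,v) \in I_\sp$ by construction, $u^i v^j \equiv u^j v^i$ modulo $I_\sp$ for all $i,j$, and summing gives $I^1[X] \equiv I^1[Y]$ modulo $I_\sp$, as desired.

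The main obstacle is the compactness step: adapting Lemma~\ref{lem:compactness.Isp} from $\RV^N$-parameters to $K$-parameters. The argument is structurally identical, using $\aleph_0$-saturation of $K$ to gather witnesses into a uniformly $\emptyset$-definable family; the only subtlety is checking that such families assemble into objects of $\VFR[1,\cdot]$ rather than $\RV[\leq\cdot]$, which amounts to tracking the $K$-coordinate carefully throughout.
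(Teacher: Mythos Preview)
Your proof is correct and shares the same core idea as the paper's: uniformize the fibrewise $I_\sp$-witnesses by compactness, then use that $I^j$ distributes over products with constant $\RV$-factors. The presentational differences are minor. You first reduce to $k=1$ by restricting to fibres over $\hat a \in K^{k-1}$, whereas the paper works directly with general $k$. More interestingly, you run the compactness argument directly over the $K$-parameter to obtain an equality $[X] = \sum_{i,j}[T_{ij}]\,u^i v^j$ already in $\K_+\VFR[1,\ell]$, and then apply $I^1$ termwise using additivity and the product formula $I^1_\chi(T\times R) = I^1_\chi(T)\times R$. The paper instead first passes through a cell decomposition $\chi\colon K^k\to\RV^N$ so that all witnesses become $\chi(x)$-definable, then invokes Lemma~\ref{lem:integration.commutes.rv} (integration commutes with $\rv$) together with Lemma~\ref{lem:compactness.Isp} to reduce to the pure product case $X = X'\times R$, $Y = X'\times S$ with $(R,S)\in I_\sp[\ell]$. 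Your route is slightly more self-contained in that it does not appeal to Lemma~\ref{lem:integration.commutes.rv}; the paper's route has the advantage of reusing machinery already in place and keeping the parameter in $\RV$ rather than $K$.
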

\begin{proof}
	By assumption, we have for each $x \in K^{k}$ that $([X_x],[Y_x]) \in I_{\mathrm{sp},x}$.
	This is witnessed by an equation as in Equation~\ref{eq:kernel.eta} of the proof of Lemma~\ref{lem:compactness.Isp}, for certain $x$-definable-objects in $\RV_x[\leq n]$. 
	We may now find a cell decomposition $\chi \colon K^k \to \RV^N$ such the fibre $X_x \subset \RV^{\ell + m}$ depends only on $\chi(x)$ and all occurring objects and morphisms in that equation are already $\chi(x)$-definable.
	By Lemma~\ref{lem:integration.commutes.rv} and Lemma~\ref{lem:compactness.Isp} it suffices to consider the case $X = X' \times R$ and $Y = X' \times S$ with $X' \subset K^k$ and $(R,S) \in I_{\mathrm{sp}}[\ell]$. This is clear, as $I^j[X] = (I^j[X']) \cdot [R]$, and similarly for $Y$. 
\end{proof}

\begin{lem}[Fubini]\label{lem:Fubini}
Let $j, j'\in \{1,\ldots,k\}$ be distinct, and let $X$ be an object in $\VFR[k,  \ell]$. Then $I^jI^{j'}[X] = I^{j'}I^j [X]$ in $\K_+\VFR[k-2, \ell+2]/I_\mathrm{sp}$.
\end{lem}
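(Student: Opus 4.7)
The plan is to reduce the statement to verifying Fubini on a single $\emptyset$-definable bi-twisted box in $K^2$, where the two orderings agree thanks to the two equivalent descriptions in Lemma~\ref{lem:bi.twisted}.

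By working fibrewise over the $K$-coordinates at positions other than $j$ and $j'$ (using that morphisms in $\VFR[k,\ell]$ are over $K^k$, together with compactness), I would first reduce to the case $k=2$, $j=1$, $j'=2$, so that $X \in \VFR[2,\ell]$. Applying Lemma~\ref{lem:cell.decomp.bi.twisted} fibrewise over the $\RV^{\ell+m}$-parameters of $X$ (with compactness for uniformity) yields a $\emptyset$-definable cell decomposition $\chi$ of $X$ each of whose fibres is a bi-twisted box in $K^2$. Both iterated integrals split as sums over the fibres of $\chi$ by Lemma~\ref{lem:integration.commutes.rv} applied twice, so it suffices to verify the Fubini identity for a single $\emptyset$-definable bi-twisted box $F = \rtimes((c_1,c_2(x_1));(\xi_1,\xi_2)) \subset K^2$ where $c_2$ has the Jacobian property on $\pi_1(F)$.

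Using this first description of $F$, the map $(x_1,x_2)\mapsto \rv(x_2 - c_2(x_1))$ is constantly $\xi_2$ on $F$, so $I^2[F] = [\pi_1(F) \times \{\xi_2\}]$ in $\K_+\VFR[1,1]$; a further cell decomposition of $\pi_1(F)$ then gives $I^1 I^2 [F] = [\{\xi_1\}]_1 \cdot [\{\xi_2\}]_1$, a single-point class in $\RV[2]$. By Lemma~\ref{lem:bi.twisted}, either $F$ is an actual box (in which case Fubini is trivial), or $F$ admits the symmetric description $F = \{(x_1,x_2) : x_2 \in Y, \rv(x_1-c_2^{-1}(x_2)) = \xi_3\}$ with $Y = \pi_2(F)$ and $\xi_3 = -\rv(c_2')^{-1}\xi_2$. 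In the latter case the analogous computation yields $I^1[F] = [Y \times \{\xi_3\}]$ and $I^2 I^1 [F] = I[Y] \cdot [\{\xi_3\}]_1$.

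The main obstacle is the non-box case, where $Y$ is not a single twisted box but an open ball of radius $r = |c_2'|\cdot|\xi_1|$ strictly exceeding $|\xi_2|$. Here $c_2(c_1) \in Y$, and cell-decomposing $Y$ around this $\emptyset$-definable centre via $\psi(y) = \rv(y - c_2(c_1))$ produces $I[Y] = [1]_0 + [\{\eta \in \RV^\times : |\eta| < r\}]_1$ in $\K_+\RV[\leq 1]$. The $F$-definable rescaling $\eta \mapsto \eta/(\rv(c_2')\xi_1)$ is a bijection from $\{\eta \in \RV^\times : |\eta| < r\}$ onto $\RV^\times_{<1}$, so this becomes $[1]_0 + [\RV^\times_{<1}]_1 \equiv [1]_1$ modulo the generator of $I_\sp$. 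Multiplying by $[\{\xi_3\}]_1$ then yields a single-point class in $\RV[2]$ matching $I^1 I^2 [F]$, since any two $\emptyset$-definable single-point classes in $\RV[2]$ coincide in $\K_+\RV[2]$. The lower-dimensional types (with some $\xi_i = 0$) are simpler variants of the same computation, as $\pi_1(F)$ and $Y$ are then both single twisted boxes or points.
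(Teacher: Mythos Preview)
Your reduction to $k=2$ and then to a single $\emptyset$-definable bi-twisted box $F$ follows the paper's argument. The gap is in the non-box case: the element $c_2(c_1)$ is not defined, since the centre $c_2$ is only given as a map $\pi_1(F) = c_1 + \rv^{-1}(\xi_1) \to K$, and $c_1 \notin \pi_1(F)$ because $\xi_1 \neq 0$. Even if one could extend $c_2$ with the Jacobian property to a larger ball containing $c_1$, one would have $|c_2(x) - c_2(c_1)| = |c_2'|\,|x - c_1| = r$ for every $x \in \pi_1(F)$, so that $c_2(c_1)$ lies \emph{outside} the open ball $Y$ (at distance exactly $r$ from every point of $Y$), not inside it. Your claimed decomposition $I[Y] = [1]_0 + [\{\eta \in \RV^\times : |\eta| < r\}]_1$ is therefore not available; without effectivity there is in general no $\emptyset$-definable point of $Y$ to use as a centre.

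The paper avoids cell-decomposing $Y$ explicitly. Instead, it uses that $c_2$ restricts to a $\emptyset$-definable bijection $c_1 + \rv^{-1}(\xi_1) \to Y$, so that $[Y] = [c_1 + \rv^{-1}(\xi_1)]$ in $\K_+\VF[1]$. Since $I$ is well-defined on $\K_+\VF[1]$ by Lemma-Definition~\ref{def-lem:integral.dim.1}, this gives $I[Y] = [\xi_1]_1$ directly, whence $I^2 I^1[F] = [(\xi_1,\xi_3)] = [(\xi_1,\xi_2)] = I^1 I^2[F]$, the middle equality holding because $\xi_2$ and $\xi_3$ are interdefinable $\emptyset$-definable singletons. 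This is both shorter and does not require locating a point of $Y$.
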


\begin{proof}
%As everything is relative to the coordinates of $K^k$ at position different from $j$ or $j'$, 
It suffices to consider the case $k =2$. 
By Lemma~\ref{lem:integration.commutes.rv} and Lemma~\ref{lem:compactness.Isp}, we may fix any $\emptyset$-definable cell decomposition $\chi \colon K^2 \to \RV^N$ and work on each twisted box seperately.
Hence, by Lemma~\ref{lem:cell.decomp.bi.twisted} we may assume that $X = X' \times R$ for some $\emptyset$-definable bi-twisted box $X' \subset K^2$ and some $\emptyset$-definable $R \subset \RV^{\ell + m}$. 
As $I^j[X] = (I^j[X'])\cdot R$, we may ignore $R$ and simply show that $I^1 I^{2}[X'] = I^{2}I^1 [X']$.

By construction, $X'$ is of the form
\[ 
X' = \{(x,y)\in K^2\mid \rv(x-c_1)=\xi_1, \rv(y-c_2(x_1)) = \xi_2\},
\] 
where $c_2$ has the Jacobian property. 
We may read of from this presentation of $X'$ that $I^1 I^2 [X'] = [(\xi_1,\xi_2)] \in \RV[\leq 2]$.

If $X'$ is an actual box, then take a cell decomposition $\psi \colon K \to \RV^M$ of the projection of $X'$ on the $Y$-coordinate to further reduce to the case where $c_2$ is constant. Then $I^2 I^1[X'] = [(\xi_2,\xi_1)]$, which is indeed equal to $[(\xi_1,\xi_2)]$ in $\RV[\leq 2]/I_{\mathrm{sp}}$.

If $X'$ is not a box, then Lemma~\ref{lem:bi.twisted} implies that
\[
X' = \{(x,y)\in K^2\mid y \in Y, \rv(x-c_2^{-1}(y)) = \xi_3\},
\]
where $\xi_3 = {-} \rv(c_2')^{-1} \xi_2$, and $Y\subset K$ is the projection of $X'$ onto the second coordinate. 
Hence, $I^1[X'] = [Y \times \{\xi_3\}] \in \VFR[1,1]/I_\mathrm{sp}$. 
Now $c_2$ provides a $\emptyset$-definable bijection between $\rv^{-1}(\xi_1) + c_1$ and $Y$, whence $I[Y'] = [\xi_1] \in \K_+ \RV[\leq 1]$. 
It thus follows that in $\K_+ \RV[\leq 2]$
\[  I^2 I^1[X'] = [(\xi_1,\xi_3)] = [(\xi_1,\xi_2)] = I^1 I^2[X'], \]
using that $\xi_2$ and $\xi_3$ are interdefinable.
\end{proof}
\begin{lem}\label{lem:kernel.LL}
Let $R,S$ be objects in $\RV[\leq n]$ and assume that $[\LL R] = [\LL S]$ in $\VF[n]$. Then $[R] \equiv [S]$ modulo $I_\mathrm{sp}$.
\end{lem}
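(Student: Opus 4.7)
The plan is to construct a semigroup morphism
\[
\Phi \colon \K_+ \VF[n] \longrightarrow \K_+ \RV[\leq n]/I_{\mathrm{sp}}
\]
via iterated integration, and then verify that $\Phi[\LL R] = [R]$ modulo $I_{\mathrm{sp}}$ for every $R \in \RV[\leq n]$. The conclusion of the lemma then follows at once by applying $\Phi$ to the assumed equality $[\LL R] = [\LL S]$ in $\K_+ \VF[n]$. More precisely, we view each $X \in \VF[n]$ as an object of $\VFR[n,0]$ and set
\[
\Phi[X] := I^1 \circ I^1 \circ \cdots \circ I^1 [X] \in \K_+ \RV[\leq n]/I_{\mathrm{sp}},
\]
where the first $K$-coordinate is integrated out $n$ times in succession. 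By iterated application of Fubini (Lemma~\ref{lem:Fubini}), this composite does not depend on the order in which the $n$ original $K$-coordinates are processed.

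The heart of the argument is to show that $\Phi$ factors through isomorphism classes in $\VF[n]$. Given a $\emptyset$-definable bijection $g \colon X \to Y$ in $\VF[n]$, Lemma~\ref{lem:rel.unary} allows us to partition $X$ into finitely many $\emptyset$-definable pieces on each of which $g$ factors as a composition of $\emptyset$-definable relatively unary bijections. Using additivity of integration along $\RV$-indexed partitions (Lemma~\ref{lem:integration.commutes.rv}) together with Lemma~\ref{lem:int.Isp}, and telescoping along the intermediate objects of such a composition, we reduce to the case where $g$ itself is a single relatively unary bijection, say unary in the $j$-th $K$-coordinate. Then for each $\hat{x}_j \in K^{n-1}$ the induced fibre map $X_{\hat{x}_j} \to Y_{\hat{x}_j}$ is a $\emptyset$-definable bijection between objects of $\VF[1]$, so Lemma-Definition~\ref{def-lem:integral.dim.1} yields $I[X_{\hat{x}_j}] = I[Y_{\hat{x}_j}]$ modulo $I_{\mathrm{sp}}$. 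Lemma~\ref{lem:compactness.Isp} then upgrades this to the equality $I^j[X] = I^j[Y]$ in $\K_+ \VFR[n-1,1]/I_{\mathrm{sp}}$, and applying the remaining $n-1$ integrations (in any order, thanks to Fubini) gives $\Phi[X] = \Phi[Y]$.

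Finally, to compute $\Phi[\LL R]$ we take $\chi = \rv \colon K^n \to \RV^n$ as cell decomposition of $\LL R$: each nonempty $\rv$-fibre is a twisted box with centre tuple $0$, so at every stage $I^1$ simply replaces the current first $K$-coordinate by its $\rv$-class while leaving the remaining structure untouched. After $n$ iterations one recovers $R$ itself as an object of $\VFR[0,n] = \RV[\leq n]$, hence $\Phi[\LL R] = [R]$ in $\K_+\RV[\leq n]/I_{\mathrm{sp}}$, and the lemma follows. The main obstacle lies in the well-definedness step above: while the reduction to relatively unary bijections via Lemma~\ref{lem:rel.unary} is clean, reassembling the piecewise information into a single equality in $\K_+\RV[\leq n]/I_{\mathrm{sp}}$ requires coordinated use of Fubini, additivity, and Lemma~\ref{lem:compactness.Isp} to manage the shifting coordinate indices across the many chained relatively unary steps.
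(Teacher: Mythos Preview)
Your proposal is correct and follows essentially the same approach as the paper's proof: define the iterated integral $I = I^1 \circ \cdots \circ I^n$ on $\K_+\VFR[n,0]$, observe that $I[\LL R] = [R]$ by taking $\rv$ as the cell decomposition, and prove invariance under bijections by reducing to relatively unary maps via Lemma~\ref{lem:rel.unary}, handling each such map fibrewise with the one-dimensional result and invoking Fubini for the coordinate reordering. One small slip: the induced fibre map $X_{\hat{x}_j} \to Y_{\hat{x}_j}$ is $\hat{x}_j$-definable rather than $\emptyset$-definable (you apply the one-dimensional result in the language $\cL(\hat{x}_j)$), and the passage from fibrewise equality modulo $I_{\mathrm{sp},\hat{x}_j}$ to equality in $\K_+\VFR[n-1,1]/I_\mathrm{sp}$ is by the very definition of $I_\mathrm{sp}$ on $\VFR[k,\ell]$ rather than Lemma~\ref{lem:compactness.Isp}.
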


\begin{proof}
Let $f\colon \LL R \to \LL S$ be an isomorphism. Denote by $I = I^1\circ \ldots \circ I^n$ the iterated integration map
\[
I\colon \K_+ \VFR[n,0]/I_\mathrm{sp} \to \K_+ \VFR[0,n]/I_\mathrm{sp} =  K_+ \RV[\leq n] / I_\mathrm{sp}.
\]
We will show that $I [\LL R] = I [\LL S]$. The result then follows by noting that, by construction, $I [\LL R]$ is nothing but the class of $R$ in $\K_+ \RV[n]/I_\mathrm{sp}$. 
%Note that by Lemma~\ref{lem:Fubini}, for any permutation $\sigma$ in $S_n$ we also have that $I = I^{\sigma(1)}\circ \ldots \circ I^{\sigma(n)}$. 

By combining Lemma~\ref{lem:rel.unary}, Lemma~\ref{lem:integration.commutes.rv} and Lemma~\ref{lem:compactness.Isp}, we may assume that $\LL R$ and $\LL S$ are in $K^n$ and $f$ is relatively unary. After the same coordinate permutation of both the domain and codomain, $f$ is therefore of the form
\[
(x_1, \ldots, x_n)\mapsto (x_1, \ldots, x_{n-1}, f(x_1, \ldots, x_n)).
\]
By Fubini (Lemma~\ref{lem:Fubini}), this coordinate permutation does not affect the result of computing $I [\LL R]$ or $I [\LL S]$. For every $y\in K^{n-1}$, we note that $f$ defines a $y$-definable bijection between $(\LL R)_y$ and $(\LL S)_y$.
Write $I^n [\LL R] = [X]$ and $I^n [\LL S] = [Y]$. Then, for each $y \in K^{k-1}$ it follows from Lemma-Definition~\ref{def-lem:integral.dim.1} that $[X_y] = I [(\LL R)_y] = [Y_y]$ in $\RV_{y}[\leq n]/I_{\mathrm{sp},y}$.
It follows that $I^n [\LL R] = I^n [\LL S] $ as objects in $\VFR[n-1,1]/I_\mathrm{sp}$, proving the lemma.
\end{proof}

\subsection{Main results}

Our main theorems now follow immediately from the work in the previous sections.

\begin{thm} \label{thm:integration.VF[n]}
Let $\cT$ be an effectively $1$-h-minimal theory of equicharacteristic zero valued fields.
Then for every $n$ there is an isomorphism of semi-groups
\[
\int\colon \K_+ \VF[n]\to \K_+ \RV[\leq n]/I_{\mathrm{sp}}
\]
such that for $X\in \VF[n], R\in  \RV[\leq n]$ we have $\int[X] = [R]$ if and only if $[\LL R] = [X]$.
\end{thm}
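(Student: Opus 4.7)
The plan is to show that the lifting map $\LL$ descends to an isomorphism $\bar{\LL}\colon \K_+\RV[\leq n]/I_{\mathrm{sp}} \to \K_+\VF[n]$ and then define $\int$ as its inverse. Essentially all the heavy lifting has been done in the preceding sections, so the proof will amount to assembling the pieces.

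First I would check that $\LL$, which is known to descend to a morphism of semigroups $\LL\colon \K_+\RV[\leq n] \to \K_+\VF[n]$ by Lemma~\ref{lem:lift.bijection} (using effectivity), factors through the congruence $I_{\mathrm{sp}}$. It suffices to check the generator $([\RV^{\times}_{<1}]_1 + [1]_0,[1]_1)$. Here $\LL[1]_1 = [1+\cM_K]$ and $\LL([\RV^{\times}_{<1}]_1 + [1]_0) = [\cM_K]$, and translation by $1$ is a $\emptyset$-definable isomorphism $1+\cM_K \to \cM_K$ in $\VF[1]$. Since $I_{\mathrm{sp}}$ is generated as a semiring congruence, this equality propagates to $\K_+\VF[n]$ by multiplying with an arbitrary class $[X] \in \K_+\VF[n-1]$ (using that $[X \times (1+\cM_K)] = [X \times \cM_K]$ via the same translation on the second factor).

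Next I would invoke Corollary~\ref{cor:L.surjective} for surjectivity of $\bar{\LL}\colon \K_+\RV[\leq n]/I_{\mathrm{sp}} \to \K_+\VF[n]$, and Lemma~\ref{lem:kernel.LL} for injectivity: if $\bar{\LL}[R] = \bar{\LL}[S]$, i.e.\ $[\LL R] = [\LL S]$ in $\K_+\VF[n]$, then $[R] \equiv [S]$ modulo $I_{\mathrm{sp}}$. Combining these two statements yields that $\bar{\LL}$ is a semigroup isomorphism.

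Finally, I would define $\int \coloneqq \bar{\LL}^{-1}$. The characterization $\int[X] = [R] \iff [\LL R] = [X]$ is then immediate from the definition. The main technical obstacle underlying the whole argument is really Lemma~\ref{lem:kernel.LL}, which in turn rests on the reduction of definable bijections to relatively unary maps (Lemma~\ref{lem:rel.unary}) and the Fubini-type behaviour of the partial integration maps $I^j$, but at this stage of the paper everything needed is in place and only bookkeeping remains.
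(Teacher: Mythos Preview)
Your proposal is correct and follows essentially the same approach as the paper: show that $\LL$ descends to a well-defined map $\K_+\RV[\leq n]/I_{\mathrm{sp}} \to \K_+\VF[n]$ (via Lemma~\ref{lem:lift.bijection} and the explicit check on the generator of $I_{\mathrm{sp}}$), invoke surjectivity (Corollary~\ref{cor:L.surjective}) and the kernel computation (Lemma~\ref{lem:kernel.LL}), and define $\int$ as the inverse. The paper's proof is terser but identical in content.
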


\begin{proof}
By Lemma~\ref{lem:lift.bijection} the map $\LL$ is well-defined on $\K_+ \RV[\leq n]\to \K_+ \VF[n]$. Lemma~\ref{lem:L.surjective} shows that it is surjective, and by Lemma~\ref{lem:kernel.LL} the kernel is precisely $I_{\mathrm{sp}}$. 
Then $\int$ is precisely the map constructed in that lemma.
\end{proof}

\begin{thm} \label{thm:integration.semiring}
Let $\cT$ be an effectively $1$-h-minimal theory of equicharacteristic zero valued fields.
Then there is an isomorphism of semirings
\[
\int\colon \K_+ \VF\to \K_+ \RV[*]/I_{\mathrm{sp}}
\]
such that for $X\in \VF[n], R\in  \RV[\leq n]$ we have $\int [X] = [R]$ if and only if $[\LL R] = [X]$.
\end{thm}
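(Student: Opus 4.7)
The plan is to glue the degreewise isomorphisms of Theorem~\ref{thm:integration.VF[n]} into a single semiring isomorphism. Using the equivalence between $\VF[n]$ and the category of $\emptyset$-definable sets of $\VF$-dimension at most $n$, the categories fit into a chain $\VF[0] \subset \VF[1] \subset \VF[2] \subset \cdots$ whose union is $\VF$, and hence $\K_+\VF = \colim_n \K_+\VF[n]$. Similarly, since $I_{\mathrm{sp}}$ is generated by a relation internal to $\K_+\RV[\leq 1]$ and quotients commute with colimits, one has $\K_+\RV[*]/I_{\mathrm{sp}} = \colim_n \K_+\RV[\leq n]/I_{\mathrm{sp}}$.

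Next I would verify that the isomorphisms $\int_n \colon \K_+\VF[n] \to \K_+\RV[\leq n]/I_{\mathrm{sp}}$ supplied by Theorem~\ref{thm:integration.VF[n]} are compatible with the transition maps in these colimits. This is immediate from the characterization $\int_n[X] = [R]$ iff $[\LL R] = [X]$: if $X \in \VF[n] \subset \VF[n+1]$, then the same identity $[\LL R] = [X]$ persists in $\K_+\VF[n+1]$, so $\int_{n+1}[X] = [R]$ agrees with the image of $\int_n[X]$ under $\K_+\RV[\leq n]/I_{\mathrm{sp}} \hookrightarrow \K_+\RV[\leq n+1]/I_{\mathrm{sp}}$. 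Passing to the colimit therefore yields a bijective semigroup morphism $\int \colon \K_+\VF \to \K_+\RV[*]/I_{\mathrm{sp}}$ satisfying the announced description.

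The final step is multiplicativity, which follows from the multiplicativity of $\LL$ already at the level of objects. For any $R \subset (\RV^\times)^n\times \RV^{m_1}$ and $S \subset (\RV^\times)^k\times \RV^{m_2}$, Definition~\ref{defn:lifting.map} yields at once the set-theoretic equality $\LL(R \times S) = \LL R \times \LL S$. Hence if $\int[X] = [R]$ and $\int[Y] = [S]$, then $[\LL(R \times S)] = [\LL R]\cdot[\LL S] = [X]\cdot[Y] = [X \times Y]$ in $\K_+\VF$, which forces $\int[X \times Y] = [R]\cdot[S] = \int[X]\cdot\int[Y]$ in $\K_+\RV[*]/I_{\mathrm{sp}}$.

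Since all of the nontrivial content (surjectivity, the computation of the kernel, and the role of $I_{\mathrm{sp}}$) was already absorbed into Theorem~\ref{thm:integration.VF[n]}, no serious obstacle remains here: the present theorem is essentially a formal assembly, and the only substantive new check is multiplicativity, which reduces to the elementary observation that $\LL$ respects products of lifts.
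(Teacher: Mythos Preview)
Your proposal is correct and follows exactly the paper's approach: the paper's proof is the single sentence ``This follows from Theorem~\ref{thm:integration.VF[n]}, noting that $\LL$ is compatible with multiplication,'' and you have simply unpacked this into the colimit assembly and the product identity $\LL(R\times S)=\LL R\times\LL S$.
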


\begin{proof}
This follows from Theorem \ref{thm:integration.VF[n]}, noting that $\LL$ is compatible with multiplication.
\end{proof}

\begin{cor}
	The integral in Theorem~\ref{thm:integration.semiring} induces an isomorphism of rings
	\[
	\int\colon \K \VF \to \K \RV[*] / ([\RV^\times_{<1}]_1 + [1]_0 - [1]_1).
	\]
\end{cor}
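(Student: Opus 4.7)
The plan is to deduce this corollary directly from Theorem~\ref{thm:integration.semiring} by applying the groupification functor $(\cdot)^a$ to both sides of the semiring isomorphism. Since groupification is left adjoint to the inclusion of commutative rings into commutative semirings, it preserves all colimits, and in particular it preserves isomorphisms and quotients by congruences. Applying it to the isomorphism $\int\colon \K_+\VF \to \K_+\RV[*]/I_\sp$ therefore yields a ring isomorphism $\K\VF \to (\K_+\RV[*]/I_\sp)^a$.

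It then remains to identify the right-hand side with $\K\RV[*]/([\RV^\times_{<1}]_1 + [1]_0 - [1]_1)$. For this I would verify the following purely formal lemma: for any commutative semiring $R$ and any congruence $J$ on $R$ generated by a family of pairs $\{(a_i,b_i)\}_i$, there is a natural isomorphism
\[
(R/J)^a \;\cong\; R^a/(a_i - b_i \mid i),
\]
where the right-hand side denotes the quotient of $R^a$ by the ideal generated by the differences $a_i - b_i$. The argument is a standard universal property juggle: the composition $R \to R^a \to R^a/(a_i - b_i)$ equates $a_i$ with $b_i$ and lands in a ring, hence factors through $R/J$ and uniquely extends to $(R/J)^a$ by the universal property of groupification; conversely, the map $R \to R/J \to (R/J)^a$ sends each $a_i - b_i$ to zero (in the ambient group $(R/J)^a$), so induces a ring map $R^a/(a_i - b_i) \to (R/J)^a$. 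The two constructions are mutually inverse, since they agree on generators.

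Specialising to $R = \K_+\RV[*]$ and $J = I_\sp$, which by Definition~\ref{def:Isp} is generated by the single pair $([\RV^\times_{<1}]_1 + [1]_0,\, [1]_1)$, yields the claimed presentation of the quotient ring, and combining this with the groupified integration isomorphism gives the corollary.

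There is no serious technical obstacle here: essentially all of the substantive content lies in Theorem~\ref{thm:integration.semiring}, and the passage from semirings to rings is entirely formal. The only small point requiring care is the verification of the generic lemma above relating semiring congruences and ideals in groupifications; but this is a direct, one-paragraph universal-property argument.
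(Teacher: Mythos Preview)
Your proposal is correct and matches the paper's (implicit) approach: the paper states this corollary with no proof, treating the passage from the semiring isomorphism of Theorem~\ref{thm:integration.semiring} to the ring isomorphism as an immediate formal consequence of groupification. Your careful spelling-out of why $(\K_+\RV[*]/I_\sp)^a \cong \K\RV[*]/([\RV^\times_{<1}]_1 + [1]_0 - [1]_1)$ via the universal properties is exactly the routine verification the paper leaves to the reader.
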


For non-effective theories it might not be possible to lift bijections from $\RV[\leq n]$ to $\VF[n]$. 
Nevertheless, Lemma~\ref{lem:kernel.LL} and surjectivity of $\LL$ on the level of objects still hold. 
%Nevertheless, the surjectivity of $\LL$ on objects and the computation of the kernel of $\LL$ work just as well without effectivity. 
We thus obtain the following.

\begin{thm}\label{thm:non.eff.int}
Let $\cT$ be a $1$-h-minimal theory of equicharacteristic zero valued fields.
Then there exists a surjective morphism of semirings
\[
\int\colon \K_+ \VF\to \K_+ \RV[*]/I_{\mathrm{sp}}
\]
such that if $R,S$ are objects in $\RV[\leq n]$ for which $[\LL R] = [\LL S]$ then $[R]=[S]$ modulo $I_\mathrm{sp}$.
\end{thm}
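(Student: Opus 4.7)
The plan is to observe that the integration map $\int$ and its basic properties can be constructed entirely without invoking effectivity, since in Section~\ref{sec:int} effectivity is used only in Lemma~\ref{lem:lift.bijection}---to ensure that bijections in $\RV[\leq n]$ lift to $\VF[n]$ and thereby promote $\LL$ to a well-defined semigroup morphism on the level of Grothendieck semigroups. In the present non-effective setting I would define $\int$ directly and forgo any attempt to invert it.

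Concretely, by Lemma~\ref{lem:L.surjective} every object $X \in \VF[n]$ is isomorphic (as a $\VF[n]$-object) to $\LL R$ for some $R \in \RV[\leq n]$, via a bijection read off from any cell decomposition of $X$. I would then set
\[
\int [X] \;:=\; [R] \;\in\; \K_+ \RV[\leq n]/I_{\mathrm{sp}}.
\]
Well-definedness is exactly the content of Lemma~\ref{lem:kernel.LL}: if $\LL R \cong \LL S$ in $\VF[n]$ then $[R] \equiv [S]$ modulo $I_{\mathrm{sp}}$. An inspection of that proof and of its supporting ingredients---Lemma-Definition~\ref{def-lem:integral.dim.1}, Lemma~\ref{lem:cell.refinement}, Lemma~\ref{lem:compatible.domain.image}, Lemma~\ref{lem:rel.unary}, Lemma~\ref{lem:integration.commutes.rv}, Lemma~\ref{lem:int.Isp}, and Lemma~\ref{lem:Fubini}---confirms that effectivity is never invoked: the arguments rely only on $1$-h-minimality, cell decomposition with the Jacobian property, Fubini-type manipulations, and computations internal to the $\RV$-sort.

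Surjectivity is then immediate since $\int [\LL R] = [R]$ by construction, and the final clause of the theorem is a formal consequence: applying $\int$ to both sides of $[\LL R] = [\LL S]$ yields $[R] \equiv [S]$ modulo $I_{\mathrm{sp}}$. Compatibility with the semiring structure is straightforward: disjoint unions and Cartesian products interact well with cell decompositions and with the lifting map, giving $\LL(R \sqcup S) \cong \LL R \sqcup \LL S$ and $\LL(R \times S) \cong \LL R \times \LL S$. Thus there is no substantive new obstacle: the essential content of the effective Theorem~\ref{thm:main.eff} that fails in general is precisely \emph{injectivity} of $\int$, since without effectivity an isomorphism in $\RV[\leq n]$ need not lift to an isomorphism between the respective lifts in $\VF[n]$; but surjectivity and the displayed implication only use the effectivity-free half of the work already done in Section~\ref{sec:int}.
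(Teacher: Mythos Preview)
Your proposal is correct and matches the paper's own argument essentially verbatim: the paper notes just before the theorem that Lemma~\ref{lem:kernel.LL} and surjectivity of $\LL$ on objects (Lemma~\ref{lem:L.surjective}) hold without effectivity, and it explicitly remarks at the start of Section~\ref{sec:kernel.of.LL.dim.one} that the results of that section and the next do not require effectivity. Your only minor imprecision is the claim that effectivity is used \emph{only} in Lemma~\ref{lem:lift.bijection}---it also appears in Lemma~\ref{lem:lifting.maps.RV[n]}, Corollary~\ref{cor:L.surjective}, and Lemma~\ref{lem:kernel.dim.1}---but none of those are needed for the construction you outline, so this does not affect the argument.
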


In~\cite{HK} it is proven that in a V-minimal theory if $\int [X] = \int [Y]$ then $X$ and $Y$ are \emph{effectively isomorphic}. 
This means that for every model $K$ of $\cT$ and for every parameter set $A$ such that $\Th_{\cL(A)}(K)$ is effective there exists an $A$-definable bijection $X\to Y$. 
Such a result is false in our context, since there might not even exist any parameter set $A$ for which the theory of $K$ is effective in $\cL(A)$. 
Indeed, consider for example any theory with an angular component map in the language.
On the other hand, this notion does make sense when the residue field is algebraically bounded and one works in an expansion of the pure valued field language by constants.
In that case, one can indeed prove a result of this form.

Let $K$ be a henselian valued field, considered in the language $\cL_\val$, and let $\cL_\an\supset \cL_\val$ be an expansion by a separated analytic structure in the sense of Cluckers--Lipshitz--Robinson~\cite{CLR}.
Then the $\cL_\an(K)$-definable sets in $\RV$ are the same as the $\cL_\val(K)$-definable sets, as follows from the relative quantifier elimination of~\cite[Thm.\,3.10]{Rid}.
Combining this with Theorem~\ref{thm:integration.semiring} immediately shows the following.

\begin{cor}
Let $K$ be an equicharacteristic zero valued field for which $\Th_{\cL_\val}(K)$ is effective.
Let $\cL_\an\supset \cL_\val$ be an expansion by a separated analytic structure as in~\cite{CLR}.
Then the natural map
\[
\K_+ \VF(\cL_\val(K))\to \K_+ \VF(\cL_\an(K))
\]
is an isomorphism.
\end{cor}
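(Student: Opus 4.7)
The plan is to compare the two Grothendieck semirings by factoring the natural map through the integration isomorphism of Theorem~\ref{thm:integration.semiring}. The crucial observation, already recorded in the paragraph preceding the statement, is that passing from $\cL_\val(K)$ to $\cL_\an(K)$ does not enlarge the class of definable sets living in Cartesian powers of $\RV$, by the relative quantifier elimination of~\cite[Thm.\,3.10]{Rid}. Since morphisms in $\RV[n]$ are simply $\emptyset$-definable maps (their graphs being definable subsets of a product of $\RV$-powers), this equality of definable sets upgrades to an equivalence of categories $\RV[n](\cL_\val(K)) \simeq \RV[n](\cL_\an(K))$ for every $n$, and therefore to an isomorphism of graded semirings $\K_+\RV[*](\cL_\val(K)) \xrightarrow{\sim} \K_+\RV[*](\cL_\an(K))$.

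Next, I would verify that both theories are effectively 1-h-minimal after adding the constants from $K$. The effectivity of $\Th_{\cL_\val}(K)$ together with Definition~\ref{def:effectivity} implies that $\Th_{\cL_\val(K)}(K)$ is $\emptyset$-effective, and by Corollary~\ref{cor:vf.effective}\,(\ref{it:an.effective}) the addition of analytic structure preserves both 1-h-minimality and effectivity. Hence $\Th_{\cL_\an(K)}(K)$ is effectively 1-h-minimal as well, so that Theorem~\ref{thm:integration.semiring} applies to both theories and yields isomorphisms of semirings
\[
\int\colon \K_+\VF(\cL_\val(K)) \xrightarrow{\sim} \K_+\RV[*](\cL_\val(K))/I_\mathrm{sp}
\]
and similarly with $\cL_\val$ replaced by $\cL_\an$. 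Note that the defining generators of $I_\sp$ involve only the classes $[1]_1,[1]_0$ and $[\RV^\times_{<1}]_1$, all of which are intrinsic to $\RV$ and hence unchanged by the language expansion; the semiring isomorphism of the previous paragraph therefore descends to an isomorphism of the quotients by $I_\sp$.

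Finally, I would assemble these into a commutative square whose two horizontal arrows are the integration isomorphisms, whose right vertical arrow is the isomorphism induced on $\RV[*]/I_\sp$, and whose left vertical arrow is the natural map under consideration. Commutativity is seen at the level of generators: given an $\cL_\val(K)$-definable $X \subset K^n \times \RV^m$, one may choose an $\cL_\val(K)$-definable cell decomposition $\chi$ adapted to $X$, which is a fortiori $\cL_\an(K)$-definable; the construction of $\int[X]$ via $I_\chi$ in Definition~\ref{def:I.dim1} and its higher-dimensional analogue in Section~\ref{sec:int.higher.dim} then produces the same object in $\RV[*]$ viewed in either theory. With the other three arrows of the square being isomorphisms, the left vertical arrow must be one as well.

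The main obstacle, if any, is bookkeeping rather than substance: one must check that the equivalence $\RV[n](\cL_\val(K)) \simeq \RV[n](\cL_\an(K))$ does indeed extend to morphisms (trivial, since morphisms are arbitrary definable maps) and that the construction of $\int$ given by iterated cell decomposition is genuinely compatible with enlarging the language. Both of these reduce to the fact that any $\cL_\val(K)$-definable cell decomposition remains a cell decomposition after passing to $\cL_\an(K)$, so nothing is lost when computing the integral in the richer language.
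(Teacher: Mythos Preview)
Your proposal is correct and follows essentially the same approach as the paper: apply the integration isomorphism of Theorem~\ref{thm:integration.semiring} in both languages and use that the $\RV$-definable sets (hence the categories $\RV[*]$ and the congruence $I_\mathrm{sp}$) coincide by relative quantifier elimination. The paper states this in a single sentence, while you helpfully spell out the verification of effectivity in both languages and the commutativity of the resulting square.
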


In particular, injectivity of this map shows that if $X,Y\subset K^n$ are $\cL_\val(K)$-definable and there exists an $\cL_\an(K)$-definable bijection $X\to Y$, then there already exists an $\cL_\val(K)$-definable bijection $X\to Y$.
Surjectivity shows that for every $\cL_\an(K)$-definable $X\subset K^n$ there exists an $\cL_\val(K)$-definable $Y\subset K^n$ and an $\cL_\an(K)$-definable bijection $X\to Y$.

\section{Measures}\label{sec:measures}

We extend the work from the previous chapter to include measures. 
In more detail, we will define analogues of the categories $\VF[n]$ and $\RV[\leq n]$ where one additionally has a volume form on the objects. 
These categories will be denoted by $\mu \VF[n]$ and $\mu \RV[\leq n]$. We then classify objects in $\mu \VF[n]$ up to measure-preserving bijections. 
%This approach is much closer to classical motivic integration.\Mathias{I am unsure about this statement.}
In a future paper we will prove that the Grothendieck rings we construct specialize to the rings of motivic volumes as constructed by Cluckers--Loeser~\cite{CLoes}.
However, our ring of motivic functions is more general, and keeps track of more information.

Throughout this section we continue with working in a $1$-h-minimal theory $\cT$ of equicharacteristic zero. We fix a sufficiently saturated model $K$ of $\cT$.

\subsection{Differentiation on $\RV$}

Effectivity leads to a notion of Jacobians on $\RV$. If $h\colon R\subset \RV^\times \to \RV^\times$ is a $\emptyset$-definable map, we would like to define a notion of derivative $h'\colon R\to \RV^\times$. 
Such a notion will be required to define measure-preserving maps, which will be our morphisms in the categories $\mu \RV[n]$. Moreover, we would like this notion of $\RV$-derivative to commute with lifting, in the sense that if $f\colon \LL R\to K$ is a $\emptyset$-definable lift of $h$, then it should hold that $\rv f'(x) = h'(\rv(x))$. There are of course several problems with this approach. Firstly $f$ might not be $C^1$ everywhere, so that $f'(x)$ is not well-defined. Secondly, even if $f$ is $C^1$ everywhere, $\rv f'(x)$ might depend on $x$ even when $\rv(x)$ is fixed. In that case $\rv f'(x)$ is not a function of $\rv(x)$. And thirdly, the resulting derivative $h'$ might depend on the chosen lift, so that derivatives are not unique. We will nevertheless follow such an approach based on lifting to define $\RV$-Jacobians. We solve the first two issues by taking a good enough lift, but $h'$ might still depend on the chosen lift.

For $\xi = (\xi_1, \ldots, \xi_n)\in \RV^n$ we define $|\xi| = |\xi_1|\cdots |\xi_n|$. This should be thought of as the volume of the box $\rv^{-1}(\xi)\subset K^n$. For $f\colon U\subset K^n\to K^n$ a $C^1$ map on an open $U$, we denote by $\Jac f\colon U\to K$ the Jacobian of $f$, which is the determinant of the total derivative of $f$.

%\Mathias{todo: write $h_1,h_2$ instead of $h_0,h_1$?}

\begin{lem}\label{lem:lift.bijections.mu}
Let $\cT$ be effective, let $R,S \subset (\RV^{\times})^n \times \RV^m$ be objects in $\RV[n]$ and let 
\[h \colon  R \to S \colon (\xi,\zeta) \mapsto (h_0(\xi,\zeta),h_1(\xi,\zeta))\]  
be a $\emptyset$-definable bijection. 
Then there exists a $\emptyset$-definable lift $f\colon \LL R\to \LL S$ of $h$ with the following properties:
\begin{enumerate}
\item $f$ is a bijection,
\item for each $(\xi,\zeta) \in R $, $f$ determines a $C^1$ map 
	\[ f_{\zeta} \colon \rv^{-1}(\xi) \to \rv^{-1}(h_0(\xi,\zeta)). \]
\item $\rv(\Jac f_\zeta)$ is constant on $\rv^{-1}(\xi)$ for every $(\xi,\zeta) \in R$, and 
\item for $(\xi,\zeta) \in R$ and $x\in \rv^{-1}(\xi)$ we have
\[
\abs{\Jac f_{\zeta}(x)} = \abs{h_0(\xi,\zeta))} / \abs{\xi}.
\]
\end{enumerate}
\end{lem}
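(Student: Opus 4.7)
The plan is to induct on $n$, paralleling the proof of Lemma~\ref{lem:lift.bijection} while additionally tracking the $C^1$ and Jacobian data. The base case $n=0$ is trivial, and for the inductive step I would mirror that proof's dichotomy on $\dim_{\RV}(R)$. In the case $\dim_{\RV}(R) = n$, Lemma~\ref{lem:lifting.maps.RV[n]} provides arbitrary $\emptyset$-definable lifts $f$ and $g$ of $h$ and $h^{-1}$; the subset of $R$ where $f$ fails to be bijective on individual $\rv$-fibres has strictly smaller $\RV$-dimension and reduces to the case below, while on the complement one refines via a $C^1$ cell decomposition of $\LL R$ prepared compatibly with $f$: a generic full $\rv$-fibre is then a single cell, so $f_\zeta$ is $C^1$ on it, and the Jacobian property forces $\rv(\Jac f_\zeta)$ constant with $|\Jac f_\zeta| = |h_0(\xi,\zeta)|/|\xi|$ because $f_\zeta$ maps a ball of radius $|\xi|$ bijectively onto one of radius $|h_0(\xi,\zeta)|$.

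For the case $\dim_{\RV}(R) < n$, after a coordinate permutation and a finite partition, both $R$ and $S$ may be assumed to have finite-to-one projections onto $(\RV^\times)^{n-1}$, yielding $R', S' \in \RV[n-1]$. The inductive hypothesis gives a $\emptyset$-definable lift $g = (g_0, g_1, h_1) \colon \LL R' \to \LL S'$ satisfying (1)--(4), with $g_0$ valued in $K^{n-1}$ and $g_1$ in $\RV^\times$. Following Lemma~\ref{lem:lift.bijection}, use Lemma~\ref{lem:lifting.maps.RV[n]} to lift the $\RV$-valued maps $(\xi,\xi_n,\zeta) \mapsto \xi_n$ and $g_1$ to $\emptyset$-definable $c, d \colon \LL R' \to K^\times$, and define
\[
f(x, y, \zeta) = \bigl(g_0(x, \rv(y), \zeta),\ \tfrac{d(x, \rv(y), \zeta)}{c(x, \rv(y), \zeta)} \cdot y,\ h_1(\rv(x), \rv(y), \zeta)\bigr).
\]
Bijectivity of $f$ follows exactly as in that lemma.

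The key computation is for properties (2)--(4). On any $\rv$-fibre $\rv^{-1}(\xi', \xi_n) \times \{\zeta\}$, the value $\rv(y) = \xi_n$ is literally constant, so $g_0$ and $d/c$ depend only on $x$, and the Jacobian matrix of $f_\zeta$ is block lower triangular with determinant $\det(\partial_x g_0) \cdot (d/c)$. By induction $\rv(\det(\partial_x g_0))$ is constant on the fibre with absolute value contributed by the first $n-1$ coordinates of $h_0$ and $\xi$; and because $\rv(d) = g_1$ and $\rv(c) = \xi_n$ are both constant on the fibre, so is $\rv(d/c) = g_1/\xi_n$, with absolute value $|g_1|/|\xi_n|$. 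Multiplying these factors yields (3), and the total absolute value $|h_0(\xi,\zeta)|/|\xi|$ gives (4). For (2), apply a $C^1$ cell decomposition of $\LL R'$ preparing $c$ and $d$: the cells partition each $\rv$-fibre into disjoint open sub-balls, so $C^1$-regularity of $d/c$ on each cell extends to the whole fibre by locality of the $C^1$ condition.

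The main obstacle is ensuring $C^1$-regularity of $f_\zeta$ on the entire $\rv$-fibre rather than merely on cells of a finer decomposition. This is precisely why the multiplicative scaling $y \mapsto (d/c) y$ in the inductive step is preferred: on each $\rv$-fibre it is literally multiplication by a function of $x$ alone, which automatically inherits $C^1$-smoothness from $c$ and $d$. In the $\dim_{\RV}(R) = n$ case, the analogous role is played by the genericity assumption, which ensures a generic $\rv$-fibre is not further subdivided by any $C^1$ cell decomposition refining $f$.
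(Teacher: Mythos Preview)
Your overall strategy mirrors the paper's, but the treatment of the case $\dim_\RV R < n$ has a genuine gap. The claim that a $C^1$ cell decomposition preparing $c$ and $d$ ``partition[s] each $\rv$-fibre into disjoint open sub-balls'' is false: by Lemma~\ref{lem:cyl.cell.decomp}, an $\rv$-fibre that is not itself a single twisted box necessarily contains a twisted box of strictly smaller dimension, and at those points the arbitrary lifts $c,d$ furnished by Lemma~\ref{lem:lifting.maps.RV[n]} need not be $C^1$. The paper fixes this by projecting not to $(\RV^\times)^{n-1}$ but to $(\RV^\times)^k$ with $k=\dim_\RV R$, taking the cell decomposition in $K^k$, and removing the bad locus $R''=\rv(Z)$ where $Z$ is the union of the lower-dimensional twisted boxes. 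This gives $\dim_\RV R'' \le k-1$, so $R''$ is handled by a \emph{secondary induction on $k$} which your argument lacks. With your projection to $n-1$ coordinates one only obtains $\dim_\RV R'' \le n-2$, which need not be smaller than $\dim_\RV R$, and the recursion does not close.

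There is also a smaller gap in the $\dim_\RV R = n$ case: the step $|\Jac f_\zeta| = |h_0(\xi,\zeta)|/|\xi|$ ``because $f_\zeta$ maps a ball of radius $|\xi|$ bijectively onto one of radius $|h_0(\xi,\zeta)|$'' presupposes an $n$-dimensional change-of-variables formula, which is precisely what this lemma is meant to underpin. The paper instead applies the one-variable Jacobian property to each restriction $x_i \mapsto f_j(x)$ (valid on the whole fibre since $\xi$ is full-dimensional) to obtain $|\partial_i f_j(x)| \le |h_{0,j}(\xi,\zeta)|/|\xi_i|$; the ultrametric determinant bound then gives $|\Jac f_\zeta| \le |h_0|/|\xi|$, and the reverse inequality follows by running the same argument on $f^{-1}$.
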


The statement of this lemma is quite similar to Lemma~\ref{lem:lift.bijection}, but moreover ensuring good differentiability of the resulting lift $f$. 

\begin{proof}
As in the proof of Lemma~\ref{lem:lift.bijection} we induct on the ambient dimension $n$, the case $n = 0$ being trivial.

Assume first that $\dim_\RV R = k < n$. 
We perform an additional induction on $k$.
By dimension theory for $\RV$, Theorem~\ref{thm:dim.theory.RV}(\ref{it:coordinate.projections}), we may assume after a $\emptyset$-definable partition and a coordinate permutation that $R$ and $S$ have finite-to-one projection onto the first $k$ coordinates. Let $R', S'\subset (\RV^\times)^k \times \RV^{m+n-k}$ be the sets $R,S$ but considered as objects in $\RV[k]$. By induction on the ambient dimension, there exists a $\emptyset$-definable bijection $g\colon \LL R'\to \LL S'$ lifting $h$ with all of the desired properties. It is of the form
\begin{align*}
	g \colon &\LL R'\subset K^k \times (\RV^\times)^{n-k}\times \RV^m 
		\to   \LL S'\subset K^k \times (\RV^\times)^{n-k}\times \RV^m \\
	&(x,\xi_2,\zeta) 
			\mapsto (g_0(x, \xi_2, \zeta), g_1(x, \xi_2, \zeta), h_1(\rv(x), \xi_2, \zeta)).
\end{align*}
Using Lemma~\ref{lem:lifting.maps.RV[n]}, let $d \colon \LL R'\to K^{n-k}$ be a lift of $g_1$ and $c \colon \LL R'\to K^{n-k}$ be a lift of $(x,\xi, \zeta)\mapsto \xi$.

Take a cell decomposition $\chi \colon K^k \to \RV^N$ adapted to $\LL R'$ with the following property for every twisted box $B$ of $\chi$: for each tuple $(\xi_2,\zeta)$ the induced maps $c(\cdot,\xi_2,\zeta)$ and $d(\cdot,\xi_2,\zeta)$ are $C^1$ on $B$ and the leading terms of these maps and their partial derivatives are constant on $B$.
Let $Z \subset \LL R'$ be the union of all twisted boxes of $\chi$ of dimension strictly less than $k$, and put $R'' = \rv Z$. 
By Theorem~\ref{thm:dim.theory.RV}, $R''$ has $\RV$-dimension at most $k-1$, and so we are done on $R''$ by induction on $k$.

Now define
\begin{align*}
	f \colon  	&\LL (R\setminus R'')\subset K^k\times K^{n-k} \times \RV^m	
				 	\to \LL (S\setminus h(R''))  \\
				&(x,y,\xi)	
					\mapsto \left( g_0(x,\rv(y), \xi), \frac{d(x, \rv(y), \xi)}{c(x,\rv(y), \xi)}  y, h_1(\rv(x), \xi, \zeta)\right),
\end{align*}
where the product in the second component is the componentwise multiplication.

We claim that $f$ is as desired.
Firstly, just as in the proof of Lemma~\ref{lem:lift.bijection}, $f$ is a bijection.
So let us check the desired differentiability properties on an $\rv$-fibre $F=\rv^{-1}(\xi_1, \xi_2)\times\{\eta\}$ where $\xi_1\in \RV^k, \xi_2\in \RV^{n-k}$ and $\eta\in \RV^m$.
Since $g_0,d,c$ are $C^1$ on $\rv^{-1}(\xi_1)$, and for every $x\in \rv^{-1}(\xi_1)$ the map
\[
y\mapsto \frac{d(x, \rv(y), \xi)}{c(x,\rv(y), \xi)} y
\]
is $C^1$, we see that $f$ is $C^1$.
The Jacobian $\Jac f$ is of the form
\[
\Jac (f)(x) = \Jac(g_0)(x) \cdot \prod_{i=1}^{n-k} \frac{d_i(x,\xi_2, \eta)}{c_i(x,\xi_2,\eta)}.
\]
By induction $\rv \Jac(g_0)$ is constant, while our construction shows that $\rv d_i$ and $\rv c_i$ are constant. 
So we indeed obtain that $\rv \Jac f$ is constant on $F$.
Write $\xi_{2} = (\xi_{2,1}, \ldots, \xi_{2,n-k})$ and note that the map
\[
y\mapsto \frac{d_i(x, \rv(y), \xi)}{c_i(x,\rv(y), \xi)} y_i
\]
defines a $\emptyset$-definable bijection between $\rv^{-1}(\xi_{2,i})$ and $\rv^{-1}(h_{2,i}(\xi_1, \xi_2, \eta))$. 
Hence we see that
\[
\prod_{i=1}^{n-k} \frac{|d_i(x,\xi_2, \eta)|}{|c_i(x,\xi_2,\eta)|} = |g_1(\xi_1, \xi_2, \eta)|/|\xi_2|.
\]
Combining this with the computation of $\Jac(f)$ above, we see that
\[
|\Jac f| = |h(\xi_1, \xi_2, \eta)| / |(\xi_1, \xi_2)|.
\]

Now assume that $\dim_\RV R = n$. 
By lemma~\ref{lem:lift.bijection}, there exist a bijective lift $f$ of $h$.
Similar to the proof of that lemma, it now suffices to prove that for a generic point $(\xi,\zeta) \in R$, the desired differentiability properties hold for $f$.
Note that on $\rv^{-1}(\xi)$, $f_{\zeta}$ is automatically $C^1$ and $\rv(\Jac(f_{\zeta}))$ is constant. 
Indeed, $\xi$ is full-dimensional and there exists a cell decomposition such that these properties hold on each twisted box.

It remains to verify that
\[
\abs{\Jac f_{\zeta}(x)} = \abs{(h_0(\xi,\zeta))} / \abs{\xi}.
\]
for all $x \in \rv^{-1}(\xi)$.
For this, fix $i,j\in \{1, \ldots, n\}$. 
We consider $f_j$ when fixing all but the $i$-th coordinate, yielding a map
\[
 \rv^{-1}(\xi_i)\to \rv^{-1}(h_j(\xi)).
\]
Since $\xi$ is full-dimensional each $\xi_i$ is as well (and remains so after fixing all but the $i$-th $K$-coordinate in $\rv^{-1}(\xi)$).
Hence, this is map $C^1$ and has the Jacobian property, yielding that
\[
|\partial_i f_j(x)|\leq \abs{h_j(\xi))} / \abs{\xi_i}.
\]
We thus obtain that $\abs{\Jac f(x)} \leq \abs{h(\xi)} / \abs{\xi}$. 
A similar argument applied to $f^{-1}$ gives the other inequality.
\end{proof}

\begin{notn}
	Let $R,S, f$ be as in the lemma above.
	Given $y = (x,\zeta) \in \LL R \subset K^n \times \RV^m$, write
	\[ \Jac f(y) \coloneqq \Jac f_{\zeta}(x). \]
\end{notn}

We can now define $\RV$-Jacobians.

\begin{defn}[$\RV$-Jacobian]
Let $R, S$ be objects in $\RV[n]$ and let $h\colon R\to S$ be a $\emptyset$-definable bijection. 
Let $f\colon \LL R\to \LL S$ be a map as in the previous lemma. 
Then the map
\[
\Jac_\RV h\colon R\to \RV\colon \xi\mapsto \rv(\Jac(f(x))) \text{ for } \rv(x)=\xi
\]
is called an \emph{$\RV$-Jacobian} of $h$.
\end{defn}

Note that the definition of an $\RV$-Jacobian makes sense without effectivity, and that $\RV$-Jacobians are by definition $\emptyset$-definable. 
Lemma~\ref{lem:lift.bijections.mu} shows that in effective theories, $\RV$-Jacobians always exist.
We stress that a Jacobian of $h$ is not necessarily unique, although the following shows that it is well-defined on generic points.
Recall that we are working in (sufficiently satured) models of equicharacteristic zero effective 1-h-minimal theories so that $\RV$-dimension and generic points are well-defined.

\begin{lem}\label{lem:RV.Jacobian.almost.everywhere}
Let $R,S$ be objects in $\RV[n]$ and let $h \colon R\to S$ be a $\emptyset$-definable bijection with $\RV$-Jacobians $h_1', h_2'\colon R\to \RV$. 
Then there exists a $\emptyset$-definable $R'\subset R$ of $\RV$-dimension at most $n-1$ such that for $\xi \in R\setminus R'$ we have $h_1'(\xi) = h_2'(\xi)$.

Equivalently, if $\dim_\RV R = n$, then for $\xi \in R$ generic, $h'(\xi)$ is well-defined independent of choice of $\RV$-Jacobian.
\end{lem}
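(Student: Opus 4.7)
The approach is to pass from the two given lifts $f_1, f_2 \colon \LL R \to \LL S$ of $h$ giving $h_1', h_2'$ to their comparison $g = f_2^{-1} \circ f_1 \colon \LL R \to \LL R$, which is a $\emptyset$-definable bijection lifting $\mathrm{id}_R$ and thus preserves every $\rv$-fibre. Applying the chain rule on a generic $\rv$-fibre $\rv^{-1}(\xi)$, where both $f_i$ are $C^1$ with constant leading-term Jacobian by Lemma~\ref{lem:lift.bijections.mu}, gives $\rv(\Jac g)(x) = h_1'(\xi)/h_2'(\xi)$ for $\rv(x) = \xi$. Hence the lemma reduces to the following claim: for any $\emptyset$-definable bijection $g \colon \LL R \to \LL R$ lifting the identity, $\rv(\Jac g)(x) = 1$ whenever $\rv(x)$ lies in $R$ outside a $\emptyset$-definable subset of $\RV$-dimension at most $n-1$.

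To handle the reduced claim, write $g = \mathrm{id} + \phi$, so that each component satisfies $\abs{\phi_k(x)} < \abs{\xi_k}$ on $\rv^{-1}(\xi)$ because $g$ fixes every $\rv$-fibre coordinate-wise. Take a $C^1$ cell decomposition adapted to $\phi$ and refining $\rv \colon K^n \to \RV^n$, arranged so that $\rv(D\phi)$ is a constant matrix on each $n$-dimensional cell. For generic $\xi \in R$ (a full-dimensional point), Lemma~\ref{lem:cyl.cell.decomp} together with the $\RV$-dimension theory from Section~\ref{sec:dim.RV} ensures that $\rv^{-1}(\xi)$ is a single such cell, yielding a well-defined constant matrix $M(\xi) \in \RV^{n \times n}$. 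The multi-variable Jacobian property combined with the containment of the image of $\phi$ inside an open box of volume at most $\abs{\xi}$ then yields entrywise estimates $\abs{\partial_k \phi_j(x)} \leq \abs{\xi_j}/\abs{\xi_k}$.

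The main obstacle is upgrading this to the strict inequality $\abs{\partial_k \phi_j(x)} < \abs{\xi_j}/\abs{\xi_k}$ for $\xi$ outside a definable set of $\RV$-dimension at most $n-1$. If equality held on a full-dimensional subset of $R$, then restricting the $j$-th component of $g$ to the $k$-th coordinate direction would, via the Jacobian property, yield a $\emptyset$-definable family of isometric bijections between the balls $\rv^{-1}(\xi_k)$ and $\rv^{-1}(\xi_j)$; this can be excluded using Theorem~\ref{thm:dim.theory.RV}, as the resulting total-dimension count would exceed $\dim_\RV R$. Once the strict bound is established, the leading-term matrix $M(\xi)$ vanishes on generic cells, so $\rv(\Jac g) = \rv(\det(I + D\phi)) = 1$ there. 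Combined with the chain-rule identity this gives $h_1'(\xi) = h_2'(\xi)$ outside a $\emptyset$-definable set of $\RV$-dimension at most $n-1$, proving the lemma.
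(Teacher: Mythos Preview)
Your overall strategy matches the paper's proof exactly: pass to the composition $g = f_2^{-1}\circ f_1$ lifting the identity, show that the partials of $\phi = g - \id$ are small on a generic fibre, and conclude $\rv(\det(I + D\phi)) = 1$ via the determinant expansion. The reduction via the chain rule and the final determinant computation are both fine.

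The gap is in your upgrade from the weak inequality $|\partial_k\phi_j|\le |\xi_j|/|\xi_k|$ to the strict one. Your proposed argument --- that equality would produce a definable family of bijections $\rv^{-1}(\xi_k)\to\rv^{-1}(\xi_j)$, and that ``the resulting total-dimension count would exceed $\dim_{\RV}R$'' --- does not go through as stated. Such families of scaled isometries between open balls exist freely (multiplication by any $a$ with $\rv(a)=\xi_j/\xi_k$ already gives one), and nothing in Theorem~\ref{thm:dim.theory.RV} rules them out; no $\RV$-dimension is forced above $n$. Note also that the case $j=k$ behaves differently ($\partial_k g_k = 1+\partial_k\phi_k$, so $|\partial_k\phi_k|=1$ does not yield the same picture), and your argument does not address it.

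The correct way to obtain strictness --- implicit in the paper's one-line appeal to the Jacobian property --- is to use full-dimensionality of $\xi$ directly. Since $\xi$ is full-dimensional, the $\emptyset$-definable map $x\mapsto \rv(\phi_j(x))$ must be constant on $\rv^{-1}(\xi)$, say with value $\zeta$; because $|\phi_j(x)|<|\xi_j|$, we have $|\zeta|<|\xi_j|$. Now $x_k\mapsto \phi_j(x)$ (other coordinates fixed) has image contained in $\rv^{-1}(\zeta)$, an open ball of radius $|\zeta|$, so the Jacobian property gives $|\partial_k\phi_j|\cdot|\xi_k|\le|\zeta|<|\xi_j|$. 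Equivalently: if equality held, the $\emptyset$-definable set $\{x:\phi_j(x)=0\}$ would meet $\rv^{-1}(\xi)$ in a nonempty proper subset, contradicting full-dimensionality. Either formulation immediately yields the strict bound for all pairs $(j,k)$, and then your determinant expansion finishes the proof.
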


\begin{proof}
Write $R \subset (\RV^{\times})^n \times \RV^m$.
Since the lemma is vacuous if $\RV$-dimension less than $n$, we may assume that $\dim_\RV R = n$.
Let $(\xi,\zeta) \in R$ be a generic point of $R$, and let $f,g$ be lifts of $h$ as in Lemma~\ref{lem:lift.bijections.mu}.
Since suffices to show that $\rv(\Jac(f \circ g^{-1})) = 1$ on the box above $\xi$, we may as well assume that $g,h$ are the identity maps on their respective domains.

As in the proof of the previous lemma, the Jacobian property for the coordinate functions $f_j$ when fixing all but the $i$-th coordinate yields that
\[ \abs{\partial_i (f_j(x) - x_j) } < \abs{x_j}/\abs{x_i} \]
It follows that $\rv(\partial_i f_i (x)) = 1$ and for each nontrivial permutation $\sigma$ of $\{1,\dots,n\}$, it holds that 
\[ \prod_{i = 1}^n \abs{ \partial_i f_{\sigma(i)}} < 1 .\]
We conclude that $\rv(\Jac(f)) = 1$, as desired.
\end{proof}

\begin{example}
The conclusion of the above lemma cannot be strengthened to hold for a $\emptyset$-definable $R'\subset R$ whose $\RV$-dimension is strictly less than that of $R$. 
Indeed, let $R = \{1\}$ in $\RV[1]$ and consider the $\emptyset$-definable bijection $\id \colon R\to R$.
Then the following maps are both $\emptyset$-definable lifts of $h$
\[
1+\cM_K\to 1+\cM_K\colon x\mapsto x, \quad 1+\cM_K\to 1+\cM_K\colon x\mapsto 2-x.
\]
Clearly, these yield different $\RV$-Jacobians on $R$.
\end{example}

\subsection{Volume forms and integration}\label{ssec:volume.forms}

We now introduce the relevant categories of definable sets with volume forms in the valued field and the $\RV$-sort. 
Morphisms in this category will be defined only almost everywhere.
We will define several variants of these categories and their Grothendieck rings, e.g.\ with $\RV$-valued volume forms or with $\Gamma$-valued volume forms, or variants where we only consider bounded definable sets.

For the following definition, note that if $X \subset K^n \times \RV^m$ is definable with finite-to-one projection onto $K^n$, then it is in bijection with a subset of $K^{n + m}$, by effectivity.
Hence, the characterizations of Theorem~\ref{thm:dim.theory.VF} extend to a notion of dimension on $X$.

\begin{defn}\label{def:almost.everywhere}
\begin{enumerate}
\item Let $X\subset K^n\times \RV^m$ be $\emptyset$-definable, with finite-to-one projection onto $K^n$. 
We say that a property holds for \emph{almost all $x\in X$} if it holds for all $x$ outside some $\emptyset$-definable subset $Z\subset X$ with $\dim Z < n$.
\item Let $X,Y\subset K^n\times \RV^m$ be $\emptyset$-definable, with finite-to-one projections onto $K^n$. 
A $\emptyset$-definable \emph{essential bijection} $f\colon X\to Y$ is a $\emptyset$-definable bijection $f\colon X_0\to Y_0$ such that $X_0\subset X, Y_0\subset Y, \dim (X\setminus X_0) < n$ and $\dim (Y\setminus Y_0) < n$.
\end{enumerate}
\end{defn}

Note that in the above definition both items depend on the ambient dimension.
The reason is that for example a point in ambient dimension zero should have non-zero measure, but a point in ambient dimension one certainly has measure zero.

For $R\subset (\RV^\times)^n\times \RV^m$ an object in $\RV[n]$ and $(\xi, \eta)\in R$ we define $|(\xi, \eta)| = \prod_i |\xi_i|$.

\begin{defn}
\begin{enumerate}
\item Denote by $\mu \VF[n]$ the category of pairs $(X,\omega)$, where $X\subset \VF^n\times \RV^m$ is a $\emptyset$-definable set for which the projection $X\to \VF^n$ is finite-to-one and $\omega\colon X\to \RV^\times$ is a $\emptyset$-definable map. A morphism between $(X,\omega)$ and $(Y, \rho)$ is a $\emptyset$-definable essential bijection such that for almost all $x\in X$ we have that
\[
\omega(x) = \rho(f(x)) \rv(\Jac f(x)).
\]
\item Denote by $\mu_\Gamma \VF[n]$ the category of pairs $(X,\omega)$, where $X\subset \VF^n\times \RV^m$ is a $\emptyset$-definable set for which the projection $X\to \VF^n$ is finite-to-one and $\omega\colon X\to \Gamma^\times$ is a $\emptyset$-definable map. A morphism between $(X,\omega)$ and $(Y, \rho)$ is a $\emptyset$-definable essential bijection such that for almost all $x\in X$ we have that
\[
\omega(x) = \rho(f(x))|\Jac f(x)|.
\]
\item Denote by $\mu \RV[n]$ the category of pairs $(R, \omega)$, where $R\subset (\RV^\times)^n\times \RV^m$ is a $\emptyset$-definable set for which the projection $R\to (\RV^\times)^n$ is finite-to-one and $\omega\colon R\to \RV^\times$ is a $\emptyset$-definable map. A morphism between $(R, \omega)$ and $(S, \rho)$ is a $\emptyset$-definable bijection $h\colon R\to S$ such that there exists a Jacobian $\Jac_\RV h$ of $h$ with
\[
\omega(\xi) = \rho(h(\xi)) \Jac_\RV h(\xi) \text{ for all }\xi\in R.
\]
\item Denote by $\mu_\Gamma \RV[n]$ the category of pairs $(R, \omega)$, where $R\subset (\RV^\times)^n\times \RV^m$ is a $\emptyset$-definable set for which the projection $R\to (\RV^\times)^n$ is finite-to-one and $\omega\colon R\to \Gamma^\times$ is a $\emptyset$-definable map. A morphism between $(R, \omega)$ and $(S, \rho)$ is a $\emptyset$-definable bijection $h\colon R\to S$ such that 
\[
|\omega(\xi)||\xi| = |\rho(h(\xi))||h(\xi)| \text{ for all }\xi\in R.
\]
\end{enumerate}
\end{defn}

\begin{remark}
In many natural effective theories, and in fact for all examples from Section~\ref{sec:eff}, there is a natural notion of differentiation on the residue field.
This notion of derivatives extends to definable maps on $\RV$, and leads to a different more intrinsic notion of $\RV$-Jacobians.
This will be explained in detail in Section~\ref{sec:intrinsic.jac}, where we will also show that in effective theories this leads to an alternative, but equivalent, definition of the categories $\mu \RV[n]$.
In particular, in an effective V-minimal theory, our definition of $\mu \RV[n]$ is the same as that of Hrusovski--Kazhdan~\cite[Def.\,8.13]{HK}.
\end{remark}

Similarly as in the case without measures, we denote by $\mu \RV[\leq n]$ the coproduct of the categories $\mu\RV[i]$ for $i\in \NN$ with $i\leq n$, and $\mu \RV[*]$ as the coproduct of $\mu\RV[i]$ with $i\in \NN$.
Let $\mu \VF[*]$ denote the coproduct of the categories $\mu \VF[n]$. 
Note that we now distinguish between sets in distinct ambient dimensions, compared to $\K_+ \VF$.
Again, the reason is that the ambient dimension plays a role in determining the measure of a definable set.

The map $\omega$ in this definition will be called the \emph{volume form}. Note that all morphisms in these categories are isomorphisms. 
Since we are only interested in the Grothendieck rings of these categories, this is not a restriction.

We also introduce bounded variants of these categories. 
A set $X\subset K^n\times \RV^m$ resp.\ $R\subset \RV^n\times \RV^m$ is said to be \emph{bounded} if its projection onto $K^n$ resp.\, $\RV^n$ is bounded. 
A volume form $\omega\colon X\to \RV^\times$ or $\omega\colon X\to \Gamma^\times$ is called \emph{bounded} if there exists a $\gamma\in \Gamma^\times$ such that for all $x\in X$ we have $|\omega(x)|\leq \gamma$.

\begin{defn}
We denote by $\mu_{\bdd}\VF[n], \mu_{\Gamma, \bdd}\VF[n], \mu_{\bdd}\RV[n]$ and $\mu_{\Gamma, \bdd} \RV[n]$ the full subcategories of the corresponding category whose objects are bounded with a bounded volume form.
\end{defn}

If $(X, \omega)$ is an object in $\mu\VF[n]$, then we will denote the class of $(X,\omega)$ in $\K_+ \mu \VF[n]$ by $[X, \omega]$.
If we wish to stress the ambient dimension, we write $[X, \omega]_n$ instead.
If $\omega$ is the constant map $1$, then we also denote the class of $[X, 1]$ by $[X]$ or $[X]_n$.

One can also look at a category consisting of only volumes of definable sets.
In that case, one should impose some relation between measures of definable sets of different dimension.
We impose the condition the $1+\cM_K$ has measure $1$, which is reminiscent of working with the Haar measure on $\QQ_p^\times$.

\begin{defn}
Define $\K_+ \mu^\vol \VF$ to be the quotient $\K_+ \mu \VF[*]$ by the congruence $([\cM_K]_1, [1]_0)$, and
define $\K_+ \mu^\vol \RV$ to be the quotient $\K_+ \mu \RV[*]$ by the congruence $([1]_1, [1]_0)$.
Similarly define $\mu^\vol \RV, \mu^{\vol}_{\Gamma} \VF$ and so on.
\end{defn}

Our goal is again to precisely relate the semirings $\K_+ \mu \VF[*]$ and $\K_+ \mu \RV[*]$, with similar variants for $\mu_\Gamma$, $\mu_{\bdd}$, $\mu_{\Gamma, \bdd}$, $\mu^{\vol}$, $\mu_{\bdd}^\vol$, $\mu_{\Gamma}^\vol$ and $\mu_{\Gamma, \bdd}^\vol$. 

\begin{defn}
Let $(R,\omega)$ be an object in $\mu\RV[n]$. Then we define the object $\LL (R, \omega)$ in $\mu\VF[n]$ as $(\LL R, \LL \omega)$ where
\[
(\LL \omega)(x,\xi) = \omega(\rv(x), \xi).
\]
If $(R, \omega)$ is an object in any of the other categories $\mu_{\Gamma}\RV[n]$, $\mu_{\bdd}\RV[n]$, $\mu^\vol\RV[n]$ then $\LL(R, \omega)$ lands in the corresponding category over $\VF$.
We call $\LL$ the \emph{lifting map}.
\end{defn}

As in the non-measured case, we will first show that for effective theories $\LL$ descends to a well-defined surjective morphism
\[
\LL\colon \K_+ \mu \RV[n]\to \K_+ \mu \VF[n].
\]
The kernel of $\LL$ differs slightly in the measured case. Indeed, the translation map $\cM_K\setminus \{0\} \to 1+\cM_K$ is an essential bijection with Jacobian $1$ and so the elements $[\RV^\times_{\leq 1}]_1$ and $[1]_1$ have the same lift. Surprisingly, this relation generates the entire kernel of $\LL$.

\begin{defn}
Denote by $I_\mathrm{sp}^\mu$ the congruence in $\K_+ \mu \RV[*]$ or in $\K_+ \mu_\Gamma \RV[*]$ generated by $([\RV^\times_{< 1}]_1, [1]_1)$. 
Write $I_{\mathrm{sp}}^\mu[n]$ for the induced semigroup congruence on $\K_+\mu\RV[n]$, or simply $I_{\mathrm{sp}}^\mu$ when $n$ is clear from the context. 
\end{defn}

We show that $I_\mathrm{sp}^\mu$ interacts well with the bounded category.

\begin{lem}\label{lem:Ispmu.bdd}
The congruence in $\K_+ \mu_{\bdd} \RV[*]$ generated by $I_\mathrm{sp}^\mu\cap \K_+ \mu_{\bdd} \RV[*]$ is identical to the congruence generated by $([\RV^\times_{< 1}]_1, [1]_1)$. The same holds in $\K_+ \mu_{\Gamma, \bdd} \RV[*]$.
\end{lem}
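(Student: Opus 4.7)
The plan is to establish the two inclusions of congruences separately. The easy direction is immediate: the generator $([\RV^\times_{<1}]_1, [1]_1)$ is realized by two bounded objects in ambient dimension $1$ equipped with the constant volume form $1$, hence belongs to $I_{\mathrm{sp}}^{\mu} \cap \K_+\mu_{\bdd}\RV[*]$, so the congruence it generates in $\K_+\mu_{\bdd}\RV[*]$ is contained in the one generated by the whole intersection.

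For the reverse, I would take any pair $(x,y) \in I_{\mathrm{sp}}^{\mu}$ with $x,y \in \K_+\mu_{\bdd}\RV[*]$ and show it already belongs to the bounded congruence generated by $([\RV^\times_{<1}]_1,[1]_1)$. Since $I_{\mathrm{sp}}^{\mu}$ is the congruence in $\K_+\mu\RV[*]$ generated by a single pair $(\alpha, \beta) = ([\RV^\times_{<1}]_1, [1]_1)$, there exists a witnessing chain of elementary moves $x = z_0, z_1, \ldots, z_N = y$ in $\K_+\mu\RV[*]$, where each transition passes between representations $[z_i] = [r_i] + \alpha[c_i]$ and $[z_{i+1}] = [r_i] + \beta[c_i]$ (up to a symmetric swap). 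The goal is to produce such a chain in $\K_+\mu_{\bdd}\RV[*]$.

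I would proceed by induction on the length of the chain, maintaining the invariants that $z_i$ is bounded and that each witnessing pair $(r_i, c_i)$ can be chosen with bounded representatives. At the inductive step, the decomposition $Z_i \cong R_i \sqcup (A_0 \times C_i)$ in $\mu\RV$ (where $A_0 = \RV^\times_{<1}$) combined with boundedness of $Z_i$ shows that both $R_i$ and $A_0 \times C_i$ correspond under the isomorphism to bounded subobjects of $Z_i$, hence $[R_i]$ and $[A_0 \times C_i]$ lie in $\K_+\mu_{\bdd}\RV[*]$.

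The hard part will be extracting a bounded representative for $[C_i]$ itself, which is what is needed to ensure $[\{1\} \times C_i]$ is bounded and hence that $z_{i+1}$ lies in $\K_+\mu_{\bdd}\RV[*]$. The plan here is to exploit the constraint imposed by the Jacobian condition: if $\phi\colon A_0 \times C_i \to T'$ is a measure-preserving bijection to a bounded $T'$, then by the product formula $\abs{\Jac_{\RV}\phi(\xi_1,\xi_C)} = \abs{\phi(\xi_1,\xi_C)}/(\abs{\xi_1}\abs{\xi_C})$, combined with the bounds $\abs{\phi} \leq N'$ and $\abs{\omega_{T'}} \leq N$ coming from boundedness of $T'$, one derives $\abs{\omega_{C_i}(\xi_C)} \cdot \abs{\xi_C} \leq N\cdot N'$ uniformly. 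Using this bound, one can modify $C_i$ by rescaling fibres (moving from the generic $\xi_1 \in A_0$ to the single point $\{1\}$) to produce a bounded representative $C_i'$ with $[C_i'] = [C_i]$ modulo the single generator relation; the latter adjustment is absorbed by the congruence we are generating. The case of $\K_+\mu_{\Gamma,\bdd}\RV[*]$ follows by the same argument, since the $\Gamma$-valued volume forms satisfy the analogous bounds under the same Jacobian estimate.
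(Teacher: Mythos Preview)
Your chain-and-induction strategy can in principle be made to work, but it is much more involved than what the paper does, and two of your steps are not correct as written. First, the Jacobian estimate: from $\omega_{C_i}(\xi_C)=\omega_{T'}(\phi)\cdot\Jac_\RV\phi$ and $|\Jac_\RV\phi|=|\phi|/(|\xi_1|\,|\xi_C|)$ one obtains $|\omega_{C_i}(\xi_C)|\cdot|\xi_C|=|\omega_{T'}(\phi)|\cdot|\phi|/|\xi_1|\le NN'/|\xi_1|$, not $\le NN'$. A uniform bound can still be extracted (fix any $\xi_1^*\in\RV^\times_{<1}$ and use that the left side is independent of $\xi_1$), but your stated inequality is wrong. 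Second, the ``rescaling fibres (moving from the generic $\xi_1\in A_0$ to the single point $\{1\}$) to produce $C_i'$ with $[C_i']=[C_i]$ modulo the generator relation'' does not describe a valid construction: $1\notin\RV^\times_{<1}$, and the modification that actually works (pushing the $(\RV^\times)^{n-1}$-coordinates of $C_i$ into auxiliary $\RV$-coordinates via a definable section) yields an honest isomorphism in $\mu\RV[n-1]$, i.e.\ $[C_i']=[C_i]$ on the nose, so there is nothing to ``absorb''.

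The paper bypasses all of this by using a single-step normal form rather than a chain: one writes $[R,\omega]=[A]+[B][\RV^\times_{<1}]_1+[C][1]_1$ and $[S,\rho]=[A]+[B][1]_1+[C][\RV^\times_{<1}]_1$, and then exploits that \emph{both} endpoints are bounded. From the first equation $[C][1]_1$ is a subobject of the bounded $(R,\omega)$, and since $[1]_1$ is a single point this immediately forces $C$ to be bounded; symmetrically, $[B][1]_1$ sits inside the bounded $(S,\rho)$, so $B$ is bounded. The point you are missing is to read off boundedness of the coefficients from the $[1]_1$ factor rather than the $[\RV^\times_{<1}]_1$ factor: the former is trivial, the latter requires your Jacobian detour.
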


On the level of Grothendieck rings, this lemma is stating that
\[
(([\RV^\times_{< 1}]_1 - [1]_1)\K \mu \RV[*])\cap \K \mu_{\bdd}\RV[*] = ([\RV^\times_{< 1}]_1 - [1]_1)\K \mu_{\bdd} \RV[*].
\]

\begin{proof}
Assume that $(R, \omega)$ and $(S,\rho)$ are objects in $\K_+ \mu_{\bdd} \RV[n]$ which are equivalent modulo $I_\mathrm{sp}^\mu$ in $\K_+ \mu \RV[*]$. 
Then there exist objects $A, B, C$ in $\mu \RV[*]$ such that
\begin{align*}
[R, \omega] &= [A] + [B][\RV_{<1}^\times]_1 + [C] [1]_1, \\
[S, \rho] &= [A] + [B][1]_1 + [C][\RV_{<1}^\times]_1
\end{align*}
in $\K_+ \mu \RV[*]$. 

Note that $A$ is isomorphic to a sub-object of $[R, \omega]$, and hence $[A]$ is an element of $\K_+ \mu_{\bdd} \RV[n]$. Since $[C][1]_1$ is isomorphic to a sub-object of $(R,\omega)$, $C$ itself is isomorphic to an object in $\mu_{\bdd} \RV[n]$. Similarly, $[B][1]_1$ is a sub-object of $(S, \rho)$, and $B$ is isomorphic to an object in $\mu_{\bdd}\RV[n]$. But then $[R,\omega]$ and $[S, \rho]$ are equivalent modulo the congruence generated by $([\RV^\times_{<1}]_1, [1]_1)$ in $\K_+ \mu_{\bdd}\RV[*]$ itself.

The proof for $\K_+ \mu_{\Gamma, \bdd}\RV[*]$ is identical.
\end{proof}

\subsection{Lifting bijections, surjectivity and the kernel}

In this subsection we prove that $\LL$ is well-defined on the level of Grothendieck semi-groups, that it is surjective, and compute the kernel. This is in large part similar to the work done in the non-measured case.
Recall that we work with a sufficiently saturated model $K$ of an effective 1-h-minimal theory $\cT$.

\begin{lem}
Let $(R, \omega),(S, \rho)$ be objects in $\mu \RV[n]$ and let $h\colon (R, \omega)\to (S, \rho)$ be an isomorphism between them. Then there exists an isomorphism $f\colon \LL(R, \omega)\to \LL(S,\rho)$ in $\mu \VF[n]$. This holds even if $\cT$ is not effective.

In particular, $\LL$ induces a well-defined morphism
\[
\LL\colon \K_+\mu \RV[n]\to \K_+\mu \VF[n].
\]
The same holds for the bounded category.

If $\cT$ is effective, then the same holds for the categories $\mu_{\Gamma}$ and $\mu_{\Gamma, \bdd}$.
\end{lem}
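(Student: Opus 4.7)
The plan is to construct the desired lift $f\colon \LL(R,\omega)\to \LL(S,\rho)$ directly from the data of the morphism $h\colon (R,\omega)\to (S,\rho)$ in $\mu\RV[n]$, exploiting the fact that being a morphism in $\mu\RV[n]$ already encodes the existence of a sufficiently nice lift. By the definition of morphism in $\mu\RV[n]$, there is an $\RV$-Jacobian $\Jac_\RV h$ of $h$ satisfying $\omega(\xi)=\rho(h(\xi))\Jac_\RV h(\xi)$ on $R$. By the very definition of $\RV$-Jacobian, this means there exists a $\emptyset$-definable lift $f\colon \LL R\to \LL S$ of $h$ as in Lemma~\ref{lem:lift.bijections.mu}; in particular $f$ is a bijection, $C^1$ on each $\rv$-fibre, and satisfies $\rv(\Jac f(x,\zeta)) = \Jac_\RV h(\rv(x),\zeta)$. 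In the effective case the existence of such a lift for any bijection $h$ is guaranteed by Lemma~\ref{lem:lift.bijections.mu}; in the non-effective case the existence is built into the hypothesis that $h$ is a morphism. This is the only place where effectivity (or its absence) plays a role.

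One then checks that $f$ is a morphism in $\mu\VF[n]$ by a direct computation: for every $(x,\zeta)\in \LL R$,
\[
(\LL\omega)(x,\zeta) = \omega(\rv(x),\zeta) = \rho(h(\rv(x),\zeta))\Jac_\RV h(\rv(x),\zeta) = (\LL\rho)(f(x,\zeta))\rv(\Jac f(x,\zeta)),
\]
so the volume-form compatibility holds on all of $\LL R$ (hence, in particular, almost everywhere). Since $f$ is a genuine bijection it is in particular an essential bijection, and thus a morphism in $\mu\VF[n]$. This shows $\LL$ sends isomorphic objects to isomorphic objects. Because $\LL$ visibly commutes with disjoint unions, it descends to a well-defined semi-group morphism $\K_+\mu\RV[n]\to \K_+\mu\VF[n]$.

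For the bounded variants, note that if $R$ is bounded then so is $\LL R$, and if $\omega$ is bounded then so is $\LL\omega$ since $|\LL\omega|=|\omega|\circ\rv$; hence the same lift $f$ realizes an isomorphism in $\mu_{\bdd}\VF[n]$. For the $\mu_\Gamma$-variants (requiring effectivity), apply Lemma~\ref{lem:lift.bijections.mu} directly to $h$ to obtain a lift $f$; property (4) of that lemma gives $|\Jac f(x,\zeta)| = |h_0(\rv(x),\zeta)|/|\rv(x)|$, and the $\mu_\Gamma\RV[n]$-morphism condition $|\omega(\xi,\zeta)||\xi| = |\rho(h(\xi,\zeta))||h_0(\xi,\zeta)|$ rearranges to precisely $(\LL\omega)(x,\zeta) = (\LL\rho)(f(x,\zeta))|\Jac f(x,\zeta)|$, so $f$ is a morphism in $\mu_\Gamma\VF[n]$; the bounded subcase is identical.

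The only real subtlety is the non-effective $\mu\VF[n]$ case, where Lemma~\ref{lem:lift.bijections.mu} cannot be invoked to produce lifts of arbitrary bijections; this is bypassed by the observation that the morphism condition in $\mu\RV[n]$ is phrased in terms of an $\RV$-Jacobian, whose existence is by definition equivalent to the existence of a lift with the required differentiability and Jacobian properties. For the $\mu_\Gamma$ categories this trick is unavailable, because the $\mu_\Gamma\RV[n]$-morphism condition refers only to $|\cdot|$-valued data and does not itself supply a lift; this is the reason effectivity is hypothesized for the $\mu_\Gamma$ and $\mu_{\Gamma,\bdd}$ parts of the statement.
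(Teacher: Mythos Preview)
Your proof is correct and follows essentially the same approach as the paper: unpack the definition of a morphism in $\mu\RV[n]$ to extract a lift $f$ as in Lemma~\ref{lem:lift.bijections.mu}, verify the volume-form identity, and for the $\mu_\Gamma$ case invoke Lemma~\ref{lem:lift.bijections.mu} directly under effectivity. Your explicit discussion of why the $\mu\RV[n]$ case does not require effectivity (the lift is packaged into the definition of $\RV$-Jacobian) while the $\mu_\Gamma$ case does (the $|\cdot|$-valued morphism condition carries no such data) is a welcome clarification that the paper leaves implicit.
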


\begin{proof}
By definition of the morphisms in the category $\mu \RV[n]$, there exists a $\emptyset$-definable map $f\colon \LL R\to \LL S$ with properties as in Lemma~\ref{lem:lift.bijections.mu}, and such that for all $x\in \LL R$ we have
\[
\omega(\rv(x)) = \rho(h(\rv(x))\rv(\Jac f(x)).
\]
This shows exactly that $f$ is an isomorphism $(\LL R, \omega)\to (\LL S, \rho)$. Since $\mu_\bdd\RV[n]$ is a full subcategory, the same automatically holds there.

So assume that we are working in $\mu_{\Gamma}\RV[n]$ and that the theory is effective. Then by Lemma~\ref{lem:lift.bijections.mu}, there exists a $\emptyset$-definable bijection $f\colon \LL R\to \LL S$ such that for $x\in \LL R$ we have
\[
|\Jac f(x)| = |h(\rv(x))| / |\rv(x)|.
\]
But this says exactly that $f$ is in fact a morphism $\LL(R, \omega)\to \LL (S, \rho)$ in the category $\mu_\Gamma \VF[n]$. As above, the bounded category is a full subcategory, so the result also follows there.
\end{proof}

As for surjectivity, we have the following.

\begin{lem}\label{lem:LL.surj}
Let $(X,\omega)$ be an object in $\mu \VF[n]$. Then there exists an object $(R,\rho)$ in $\mu \RV[n]$ such that $\LL(R, \rho)$ is isomorphic to $(X, \omega)$. The same holds for any of the other categories.

In particular, $\LL\colon \K_+\mu \RV[n]\to \K_+\mu \VF[n]$ is surjective, and similarly for the other categories.
\end{lem}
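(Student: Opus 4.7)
The plan is to mimic the proof of Lemma~\ref{lem:L.surjective}, carrying along the volume form. The first step is to apply Theorem~\ref{thm:cell.decomp}, parts (1) and (2), to obtain a $\emptyset$-definable cell decomposition $\chi\colon K^n \to \RV^N$ adapted to $X$ with the further property that the map $\omega\colon X \to \RV^\times$ (or $\Gamma^\times$) is constant on every twisted box of $\chi$. This preparation of $\omega$ is the essential bookkeeping step: it ensures that $\omega$ descends to a well-defined function on the index of twisted boxes, which will become the volume form on $R$.

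Next, I would reuse the bijection $g\colon X \to \LL R$ constructed in the proof of Lemma~\ref{lem:L.surjective}: on each twisted box $\chi^{-1}(\xi)$, $g$ is essentially the translation $(x_1,\ldots,x_n) \mapsto (x_1 - c_{\xi,1}, \ldots, x_n - c_{\xi,n}(\pi_{<n}(x)))$ in the coordinates of type $1$, followed by recording the $\RV$-data. The Jacobian of this translation on any $n$-dimensional twisted box is lower triangular with unit diagonal, so $|\Jac g| = 1$ and $\rv(\Jac g) = 1$ there. Defining the volume form $\rho$ on $R$ by transporting $\omega$ (well-defined since $\omega$ is constant on each twisted box), the identity
\[
\omega(x) = (\LL\rho)(g(x))\cdot \rv(\Jac g(x))
\]
holds on every $n$-dimensional twisted box, and hence almost everywhere in the sense of Definition~\ref{def:almost.everywhere}, since the union of twisted boxes of dimension strictly less than $n$ has dimension less than $n$. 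Thus $g$ is an isomorphism $(X,\omega)\to \LL(R,\rho)$ in $\mu\VF[n]$.

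The same construction works verbatim for the $\mu_\Gamma$ variants, using $|\Jac g|=1$, and for the bounded variants after noting that on any bounded twisted box the relevant cell centres remain in a bounded region, so the transported set $\LL R$ and volume form $\rho$ also remain bounded. For the volume variants $\mu^\vol$, surjectivity on objects is unaffected by passing to a quotient. The main conceptual point is already handled by the preparation property of Theorem~\ref{thm:cell.decomp}: once $\omega$ is constant on twisted boxes, the translation structure of $g$ automatically yields a unit Jacobian, and so no new geometric input beyond the non-measured case is needed. The surjectivity statement on Grothendieck semirings then follows formally.
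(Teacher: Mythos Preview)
Your proof is correct and follows the same approach as the paper's (one-sentence) proof: refine the cell decomposition so that $\omega$ is constant on twisted boxes, then reuse the bijection from Lemma~\ref{lem:L.surjective} and note that its Jacobian is triangular with unit diagonal on $n$-dimensional twisted boxes. One small correction in your bounded-case justification: the cell centres themselves need not lie in a bounded region; what is bounded is $|x_i - c_i| = |r_{\xi,i}|$, since each such radius is dominated by the diameter of the corresponding coordinate projection of $X$, and it is the boundedness of these $r_{\xi,i}$ (not of the $c_i$) that makes $R$, and hence $\LL R$, bounded.
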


\begin{proof}
The proof is nearly identical to Lemma~\ref{lem:L.surjective}, except that one takes the initial cell decomposition $\chi$ in that proof refining the volume form $\omega$. 
Note that in any of the other categories, the resulting proof remains within that category.
\end{proof}

For the kernel, we follow the same approach as in Sections~\ref{sec:kernel.of.LL.dim.one} and~\ref{sec:int.higher.dim}. 

\begin{lem}\label{lem:kernel.mu.dim.1}
The kernel of the morphism 
\[
\LL\colon \K_+\mu\RV[1]\to \K_+\mu \VF[1]
\]
is the congruence $I_{\mathrm{sp}}^{\mu}[1]$.

A similar statement holds for $\mu_\Gamma, \mu_{\bdd}$ and $\mu_{\Gamma, \bdd}$.
\end{lem}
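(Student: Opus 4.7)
The inclusion $I_{\mathrm{sp}}^{\mu}[1] \subseteq \ker\LL$ is immediate: the generators of $I_{\mathrm{sp}}^{\mu}$ lift to $(\cM_K \setminus \{0\}, 1)$ and $(1+\cM_K, 1)$ in $\mu\VF[1]$, and the translation $x \mapsto x+1$ restricts to a $\emptyset$-definable bijection $\cM_K \setminus \{0\} \to (1 + \cM_K) \setminus \{1\}$ with Jacobian $1$. Since $\{0\}$ and $\{1\}$ are of dimension $0 < 1$, this is an essential bijection in $\mu\VF[1]$ preserving the constant volume form, so $\LL[\RV^\times_{<1}, 1]_1 = \LL[\{1\}, 1]_1$ in $\K_+\mu\VF[1]$.

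For the reverse inclusion, I plan to adapt the strategy of Section~\ref{sec:kernel.of.LL.dim.one} and construct a map $I\colon \K_+\mu\VF[1] \to \K_+\mu\RV[1]/I_{\mathrm{sp}}^{\mu}$ serving as a left inverse to $\LL$. Given $(X,\omega) \in \mu\VF[1]$ with $X \subset K \times \RV^m$, pick a $\emptyset$-definable cell decomposition $\chi\colon K \to \RV^N$ adapted to $X$ and to $\omega$, in the sense that $\omega$ is constant on each twisted box. Then define $I_\chi[X,\omega]$ as the class in $\K_+\mu\RV[1]$ of the type-$1$ part of the $\RV$-image constructed in the proof of Lemma~\ref{lem:L.surjective}, equipped with the descended volume form. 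The type-$0$ cells are discarded; this is legitimate since any finite subset of $K$ is essentially bijective to $\emptyset$ in $\mu\VF[1]$ (the essential-bijection condition is trivially satisfied in ambient dimension $1$ for finite sets).

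The main technical step is showing $I_\chi[X,\omega]$ is independent of $\chi$ modulo $I_{\mathrm{sp}}^{\mu}$, following Lemma~\ref{lem:cell.refinement}. By passing to a common refinement, it suffices to analyze a single-step refinement adding one new centre $d$ to a type-$1$ cell $\rtimes(c, r)$. Before the refinement this cell contributes $[\{r\}, \omega_0]_1$, where $\omega_0 \in \RV^\times$ is the value of $\omega$ on the cell; afterwards the type-$1$ sub-balls around $d$ contribute $[\RV^\times_{<|r|}, \omega_0]_1$, while the new type-$0$ point $\{d\}$ is discarded. The isomorphism $s \mapsto s/r$ in $\mu\RV[1]$, whose $\RV$-Jacobian is $1/r$ by Lemma~\ref{lem:lift.bijections.mu}, identifies these two classes with $[\{1\}, r\omega_0]_1$ and $[\RV^\times_{<1}, r\omega_0]_1$ respectively. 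Multiplying the generating relation $([\{1\}, 1]_1, [\RV^\times_{<1}, 1]_1)$ of $I_{\mathrm{sp}}^{\mu}$ by $[\{1\}, r\omega_0]_0 \in \K_+\mu\RV[0]$ then shows these are congruent modulo $I_{\mathrm{sp}}^{\mu}$. Families of cells are handled by a compactness argument as in Lemma~\ref{lem:compactness.Isp}.

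Invariance of $I$ under morphisms in $\mu\VF[1]$ is obtained via compatible domain-image preparation (Lemma~\ref{lem:compatible.domain.image}): the Jacobian property ensures the descended bijection in $\mu\RV[\leq 1]$ preserves $\RV$-Jacobians, so it is a morphism of measured objects, and the lower-dimensional discrepancy introduced by an essential bijection vanishes in $\K_+\mu\RV[1]$. Since $I\circ\LL = \id$ on $\K_+\mu\RV[1]/I_{\mathrm{sp}}^{\mu}$ (apply $I$ using $\chi = \rv$), the map $\LL$ modulo $I_{\mathrm{sp}}^{\mu}$ is injective, giving $\ker\LL = I_{\mathrm{sp}}^{\mu}[1]$. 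The variants $\mu_\Gamma, \mu_{\bdd}, \mu_{\Gamma, \bdd}$ follow by the same argument: the $\Gamma$-valued cases use $|\Jac|$ in place of $\rv(\Jac)$, and the bounded cases rely on Lemma~\ref{lem:Ispmu.bdd} to ensure that the induced congruence on the bounded category is still generated by the same relation. The main obstacle lies in the bookkeeping of volume forms across cell refinements, and in justifying that the $\RV[0]$-contributions can be discarded consistently while preserving the congruence relation.
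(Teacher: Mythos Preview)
Your proposal is correct and follows essentially the same approach as the paper: construct the measured integration map $I$ via cell decomposition, verify well-definedness modulo $I_{\mathrm{sp}}^{\mu}$ through the cell-refinement computation, and check invariance under morphisms via compatible domain-image preparation with the Jacobian property. The paper's own proof is extremely terse (it just says ``essentially identical to the non-measured case'' and notes that one must take cell decompositions on which the Jacobian property holds and the volume forms are constant), so you have in fact spelled out more than the paper does, including the point that type-$0$ cells are discarded because essential bijections in $\mu\VF[1]$ identify finite sets with $\emptyset$; this is exactly why the target is $\mu\RV[1]$ rather than $\mu\RV[\leq 1]$.

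One small imprecision: ``passing to a common refinement'' reduces to comparing $\chi$ with a refinement $\psi$, but $\psi$ may add several centres at once, not a single $d$; the reduction to the single-centre case is the averaging induction from Lemma~\ref{lem:cell.refinement}, which you should invoke explicitly rather than elide. For the bounded variants, the paper appeals only to full-subcategory-ness; your citation of Lemma~\ref{lem:Ispmu.bdd} is the more precise justification that the congruence restricts correctly.
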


\begin{proof}
The proof is essentially identical to the non-measured case as in Section~\ref{sec:kernel.of.LL.dim.one}. 
The only change is that if $f\colon \LL R\to \LL S, g\colon \LL S\to \LL R$ are $\emptyset$-definable bijections, one takes the initial cell decompositions $\chi_0, \psi_0$ for compatible domain-image preparation such that $f$ and $g$ have the Jacobian property on the twisted boxes of $\chi_0, \psi_0$ and such that the volume forms are constant on the twisted boxes of the cell decomposition.

For the category $\mu_\Gamma$ the proof is identical, while the bounded variants follow from the fact that these are full subcategories.
\end{proof}

In higher dimensions, we again first show that maps can be made relatively unary.

\begin{lem}\label{lem:unique.volume.form}
Let $(X,\omega)$ be an object in $\mu\VF[n]$ and let $f\colon X\to Y$ be a $\emptyset$-definable bijection, where $Y\subset K^n$. 
Then there exists a $\emptyset$-definable map $\rho\colon Y\to \RV^\times$ such that for almost all $x\in X$ one has
\[
\omega(x) = \rho(f(x))\rv((\Jac f)(x)).
\]
Moreover, if $\rho'$ is another such map then $\rho=\rho'$ almost everywhere.
\end{lem}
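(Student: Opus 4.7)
The plan is to define $\rho$ piecewise via the formula $\rho(y) = \omega(f^{-1}(y)) \cdot \rv(\Jac f(f^{-1}(y)))^{-1}$ on a cell decomposition of $X$ on which $\omega$, $f$, and $\rv(\Jac f)$ all behave regularly, and then to extend it arbitrarily (say by the constant $1$) on the residual low-dimensional set where this formula fails to make sense.

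More precisely, I would apply cell decomposition (Theorem~\ref{thm:cell.decomp}), in particular its $C^1$-variant together with preparation of functions, to find a $\emptyset$-definable cell decomposition $\chi \colon X \to \RV^N$ such that: (i) $\omega$ is constant on every twisted box of $\chi$; (ii) every $n$-dimensional twisted box is open and $f$ is $C^1$ there; and (iii) on every such $n$-dimensional twisted box, $\rv \circ \Jac f$ is constant. Condition (iii) is arranged by a further refinement that prepares the $\emptyset$-definable function $\rv \circ \Jac f$, which is well-defined on the union of $n$-dimensional open twisted boxes once (ii) holds. Let $Z \subset X$ be the union of all twisted boxes of dimension less than $n$ together with those $n$-dimensional open twisted boxes on which $\Jac f$ vanishes identically. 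By dimension theory (Theorem~\ref{thm:dim.theory.VF}), the first part has dimension less than $n$, and the second part also does, since an injective $C^1$-map between open subsets of $K^n$ cannot have everywhere vanishing Jacobian on an open set in a $1$-h-minimal field of equicharacteristic zero. Hence $\dim Z < n$.

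On each $n$-dimensional open twisted box $B \not\subset Z$ let $\alpha_B \in \RV^\times$ denote the constant value of $\omega|_B$ and $\beta_B \in \RV^\times$ the constant value of $\rv(\Jac f)|_B$. Because $f$ is a bijection, the images $f(B)$ for distinct such $B$ are pairwise disjoint. Setting $\rho(y) = \alpha_B \beta_B^{-1}$ for $y \in f(B)$ and $\rho(y) = 1$ elsewhere then yields a $\emptyset$-definable map $\rho \colon Y \to \RV^\times$, and the identity $\omega(x) = \rho(f(x)) \rv(\Jac f(x))$ holds for every $x \in X \setminus Z$, i.e.\ almost everywhere. For uniqueness, if $\rho'$ satisfies the same identity outside a $\emptyset$-definable set $Z' \subset X$ of dimension less than $n$, then $\rho \circ f = \rho' \circ f$ on $X \setminus (Z \cup Z')$; since $f$ is a definable bijection and dimension is preserved under such maps, $Y \setminus f(X \setminus (Z \cup Z'))$ has dimension less than $n$, so $\rho = \rho'$ almost everywhere on $Y$.

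The main technical subtlety is verifying that $\Jac f$ is nonzero almost everywhere on the union of $n$-dimensional open twisted boxes, which relies on an appropriate multivariate Jacobian property for $1$-h-minimal fields, combined with injectivity of $f$. Once this has been secured, the piecewise definition of $\rho$ is essentially forced by the required identity, and both existence and uniqueness are straightforward bookkeeping with cell decomposition and dimension theory.
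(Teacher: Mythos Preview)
Your approach is correct and essentially the same as the paper's: define $\rho$ by the formula forced by the identity, namely $\rho(y) = \omega(f^{-1}(y))/\rv(\Jac f(f^{-1}(y)))$ wherever this makes sense, and extend arbitrarily elsewhere. The paper's proof is a two-liner that writes down exactly this formula; your intermediate step of passing to a cell decomposition on which $\omega$ and $\rv(\Jac f)$ are constant is unnecessary, since these are already $\emptyset$-definable functions that can be composed directly with $f^{-1}$.
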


In other words, this lemma says that there is an essentially unique volume form $\rho$ on $Y$ such that $f\colon (X, \omega)\to (Y, \rho)$ becomes a morphism in $\mu \VF[n]$.
Note that this lemma also implies a similar version for $\mu_\Gamma \VF[n]$.

\begin{proof}
Let $y\in Y$. If $f$ is $C^1$ at $x=f^{-1}(y)$ with non-zero Jacobian, we define
\[
\rho(y) = \omega(x)/\rv( (\Jac f)(x)). 
\]
The unicity of $\rho$ is clear. 
\end{proof}

\begin{lem}[Relatively unary maps]\label{lem:rel.unary.mu}
Let $f\colon (X, \omega)\to (Y, \rho)$ be a morphism in $\mu \VF[n]$. Then there exists a finite partition of $X$ into $\emptyset$-definable pieces $X_i$ such that the restriction of $f$ to $(X_i, \omega)$ is a finite composition of relatively unary morphisms in $\mu\VF[n]$. 

The same holds in $\mu_\Gamma \VF[n]$.
\end{lem}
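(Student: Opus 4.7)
The plan is to bootstrap from the unmeasured version, Lemma~\ref{lem:rel.unary}, by transporting the volume form along the intermediate decomposition via Lemma~\ref{lem:unique.volume.form}. First, I would apply Lemma~\ref{lem:rel.unary} to $f\colon X\to Y$ viewed as a $\emptyset$-definable essential bijection in $\VF[n]$ (ignoring $\omega$ and $\rho$). This yields a finite $\emptyset$-definable partition $X=\bigsqcup_i X_i$ such that $f|_{X_i}$ factors as a composition
\[
X_i = Z_{i,0} \xrightarrow{g_{i,1}} Z_{i,1} \xrightarrow{g_{i,2}} \cdots \xrightarrow{g_{i,k_i}} Z_{i,k_i} = f(X_i)
\]
of $\emptyset$-definable relatively unary bijective morphisms in $\VF[n]$. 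Fixing $i$, I drop the index.

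Next, I equip each intermediate $Z_j$ with a volume form to promote the $g_j$ to morphisms in $\mu\VF[n]$. Set $\omega_0 \coloneqq \omega|_{Z_0}$, and inductively apply Lemma~\ref{lem:unique.volume.form} to the bijection $g_j\colon Z_{j-1}\to Z_j$: this furnishes an essentially unique $\emptyset$-definable $\omega_j\colon Z_j\to\RV^\times$ such that for almost all $x\in Z_{j-1}$,
\[
\omega_{j-1}(x) = \omega_j(g_j(x))\,\rv(\Jac g_j(x)).
\]
By construction each $g_j\colon (Z_{j-1},\omega_{j-1})\to (Z_j,\omega_j)$ is a morphism in $\mu\VF[n]$. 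Composing via the chain rule $\Jac f(x) = \prod_j \Jac g_j$ (evaluated at the appropriate iterates), we get
\[
\omega(x) = \omega_{k}(f(x))\,\rv(\Jac f(x))\quad\text{for almost all }x\in X_i.
\]
On the other hand $f\colon (X_i,\omega|_{X_i})\to (f(X_i),\rho|_{f(X_i)})$ is a morphism in $\mu\VF[n]$ by hypothesis, so the uniqueness clause of Lemma~\ref{lem:unique.volume.form} forces $\omega_{k} = \rho|_{f(X_i)}$ almost everywhere on $f(X_i)$. Shrinking each $Z_j$ by a $\emptyset$-definable set of dimension less than $n$ (using that essential bijections can be restricted without changing their class), we may arrange that all $g_j$ become genuine morphisms in $\mu\VF[n]$, and that their composition equals $f|_{X_i}$ as a morphism $(X_i,\omega)\to (f(X_i),\rho)$.

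The argument for $\mu_\Gamma\VF[n]$ is identical: one uses the $\Gamma$-valued analogue of Lemma~\ref{lem:unique.volume.form} (which follows by the same proof, setting $\rho(y) = \omega(x)/|\Jac f(x)|$ at points of differentiability) to propagate the $\Gamma$-valued volume form through the intermediate $Z_j$. The only subtle point, which I expect to be the main source of bookkeeping, is ensuring that the various exceptional sets of dimension less than $n$ arising at each step of the decomposition and from each application of Lemma~\ref{lem:unique.volume.form} can be uniformly removed so that the $g_j$ really are composable as essential bijections; this is handled by replacing each $Z_j$ by the complement in $Z_j$ of the union of the (finitely many) relevant $\emptyset$-definable lower-dimensional sets, which remains $\emptyset$-definable and does not affect any class in $\K_+\mu\VF[n]$.
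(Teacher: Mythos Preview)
Your proposal is correct and follows essentially the same approach as the paper: apply the unmeasured Lemma~\ref{lem:rel.unary} to get the relatively unary decomposition, then use Lemma~\ref{lem:unique.volume.form} to push the volume form through each intermediate stage (with uniqueness ensuring the endpoint matches $\rho$). The paper's proof is a two-line sketch of exactly this; your version simply spells out the bookkeeping with the intermediate $(Z_j,\omega_j)$ and the exceptional lower-dimensional sets more carefully.
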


\begin{proof}
It suffices to consider the case where $f$ is a bijection. 
The lemma now follows by applying Lemma~\ref{lem:rel.unary} to write $f$ as a finite composition of relatively unary maps, and using Lemma~\ref{lem:unique.volume.form} to turn the resulting maps into morphisms in $\mu \VF[n]$ or $\mu_\Gamma \VF[n]$. 
\end{proof}

Let $n$ be a positive integer. 
For subsets $s \subset \{1,\ldots,n\}$ write $K^{s}$ for the set of functions from $s$ to $K$.
There is an obvious bijection to $K^{\# s}$, but the presentation $K^{s}$ additionally keeps track of an `index' for each coordinate.
The set $\RV^s$ is understood in the same way.

\begin{defn}
Let $n$ be a positive integer and let $k, \ell\subset \{1, \ldots, n\}$ form a partition of $\{1, \ldots, n\}$. Denote by $\mu \VFR[k, \ell]$ the category whose objects are pairs $(X, \omega)$ where $X\subset K^k\times \RV^\ell\times \RV^m$ is $\emptyset$-definable such that the projection onto $K^k\times \RV^\ell$ is finite-to-one, and such that $\omega \colon X\to \RV$ is $\emptyset$-definable and factors through $\RV^\ell\times \RV^m$. 
A morphism $(X, \omega)\to (Y, \rho)$ is a $\emptyset$-definable bijection $f\colon X\to Y$ such that for every $a\in K^k$ the map $f_a\colon (X_a, \omega)\to (Y_a, \omega)$ is a morphism in the category $\mu \RV[\ell]$ in the language $\cL(a)$.

Similarly define the categories $\mu_{\Gamma}\VFR[k, \ell], \mu_{\bdd}\VFR[k, \ell]$ and $\mu_{\Gamma, \bdd}\VFR[k, \ell]$.
\end{defn}

One should think of an element in $\mu \VFR[k, \ell]$ as a motivic function $K^k\to \mu \RV[\ell]$.
In particular, we have that $\mu \VFR[0, \ell]$ is simply $\mu \RV[\ell]$.

\begin{remark}
For technical reasons, we really need to work with subsets $k, \ell\subset \{1, \ldots, n\}$ rather than just integers summing up to $n$ as in the non-measured case.
The reason is that the order of coordinates in $K^k\times \RV^\ell$ is important, and we need to keep track of this order when integrating out several of the variables.
For a concrete example, consider the object $(\rv^{-1}(\xi_1)\times \rv^{-1}(\xi_2), 1)$ in $\mu \VFR[2, 0]$ and suppose that we work simply with integers $k$ and $\ell$.
Integrating the second coordinate and then the first yields the object
\[
((\xi_1, \xi_2), 1)
\] 
in $\mu \VFR[0,2]$, while first integrating the second coordinate and then the first yields
\[
((\xi_2, \xi_1), 1).
\]
These objects might not be isomorphic, since the coordinate permutation $(\xi_1, \xi_2)\mapsto (\xi_2, \xi_1)$ has $\RV$-Jacobian $-1$.
Note that this is not an issue in the category without measures, since this coordinate permutation is certainly a $\emptyset$-definable map.
Also, in the corresponding category $\mu_\Gamma \VFR[k, \ell]$ there is no issue, since the norm of the $\RV$-Jacobian is $1$.
\end{remark}

For the rest of this section we simply work with the category $\mu \VFR[k, \ell]$ and note that the proofs works just as well for the variants $\mu_\Gamma \VFR[k, \ell], \mu_\bdd \VFR[k, \ell]$ and $\mu_{\Gamma, \bdd} \VFR[k, \ell]$.
When dealing with the bounded category, one should use Lemma~\ref{lem:Ispmu.bdd} to control the congruence $I_\mathrm{sp}^\mu$.

If $(X, \omega)$ is an object in $\mu \VFR[k, \ell]$, then for $a\in K^k$, the fibre $(X_a, \omega|_{X_a})$ is an object in $\RV[\ell]$ in the expanded language $\cL(a)$. Define the equivalence relation $I_{\mathrm{sp}}^\mu$ on $\mu \VFR[k, \ell]$ by saying that $(X,Y)\in I_{\mathrm{sp}}^\mu$ if for every $a\in K^k$ we have that $(X_a, Y_a)\in I_{\mathrm{sp},a}^\mu$.

\begin{lem}[$I_\mathrm{sp}^\mu$ in families]
Let $R,S$ be in $\mu \RV[\ell]$ and let $f\colon R\to \RV^N, g\colon S\to \RV^N$ be $\emptyset$-definable. Assume that for every $\eta\in \RV^N$ we have that $(f^{-1}(\eta), g^{-1}(\eta))\in I_{\mathrm{sp}, \eta}^\mu$. Then $(R,S)\in I_{\mathrm{sp}}^\mu$.
\end{lem}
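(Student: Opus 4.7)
The plan is to mirror the proof of Lemma~\ref{lem:compactness.Isp} while keeping track of the volume forms along the way. First I would replace $R$ by the isomorphic object $\{(\xi, f(\xi)) \mid \xi \in R\}$ with the pullback volume form, and similarly for $S$, so that $f$ and $g$ become projections onto the last $N$ coordinates. Writing $\omega_\eta, \rho_\eta$ for the restrictions of the volume forms to the fibres $R_\eta$ and $S_\eta$, each pair $(R_\eta, \omega_\eta)$ and $(S_\eta, \rho_\eta)$ is then an $\eta$-definable object of $\mu\RV[\ell]$ in the language $\cL(\eta)$.

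For each $\eta \in \RV^N$, the hypothesis yields a witness of the congruence $([R_\eta, \omega_\eta], [S_\eta, \rho_\eta]) \in I_{\mathrm{sp},\eta}^\mu$. Since $I_{\mathrm{sp}}^\mu$ is generated as a semiring congruence by the single pair $([\RV^\times_{<1}]_1, [1]_1)$, this witness can be expressed by a pair of equalities
\begin{align*}
[R_\eta, \omega_\eta] &= \sum_{i,j} [T_{ij,\eta}, \tau_{ij,\eta}] \cdot [\RV^\times_{<1}]_1^i \cdot [1]_1^j, \\
[S_\eta, \rho_\eta] &= \sum_{i,j} [T_{ij,\eta}, \tau_{ij,\eta}] \cdot [\RV^\times_{<1}]_1^j \cdot [1]_1^i,
\end{align*}
for certain $\eta$-definable objects $(T_{ij,\eta}, \tau_{ij,\eta})$ in $\mu\RV[*]$, exactly as in the non-measured case. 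By saturation of $K$ together with compactness, I can bound the indices $i,j$ uniformly in $\eta$ and choose the $T_{ij,\eta}$, the volume forms $\tau_{ij,\eta}$, and the isomorphisms realizing these equations in $\eta$-definable families. Assembling over $\eta$, I would then set $T_{ij} = \bigsqcup_{\eta} T_{ij,\eta} \times \{\eta\}$ with the induced volume form $\tau_{ij}$, obtaining $\emptyset$-definable objects in $\mu\RV[*]$ that witness $([R, \omega], [S, \rho]) \in I_{\mathrm{sp}}^\mu$.

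The one point requiring mild extra care relative to the non-measured version is that the uniformization must respect the volume forms, not just the underlying definable sets. However, since $[\RV^\times_{<1}]_1$ and $[1]_1$ are classes of objects with trivial volume form living on an ambient $\RV$-coordinate disjoint from the $\RV^\ell$-factor where $\omega$ and $\rho$ are supported, multiplication by these classes simply pulls the volume form along a product. Hence the individual forms $\tau_{ij,\eta}$ glue to a well-defined $\tau_{ij}$ on $T_{ij}$, and the witnessing isomorphisms become genuine morphisms in $\mu\RV[\ell]$ rather than bare definable bijections. This compatibility of volume forms with multiplication by the generators of $I_{\mathrm{sp}}^\mu$ is the sole (mild) obstacle, and once it is verified the argument closes exactly as in Lemma~\ref{lem:compactness.Isp}.
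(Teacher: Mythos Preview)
Your proposal is correct and matches the paper's approach exactly: the paper's proof consists of the single sentence ``This follows from compactness, similar to Lemma~\ref{lem:compactness.Isp},'' and you have simply unwound what that means in the presence of volume forms. Your additional remark about why the volume forms glue across the fibres is a reasonable sanity check that the paper leaves implicit.
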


\begin{proof}
This follows from compactness, similar to Lemma~\ref{lem:compactness.Isp}.
\end{proof}

The map $\LL\colon \mu\RV[1]\to \mu \VF[1]$ induces an isomorphism
\[
I_\mu\colon \K_+\mu \VF[1]\to \K_+\mu \RV[1]/I_\mathrm{sp}^\mu.
\]
We define a relative version of $I_\mu$ completely analogous to the non-measured case. 
If $j\in k$, then we may apply this construction of $I_\mu$ uniformly to obtain a morphism
\[
I^j_\mu\colon \K_+ \mu\VFR[k, \ell]\to \K_+ \mu \VFR[k\setminus \{j\}, \ell\cup\{j\}]/I_\mathrm{sp}^\mu
\]
which essentially integrates out the $j$-th variable of $X$.

If $X\subset K^k\times \RV^\ell\times \RV^m$ is an object in $\VFR[k, \ell]$ and $(V_x, \omega_x)_{x\in X}$ is a $\emptyset$-definable family of pairs of objects in $\mu \VFR[k', \ell']$ in the language $\cL(x)$, then we define
\[
\sum_{x\in X} [V_x, \omega_x] \coloneqq [V,\omega] \in \mu \VFR[k\sqcup k', \ell \sqcup \ell']
\]
by 
\[
V = \{(x,y)\in K^k\times \RV^\ell\times \RV^m\times K^{k'}\times \RV^{\ell'}\times \RV^{m'}\mid x\in X, y\in V_x\}
\]
and $\omega(x,y) = \omega_x(y)$. 
This sum is well-defined: for each family $(V_x',\omega_x')_x$ such that for each $x$ one has $[V_x',\omega_x'] = [V_x,\omega_x]$ it holds that $\sum_{x} [V_x',\omega_x'] = \sum_x [V_x,\omega_x]$. 
Indeed, by compactness and dimension theory in $K$ there is a $\emptyset$-definable essential bijection between $(V',\omega')$ and $(V,\omega)$.

For notational simplicity, we shall also denote the classes $[X,\omega] \in \K_+\mu\VFR[k,\ell]$ simply by $X$ in the next few lemmas, if no confusion is possible.

\begin{lem}[Integration, $\RV$ and $\K_+$] \leavevmode
\begin{enumerate}
\item Let $(X,\omega)\in \Ob \mu \VFR[k, \ell]$ and let $f\colon X\to \RV^N$ be $\emptyset$-definable. Then
\[
I_\mu^j X = \sum_{\xi\in \RV^N} I_\mu^j (f^{-1}(\xi), \omega|_{f^{-1}(\xi)}).
\]
\item Let $(X,\omega), (Y, \rho)$ be objects in $\mu \VFR[k, \ell]$. If $X=Y$ in $\K_+\mu\VFR[k, \ell]$ then $I^j_\mu X = I^j_\mu Y$ in $\K_+\mu \VFR[k\setminus \{j\}, \ell\cup \{j\}]/I_\mathrm{sp}^\mu$.
\item Let $(X,\omega), (Y, \rho)$ be objects in $\mu \VFR[k, \ell]$. If $X=Y$ in $ \K_+\mu\VFR[k, \ell]/I_\mathrm{sp}^\mu$ then also $I^j_\mu X = I^j_\mu Y$ in $\K_+\mu \VFR[k\setminus \{j\}, \ell\cup\{j\}] /I_\mathrm{sp}^\mu$.
\end{enumerate}
\end{lem}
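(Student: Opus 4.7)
The three parts follow the pattern of their non-measured analogues Lemma~\ref{lem:integration.commutes.rv} and Lemma~\ref{lem:int.Isp}, with the additional bookkeeping of volume forms handled via the one-dimensional kernel description Lemma~\ref{lem:kernel.mu.dim.1}.

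For part (1), everything being relative to the $K^{k\setminus\{j\}}$-coordinates, the family version of $I_\mathrm{sp}^\mu$ proved just above reduces the claim to $\mu\VFR[1,\ell]$. I then pick a $\emptyset$-definable cell decomposition $\chi\colon K\to \RV^M$ adapted to $X$ through which both $f$ and $\omega$ factor (guaranteed by Theorem~\ref{thm:cell.decomp}(1)), and unfold the definition of $I^1_\mu$:
\[
I^1_\mu[X,\omega] = [I^1_\chi(X,\omega)] = \sum_{\xi} [I^1_\chi(f^{-1}(\xi), \omega|_{f^{-1}(\xi)})] = \sum_{\xi} I^1_\mu[f^{-1}(\xi), \omega|_{f^{-1}(\xi)}].
\]

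For part (2), by the additive structure of the Grothendieck semigroup, it suffices to treat the case where $[X,\omega]$ and $[Y,\rho]$ are related by a single isomorphism $F$ in $\mu\VFR[k,\ell]$. Since $F$ is over $K^k$ and $I^j_\mu$ leaves the $K^{k\setminus\{j\}}$-coordinates unchanged, part (1) combined with the family $I_\mathrm{sp}^\mu$-lemma lets me reduce to $k=1$. There, each fibre restriction $F_x\colon (X_x,\omega_x)\to (Y_x,\rho_x)$ is an isomorphism in $\mu\RV[\ell]$ over $\cL(x)$. I then pick cell decompositions of $K$ compatible with $X$, $Y$, $\omega$, $\rho$, and the map $F$, refining them exactly as in the proofs of Lemma~\ref{lem:cell.refinement} and Lemma~\ref{lem:compatible.domain.image} so that on each twisted box $B$ the fibre classes $[X_x,\omega_x]$ and $[Y_{F(x)},\rho_{F(x)}]$ become $\chi(B)$-definably isomorphic in $\mu\RV[\ell]$. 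Invoking the one-dimensional kernel description Lemma~\ref{lem:kernel.mu.dim.1} box by box then yields $I^1_\mu[X,\omega]\equiv I^1_\mu[Y,\rho] \pmod{I_\mathrm{sp}^\mu}$. The main subtlety is coordinating cell decompositions, volume forms, and the fibrewise maps simultaneously, but this is handled by exactly the same mechanism as in the non-measured Lemma~\ref{lem:int.Isp}.

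Part (3) follows formally from (2). The congruence $I_\mathrm{sp}^\mu$ is generated by the single relation $([\RV^\times_{<1}]_1,[1]_1)$, so any $I_\mathrm{sp}^\mu$-equivalence $[X]\equiv [Y]$ in $\K_+\mu\VFR[k,\ell]$ can be witnessed by identities of the form $[X] = [A] + [B]\cdot [\RV^\times_{<1}]_1$ and $[Y] = [A] + [B]\cdot [1]_1$ in $\K_+\mu\VFR[k,\ell']$ for some enlarged $\ell'$. Since multiplication by a class in $\K_+\mu\RV[\leq 1]$ only enlarges the $\RV$-factors and commutes with $I^j_\mu$, applying (2) together with the defining relation $[\RV^\times_{<1}]_1 \equiv [1]_1 \pmod{I_\mathrm{sp}^\mu}$ gives $I^j_\mu[X]\equiv I^j_\mu[Y]$ modulo $I_\mathrm{sp}^\mu$.
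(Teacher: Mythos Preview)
Your proposal is correct and follows essentially the same approach as the paper, whose proof is the one-liner ``This is identical to the non-measured case, Lemmas~\ref{lem:integration.commutes.rv}, \ref{lem:I^j.well-defined} and~\ref{lem:int.Isp}.'' Your expansion of that reference matches those arguments, with the volume-form bookkeeping handled exactly as one would expect. One small slip: in part~(2) you write $[Y_{F(x)},\rho_{F(x)}]$, but since morphisms in $\mu\VFR[k,\ell]$ are over $K^k$ this should read $[Y_x,\rho_x]$; the argument is unaffected.
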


\begin{proof}
This is identical to the non-measured case, Lemmas~\ref{lem:integration.commutes.rv}, \ref{lem:I^j.well-defined} and~~\ref{lem:int.Isp}.
\end{proof}

\begin{lem}[Fubini]
Let $j,j'\in k$ be distinct. Let $(X,\omega)$ be an object in $\mu \VFR[k, \ell]$. Then $I^j_\mu I^{j'}_\mu X = I^{j'}_\mu I^j_\mu X$ in $\K_+ \mu \VFR[k\setminus \{j,j'\}, \ell\cup\{j,j'\}]/I_\mathrm{sp}^\mu$.
\end{lem}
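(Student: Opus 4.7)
The plan is to follow the non-measured Fubini proof (Lemma~\ref{lem:Fubini}) closely, but to keep careful track of the volume forms at each step. First I reduce to the case $k = \{j, j'\} = \{1, 2\}$ by relativizing over the remaining $K$-coordinates, which is justified by compactness and the fact that the integration map is defined fibrewise in those coordinates. Next, the measured analogues of Lemma~\ref{lem:integration.commutes.rv} and the $I_{\mathrm{sp}}^\mu$-in-families compactness lemma let me fix any $\emptyset$-definable cell decomposition of $K^2$ adapted to $(X,\omega)$ and work one twisted box at a time. Combining this with Lemma~\ref{lem:cell.decomp.bi.twisted}, and refining the cell decomposition so that $\omega$ is constant along each twisted box of $K^2$, I reduce to the case where $X = X' \times R$, with $X'$ a $\emptyset$-definable bi-twisted box in $K^2$, $R \in \mu\RV[\ell]$, and $\omega(x, r) = \omega_0(r)$ depending only on $r$. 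By $\RV$-linearity of $I^j_\mu$ it then suffices to prove the statement for $(X', c)$ where $c \in \RV^\times$ is a constant.

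Write $X'$ in its original form as $\{(x_1, x_2) \in K^2 \mid \rv(x_1 - c_1) = \xi_1,\ \rv(x_2 - c_2(x_1)) = \xi_2\}$ with $c_2$ enjoying the Jacobian property. If $X'$ is already an actual box (after further cell-decomposing its projection onto the second coordinate to make $c_2$ constant), then both iterated integrals return the same object $(\{(\xi_1, \xi_2)\}, c) \in \mu\RV[\leq 2]$, since the $\RV$-datum at position $i$ is in each case the leading term computed by cell-decomposing the $i$-th $K$-variable.

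If $X'$ is not a box, I invoke Lemma~\ref{lem:bi.twisted} to obtain the alternative presentation $X' = \{(x_1, x_2) \mid x_2 \in Y,\ \rv(x_1 - c_2^{-1}(x_2)) = \xi_3\}$ with $\xi_3 = -\rv(c_2')^{-1}\xi_2$. A direct computation then yields $I^1_\mu I^2_\mu[X', c] = [\{(\xi_1, \xi_2)\}, c]$ (using the original presentation), while $I^2_\mu I^1_\mu[X', c] = [\{(\xi_3, \rv(c_2')\xi_1)\}, c]$ (using the alternative presentation, and cell-decomposing $Y$ via the Jacobian property of $c_2$ to identify $Y$ with a single twisted ball around $c_2(c_1)$ of $\rv$-value $\rv(c_2')\xi_1$). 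To compare these two singletons in $\mu\RV[\leq 2]$, consider the $\emptyset$-definable bijection $h \colon (\xi_3, \rv(c_2')\xi_1) \mapsto (\xi_1, \xi_2) = (\rv(c_2')^{-1}\cdot \rv(c_2')\xi_1,\ -\rv(c_2')\cdot \xi_3)$ with lift $(u, v) \mapsto (c_2'(c_1)^{-1} v, -c_2'(c_1) u)$ on $K^2$, whose Jacobian determinant equals $-(c_2'(c_1)^{-1})(-c_2'(c_1)) = 1$. Therefore $\Jac_\RV h = 1$, so $h$ is a morphism in $\mu\RV[\leq 2]$ between the two singletons equipped with the constant volume form $c$, completing the proof.

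The main subtlety, and the place where the measured case genuinely differs from the non-measured one, is this last Jacobian computation: the coordinate transposition built into the change of integration order contributes a $-1$ to the determinant, while the formula $\xi_3 = -\rv(c_2')^{-1}\xi_2$ from Lemma~\ref{lem:bi.twisted} contributes another $-1$; these two signs cancel exactly, which is precisely what is needed for the volume forms on the two sides to agree with $\Jac_\RV h = 1$. The careful bookkeeping of positions (rather than merely dimensions) in the definition of $\mu\VFR[k, \ell]$ via subsets $k, \ell \subset \{1, \ldots, n\}$ is what makes this cancellation visible.
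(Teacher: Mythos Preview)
Your proof is correct and follows the same strategy as the paper's: reduce to a $\emptyset$-definable bi-twisted box $X'$ with constant volume form, compute both iterated integrals as singletons in $\mu\RV[2]$, and compare them via an explicit isomorphism. The one cosmetic difference is in the bookkeeping for $I^2_\mu$ of $Y$: you record it as $[\rv(c_2')\xi_1, c]$ by cell-decomposing $Y$ around $c_2(c_1)$, while the paper instead applies the measure-changing bijection $c_2^{-1}$ (with Jacobian $(c_2')^{-1}$) to obtain the isomorphic class $[\xi_1, \rv(c_2')\omega_X]$; accordingly your final comparison has $\Jac_\RV h = 1$ while the paper's has $\Jac_\RV h = \rv(c_2')$, but the content is identical and your sign-cancellation remark makes explicit what the paper leaves implicit.

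One small caution: the centre $c_2$ is a priori only defined on $\pi_1(X') = c_1 + \rv^{-1}(\xi_1)$, which does not contain $c_1$, so the phrase ``a single twisted ball around $c_2(c_1)$'' is not literally justified. The paper's approach---pulling $(Y,\omega_X)$ back along $c_2$ to $(\rv^{-1}(\xi_1),\rv(c_2')\omega_X)$ in $\mu\VF[1]$ and only then integrating---avoids ever needing a value for $c_2(c_1)$ and is the cleaner way to handle that step.
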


\begin{proof}
Following the same strategy as in the proof of Lemma~\ref{lem:Fubini} and using Lemma~\ref{lem:bi.twisted}, we can reduce to the case that $X$ is an $A$-definable bi-twisted box for some $A\subset K\cup \RV$, say of the form
\[
X = \{(x,y)\in K^2\mid \rv(x-c_1) = \xi_1, \rv(y-c_2(x_1)) = \xi_2\},
\]
and $(j,j') = (1,2)$.
We may moreover assume that $\omega$ is equal to some constant $\omega_X$ on $X$. 

Firstly, we have that $I^1_\mu I^{2}_\mu X = [(\xi_1, \xi_2), \omega_X]$ in $\K_+\mu \RV_A[2]$. If $X$ is a box in $K^2$, then similarly $I^{2}_\mu I^1_\mu X = [(\xi_1, \xi_2), \omega_X]$. So assume that $X$ is not a box. Then Lemma~\ref{lem:cell.decomp.bi.twisted} shows that 
\[
X = \{(x,y)\in K^2\mid y\in Y, \rv(x-c_2^{-1}(y)) = \xi_3\},
\]
where $\xi_3$ is $\dcl_{\RV}(A)$-definable and $Y\subset K$ is the projection of $X$ onto the second coordinate. Then $ I^1_\mu X = (\{\xi_3\}\times Y, \omega_X)$, where $\xi_3 = -\rv(c_2')^{-1}\xi_2$. 
The map $c_2$ provides an $A$-definable bijection between $\rv^{-1}(\xi_1)+c_1$ and $Y$. Since $c_2$ has the Jacobian property on $\rv^{-1}(\xi_1)+c_1$, we have in $\K_+\mu \VF_{A\xi_3}[1]$ that
\[ [Y,\omega_X] = [\rv^{-1}(\xi_1),\rv(c_2') \omega_X ].  \]
Working relative to the parameters $A\xi_3$, we see that
\[ I_\mu [Y,\omega_X] =  [\xi_1, \rv(c_2') \omega_X ] \in \K_+ \RV_{A\xi_3}[1]/I_{\mathrm{sp}}^\mu. \]
Now using that $\xi_3 = -\rv(c_2')^{-1} \xi_2$, it follows that
\[ I^{2}_\mu I^{1}_\mu [X,\omega_X] = I^{2}_\mu [ \{\xi_3\} \times Y, \omega_X]  = [(\xi_3,\xi_1),\rv(c_2') \omega_X ] = [(\xi_1,\xi_2),\omega_X], \]
as desired.
%
%Old proof. Does not work, since the morphisms on $\K_+\VFR_A[1,1]$ are quite restrictive.
%The map $c_2$ provides an $A$-definable bijection between $\rv^{-1}(\xi_1)+c_1$ and $Y$, and since $c_2$ has the Jacobian property on $\rv^{-1}(\xi_1)+c_1$, we have in $\K_+\mu \VFR_A[1,1]$ that
%\[
%[ \{\xi_3\}\times Y, \omega_X] = [\{\xi_3\}\times \rv^{-1}(\xi_1), \rv(c_2')\omega_X] = [\rv^{-1}(\xi_1)\times \{\xi_2\}, \omega_X].
%\]
%Therefore $I^{j'}_\mu I^j_\mu X = [(\xi_1, \xi_2), \omega_X]$, which is the same as $[(\xi_1, \xi_2), \omega_X]$.
\end{proof}

Finally, we compute the kernel of $\LL$.

\begin{lem}[The kernel of $\LL$]\label{lem:kernel.LL.mu}
Let $(R, \omega), (S, \rho)$ be objects in $\mu \RV[n]$ and assume that $\LL(R, \omega)$ and $\LL(S,\omega)$ are isomorphic in $\mu \VF[n]$. Then $([R, \omega], [S, \omega])$ is in $I_\mathrm{sp}^\mu$. 
\end{lem}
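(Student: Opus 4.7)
The plan is to mirror the proof of Lemma~\ref{lem:kernel.LL} from the non-measured setting. The main tool is the iterated integration map
\[
I_\mu = I_\mu^1 \circ \cdots \circ I_\mu^n \colon \K_+\mu\VFR[\{1,\ldots,n\},\emptyset]/I_\mathrm{sp}^\mu \to \K_+\mu\RV[n]/I_\mathrm{sp}^\mu,
\]
whose composition is independent of the order of the $I_\mu^j$ by the Fubini lemma just established. It suffices to show that $I_\mu[\LL(R,\omega)] = [R,\omega]$ modulo $I_\mathrm{sp}^\mu$, and symmetrically for $(S,\rho)$, and that $I_\mu$ respects isomorphism classes of objects in $\mu\VF[n]$.

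For the first claim, choose a cell decomposition of $\LL R$ refining both $\rv$ and the volume form $\LL\omega$ (which is constant on each $\rv$-fibre by construction). One application of $I_\mu^j$ then strips off the $j$-th $K$-coordinate of each fibre and replaces it by its $\rv$-image, carrying the volume form along unchanged. Iterating through all $n$ coordinates recovers exactly $[R,\omega]$ modulo $I_\mathrm{sp}^\mu$; this is essentially the unwinding of the definition of $I_\mu^j$ from Lemma-Definition~\ref{def-lem:integral.dim.1} combined with Lemma~\ref{lem:LL.surj}.

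For the second claim, let $f \colon \LL(R,\omega) \to \LL(S,\rho)$ be an isomorphism in $\mu\VF[n]$. By Lemma~\ref{lem:rel.unary.mu}, after a $\emptyset$-definable partition of $\LL R$, $f$ is a finite composition of relatively unary morphisms in $\mu\VF[n]$, so we may assume $f$ itself is relatively unary. After an appropriate coordinate permutation, absorbed by a matching permutation of $I_\mu^1,\ldots, I_\mu^n$ via Fubini, we may further assume that $f$ acts nontrivially only on the $n$-th $K$-coordinate. Then for every $y \in K^{n-1}$, $f$ restricts to an isomorphism $(X_y,\omega|_{X_y}) \to (Y_y, \rho|_{Y_y})$ in $\mu\VF_y[1]$ in the language $\cL(y)$. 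Applying Lemma~\ref{lem:kernel.mu.dim.1} fibrewise and invoking compactness (in the form of the families version of $I_\mathrm{sp}^\mu$) yields $I_\mu^n[\LL(R,\omega)] = I_\mu^n[\LL(S,\rho)]$ in $\K_+\mu\VFR[\{1,\ldots,n-1\},\{n\}]/I_\mathrm{sp}^\mu$. Continuing to integrate out the remaining $K$-coordinates produces the desired equality in $\K_+\mu\RV[n]/I_\mathrm{sp}^\mu$.

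The main technical obstacle is the careful bookkeeping of the volume form through the relatively unary decomposition and the coordinate permutations. Lemma~\ref{lem:unique.volume.form} ensures that each intermediate map in the unary decomposition carries an essentially unique compatible volume form, so that the composition remains a genuine morphism in $\mu\VF[n]$ at each stage. Similarly, one must check that the coordinate permutations introduced in order to apply Fubini interact correctly with the $\RV$-Jacobian; this is precisely why the interpolating category $\mu\VFR[k,\ell]$ was defined with indexed subsets rather than bare integers. Once these details are handled, the argument runs in direct analogy with the non-measured case, and applies verbatim to the variants $\mu_\Gamma$, $\mu_\bdd$ and $\mu_{\Gamma,\bdd}$, using Lemma~\ref{lem:Ispmu.bdd} to control $I_\mathrm{sp}^\mu$ in the bounded settings.
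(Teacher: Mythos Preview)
Your proposal is correct and follows essentially the same approach as the paper: iterated integration $I_\mu = I_\mu^1 \circ \cdots \circ I_\mu^n$, reduction to relatively unary maps via Lemma~\ref{lem:rel.unary.mu}, Fubini to handle coordinate permutations, and the one-dimensional kernel computation applied fibrewise. The paper's own proof simply says ``identical to Lemma~\ref{lem:kernel.LL}'' and points to these same ingredients, so your writeup is in fact a more detailed unpacking of what the authors had in mind.
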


\begin{proof}
The argument is identical to Lemma~\ref{lem:kernel.LL}. One first uses Lemma~\ref{lem:rel.unary.mu} to reduce to the relatively unary case, at which point one can use Fubini to compute the integral of $\LL (R, \omega)$ and $\LL (S, \rho)$. 
\end{proof}

\subsection{Isomorphisms of Grothendieck semirings with measures}

We deduce isomorphisms of Grothendieck rings similar to the non-measured case.

\begin{theorem}\label{thm:isom.K.mu}
There is an isomorphism of graded semirings
\[
\int\colon \K_+ \mu \VF[*]\to \K_+\mu \RV[*]/I_\mathrm{sp}^\mu,
\]
such that $\int(\LL(R, \omega)) = (R, \omega)$ for any object $(R, \omega)$ in $\mu \RV[n]$. 
\end{theorem}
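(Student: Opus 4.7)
The proof will proceed in complete parallel with the deduction of Theorem~\ref{thm:integration.semiring} from its preparatory lemmas in the non-measured setting; indeed, all the substantive work has already been done in the preceding subsections, and what remains is essentially assembly.

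The plan is as follows. On each graded piece, the lifting map
\[
\LL\colon \K_+\mu\RV[n]\to \K_+\mu\VF[n]
\]
is a well-defined morphism of semigroups by the lemma following the definition of $\LL$ on the measured categories, it is surjective by Lemma~\ref{lem:LL.surj}, and its kernel is precisely the congruence $I_\mathrm{sp}^\mu[n]$ by Lemma~\ref{lem:kernel.LL.mu}. Combining these three facts, $\LL$ descends on each graded piece to an isomorphism of semigroups $\K_+\mu\RV[n]/I_\mathrm{sp}^\mu[n]\xrightarrow{\sim}\K_+\mu\VF[n]$, and we define $\int$ on $\K_+\mu\VF[n]$ to be its inverse. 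The defining compatibility $\int([\LL(R,\omega)]) = [(R,\omega)]$ modulo $I_\mathrm{sp}^\mu$ then holds by construction.

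To upgrade this isomorphism of graded semigroups to a graded semiring isomorphism, I would simply verify that $\LL$ respects products: given $(R,\omega)\in\Ob\mu\RV[n]$ and $(S,\rho)\in\Ob\mu\RV[m]$, the object $\LL((R,\omega)\cdot(S,\rho))$ is canonically isomorphic to $\LL(R,\omega)\cdot\LL(S,\rho)$ in $\mu\VF[n+m]$, since the underlying definable set is in both cases $\{(x,y)\mid (\rv x,\rv y)\in R\times S\}$ and the product volume forms coincide by the very definition of $\LL\omega$ and $\LL\rho$. Since $I_\mathrm{sp}^\mu$ is a semiring congruence, the induced map on quotients is then a semiring morphism, hence so is its inverse $\int$. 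The grading is preserved because both $\LL$ and the product respect ambient dimension.

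The genuine obstacle, as in the non-measured case, has been absorbed into the kernel computation Lemma~\ref{lem:kernel.LL.mu}, which in turn rests on the reduction to relatively unary maps (Lemma~\ref{lem:rel.unary.mu}) together with the Fubini theorem for $\mu\VFR[k,\ell]$ and the compatibility of the intermediate integration operators $I^j_\mu$ with $I_\mathrm{sp}^\mu$. Given that machinery, Theorem~\ref{thm:isom.K.mu} is formal, and the only point at which one needs to exercise care is keeping track of the order of coordinates in $\mu\VFR[k,\ell]$ when applying Fubini, precisely the bookkeeping motivated by the remark following the definition of $\mu\VFR[k,\ell]$.
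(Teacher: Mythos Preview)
Your proposal is correct and follows essentially the same approach as the paper: define $\int$ as the inverse of $\LL$, using that $\LL$ is well-defined, surjective (Lemma~\ref{lem:LL.surj}), and has kernel exactly $I_\mathrm{sp}^\mu$ (Lemma~\ref{lem:kernel.LL.mu}). The paper's proof is a terse three lines to this effect; your additional remarks on multiplicativity and grading are correct and harmless elaborations of what the paper leaves implicit.
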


\begin{proof}
We define $\int$ as the inverse of $\LL$. By Lemma~\ref{lem:LL.surj} $\LL$ is surjective, while Lemma~\ref{lem:kernel.LL.mu} shows that its kernel is exactly $I_\mathrm{sp}^\mu$.
\end{proof}

Note that the above theorem does not need effectivity.
This is simply because the morphisms in $\RV[n]$ are liftable by definition.

We obtain similar results for the variants.
\begin{theorem}\label{thm:isom.K.mu.variants}
The map $\int$ induces the following surjective morphisms of graded semirings, which are bijections if $\cT$ is effective:
\begin{align*}
\K_+ \mu_\Gamma \VF[*]&\to \K_+ \mu_\Gamma \RV[*]/I_\mathrm{sp}^\mu, \\
\K_+ \mu_{\bdd} \VF[*]&\to \K_+ \mu_\bdd \RV[*]/ I_\mathrm{sp}^\mu, \\
\K_+ \mu_{\Gamma,\bdd} \VF[*]&\to \K_+ \mu_{\Gamma,\bdd} \RV[*]/ I_\mathrm{sp}^\mu.
\end{align*}
These in turn induce epimorphisms
\begin{align*}
\K_+ \mu^\vol \VF & \to \K_+ \mu^\vol \RV / I_\mathrm{sp}^\mu, \\
\K_+ \mu^\vol_\Gamma \VF & \to \K_+ \mu^\vol_\Gamma \RV / I_\mathrm{sp}^\mu, \\
\K_+ \mu^\vol_{\Gamma, \bdd} \VF & \to \K_+ \mu^\vol_{\Gamma, \bdd} \RV / I_\mathrm{sp}^\mu. 
\end{align*}
which are again bijective if $\cT$ is effective.
\end{theorem}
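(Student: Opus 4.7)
The plan is to run the argument of Theorem~\ref{thm:isom.K.mu} in each of the three listed variants, and then check that the integration map descends to the three $\vol$ quotients.

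For the first three maps, the entire strategy of Sections~\ref{ssec:volume.forms}--\ref{sec:isom.K.mu} applies verbatim, since all the key lemmas were stated to hold for $\mu_{\Gamma}, \mu_{\bdd}$ and $\mu_{\Gamma,\bdd}$. More precisely: (i) the lifting map $\LL$ is a well-defined morphism of semigroups at the level of Grothendieck semirings in each variant (unconditionally for $\mu_\bdd$, and under effectivity for $\mu_\Gamma$ and $\mu_{\Gamma,\bdd}$, via Lemma~\ref{lem:lift.bijections.mu}(4) which supplies the norm condition on the Jacobian needed for $\mu_\Gamma$-morphisms); (ii) surjectivity on objects follows from Lemma~\ref{lem:LL.surj} since the cell-decomposition construction preserves the extra structure (boundedness and $\Gamma$-valued volume forms); (iii) the kernel computation of Lemma~\ref{lem:kernel.LL.mu} reduces via Lemma~\ref{lem:rel.unary.mu} (relatively unary maps in $\mu_\Gamma$ by the ``moreover'' part of that lemma, and in the bounded setting because $\mu_\bdd\VF[n]$ is a full subcategory) and Fubini to the one-dimensional case of Lemma~\ref{lem:kernel.mu.dim.1}. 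In the bounded setting, Lemma~\ref{lem:Ispmu.bdd} ensures that the congruence $I_\mathrm{sp}^\mu$ in $\K_+\mu_\bdd\RV[*]$ and $\K_+\mu_{\Gamma,\bdd}\RV[*]$ is exactly the one generated by $([\RV^\times_{<1}]_1,[1]_1)$ inside those semirings, so there is no issue with congruence classes escaping the bounded subcategory.

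For the $\vol$ quotients, I would check compatibility of the defining relations under $\int$. On the $\VF$ side we quotient by $([\cM_K]_1,[1]_0)$, and on the $\RV$ side by $([1]_1,[1]_0)$. The translation $x\mapsto x-1$ gives an essential bijection $\cM_K \cong 1+\cM_K$ with trivial Jacobian, so $[\cM_K]_1 = [1+\cM_K]_1$ in all four $\K_+\mu\VF[*]$ variants, and $\int[\cM_K]_1 = [\RV^\times_{<1}]_1 \equiv [1]_1 \pmod{I_\mathrm{sp}^\mu}$, while $\int[1]_0 = [1]_0$. Hence the $\VF$-side relation is sent to the $\RV$-side relation modulo $I_\mathrm{sp}^\mu$, and $\int$ descends to a well-defined morphism
\[
\int\colon \K_+ \mu^\vol \VF \to \K_+ \mu^\vol \RV / I_\mathrm{sp}^\mu,
\]
and similarly for the $\mu^\vol_\Gamma$ and $\mu^\vol_{\Gamma,\bdd}$ variants. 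Surjectivity is inherited from the unquotiented version.

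Under effectivity, for bijectivity I would construct an inverse by showing that the unquotiented inverse $\LL$ descends. This amounts to checking that $\LL[1]_1 = [1+\cM_K]_1 = [\cM_K]_1$ and $\LL[1]_0 = [\{1\}]_0 = [1]_0$ become identified in $\K_+\mu^\vol\VF$, which is exactly the defining relation $([\cM_K]_1,[1]_0)$ there. Combining with Theorem~\ref{thm:isom.K.mu} and its $\mu_\Gamma, \mu_{\Gamma,\bdd}$ analogues from the first part of the proof, injectivity of the quotient map follows.

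The main obstacle I expect is purely bookkeeping in the bounded $\vol$ case: one must be careful that representatives chosen to witness the $\vol$-relation (in particular lifts of $[1]_0$) can be taken bounded, and that the equality $[\cM_K]_1 = [1+\cM_K]_1$ is legal in $\mu_{\Gamma,\bdd}\VF[1]$. Both are straightforward, but this is why the $\K_+\mu^\vol_\bdd$ version is not listed: the relation $[\cM_K]_1 \sim [1]_0$ only makes sense once one normalizes using $\Gamma$-valued volumes, which is why the theorem only asserts the three $\vol$-variants above.
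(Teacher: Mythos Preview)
Your proposal is correct and spells out exactly what the paper's one-word proof (``Clear.'') abbreviates: rerunning the lifting/integration machinery of Theorem~\ref{thm:isom.K.mu} in each variant via the lemmas already stated for $\mu_\Gamma$, $\mu_{\bdd}$, $\mu_{\Gamma,\bdd}$, and then checking that the $\vol$-relations $([\cM_K]_1,[1]_0)$ and $([1]_1,[1]_0)$ correspond under $\int$ and $\LL$. Your closing speculation about why $\mu^\vol_\bdd$ is omitted is off---the relation $([\cM_K]_1,[1]_0)$ is perfectly fine with $\RV$-valued forms (witness $\mu^\vol$ itself on the list) and both objects are bounded---but this is a side remark that does not affect the argument.
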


\begin{proof}
Clear.
\end{proof}

Using these isomorphisms we can define the motivic volume of a definable set.
\begin{defn}[Motivic volume]
Let $X\subset K^n$ be a $\emptyset$-definable set. Then we define its \emph{motivic volume} as 
\[
\mu(X) = \int [X,1]\in \K_+ \mu^\vol_\Gamma \RV / I_\mathrm{sp}^\mu.
\]
\end{defn}

We conclude this section by showing that if $X$ has bounded motivic volume, then it is in fact isomorphic to a bounded set.
Such a statement makes sense by the following lemma.
\begin{lem} \label{lem:vol.inclusion}
The inclusions
\begin{align*}
	\Ob \mu_{\Gamma,\bdd} \VF[n] &\subset \Ob \mu_{\Gamma} \VF[n], \\
	\Ob \mu_{\Gamma,\bdd}\RV[n] &\subset \Ob \mu_\Gamma\RV[n],
\end{align*}
induce injective semiring morphisms
\begin{align*}
	\K_+ \mu_{\Gamma,\bdd}^\vol\VF &\hookrightarrow \K_+ \mu_{\Gamma}^\vol\VF, \\
	\K_+ \mu_{\Gamma,\bdd}^\vol\RV/I^\mu_{\mathrm{sp}} &\hookrightarrow \K_+ \mu_{\Gamma}^\vol\RV/I_{\mathrm{sp}}^\mu.
\end{align*}    
\end{lem}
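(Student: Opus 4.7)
The plan is to verify well-definedness and then prove injectivity by adapting the proof of Lemma~\ref{lem:Ispmu.bdd} so as to absorb the additional vol-congruence generator. For well-definedness, the inclusions $\mu_{\Gamma,\bdd}\VF[n] \hookrightarrow \mu_{\Gamma}\VF[n]$ and $\mu_{\Gamma,\bdd}\RV[n] \hookrightarrow \mu_\Gamma \RV[n]$ are fully faithful subcategory inclusions and are compatible with disjoint unions and Cartesian products, so they induce semiring morphisms on $\K_+$. All generators of the relevant congruences---namely $([\cM_K]_1,[1]_0)$ on the $\VF$ side and $([\RV^\times_{<1}]_1,[1]_1)$ together with $([1]_1,[1]_0)$ on the $\RV$ side---are represented by bounded objects equipped with (trivial) bounded volume forms, so the semiring morphisms descend to the $\vol$-quotients, and further modulo $I^\mu_{\sp}$ on the $\RV$ side.

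For injectivity on the $\RV$ side, I will argue as follows. Suppose $(R,\omega)$ and $(S,\rho)$ are bounded objects whose classes coincide in $\K_+ \mu_\Gamma^\vol \RV / I^\mu_{\sp}$. The standard description of congruences in a commutative semiring produces classes $A, B_1, C_1, B_2, C_2 \in \K_+ \mu_\Gamma \RV[*]$ with
\begin{align*}
[R,\omega] &= [A] + [B_1][\RV^\times_{<1}]_1 + [C_1][1]_1 + [B_2][1]_1 + [C_2][1]_0, \\
[S,\rho] &= [A] + [B_1][1]_1 + [C_1][\RV^\times_{<1}]_1 + [B_2][1]_0 + [C_2][1]_1,
\end{align*}
in $\K_+ \mu_\Gamma \RV[*]$. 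Interpreting each equation as an isomorphism of coproducts, each summand on either right-hand side is isomorphic to a sub-object of a bounded object, hence itself bounded. Arguing exactly as in Lemma~\ref{lem:Ispmu.bdd}, the classes $[A],[B_1],[C_1],[B_2],[C_2]$ then admit bounded representatives, so the above equations already hold in $\K_+ \mu_{\Gamma,\bdd}\RV[*]$. This witnesses $[R,\omega]=[S,\rho]$ in $\K_+ \mu_{\Gamma,\bdd}^\vol \RV / I^\mu_{\sp}$. The $\VF$ case is entirely analogous, with only the single generator $([\cM_K]_1,[1]_0)$ of the vol-congruence; no analogue of $I^\mu_{\sp}$ is needed.

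The main subtle step, already present in the proof of Lemma~\ref{lem:Ispmu.bdd}, is the implication ``$[B]\cdot[g]$ bounded implies $[B]$ bounded,'' where $[g]$ is one of the bounded generators. Given a bounded representative $Y$ of $[B]\cdot[g]$, one extracts a bounded representative of $[B]$ by exploiting that each generator $[g]$ is a single bounded ball or point: one takes a suitable fiber of $Y \cong B \times g$ and re-indexes its coordinates between the $(\RV^\times)^n$- and $\RV^m$-parts so as to obtain a bounded object in $\mu_\Gamma \RV[k]$ whose class is $[B]$. Everything else in the proof is routine bookkeeping of the coproduct decomposition underlying each equation in $\K_+$.
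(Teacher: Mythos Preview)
Your approach is the same as the paper's, which simply says the result follows ``similarly to the proof of Lemma~\ref{lem:Ispmu.bdd}''. One small correction to your final paragraph: not every generator $[g]$ is a single ball or point---$[\RV^\times_{<1}]_1$ is an infinite subset of $\RV$ which need not contain any $\emptyset$-definable element, so the fibre-extraction you describe does not apply to it directly. This is harmless, however: exactly as in the paper's proof of Lemma~\ref{lem:Ispmu.bdd}, one recovers each coefficient from the \emph{point} side of its generator pair by using both displayed equations. From your first equation one reads off bounded representatives of $C_1, B_2, C_2$ (via the factors $[1]_1$ and $[1]_0$), and from your second equation a bounded representative of $B_1$ (via the factor $[1]_1$). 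One never needs to extract a factor from a product with $[\RV^\times_{<1}]_1$.
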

\begin{proof}
This follows similarly to the proof of Lemma~\ref{lem:Ispmu.bdd}.
\end{proof}
\begin{prop}
Let $X \subset K^n$ be $\emptyset$-definable. If $\mu(X) \in \K_+ \mu_{\Gamma,\bdd}^\vol\RV/I_{\mathrm{sp}}^\mu$, then $[X,1] \in \K_+ \mu_{\Gamma,\bdd} \VF[n]$. 
\end{prop}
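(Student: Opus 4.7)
The plan is to find a bounded representative of $[X,1]$ in $\K_+ \mu_\Gamma \VF[n]$ by lifting one from $\RV$ via $\LL$. By Lemma~\ref{lem:LL.surj} applied to $\mu_\Gamma \VF[n]$, there exists $(R,\omega) \in \mu_\Gamma \RV[\leq n]$ with $\LL(R,\omega) \cong (X,1)$, so that $[R,\omega] = \int[X,1]$ in $\K_+ \mu_\Gamma \RV[\leq n]/I_\sp^\mu$ has image $\mu(X)$ in $\K_+ \mu_\Gamma^\vol \RV/I_\sp^\mu$. Combining the hypothesis with the surjection $\K_+ \mu_{\Gamma,\bdd}\RV[*]/I_\sp^\mu \twoheadrightarrow \K_+ \mu_{\Gamma,\bdd}^\vol \RV/I_\sp^\mu$ and the injectivity from Lemma~\ref{lem:vol.inclusion}, one picks a bounded class $[S,\sigma] \in \K_+ \mu_{\Gamma,\bdd}\RV[*]/I_\sp^\mu$ whose image in $\K_+ \mu_\Gamma^\vol \RV/I_\sp^\mu$ equals $\mu(X)$. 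Then $[R,\omega]$ and the image of $[S,\sigma]$ become equal in $\K_+ \mu_\Gamma^\vol \RV/I_\sp^\mu$, hence are related in $\K_+ \mu_\Gamma\RV[*]/I_\sp^\mu$ by the $\vol$-congruence, which is generated by the bounded pair $([1]_1,[1]_0)$.

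The crucial step is then a preservation-of-boundedness lemma: a class $c \in \K_+ \mu_\Gamma\RV[*]/I_\sp^\mu$ admits a bounded representative if and only if its image in $\K_+ \mu_\Gamma^\vol \RV/I_\sp^\mu$ does. The ``if'' direction is immediate from commutativity; for the ``only if'' direction, I would unpack the fact that $c$ and some bounded $b$ become equal modulo $\vol$ as a finite chain of elementary moves of the form $s + z\cdot[1]_1 \leftrightarrow s + z\cdot[1]_0$, arising from the generators of the congruence. At each such step, boundedness of $s + z\cdot[1]_i$ (for $i \in \{0,1\}$) is equivalent to boundedness of $s$ and of $z$ separately: if a sum $[A]+[B]$ admits a bounded representative $(C,\omega_C)$, restricting the essential bijection $A\sqcup B \to C$ to $A$ (resp.\ $B$) exhibits it as essentially isomorphic to a bounded definable subset of $C$, and multiplication by the single-point class $[1]_i$ preserves the property of admitting a bounded representative. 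Thus boundedness propagates along the chain, and one extracts a bounded representative $[S',\sigma'] \in \K_+ \mu_{\Gamma,\bdd}\RV[\leq n]/I_\sp^\mu$ of $[R,\omega]$; the grading of $\K_+ \mu_\Gamma\RV[*]/I_\sp^\mu$ ensures the representative can be chosen in degree $\leq n$.

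Applying $\LL$ to the equality $[R,\omega] = [S',\sigma']$ in $\K_+ \mu_\Gamma\RV[\leq n]/I_\sp^\mu$ finally yields $[X,1] = [\LL(R,\omega)] = [\LL(S',\sigma')]$ in $\K_+ \mu_\Gamma\VF[n]$, and $\LL(S',\sigma')$ is bounded because $(S',\sigma')$ is. Hence $[X,1] \in \K_+ \mu_{\Gamma,\bdd}\VF[n]$. The main obstacle is the preservation-of-boundedness lemma above: it requires unpacking the semiring $\vol$-congruence into its elementary moves and extracting bounded subobjects from bounded sums in the Grothendieck semiring.
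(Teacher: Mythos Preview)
Your overall strategy mirrors the paper's, but the paper works entirely on the $\VF$ side: using surjectivity of $\int$ together with the injections of Lemma~\ref{lem:vol.inclusion}, one obtains a bounded $(Y,\omega)\in\mu_{\Gamma,\bdd}\VF[*]$ with $[X,1]=[Y,\omega]$ in $\K_+\mu_\Gamma^\vol\VF$, and then unpacks the congruence generated by $([\cM_K]_1,[1]_0)$ directly inside $\K_+\mu_\Gamma\VF[*]$. The point of staying in $\K_+\mu_\Gamma\VF[*]$ (with no further quotient) is that there an equality of classes in a fixed graded piece is literally an isomorphism in $\mu_\Gamma\VF[n]$, so one can genuinely restrict to subobjects and read off that the intermediate terms $A,B,C$ are represented by bounded objects.

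There is a real gap in your ``restricting the essential bijection'' step. You run the chain argument in $\K_+\mu_\Gamma\RV[*]/I_\sp^\mu$, but an equality $s+z\cdot[1]_i=[C,\omega_C]$ in that \emph{quotient} is not witnessed by any bijection of representatives; it only asserts congruence modulo $I_\sp^\mu$. So you cannot restrict anything to $C$ to produce bounded representatives of $s$ and $z$. (Also, ``essential bijection'' is a $\VF$ notion; morphisms in $\mu_\Gamma\RV[n]$ are honest bijections.) The fix is to lift the entire chain to $\K_+\mu_\Gamma\RV[*]$ before quotienting and unpack the congruence generated by \emph{both} pairs $([\RV^\times_{<1}]_1,[1]_1)$ and $([1]_1,[1]_0)$; since all four of these classes are bounded, your propagation argument then goes through, now legitimately restricting isomorphisms in $\mu_\Gamma\RV[*]$. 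This amounts to reproving Lemma~\ref{lem:Ispmu.bdd} in tandem with the $\vol$-move, and at that point the paper's single-congruence route on the $\VF$ side is strictly more economical.
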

\begin{proof}
By the previous lemma and theorem, there exists some $(Y,\omega) \in \Ob \mu_{\Gamma,\bdd}\VF[*]$ and $A,B,C \in \K_+\mu_{\Gamma}\VF[*]$ such that in $\K_+\mu_{\Gamma} \VF[*]$
\begin{align*}
[X,1] &= A + B[1]_0 + C[\cM_K]_1, \\
[Y,\omega] &= A + B[\cM_K]_1 + C[1]_0.
\end{align*}
Comparing ambient dimensions in the first equation, we find that $A,B \in \K_+\mu_{\Gamma}\VF[n]$ and $C \in {\K_+\mu_{\Gamma}\VF[n-1]}$. 
The second equation yields that $A,B,C$ can be represented by subobjects of bounded objects. 
Hence, the class $[X,1]$ belongs to $\K_+\mu_{\Gamma,\bdd}\VF[n]$.
\end{proof}
%\begin{prop}
%Let $X\subset K^n$ be $\emptyset$-definable.
%If $\mu(X) \in \K_+ \mu^\vol_{\Gamma, \bdd}\RV / I_\sp^\mu$, then there exists an object $[Y, \omega]$ in $\mu_{\Gamma, \bdd}\VF[n]$ such that $[X, 1] = [Y, \omega]$ in $\K_+ \mu_{\Gamma, \bdd}\VF[n]$.
%\end{prop}
%
%\begin{proof}
%By the previous theorem, there are elements $A,B,C$ in $\K_+ \mu_\Gamma \VF[\leq n]$ and an element $Z$ in $\K_+ \mu_{\Gamma, \bdd} \VF[\leq n]$ such that
%\begin{align*}
%[X] &= A + B[1]_0 + C[\cM_K]_1, \\
%Z &= A + B[\cM_K]_1 + C[1]_0.
%\end{align*}
%By looking at ambient dimensions, we see that $A$ and $B$ are objects in $\mu_{\Gamma}\VF[n]$, while $C$ is an object in $\mu_\Gamma\VF[n-1]$.
%Now $A$, $B[\cM_K]_1$ and $C[1]_0$ are subobjects of $Z$, and so all lie in the bounded category.
%Hence so do $A$, $B$ and $C$ themselves.
%Therefore $[X]$ is an element of $\K_+ \mu_{\Gamma, \bdd} \VF[n]$, as desired.
%\end{proof}

\subsection{Intrinsic $\RV$-Jacobians}\label{sec:intrinsic.jac}

In familiar theories our notion of $\RV$-Jacobians agrees with the usual notion.
This leads to a more intrinsic definition of the categories $\mu \RV[n]$ from Section~\ref{ssec:volume.forms}.
For V-minimal theories this is in fact how Hrushovski--Kazhdan define $\RV$-Jacobians~\cite[Sec.\,5.5]{HK}. By~\cite[Cor.\,5.23]{HK} their notion coincides with ours.

We first define a notion of derivatives on the residue fields of interest.
Afterwards, we will use translation to define the derivative for maps on $\RV$ itself.
On real closed, almost real closed fields, or $1$-h-minimal fields the notion of derivatives is clear, since these are topological fields.

All other residue fields of interest are algebraically bounded (Definition~\ref{def:alg.bdd}), and so we define a notion of derivative on such a field.
Recall also that a field is algebraically bounded if and only if it is algebraically bounded over $\dcl(\emptyset)$ (\cite[Prop.\,2.17]{JY23}).
Although many algebraically bounded fields are topological in some sense, not all of them are, and so we define a notion of derivative without relying on topology.
The basic idea is to define derivatives via the implicit function theorem.

\begin{defn}\label{def:alg.bdd.der}
Let $k$ be algebraically bounded, let $f\colon k^n\to k$ be a $\emptyset$-definable function and write $\ell = \dcl(\emptyset) \subset k$.
Then we say that $f$ is \emph{differentiable at $a\in k$} (or simply $C^1$ at $a$) if there exists a non-zero polynomial $P\in \ell[x_1, \ldots, x_n,y]$ such that $P$ vanishes on the graph of $f$ and such that
\[
(\partial_y P)(a,f(a))\neq 0.
\]
In that case, we define the \emph{$j$-th partial derivative of $f$ at $a$} to be 
\[
(\partial_j f)(a) = \frac{-(\partial_{x_j} P)(a,f(a))}{(\partial_y P)(a,f(a))}.
\]
\end{defn}

\begin{example}
If $f$ is a polynomial over $\ell$, then one can take $P = y-f(x)$ to see that $\partial_j f(x)$ is just the usual derivative.
\end{example}

We note that our notion of being differentiable at a point may be more restrictive than what one would expect.

\begin{example}
Consider $k=\RR$, which is algebraically bounded in $\cL_\ring(\RR)$, and consider the function
\[
f\colon \RR\to \RR\colon x\mapsto \begin{cases}
x & \text{ if } x < 0, \\
x^2 & \text{ if } x \geq 0.
\end{cases}
\]
If $P\in \RR[x,y]$ is any polynomial vanishing on the graph of $f$, then it must vanish on the Zariski closure of the graph of $f$, and so $(y-x)(y-x^2)$ is a factor of $P$.
But then $P$ is not differentiable at $1$, since $(\partial_y P)(1,1) = 0$.
\end{example}

Our notion of derivative may also be ill-defined.

\begin{example}
Consider again $k = \RR$ which is algebraically bounded in $\cL_\ring(\RR)$, and consider the $\emptyset$-definable function
\[
g\colon \RR\to \RR\colon x\mapsto \begin{cases} 1 & x = 0 \\ 0 & x \neq 0\end{cases}.
\]
Taking $P = y(y-1)$, we see that $f$ is differentiable at $0$ with derivative $0$.
However, we could just as well have taken $P = y(y-x-1)$ which would give $(\partial g)(0) = 1$.
\end{example}

In contrast to the above examples, we will show that definable functions are automatically differentiable away from a lower-dimensional set with well-defined derivative.
This is enough for our purpose, and not an issue when comparing it with $\RV$-Jacobians because of Proposition~\ref{lem:RV.Jacobian.almost.everywhere}.

\begin{remark}
It may be interesting to investigate whether one can use the \'{e}tale-open topology from~\cite{etale-open} to define a more local definition of differentiation.
In the above examples, the \'{e}tale-open topology is simply the usual Euclidean topology on $\RR$, and for this topology the function $f$ is indeed differentiable at $1$, while $g$ is not differentiable at $0$.
\end{remark}

Let $k$ be a field.
Then on $k[x_1, \ldots, x_n,y]$ we consider the monomial order which is the graded lexicographic order with $y > x_1 > \ldots > x_n$.
This determines a notion of leading monomials and leading coefficients for elements of $k[x_1, \ldots, x_n,y]$.
Additionally we obtain a well-founded total preorder $\preceq$ on $k[x_1, \ldots, x_n,y]$ by comparing leading monomials.
We say that an element of $k[x_1, \ldots, x_n, y]$ is \emph{monic} if its leading coefficient with respect to this monomial ordering is $1$.

Also recall that for algebraically bounded fields there is a natural notion of dimension, since $\acl$ satisfies the exchange property.

\begin{lem}\label{lem:alg.bdd.monic.poly}
Let $k$ be algebraically bounded, let $f \colon k^n\to k$ be a $\emptyset$-definable function and write $\ell = \dcl(\emptyset) \subset k$.
Then there exists a unique monic polynomial $P \in \ell[x_1, \ldots, x_n,y]$ which is minimal with respect to $\preceq$ for the following property:
\begin{enumerate}
\item[] There exists a $\emptyset$-definable $Z \subset k^n$ of dimension at most $n-1$ such that for $x\notin Z$ we have that $P(x, \cdot)\in k[y]$ is non-zero and $P(x,f(x)) = 0$.
\end{enumerate}
Moreover, $P$ and $\partial_y P$ are coprime.
\end{lem}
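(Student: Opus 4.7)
The plan is to define the set $S \subset \ell[x,y]$ of monic polynomials satisfying the displayed property, use well-foundedness of $\preceq$ to obtain a minimal element, and then establish uniqueness via subtraction and coprimality via squarefreeness.

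For existence, I would apply algebraic boundedness of $k$ over $\ell = \dcl(\emptyset)$ to the defining formula $y = f(x)$: this produces nonzero polynomials $P_1,\ldots,P_m \in \ell[x,y]$ such that for each $a \in k^n$ at least one $P_i(a,y) \in k[y]$ is nonzero with $f(a)$ as a root. The product $P = \prod_i P_i$, rescaled by its leading coefficient in $\ell$ to be monic, lies in $S$: the exceptional set $Z = \bigcup_i \{a : P_i(a,y) \equiv 0\}$ is Zariski closed of dimension at most $n-1$, and outside $Z$ some factor $P_i(a, f(a))$ vanishes, so $P(a, f(a)) = 0$. Well-foundedness of $\preceq$ on monic polynomials then provides a minimal $P \in S$.

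For uniqueness, suppose $P, Q$ are both minimal. Totality of $\preceq$ on leading monomials forces them to share the same leading monomial, so $P - Q$ has strictly smaller leading monomial with leading coefficient some $c \in \ell$. If $P - Q \neq 0$ has positive degree in $y$, then $(P-Q)/c$ is monic with smaller leading monomial; its $y$-vanishing locus is a proper Zariski closed subset of $k^n$ (hence of dimension at most $n-1$), and it vanishes at $(x,f(x))$ outside the exceptional sets for $P$ and $Q$, so $(P-Q)/c \in S$, contradicting minimality. If instead $P - Q \in \ell[x]$ is nonzero, then $(P-Q)(x,f(x)) = (P-Q)(x) = 0$ on the complement of a dimension $\leq n-1$ set, forcing $P - Q = 0$ as a polynomial in $\ell[x]$, again a contradiction. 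Hence $P = Q$.

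For coprimality, factor the minimal $P$ as $\prod_i p_i^{e_i}$ into monic irreducibles in $\ell[x,y]$ (noting that $P$ itself must have positive $y$-degree, else $P \in \ell[x]$ would be forced to be zero), and let $q = \prod_i p_i$ be its radical. Then $q \in S$: the $y$-vanishing locus of $q$ coincides with that of $P$, and $q(x,f(x)) = 0$ whenever $P(x,f(x)) = 0$. Comparing leading monomials $\operatorname{lm}(q) = \prod_i \operatorname{lm}(p_i)$ to $\operatorname{lm}(P) = \prod_i \operatorname{lm}(p_i)^{e_i}$, minimality of $P$ forces all $e_i = 1$, so $P$ is squarefree in $\ell[x,y]$. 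By Gauss's lemma, irreducible factors of positive $y$-degree remain irreducible and mutually non-associate in $\ell(x)[y]$, while factors of zero $y$-degree become units; hence $P$ is squarefree in $\ell(x)[y]$, which in characteristic zero is equivalent to $\gcd(P, \partial_y P) = 1$.

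The main subtlety lies in the coprimality step: one has to transfer squarefreeness from $\ell[x,y]$ to $\ell(x)[y]$ via Gauss's lemma and then use the characteristic zero assumption. The uniqueness step also requires care to separately treat the degenerate case where $P - Q$ becomes a nonzero element of $\ell[x]$, which cannot itself give rise to an element of $S$ but directly contradicts the defining property.
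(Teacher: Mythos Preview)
Your proof follows the same route as the paper's: existence from algebraic boundedness plus well-foundedness of $\preceq$, uniqueness by subtraction (the paper compresses your case analysis to a single line), and coprimality via squarefreeness deduced from minimality. One small gap remains in the last step: your argument concludes only with $\gcd(P,\partial_y P)=1$ in $\ell(x)[y]$, not in $\ell[x,y]$. A nonconstant irreducible $G\in\ell[x]$ dividing $P$ would trivially divide $\partial_y P$ as well without violating squarefreeness in $\ell[x,y]$, so Gauss's lemma alone does not close the case. But this is easy to rule out: if $G\mid P$ with $G\in\ell[x]$ nonconstant, then $P/G$ still lies in your set $S$ (enlarge $Z$ by the zero locus of $G$) and $P/G\prec P$, contradicting minimality. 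With this one-line addition your proof is complete.
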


\begin{proof}
Since $k$ is algebraically bounded there exist finitely many polynomials $P_1, \ldots, P_m\in \ell[x_1, \ldots, x_n,y]$ such that for every $x\in k^n$ there is an $i$ such that $P_i(x,\cdot)\in k[y]$ is non-zero and $P_i(x,f(x)) = 0$. 
Note that this uses \cite[Prop.\,2.17]{JY23}.
Then $Q = \prod_i P_i$ has the desired property above, since any $\ell$-definable set is $\emptyset$-definable.
Hence there does indeed exist a polynomial $P\in \ell[x_1, \ldots, x_n, y]$ with the above property which is minimal with respect to $\preceq$.
We can moreover assume that $P$ is monic.

We claim that $P$ is unique, so let $Q$ be another monic polynomial with the same property which is minimal for $\preceq$.
Let $Z$ be as in the property for both $P$ and $Q$. 
Since $P$ and $Q$ have the same leading monomial, it follows by minimality that $P - Q$ is identically zero.
%Then minimality shows that for every $x\notin Z$ we have $P(x,\cdot) = Q(x,\cdot)$ in $k[y]$.
%Since $k^n\setminus Z$ has Zariski-dimension $n$, this can only hold if $P=Q$.

If $P$ and $\partial_y P$ have a common monic factor $G$, then $G^2$ divides $P$.
But then $P/G$ is also as desired, and since $P$ is minimal with respect to $\preceq$ this implies that $G= 1$.
\end{proof}

\begin{lem}\label{lem:alg.bdd.der}
Let $k$ be algebraically bounded, let $f\colon k^n\to k$ be $\emptyset$-definable and write $\ell = \dcl(\emptyset) \subset k$
Then there exists a $\emptyset$-definable set $Z \subset k^n$ such that $\dim(Z) < n$, such that $f$ is differentiable at every $a \notin Z$ and such that $f'(a)$ is independent of the choice of $P$ in Definition~\ref{def:alg.bdd.der}.
\end{lem}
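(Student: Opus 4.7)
The plan is to work with the Zariski closure $V \subset \AA^{n+1}_\ell$ of the graph of $f$ over $\ell$. First I would verify that $\dim V = n$: algebraic boundedness of $k$ over $\emptyset$ supplies polynomials $P_1, \dots, P_m \in \ell[x, y]$ such that for each $a \in k^n$ the value $f(a)$ is a root of some $P_i(a, \cdot) \not\equiv 0$, so the graph of $f$ is contained in $\bigcup_i Z(P_i)$, giving $\dim V \leq n$; conversely, $V$ projects dominantly onto $\AA^n_\ell$, forcing $\dim V \geq n$. It will also be useful that a polynomial $Q \in \ell[x, y]$ vanishes on the graph of $f$ if and only if $Q \in I(V)$, which follows from the definition of Zariski closure together with the density of the graph in $V$.

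For the independence of the derivative, let $V_{\mathrm{sing}} \subset V$ denote the singular locus, which has dimension strictly less than $n$, and set $Z_1 := \{a \in k^n : (a, f(a)) \in V_{\mathrm{sing}}\}$. Then $Z_1$ is $\emptyset$-definable of dimension less than $n$, since it is contained in the image of $V_{\mathrm{sing}}$ under projection to $\AA^n_\ell$, and algebraic boundedness identifies the model-theoretic dimension with the Zariski dimension of the image. For $a \notin Z_1$ the point $(a, f(a))$ is a smooth point of the $n$-dimensional variety $V \subset \AA^{n+1}_\ell$, so the conormal space at $(a, f(a))$ is one-dimensional and every gradient $\nabla Q(a, f(a))$ with $Q \in I(V)$ is a scalar multiple of a single nonzero vector. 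If two choices $Q_1, Q_2$ both satisfy $\partial_y Q_i(a, f(a)) \neq 0$, proportionality immediately gives $-\partial_{x_j} Q_1/\partial_y Q_1 = -\partial_{x_j} Q_2/\partial_y Q_2$ at $(a, f(a))$, as required.

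For the existence of a witness polynomial, I would start from the minimal polynomial $P$ provided by Lemma~\ref{lem:alg.bdd.monic.poly} together with its associated set $Z_0$ of dimension less than $n$. Coprimality of $P$ and $\partial_y P$ forces the resultant $R(x) := \mathrm{Res}_y(P, \partial_y P) \in \ell[x]$ to be nonzero, and outside $Z_0 \cup Z(R)$ both $P(a, f(a)) = 0$ and $\partial_y P(a, f(a)) \neq 0$ hold. However, $P$ itself need not lie in $I(V)$, because components of $V$ of dimension less than $n$ may cause $P$ to fail on $\{(x, f(x)) : x \in Z_0\}$. To remedy this, let $W$ be the union of the Zariski closure of $\{(x, f(x)) : x \in Z_0\}$ with all irreducible components of $V$ of dimension strictly less than $n$, so that $\dim W < n$. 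Choose $g \in \ell[x, y]$ vanishing on $W$ but not on any $n$-dimensional component of $V$; such $g$ exists since $W$ does not contain any such component. Then $P' := g \cdot P$ vanishes on the entire graph of $f$, and the product rule gives $\partial_y P'(a, f(a)) = g(a, f(a)) \cdot \partial_y P(a, f(a))$ whenever $P(a, f(a)) = 0$, which is nonzero outside $Z_0 \cup Z(R) \cup \{a : g(a, f(a)) = 0\}$, each a $\emptyset$-definable subset of $k^n$ of dimension less than $n$.

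Taking $Z$ to be the union of all the $\emptyset$-definable sets above completes the proof: for $a \notin Z$ the polynomial $P'$ witnesses differentiability of $f$ at $a$, and the uniqueness argument from the second paragraph guarantees that $f'(a)$ is well-defined. The main obstacle is the possible reducibility of $V$ and the presence of components of dimension less than $n$, which prevents $P$ itself from being a valid witness in the sense of Definition~\ref{def:alg.bdd.der} and forces the detour through $g$; this is ultimately controlled by algebraic boundedness, which ensures that $\{a \in k^n : (a, f(a)) \in Y\}$ is $\emptyset$-definable of dimension less than $n$ whenever $Y \subset V$ is an $\ell$-subvariety of dimension less than $n$.
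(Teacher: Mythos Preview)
Your argument is correct and takes a genuinely different, more geometric route than the paper. The paper stays purely algebraic: for existence it uses the product $P = \prod_i P_i$ coming directly from algebraic boundedness (made squarefree so that $P$ and $\partial_y P$ are coprime), which vanishes on the whole graph by construction; for uniqueness it shows via polynomial division with respect to $\preceq$ that any $T$ vanishing on the graph with $\partial_y T(a,f(a)) \neq 0$ is actually a multiple of the minimal $Q$ from Lemma~\ref{lem:alg.bdd.monic.poly}, whence the two ratios agree. You instead pass to the Zariski closure $V$ of the graph and read uniqueness off the one-dimensionality of the conormal space at smooth points of the $n$-dimensional locus, and for existence you repair the minimal $P$ (which a priori only vanishes on the graph away from $Z_0$) by multiplying with a $g \in I(W)$ chosen via prime avoidance. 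The paper's route is more elementary and entirely self-contained, while yours makes the underlying geometry transparent and avoids the division argument.

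One imprecision is worth flagging: your $Z_1$ only removes the preimage of $V_{\mathrm{sing}}$, but a smooth point of $V$ lying solely on a component of dimension $<n$ has conormal space of dimension $>1$, so the proportionality claim fails there. These points are in fact removed later, since $g$ vanishes on all lower-dimensional components and hence $\{a : g(a,f(a)) = 0\}$ contains them; so your final $Z$ is correct. But the uniqueness paragraph as written does not stand on its own --- you should either enlarge $Z_1$ to include the preimage of the $(<n)$-dimensional components, or note explicitly that the argument only applies once those are removed.
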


\begin{proof}
Since $k$ is algebraically bounded there exist finitely many $P_1, \ldots, P_m\in \ell[x_1, \ldots, x_n,y]$ such that for every $x\in k^n$ there is an $i$ such that $P_i(x,\cdot)\in k[y]$ is non-zero and $P_i(x,f(x)) = 0$.
Then $P = \prod_i P_i$ is a non-zero polynomial, and we may assume that $P$ and $\partial_y P$ are coprime in $\ell[x,y]$, hence in $k[x,y]$ since $\ell$ is perfect.
Now the set $Z_1$ of $a\in k^n$ for which either $P(a, \cdot) = 0$ in $k[y]$, or $P(a, \cdot)$ and $\partial_y P(a, \cdot)$ are not coprime in $k[y]$, is a proper $\emptyset$-definable Zariski closed subset of $k^n$.
For $a\notin Z_1$, $f$ is differentiable at $a$.

We now show that $f'$ is well-defined away from a lower-dimensional set.
Let $Q$ be minimal with respect to $\preceq$ as in the previous lemma.
Then there exists a $\emptyset$-definable $Z_2\subset k^n$ of dimension at most $n-1$ such that for $a\notin Z_2$, $Q(a, \cdot)\in k[y]$ is non-zero and $Q(a,f(a)) = 0$.
Since $Q$ and $\partial_y Q$ are coprime, there is a $\emptyset$-definable proper Zariski closed set $Z_3\subset k^n$ such that for $a\notin Z_3$ we have that $Q(a, \cdot)\in k[y]$ and $\partial_y  Q(a,\cdot)\in k[y]$ remain coprime.

Take $a\notin Z_1\cup Z_2\cup Z_3$ and let $T \in \ell[x_1, \ldots, x_n, y]$ be such that $T$ vanishes on the graph of $f$ and $\partial_y T(a,f(a)) \neq 0$.
By polynomial division with respect to $\preceq$, there exist $R, G\in \ell[x_1, \ldots, x_n, y]$ with $T = QG + R$ and $R\prec Q$.
We now have for every $b \notin Z_2$ that $R(b,f(b)) = 0$. 
Now either $R(x,y)$ is identically zero, or $R(x,\cdot) \in k[y]$ is nonzero away from a set of dimension at most $n-1$.
But the latter case cannot occur, by minimality of $Q$.
It follows that $R = 0$, whence $Q$ divides $T$.
A straight-forward computation now shows that
\[
\frac{-(\partial_{x_j} T)(a,f(a))}{(\partial_y T)(a,f(a))} = \frac{-(\partial_{x_j} Q)(a,f(a))}{(\partial_y Q)(a,f(a))}. \qedhere
\]
\end{proof}

We will consider three types of residue fields $k$ when defining an intrinsic notion of $\RV$-Jacobian.
Firstly, assume that $k$ is real closed in a language such that the theory of $k$ in $\cL$ is o-minimal. 
Let $f\colon U\subset k^n\to k^n$ be a $\emptyset$-definable map on an open set $U\subset k^n$. 
Then there is an open subset $V\subset U$ for which $\dim(U\setminus V) < n$ and such that $f$ is $C^1$ on $V$. 
We then define $\Jac f\colon V\to k$ as usual.

Secondly, assume that $k$ is a henselian valued field of characteristic zero in a language for which it is $1$-h-minimal. 
If $f\colon U\subset k^n\to k^n$ is a $\emptyset$-definable map on an open set $U\subset k^n$, then by~\cite{CHRV} there exists an open subset $V\subset U$ for which $\dim (U\setminus V) < n$ and such that $f$ is $C^1$ on $V$. 
Then we define $\Jac f\colon V\to k$ in the usual way.

Thirdly, if $k$ is algebraically bounded over $\ell$ in some language $\cL$, and $f\colon k^n\to k^n$ is $\emptyset$-definable, then by Lemma~\ref{lem:alg.bdd.der} there exists some $U\subset k^n$ with $\dim(k^n\setminus U)<  n$ such that $f$ is differentiable on $U$.
We then define $\Jac f\colon U\to k$ using Definition~\ref{def:alg.bdd.der}.

To define the intrinsic $\RV$-Jacobian, we will locally translate $\emptyset$-definable functions $\RV^n \to \RV^n$ to maps $k^n \to k^n$.
In this step we temporarily add additional parameters from $\RV$, so we have to take care that $k$ retains the desired properties. 
A sufficient condition is given by stable embeddedness. Recall that $k$ is \emph{stably embedded} in $\RV$ if each $\RV$-definable subset in $k^n$ is already $k$-definable.
Say that $k$ is \emph{strongly stably embedded} in $\RV$ if for every $\xi \in \RV^m$ each $\xi$-definable subset $X \subset k^n$ is already $\dcl(\xi) \cap k$-definable.

\begin{lem}
	Let $K$ be a 1-h-minimal field of equicharacteristic zero in some language $\cL$. Consider the residue field $k$ with its induced structure and let $\xi \in \RV_K^n$.
	\begin{enumerate}
		\item Assume that $k$ is stably embedded in $\RV$ and that $k$ is either o-minimal or algebraically bounded. Then $k$ remains respectively o-minimal or algebraically bounded, for the induced structure from $\cL(\xi)$.
		\item The same holds if $k$ is 1-h-minimal and strongly stably embedded in $\RV$.
	\end{enumerate}
\end{lem}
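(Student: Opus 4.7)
\emph{Proof plan.} The strategy in both parts is to translate $\cL(\xi)$-definability of subsets of $k^n$ (or mixed-sort sets involving $\RV_k$) into $\cL$-definability using parameters drawn from $k$, and then invoke the corresponding preservation property of the original structure on $k$ under expansion by $k$-parameters.

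For part (1), the o-minimal case is immediate: any $\cL(\xi)$-definable subset $X \subset k$ (with any additional $k$-parameters absorbed) is, by stable embeddedness of $k$ in $\RV$, already $\cL$-definable using parameters from $k$ alone, and hence is a finite union of points and intervals by o-minimality of $k$ in $\cL$.

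For the algebraically bounded case, fix an $\cL(\xi)$-formula $\phi(x,y)$ with $x$ a tuple of variables and $y$ a single variable. The associated family $\{(a,y) \in k^{|x|+1} : \phi(a,y,\xi)\}$ is $\xi$-definable, so by stable embeddedness there exist an $\cL$-formula $\psi(x,y,z)$ and a fixed parameter tuple $c \in k^{|z|}$ with $\phi(k,k,\xi) = \psi(k,k,c)$. Applying algebraic boundedness of $k$ in $\cL$ over $\ell$ to $\psi$ yields finitely many polynomials $P_i \in \ell[x,z,y]$ controlling the finite fibres. Specializing $z = c$ gives polynomials in $\ell(c)[x,y]$ that witness algebraic boundedness of $k$ in $\cL(\xi)$ over the subfield generated by $\ell$ and $c$.

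For part (2), the core intermediate claim is that under strong stable embeddedness the structure induced on $k$ from $\cL(\xi)$ coincides with the $\cL$-structure augmented by parameters from $A := \dcl(\xi) \cap k$. This is direct for subsets of $k^n$ from the definition. To handle the mixed-sort families $W \subset k^n \times \RV_k^\ell$ that appear in the formulation of 1-h-minimality, we encode $\RV_k$-elements by their canonical representatives in $k$, using that $\RV_k$ is interpretable in $k$. This replaces $W$ by a $\xi$-definable subset of some $k^{n + n'}$ to which strong stable embeddedness directly applies, yielding an $\cL(A)$-definition that decodes back to an $\cL(A \cup \RV_k)$-definition of $W$. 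Given this reduction, 1-h-minimality of $k$ in $\cL(\xi)$ with an arbitrary parameter set $A' \subset k$ follows by applying 1-h-minimality of $k$ in $\cL$ with base $A \cup A'$: the resulting finite $\cL(A \cup A')$-definable set $C \subset k$ preparing each $\RV_k$-fibre is automatically $\cL(\xi)(A')$-definable.

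The main obstacle is making the encoding step in part (2) uniform, that is, extending strong stable embeddedness from subsets of pure Cartesian powers $k^n$ to mixed-sort families parametrized by $\RV_k$. This requires a careful use of the interpretability of $\RV_k$ in $k$ to reduce matters to the hypothesized form, and will likely rely on compactness or saturation arguments to ensure the parameter set $A \subset k$ can be chosen uniformly across all $\RV_k$-indexed slices.
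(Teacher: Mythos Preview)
Your proposal is correct and follows essentially the same route as the paper: stable embeddedness reduces $\cL(\xi)$-definable subsets of $k^n$ to $\cL(k)$-definable ones, after which the relevant tameness property (o-minimality, algebraic boundedness, 1-h-minimality) is inherited from the original structure on $k$. For part~(2) both you and the paper encode $\RV_k$-parameters by $k$-elements via interpretability of $\RV_k$ in $k$, then invoke strong stable embeddedness.

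The only difference is packaging: the paper works with a single set $X$ at a time and extracts a specific $\xi$-definable tuple $g(\xi) \in k^m$ replacing the role of $\xi$, whereas you phrase things via the global claim that the $\cL(\xi)$-induced structure on $k$ coincides with the $\cL(\dcl(\xi)\cap k)$-structure. Your ``main obstacle'' about uniformity across $\RV_k$-indexed slices is not a genuine difficulty: since $\RV_k$ (and each $\RV_{k,\lambda}$) is a $\emptyset$-definable quotient of $k$, a mixed-sort $\xi$-definable $W \subset k^n \times \RV_k^\ell$ pulls back along the quotient map to a $\xi$-definable subset of $k^{n+\ell}$, to which strong stable embeddedness applies directly, yielding a single parameter tuple in $\dcl(\xi)\cap k$ that works for all slices simultaneously. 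No compactness or saturation is needed at this step.
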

\begin{proof}
	Throughout this proof, `definable' and `$\emptyset$-definable' refer to definability with respect to $\cL$.
	
	Suppose first that $k$ is o-minimal. Since every $\xi$-definable subset of $k$ is already $k$-definable, it is still a finite union of points and intervals.
	
	Now assume that $k$ is algebraically bounded. Let $X \subset k^{n+1}$ be a $\xi$-definable subset. We need to show that there exist nonzero polynomials $P_1(x,y),\dots,P_n(x,y) \in k[x,y]$ such that whenever $X_{a} \subset k$ is finite for $a \in k^n$, there is some $i$ such that $P_i(a,y) \in k[y]$ is nonzero and vanishes on $X_a$.
	By stable embeddedness, there exists some $\emptyset$-definable $Y \subset k^{m+n+1}$ and $b \in k^m$ such that $X = Y_b$. By algebraic boundedness, there exist polynomials $Q_i(u,x,y) \in k[u,x,y]$ such that the $Q_i(b,x,y)$ are as desired.
	
	Finally, assume that $k$ is 1-h-minimal and strongly stably embedded in $\RV$. 
	We verify the definition of $1$-h-minimality in the structure induced by $\cL(\xi)$.
	Since $K$ is sufficiently saturated, so is $k$ and hence it suffices to verify the following:
	for parameters $\lambda \in \Gamma_{k}^\times$, $A \subset k$ and $\eta \in \RV_{k,\lambda}$, any $\{\xi\} \cup A \cup \RV_{k} \cup \{ \eta \}$-definable subset of $k$ can be $\lambda\abs{m}$-prepared by a finite $A\xi$-definable set, for some $m \in \NN$.
	Since $A$, $\RV_k$ and $\eta$ all live in (definable quotients of) $k$, strong stable embeddedness provides a $\xi$-definable $g(\xi) \in k^m$ such that $X$ is already $\{g(\xi)\} A \cup \RV_{k} \cup \{ \eta \}$-definable.
	By 1-h-minimality of $k$ in the original structure there is finite a $A \cup \{g(\xi)\}$-definable $C$ which $\lambda\abs{m}$-prepares $X$. 
	Then $C$ is clearly also $A\xi$-definable.
\end{proof}

\begin{defn}[Intrinsic $\RV$-Jacobians]
Let $\cT$ be a $1$-h-minimal theory of equicharacteristic zero. 
Assume that the residue field with its induced structure is o-minimal, algebraically bounded or 1-h-minimal and stably embedded in $\RV$.
If the residue field is 1-h-minimal assume additionally that it is strongly stably embedded.
Let $h\colon R\subset (\RV^\times)^n\to (\RV^\times)^n$ be $\emptyset$-definable and let $\xi\in R$. Consider the map
\[
f_\xi\colon \{\alpha\in (\RF^\times)^n\mid \alpha\xi\in R, |h(\alpha\xi)| = |h(\xi)|\}\to (\RF^\times)^n\colon \alpha\mapsto h(\alpha\xi) / h(\xi),
\]
and for $\eta\in R$ define 
\[
(\operatorname{IJac} h)(\eta) = h(\xi)\cdot (\Jac f_\xi)(\eta/\xi) / \xi \in \RV
\]
for any $\xi$ for which $\eta/\xi$ lies in the domain of $f_\xi$ and such that $f_\xi$ is $C^1$ at $\eta/\xi$. We call $\operatorname{IJac}$ the \emph{intrinsic $\RV$-Jacobian of $h$}.
\end{defn}

Note that in the above definition, the map $f_\xi$ might not be $C^1$ at $\eta/\xi$.
In that case the intrinsic $\RV$-Jacobian is not defined at $\eta$.
%The assumption that $\RF$ is stably embedded is required for the function $f_\xi$ to still be definable in the induced structure on $\RF$ itself, and hence for the notion of derivative to make sense for $f_\xi$.

\begin{lem}\label{lem:int.jac.well.defined}
In any of the above theories, the intrinsic $\RV$-Jacobian is well-defined away from a set of $\RV$-dimension strictly less than $n$.
\end{lem}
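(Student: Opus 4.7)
The plan is to verify two requirements for well-definedness: that the defining formula can be evaluated for generic $\eta \in R$ (existence of a valid reference point $\xi$), and that the value is independent of the choice of $\xi$. I may assume $\dim_{\RV} R = n$ throughout, as the statement is vacuous otherwise.

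For independence of the choice of $\xi$, I would argue directly via a chain-rule computation. Given two valid reference points $\xi_1, \xi_2$ for a fixed $\eta$, set $\gamma = \xi_1/\xi_2 \in (k^\times)^n$, $\beta = \eta/\xi_1$, and $c = h(\xi_1)/h(\xi_2) \in (k^\times)^n$ (well-defined since both validity conditions force $|h(\xi_1)_i| = |h(\xi_2)_i|$ for each $i$). Unwinding definitions gives the componentwise identity $f_{\xi_2}(\beta\gamma) = c \cdot f_{\xi_1}(\beta)$. Differentiating and taking determinants in the residue field via the chain rule then yields
\[
(\Jac f_{\xi_2})(\beta\gamma) \cdot \prod\nolimits_j \gamma_j = (\Jac f_{\xi_1})(\beta) \cdot \prod\nolimits_i c_i.
\]
Substituting into the defining formula $h(\xi)(\Jac f_\xi)(\eta/\xi)/\xi$, with $h(\xi)/\xi$ interpreted as the quotient of products of components, the two candidate values for $(\operatorname{IJac} h)(\eta)$ coincide.

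For existence of a valid $\xi$ for generic $\eta$, I reduce to the $\RV$-Jacobian $\Jac_\RV h$ from the previous section. Using Lemma~\ref{lem:lift.bijections.mu}, pick a $\emptyset$-definable lift $F\colon \LL R \to \LL S$ of $h$ such that for generic $\eta \in R$ and any $y$ with $\rv(y) = \eta$, $F$ is $C^1$ at $y$ with $\rv(\Jac F)$ constant on $\rv^{-1}(\eta)$ and $|\Jac F(y)| = |h(\eta)|/|\eta|$. Take $\xi = \eta$; the auxiliary valued-field map
\[
\psi(u) = F(u \cdot y)/F(y)
\]
is $C^1$ near $u = 1$, takes values in $(\cO_K^\times)^n$, and descends under $\res$ to $f_\eta$ near $1$. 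A chain-rule computation then yields $\Jac f_\eta(1) = \res(\Jac \psi(1))$, and unfolding gives $(\operatorname{IJac} h)(\eta) = \rv(\Jac F(y)) = \Jac_\RV h(\eta)$, which is well-defined outside a $\emptyset$-definable set of $\RV$-dimension $< n$ by Lemma~\ref{lem:RV.Jacobian.almost.everywhere}.

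The hard part will be the descent-of-derivatives step in the algebraically bounded residue field case, where differentiability is defined implicitly via vanishing polynomials (Definition~\ref{def:alg.bdd.der}) rather than topologically. I would handle this by applying algebraic boundedness to the graph of $\psi$, extracting a polynomial witness for $f_\eta = \res \psi$ at $1$, and verifying via Lemma~\ref{lem:alg.bdd.monic.poly} that the resulting implicit derivative agrees with $\res(\Jac \psi(1))$.
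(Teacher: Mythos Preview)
Your independence-of-$\xi$ chain-rule argument is correct and spells out what the paper compresses into ``picking a different $\xi$ leads to a rescaling which cancels out.''

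For existence, however, you take a substantial detour. The paper argues directly from the generic differentiability already established for each residue-field type: for each fixed $\xi$, the map $f_\xi$ is $C^1$ away from a $\xi$-definable subset of $k^n$ of dimension at most $n-1$ (by o-minimal or $1$-h-minimal cell decomposition, or by Lemma~\ref{lem:alg.bdd.der} in the algebraically bounded case). Translating by $\xi$ gives a bad set of $\RV$-dimension at most $n-1$ inside each coset $\xi(k^\times)^n$; compactness makes this uniform, and then local dimension (Theorem~\ref{thm:dim.theory.RV}(\ref{it:local.dimension})) bounds the total bad set. No lift to $K$ is needed.

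Your approach instead proves the stronger equality $\operatorname{IJac} h = \Jac_\RV h$ at generic points, which is exactly the content of Propositions~\ref{prop:ijac.alg.bdd}--\ref{prop:ijac.real.closed} later in the section. The descent-of-derivatives step you flag as hard in the algebraically bounded case is indeed the substantive work there, and the paper handles it via $(\gamma,\mu)$-weighted polynomials and Lemma~\ref{lem:alg.bdd.map.RV_gamma}, not by reducing the graph of a valued-field map $\psi$ (note that $\psi$ lives in $K$, while algebraic boundedness is a hypothesis on $k$, so your sketch there needs more care). A smaller point: invoking Lemma~\ref{lem:lift.bijections.mu} restricts you to bijections $h$, whereas the intrinsic $\RV$-Jacobian is defined for arbitrary $\emptyset$-definable $h$.
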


\begin{proof}
Let $h\colon R\subset (\RV^\times)^n\to (\RV^\times)^n$ and $f_\xi$ be as in the definition of the intrinsic $\RV$-Jacobian.
For $\eta\in R$ the definition of the intrinsic $\RV$-Jacobian of $h$ at $\eta$ depends on $\xi$. 
Picking a different $\xi$ leads to a rescaling which cancels out in the definition of the intrinsic $\RV$-Jacobian. 
For each $\xi \in (\RV^\times)^n$, we thus obtain a $\emptyset$-definable set $R_{\xi}  \subset \xi k^n$ of $\RV$-dimension at most $n-1$ away from which $\operatorname{IJac}(h)$ is well-defined.
By compactness we may take $R$ to be $\emptyset$-definable.
It remains to check that $\dim_{\RV} R \leq n-1$.
By Theorem~\ref{thm:dim.theory.RV}(\ref{it:local.dimension}), this follows mediately from the fact that each fibre $R \cap \xi k^n$ has $\RV$-dimension at most $n-1$. 
\end{proof}

We now show that in the relevant theories, the $\RV$-Jacobian and the intrinsic $\RV$-Jacobian agree. 
To prove this for algebraically bounded residue fields we explicitly construct good lifts of maps.
For this we need that effectivity is preserved when adding constants from $\Gamma$.

\begin{lem}\label{lem:eff.Gamma}
Let $\cT$ be an effective $1$-h-minimal theory in a language $\cL$ and let $K$ be a model of $\cT$. Then for any $ \Delta \subset \Gamma_K$ the theory of $K$ in $\cL(\Delta)$ is effective.
\end{lem}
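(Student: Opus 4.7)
The plan is to reduce, by induction on $|\Delta|$, to the case $\Delta = \{\delta\}$ consisting of a single $\delta \in \Gamma_K$. The language $\cL(\delta)$ is an $\RV$-expansion of $\cL$ (the constant $\delta$ corresponds to the unary predicate $\{\xi \in \RV : |\xi| = \delta\}$ on $\RV$), so $\Th_{\cL(\delta)}(K)$ remains $1$-h-minimal by~\cite[Thm.\,4.1.19]{CHR}, and $\acl$ restricted to $K$ is unchanged by~\cite[Cor.\,4.1.17]{CHR}. By Lemma~\ref{lem:lift.fin} together with the symmetric polynomial reduction in its proof, it suffices to lift a single $\cL(B\delta)$-definable element $\xi \in \RV$ to a $\cL(B\delta)$-definable $x \in K$, for each $B \subset K$.

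To produce such a lift, fix any $c \in K$ with $|c| = \delta$, which exists since $\Gamma_K^\times = |K^\times|$. Writing $\xi = \tilde f(\delta)$ for a $\cL(B)$-definable partial function $\tilde f\colon \Gamma \to \RV$, we apply Corollary~\ref{cor:lifting.maps} to the induced $\cL(B)$-definable map $\RV \to \RV,\ \zeta \mapsto \tilde f(|\zeta|)$, obtaining a $\cL(B)$-definable $F\colon K \to K$ such that $\rv(F(c')) = \xi$ whenever $|c'| = \delta$. The set $Y = \{F(c') : c' \in K,\ |c'| = \delta\}$ is then $\cL(B\delta)$-definable and contained in the open ball $\rv^{-1}(\xi)$.

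It remains to extract a $\cL(B\delta)$-definable point from $Y$. One prepares $Y$ by a finite $\cL(B\delta)$-definable set $C$ using $1$-h-minimality of $\cL(\delta)$. The main obstacle — and where the dimension theory of Section~\ref{sec:dim.RV} is essential — is to show that either $Y \cap C \neq \emptyset$, or $Y$ is itself finite, so that $\cL(B\delta)$-definable averaging in characteristic zero yields the desired lift inside $\rv^{-1}(\xi)$. The key input is Theorem~\ref{thm:dim.theory.RV}: applying the local dimension statement (Theorem~\ref{thm:dim.theory.RV}(\ref{it:local.dimension})) fibrewise over $\Gamma$, together with the exchange property on $\RV$ (Lemma~\ref{lem:RV.acl.exchange}) relating $\xi$ and $\delta$ in the $\acl$-pregeometry, rules out the pathological configuration where $Y$ fills a generic sub-ball of $\rv^{-1}(\xi)$ disjoint from its preparation centres. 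Once this is established, finite definable choice (via averaging in characteristic zero) produces the required $\cL(B\delta)$-definable lift of $\xi$.
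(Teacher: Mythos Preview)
Your argument has a genuine gap at the decisive step. You correctly identify the obstacle---showing that either $Y$ is finite or $Y \cap C \neq \emptyset$---but you do not prove it; you only gesture at ``local dimension'' and ``exchange'' without explaining how they apply. In fact the tools you name are not the right ones: exchange on $\RV$ relates elements of $\RV$, whereas $\delta$ lives in $\Gamma$ with no canonical embedding into $\RV$. Worse, nothing in your set-up rules out the case $C \cap \rv^{-1}(\xi) = \emptyset$. In that case $\rv^{-1}(\xi)$ is contained in a single ball $1$-next to $C$, and since $Y$ is nonempty and prepared by $C$ this forces $Y = \rv^{-1}(\xi)$; then $Y$ is infinite and $Y \cap C = \emptyset$, so your dichotomy fails and no $\cL(B\delta)$-definable point is produced.

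The paper's argument is shorter and avoids this detour entirely. Saying that $\xi$ is $\cL(A\gamma)$-definable means there is an $\cL(A)$-definable map $g\colon \RV^n \to \RV$ constant equal to $\xi$ on the coset $\RV^n_{=\gamma}$, which has $\RV$-dimension $n$. Hence $\xi$ lies in the set $R = \{\zeta : \dim_\RV g^{-1}(\zeta) = n\}$, which is $\cL(A)$-definable by definability of $\RV$-dimension (Theorem~\ref{thm:dim.theory.RV}(\ref{it:families})). By the same item, the total space $g^{-1}(R) \subset \RV^n$ has dimension $n + \dim_\RV R$, forcing $\dim_\RV R = 0$. So $R$ is finite, $\xi \in \acl_\cL(A) \cap \RV$, and effectivity of the original theory over $A$ lifts $\xi$ directly. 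The construction of $Y$, the lift $F$, and the preparation $C$ are all unnecessary: the whole content is that $\xi$ is already $\cL(A)$-algebraic in $\RV$, and your approach neither proves this nor bypasses it.
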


Note that $\Th_{\cL(\Delta)}(K)$ is also $1$-h-minimal, since h-minimality is preserved under adding constants from $\RV^\text{eq}$.

\begin{proof}
Let $L$ be a model of $\Th_{\cL(\Delta)}(K)$, $A\subset L$ and $\xi\in \RV_L$ an $A\gamma$-definable element for some finite tuple $\gamma \in \Delta^n$.
In other words, there exists an $A$-definable map $g \colon \RV^n \to \RV$ such that $g(\RV^n_{=\gamma}) = \{\xi\}$.
In particular, $\xi$ belongs to the $A$-definable set $R$ of all $\zeta \in \RV$ for which $g^{-1}(\zeta)$ has $\RV$-dimension $n$.
We necessarily have $\dim_{\RV}R = 0$, by dimension theory. Hence, $R$ is finite.
Effectivity of $\Th_{\cL(A)}(L)$ now yields a finite $A$-definable lift $C$ of $R$.
The average of $\rv^{-1}(\xi) \cap C$ now lifts $\xi$, showing effectivity.
\end{proof}

We also need the notion of $(\gamma, \mu)$-weighted polynomials. 
This builds upon~\cite[Sec.\,5.5]{HK} but we need a slightly more general notion. 
Let $\gamma = (\gamma_1, \ldots, \gamma_n)\in \Gamma_K^n$ and $\mu\in \Gamma_K^n$ be $\emptyset$-definable.
Then a \emph{$(\gamma, \mu)$-weighted monomial} is an expression of the form $\alpha_iX^i$ where $i = (i_1, \ldots, i_n)\in \NN^n$ and $\alpha_i\in \RV^n$ is $\emptyset$-definable satisfying
\[
|\alpha_i|\prod_{j=1}^n\gamma_j^{i_j} = \mu.
\]
A \emph{$(\gamma, \mu)$-weighted polynomial} is simply a finite sum of $(\gamma, \mu)$-weighted monomials.
Let $\RV[X; \gamma, \mu]$ be the $k$-module of $(\gamma, \mu)$-weighted polynomials.
Note that this is not a ring, since the product of two $(\gamma, \mu)$-weighted polynomials is in general not a $(\gamma, \mu)$-weighted polynomial.
However, there is a multiplication
\[
\RV[X; \gamma, \mu]\times \RV[X; \gamma, \mu']\to \RV[X; \gamma, \mu\mu'],
\]
which shows that the direct sum $\bigoplus_\mu \RV[X; \gamma, \mu]$ is a $k$-algebra, where $\mu$ runs over all $\emptyset$-definable elements of $\Gamma_K$.
In particular, this also shows that $\RV[X; \gamma, 1]$ is a $k$-algebra.
However, to ensure definability of the coefficients in Lemma~\ref{lem:alg.bdd.map.RV_gamma} below, we really need to keep track of $\mu$ as well.

The reason for introducing $(\gamma, \mu)$-weighted polynomials is to transport algebraic geometry from $k^n$ to cosets of $(k^\times)^n$ in $(\RV^\times)^n$.
In more detail, recall that $\RV_{=\gamma}^\times\subset \RV^n$ consists of all $(\xi_1, \ldots, \xi_n)\in \RV^n$ for which $|\xi_i| = \gamma_i$.
We put $\RV_{=\gamma} = \RV_{=\gamma}^\times \cup\{0\}$.
This set is isomorphic to $(k^\times)^n$ by translating with any element $\xi\in \RV_{=\gamma}$, and the resulting Zariski topology on $\RV_{=\gamma}$ is independent of the chosen $\xi$.
Note however that such a translation need not be $\emptyset$-definable.

If $f\in \RV[X; \gamma, \mu]$ is a $(\gamma, \mu)$-weighted polynomial, then we can evaluate $f$ at every element $\xi\in \RV_{=\gamma}^\times$ to obtain an element of $\RV_{=\mu}$. 
In more detail, say that $f$ is of the form $\sum_i \alpha_i X^i$ with $\alpha_i\in \RV$ $\emptyset$-definable. Take $a_i\in K$ such that $\rv(a_i) = \alpha_i$ and consider the polynomial $F = \sum_i a_i X^i$ over $K$.
If $\xi\in \RV_{=\gamma}$ let $x\in K^n$ be such that $\rv(x) = \xi$.
Then we define
\[
f(\xi) = \begin{cases} \rv(F(x)) & \text{ if } |F(x)| = \mu, \\
	0 & \text{ if } |F(x)| < \mu. \end{cases}
\]
This is well-defined and shows that $f$ may be interpreted as a $\emptyset$-definable map $\RV_{=\gamma}^\times\to \RV_{=\mu}$.
Equivalently, this amounts to replacing each occurrence of $+$ in $f$ by the modified partial addition $\oplus'$, as in the proof of Lemma~\ref{lem:lift.fin}.
Now, the vanishing locus of $f$ defines a Zariski closed subset of $\RV_{=\gamma}^\times$.
If $k$ is algebraically bounded, then definable subsets of $\RV_{\gamma}$ are contained in the vanishing loci of $(\gamma, \mu)$-weighted polynomials.
%If $k$ is algebraically closed then the converse holds as well.

\begin{lem}\label{lem:alg.bdd.map.RV_gamma}
Assume that $k$ is algebraically bounded.
Let $\gamma = (\gamma_1, \ldots, \gamma_n)\in \Gamma_K^n, \lambda\in \Gamma_K$ and $f\colon \RV_{=\gamma}^\times\to \RV_{=\lambda}^\times$ be $\emptyset$-definable.
Then there exists a $\emptyset$-definable $\mu\in \Gamma_K$, a $((\gamma, \lambda), \mu)$-weighted polynomial $P$ in the variables $(x_1, \ldots, x_n,y)$, and a $\emptyset$-definable subset $Z\subset \RV_{=\gamma}^\times$ of dimension at most $n-1$ such that for every $x\in \RV_{=\gamma}^\times\setminus Z$ we have that $P(x,\cdot)$ is non-zero in $y$ and $P(x,f(x)) = 0$.
\end{lem}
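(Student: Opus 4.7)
The plan is to reduce the problem to the residue field by translation, apply algebraic boundedness of $k$ to obtain polynomial witnesses, and then isolate a $\emptyset$-definable representative via a minimality argument in the spirit of Lemma~\ref{lem:alg.bdd.monic.poly}.

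First I would fix auxiliary parameters $\xi \in \RV_{=\gamma}^\times$ and $\eta \in \RV_{=\lambda}^\times$ and consider the $\xi\eta$-definable map
\[
\tilde{f}_{\xi,\eta} \colon (k^\times)^n \to k^\times, \quad \alpha \mapsto f(\alpha\xi)/\eta,
\]
whose graph has singleton fibres over the first $n$ coordinates. Invoking algebraic boundedness of $k$ in the induced structure expanded by the $\RV$-parameters $\xi,\eta$ (which remains algebraically bounded by the stable embeddedness lemma earlier in this subsection), I would obtain finitely many polynomials $Q_1, \dots, Q_m \in k[u_1, \dots, u_n, v]$ with $\xi\eta$-definable coefficients such that, outside a set of $k$-dimension strictly less than $n$, some $Q_s(\alpha, \cdot)$ is nonzero in $v$ and vanishes at $\tilde{f}_{\xi,\eta}(\alpha)$. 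Under the substitution $u = x/\xi$, $v = y/\eta$ each monomial $c\,u^j v^k$ of $Q_s$ becomes $c\xi^{-j}\eta^{-k} x^j y^k$, and the identity $|c\xi^{-j}\eta^{-k}|\gamma^j\lambda^k = 1$ shows that $P_s(x,y) := Q_s(x/\xi, y/\eta)$ is a $((\gamma,\lambda),1)$-weighted polynomial, albeit a priori only with $\xi\eta$-definable coefficients. Their product is a single such weighted polynomial satisfying the desired vanishing property.

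The main obstacle is upgrading this to a \emph{genuinely} $\emptyset$-definable weighted polynomial, as the paper's definition requires all coefficients to be $\emptyset$-definable in $\RV$. For this I would first extract a uniform degree bound $N$ on the $Q_s$ by compactness in the parameter $(\xi,\eta)$, so that the $\emptyset$-definable set $\cW_N$ of $((\gamma,\lambda),1)$-weighted polynomials of degree at most $N$ satisfying the vanishing property is nonempty. Then, equipping the set of weighted monomials with the graded-lexicographic order $\preceq$ from the paragraph preceding Lemma~\ref{lem:alg.bdd.monic.poly}, I would consider the $\preceq$-minimal monic element $P$ of $\cW_N$. Adapting the uniqueness argument in Lemma~\ref{lem:alg.bdd.monic.poly}, any two monic $\preceq$-minimal elements $P_1, P_2$ must share the same leading monomial, so $P_1 - P_2$ would, after rescaling, lie in $\cW_N$ with strictly smaller leading monomial, forcing $P_1 = P_2$. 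Thus $P$ is the unique solution of a $\emptyset$-definable condition, so $P \in \dcl(\emptyset)$ and each of its coefficients is $\emptyset$-definable in $\RV$. Taking $\mu = 1$ and $Z$ to be the $\emptyset$-definable union of the bad loci arising from this construction then completes the proof.
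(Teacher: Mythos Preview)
Your translation-to-$k$ step and the idea of using a minimality argument are both on the right track, and indeed mirror the paper's proof. However, there is a genuine gap: you fix $\mu = 1$ throughout, and this is precisely the case the paper warns cannot work. Right after the lemma statement the authors give the counterexample of the identity $f\colon \RV_{=\gamma}^\times \to \RV_{=\gamma}^\times$ when $\RV_{=\gamma}^\times$ contains no $\emptyset$-definable point: here the only candidate weighted polynomial is (a scalar multiple of) $y-x$, which is $((\gamma,\gamma),\gamma)$-weighted, not $((\gamma,\gamma),1)$-weighted. In your construction, your set $\cW_N$ of $((\gamma,\lambda),1)$-weighted polynomials is nonempty as a \emph{definable set}, but its elements look like $(y-x)/\eta$ for $\eta \in \RV_{=\gamma}^\times$, and none of these has $\emptyset$-definable coefficients. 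Your normalisation ``monic'' is also ill-posed here: the leading coefficient of a $((\gamma,\lambda),1)$-weighted polynomial with leading monomial $x^{j_0}y^{k_0}$ lives in $\RV_{=\gamma^{-j_0}\lambda^{-k_0}}$ and cannot equal $1\in\RV$ unless $\gamma^{j_0}\lambda^{k_0}=1$. Consequently the uniqueness argument breaks down: the $\preceq$-minimal elements of $\cW_N$ form a $k^\times$-torsor with no canonical $\emptyset$-definable representative.

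The paper's fix is to allow $\mu$ to be determined by the leading monomial. They apply Lemma~\ref{lem:alg.bdd.monic.poly} to the translated map $h_{\xi,\eta}$ to get a \emph{unique} minimal monic $Q_{\xi,\eta}\in k[x,y]$, set $P_{\xi,\eta}(x,y)=Q_{\xi,\eta}(x/\xi,y/\eta)$, and then observe directly that a change of translation $(\xi,\eta)\to(\xi',\eta')$ transforms $Q$ by a rescaling that exactly cancels when passing to $P$ (after the monic normalisation). Thus $P$ is literally independent of $(\xi,\eta)$, hence has $\emptyset$-definable coefficients; the resulting $\mu$ is $\gamma^{j_0}\lambda^{k_0}$ for the leading exponent $(j_0,k_0)$. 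The key difference from your approach is that uniqueness is established at the level of $Q_{\xi,\eta}$ in $k[x,y]$ (where ``monic'' is unambiguous), and $\emptyset$-definability then comes from invariance under change of $(\xi,\eta)$, not from a minimality argument inside a fixed weighted polynomial space.
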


The crucial point of this lemma is that $P$ has $\emptyset$-definable coefficients. 
This will be needed to lift them to the valued field.
Note that this lemma would be false working only with $\mu=1$.
Indeed, assume that $\gamma\in \Gamma_K$ is $\emptyset$-definable, but that $\RV_{=\gamma}^\times$ contains no $\emptyset$-definable elements.
Then the identity function $f\colon \RV_{=\gamma}^\times\to \RV_{=\gamma}^\times$ is certainly $\emptyset$-definable, but there is no $((\gamma, \gamma), 1)$-weighted polynomial as in the lemma.
On the other hand, we can clearly take $P = x-y$ which is a $((\gamma, \gamma), \gamma)$-weighted polynomial.

\begin{proof}
Take any $\xi\in \RV_{=\gamma}^\times$ and $\eta\in \RV_{=\lambda}^\times$ and consider the function 
\[
h_{\xi, \eta}\colon k^n\to k\colon x\mapsto f(\xi x)/\eta.
\]
This function is definable (with parameters) and so by Lemma~\ref{lem:alg.bdd.monic.poly} there exists a unique monic polynomial $Q_{\xi,\eta}\in k[x_1, \ldots, x_n, y]$ minimal with respect to $\preceq$ such that for $x\in k^n$ away from a lower-dimensional subset we have that $Q_{\xi, \eta}(x,\cdot)\in k[y]$ is non-zero and $Q_{\xi, \eta}(x,h(x)) = 0$.
Define $P_{\xi, \eta}(x,y) = Q_{\xi, \eta}(x/\xi, y/\eta)$.
After multiplying by an element from $k^\times$, we may assume that $P_{\xi, \eta}$ is monic.

We claim that $P = P_{\xi, \eta}$ is independent of the choice $\xi$ and $\eta$.
In particular this shows that the coefficients of $P$ are $\emptyset$-definable, and so $P$ is as desired.
Take $\xi'\in \RV_{=\gamma}^\times$ and $\eta'\in \RV_{=\lambda}^\times$ and define $\alpha = \xi'/\xi\in (k^\times)^n, \beta = \eta'/\eta\in k^\times$.
For every $x\in k^n$ we have that $h_{\xi', \eta'}(x) = h_{\xi, \eta}(\alpha x) / \beta$.
By the unicity in Lemma~\ref{lem:alg.bdd.monic.poly} we therefore obtain that $Q_{\xi', \eta'}(x,y) = Q_{\xi, \eta}(\alpha x, \beta y)$. 
Hence $P_{\xi, \eta}$ is independent of $\xi$ and $\eta$, as desired.
\end{proof}

\begin{prop}\label{prop:ijac.alg.bdd}
Let $\cT$ be an effective $1$-h-minimal theory of equicharacteristic zero valued fields whose residue field is stably embedded and algebraically bounded.
Let $h\colon R\subset (\RV^\times)^n\to (\RV^\times)^n$ be $\emptyset$-definable. 
Then for any $\RV$-Jacobian $\Jac_\RV h$ of $h$, there exists a $\emptyset$-definable set $R'\subset R$ such that $\dim_\RV(R\setminus R') < n$ and such that $\Jac_\RV h = \operatorname{IJac} h$ on $R'$.
\end{prop}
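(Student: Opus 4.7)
The plan is to construct a particular lift $f$ of $h$ by using the weighted-polynomial description of $h$ on level sets of $\abs{\cdot}$, and then compute its Jacobian explicitly via the implicit function theorem so that its image under $\rv$ matches the intrinsic Jacobian. By Lemma~\ref{lem:RV.Jacobian.almost.everywhere} it suffices to exhibit one $\RV$-Jacobian that agrees with $\operatorname{IJac}h$ on a set of full $\RV$-dimension, and by Lemma~\ref{lem:int.jac.well.defined} we may work away from a set of dimension $< n$.

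First, I would reduce to the situation where $\abs{\xi}=\gamma$ and $\abs{h(\xi)}=\lambda$ are constant on $R$, with $\gamma \in (\Gamma_K^\times)^n$ and $\lambda \in (\Gamma_K^\times)^n$, and where $\dim_{\RV} R = n$. This is possible after a $\emptyset$-definable partition of $R$. Writing $h = (h_1,\dots,h_n)$ with $h_j \colon R \to \RV_{=\lambda_j}$, apply Lemma~\ref{lem:alg.bdd.map.RV_gamma} to each coordinate (using stable embeddedness of $k$ so that the residue field retains its properties after adding parameters from $\RV$) to obtain, for each $j$, a $\emptyset$-definable $\mu_j \in \Gamma_K$ and a $((\gamma,\lambda_j),\mu_j)$-weighted polynomial $P_j(x,y) \in \RV[x_1,\dots,x_n,y;(\gamma,\lambda_j),\mu_j]$ with $\emptyset$-definable coefficients such that $P_j(\xi,h_j(\xi))=0$ and $P_j(\xi,\cdot) \neq 0$ for all $\xi$ away from a set of $\RV$-dimension $<n$. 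By the construction in Lemma~\ref{lem:alg.bdd.monic.poly} we may further ensure that $P_j$ and $\partial_y P_j$ are coprime.

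Second, I would lift $P_j$ to a polynomial $\tilde P_j \in K[x_1,\dots,x_n,y]$: since the coefficients of $P_j$ are $\emptyset$-definable elements of $\RV$ and $\cT$ is effective, Lemma~\ref{lem:eff.Gamma} allows us to pick $\emptyset(\Gamma)$-definable lifts to $K$, where the $\Gamma$-parameters are fixed throughout. Using the Hensel property on each $\rv$-fibre $\rv^{-1}(\xi) \subset K^n$, solve $\tilde P_j(x,y) = 0$ for $y$ near a lift of $h_j(\xi)$; the non-vanishing of $\partial_y \tilde P_j$ at the approximate solution follows because $\partial_y P_j(\xi,h_j(\xi)) \neq 0$ generically, which in turn comes from the coprimality of $P_j$ and $\partial_y P_j$. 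This yields $\emptyset$-definable functions $f_j\colon \LL R_0 \to K$ where $R_0 \subset R$ is cofinite in $\RV$-dimension, and $f = (f_1,\dots,f_n)$ is a $\emptyset$-definable lift of $h$ in the sense that $\rv \circ f = h \circ \rv$. Shrinking $R_0$ further, one verifies that $f$ satisfies the differentiability conclusions of Lemma~\ref{lem:lift.bijections.mu}, so $f$ defines an $\RV$-Jacobian $\Jac_{\RV}h$ of $h$ via $\rv(\Jac f)$.

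Third, compute both sides. On the valued field, differentiating $\tilde P_j(x,f_j(x)) = 0$ gives
\[
\partial_i f_j(x) \;=\; -\frac{(\partial_{x_i}\tilde P_j)(x,f_j(x))}{(\partial_y\tilde P_j)(x,f_j(x))},
\]
and applying $\rv$ using the weighted structure of $P_j$ (the key point being that the weight of each monomial of $P_j$ on the level sets is exactly $\mu_j$, so the weighted sum is transported faithfully by $\rv$) yields
\[
\rv(\partial_i f_j(x)) \;=\; -\frac{(\partial_{x_i}P_j)(\rv(x),h_j(\rv(x)))}{(\partial_y P_j)(\rv(x),h_j(\rv(x)))}.
\]
A parallel computation on the residue field: for the intrinsic Jacobian one considers $f_\xi(\alpha) = h(\alpha\xi)/h(\xi)$ whose graph is cut out by the polynomial $Q_{\xi,h(\xi),j}(a,b) = P_j(\xi a, h_j(\xi) b)/\mu_j$ (after normalising), and the same implicit differentiation gives the coordinate partial derivatives of $f_\xi$. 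Rescaling by the factor $h(\xi)/\xi$ that appears in the definition of $\operatorname{IJac}$ matches the rescaling coming from evaluating at $x\in \rv^{-1}(\xi)$.

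The main obstacle is the bookkeeping in the third step: ensuring that $\rv$ commutes with the formal derivative of the weighted polynomial $P_j$ and that the scaling factors in the definition of $\operatorname{IJac}h$ align with those implicit in $\rv(\Jac f)$. This is essentially a careful computation with $(\gamma,\mu)$-weighted polynomials, analogous to the tracking done in \cite[Sec.\,5.5]{HK} for V-minimal theories, but one must verify throughout that the terms of lower weight that might appear in $\partial_{x_i}\tilde P_j$ and $\partial_y\tilde P_j$ either cancel or vanish under $\rv$.
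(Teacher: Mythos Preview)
Your proposal follows essentially the same strategy as the paper: reduce to level sets $\RV_{=\gamma}^\times\to\RV_{=\lambda}^\times$ using Lemma~\ref{lem:eff.Gamma} and compactness, obtain $\emptyset$-definable weighted polynomials $P_j$ via Lemma~\ref{lem:alg.bdd.map.RV_gamma}, lift their coefficients to $K$ by effectivity, solve by Hensel to get a lift $f$ of $h$, and observe that $\rv(\Jac f)=\operatorname{IJac} h$ by construction. Your explicit implicit-differentiation computation is more detailed than the paper's terse ``by construction'', but the content is the same; your residual worry about ``terms of lower weight'' is unfounded, since every monomial in a $((\gamma,\lambda_j),\mu_j)$-weighted polynomial has weight exactly $\mu_j$, so the derivatives $\partial_{x_i}P_j$ and $\partial_y P_j$ are themselves weighted polynomials (of weights $\mu_j/\gamma_i$ and $\mu_j/\lambda_j$) and $\rv$ transports the computation cleanly.
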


\begin{proof}
Let $G\subset (\RV^\times)^n\times (\RV^\times)^n$ denote the graph of $h$.
By Lemma~\ref{lem:eff.Gamma} and compactness we may assume that $G$ is a subset of $\RV_{=\gamma}^\times\times \RV_{=\lambda}^\times$ for some $\emptyset$-definable elements $\gamma, \lambda\in \Gamma^n_K$.
Then Lemma~\ref{lem:alg.bdd.map.RV_gamma} implies that there exist $\emptyset$-definable $\mu_1, \ldots, \mu_n$, weighted $((\gamma, \lambda), \mu_i)$-polynomials $\overline{P}_1, \ldots, \overline{P}_n$ with $P_i$ in the variables $x_1, \ldots, x_n, y_i$, and a $\emptyset$-definable $R'\subset R$ of dimension at most $n-1$ such that for $x\in R\setminus R'$, we have that $\overline{P}_i(x,\cdot)$ is non-zero and $\overline{P}_i(x,h_i(x)) = 0$.
By effectivity, there exist for each $i$ a polynomial $P_i\in K[x_1, \ldots, x_n, y_i]$ with $\emptyset$-definable coefficients lifting $\overline{P}_i$.

By enlarging $R'$ we may assume by Lemma~\ref{lem:alg.bdd.der} that $h$ is $C^1$ on $R\setminus R'$ with non-vanishing Jacobian.
Let $F\subset \LL (R\setminus R')\times \LL h(R)$ be the $\emptyset$-definable set given by the common vanishing locus of all the $P_i$.
Since $h$ is $C^1$ with non-vanishing Jacobian on $R\setminus R'$, henselianity shows that $F$ defines the graph of a $\emptyset$-definable function $f\colon\LL(R\setminus R')\to K^n$ which lifts $h$.
Moreover, over this set we have by construction $\rv \Jac f = \operatorname{IJac} h$.
\end{proof}

\begin{remark}\label{rem:0.eff.implies.eff}
The proof of Proposition~\ref{prop:ijac.alg.bdd} works by constructing an explicit lift of $h$ over a $R\setminus R'$, for some $R'$ of dimension at most $n-1$. 
In particular, combining this argument with an induction on $\RV$-dimension reproves Lemma~\ref{lem:lift.bijections.mu} for these theories.
%Even more, this shows that if the residue field is stably embedded and algebraically bounded over , then $\Gamma$-effectivity implies effectivity. \Floris{Not entirely correct still..}
\end{remark}

For coarsenings we have the following.
We follow the notation and set-up from Section~\ref{sec:coarsenings}.

\begin{prop}\label{prop:ijac.coarsening}
Let $\cT'$ be a 1-h-minimal theory in a language $\cL$, possibly of mixed characteristic, and let $\cT$ be the theory of a proper nontrivial equicharacteristic zero coarsening of a model of $\cT'$ in the language $\cL_c$.
Let $R,S \subset (\RV_c^{\times})^n \times \RV_c^m$ and $h \colon R \to S$ be $\cL_c$-definable.
Given any $\RV$-Jacobian $\Jac_\RV h\colon R\to \RV_c^\times$ of $h$, there exists a $\cL_c$-definable $R' \subset R$ such that $\dim_{\RV_c}(R') < n$ and  $\Jac_\RV h = \operatorname{IJac} h$ on $R \setminus R'$.
\end{prop}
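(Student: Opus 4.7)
The theory $\cT$ is effectively 1-h-minimal by Proposition~\ref{prop:coarsening.eff}, and by Proposition~\ref{prop:coarsening.res.1.h.min} the residue field $k_c$ for the coarsened valuation, equipped with its induced structure and the valuation inherited from $\abs{\cdot}$, is 1-h-minimal. Thus the intrinsic $\RV$-Jacobian $\operatorname{IJac} h$ is well-defined by Lemma~\ref{lem:int.jac.well.defined}, provided $k_c$ is strongly stably embedded in $\RV_c$; this embeddedness can be extracted from a relative quantifier elimination argument in the style of Proposition~\ref{prop:RVQE} applied to the short exact sequence $1\to k_c^\times \to \RV_c^\times \to \Gamma_c^\times \to 1$.

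The strategy mirrors Proposition~\ref{prop:ijac.alg.bdd}: we produce an explicit lift of $h$ whose Jacobian we can compute, and compare it with $\operatorname{IJac} h$. Choose a lift $f\colon \LL R \to \LL S$ of $h$ via Lemma~\ref{lem:lift.bijections.mu}, so that $\Jac_\RV h(\xi) = \rv_c(\Jac f(x))$ for $x \in \rv_c^{-1}(\xi)$ on a full-dimensional subset of $R$. Using the effectivity of $\cT$, lift a generic $\xi_0 \in R$ to an $\cL_c$-definable $\tilde\xi_0 \in K^n$ and $h(\xi_0)$ to $\tilde h \in K^n$, and set
\[
F_{\xi_0}(u) \;=\; f(\tilde\xi_0 \cdot u)/\tilde h \in K^n,
\]
with componentwise products. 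The chain rule gives
\[
\rv_c\bigl(\Jac F_{\xi_0}(u)\bigr) \;=\; \Jac_\RV h(\xi_0 \cdot \res_c u)\cdot \xi_0/h(\xi_0),
\]
while the norm identity of Lemma~\ref{lem:lift.bijections.mu} yields $\abs{\Jac F_{\xi_0}(u)} = 1$ whenever $\res_c u$ lies in the domain of $f_{\xi_0}$, so $\res_c(\Jac F_{\xi_0}(u)) \in k_c^\times$. By construction, $F_{\xi_0}$ descends via $\res_c$ precisely to the map $f_{\xi_0}\colon (k_c^\times)^n \to k_c^n$ appearing in the definition of $\operatorname{IJac}$.

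The crux is then the identity
\[
\res_c\bigl(\Jac F_{\xi_0}(u)\bigr) \;=\; \Jac f_{\xi_0}(\res_c u)
\]
for generic $u$. Granting this, substituting into the displays above yields $\Jac_\RV h(\xi_0 \cdot \res_c u) = \operatorname{IJac} h(\xi_0 \cdot \res_c u)$ on a full-dimensional subset of $R$: as $\xi_0$ ranges over generic points and $\res_c u$ over generic points of $(k_c^\times)^n$, Theorem~\ref{thm:dim.theory.RV}(\ref{it:local.dimension}) guarantees that this covers all of $R$ away from a set of $\RV_c$-dimension at most $n-1$.

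The main obstacle is this remaining identity: for an $\cL_c$-definable $F\colon (\cO_c^\times)^n \to \cO_c^n$ descending via $\res_c$ to a map $\bar F\colon (k_c^\times)^n \to k_c^n$, one must show $\res_c(\Jac F) = \Jac \bar F$ generically. This is proved by combining the Jacobian property of $F$ with respect to the original valuation $\abs{\cdot}$ on $K$, applied on sufficiently fine balls inside $(\cO_c^\times)^n$, with the Jacobian property of $\bar F$ in the 1-h-minimal valued field $k_c$: on such balls, $F(u) - F(v) = DF(v)(u-v)\cdot(1 + \epsilon_K)$ with $\abs{\epsilon_K} < 1$ in $K$, and similarly $\bar F(\alpha) - \bar F(\beta) = D\bar F(\beta)(\alpha-\beta)(1 + \epsilon_{k_c})$ in $k_c$; reducing the first equation via $\res_c$ and invoking uniqueness of the leading linear term forces $\res_c DF(v) = D\bar F(\res_c v)$ on a full-dimensional set, and taking determinants gives the desired identity.
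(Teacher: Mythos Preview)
Your overall strategy—lift $h$ via Lemma~\ref{lem:lift.bijections.mu}, rescale to $F_{\xi_0}$ on $(\cO_c^\times)^n$ descending to $f_{\xi_0}$ on $k_c^n$, and compare Jacobians—is close in spirit to the paper's. The genuine gap is the last step. From the ordinary Jacobian property of $F$ for the fine valuation you get $F(u)-F(v) = DF(v)(u-v)(1+\epsilon_K)$ with $|\epsilon_K|<1$; but since $\cM_c \subsetneq \cM_K$, reducing only gives $\res_c\epsilon_K \in \cM_{k_c}$, so in $k_c$ you obtain $\bar F(\alpha)-\bar F(\beta) = \res_c(DF(v))(\alpha-\beta)(1+\bar\epsilon)$ with $|\bar\epsilon|_{k_c}<1$. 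Comparing this with the $k_c$-Jacobian property yields only $\rv_{k_c}\bigl(\res_c DF(v)\bigr) = \rv_{k_c}\bigl(D\bar F(\res_c v)\bigr)$, not equality in $k_c$: there is no ``uniqueness of the leading linear term'' at this precision, since any $L$ with the same $\rv_{k_c}$-class satisfies the identical relation. As $\operatorname{IJac} h$ records the actual element $\Jac f_{\xi_0}\in k_c\subset \RV_c$ (not just its $\rv_{k_c}$-class), this is insufficient for the desired equality in $\RV_c$.

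The paper closes this gap by using the fine valuation more aggressively: it takes a cell decomposition of $K^n$ for $|\cdot|$ on whose $|m|\lambda$-twisted boxes each coordinate function of $f$ satisfies the $\lambda$-Jacobian property \emph{and} $\rv_\lambda(\partial_i f)$ is constant, for \emph{every} $\lambda\leq 1$ in $\Gamma_K^\times$. Letting $\lambda$ range over the convex subgroup $\Delta$ defining the coarsening then gives control of the form $|\epsilon_K|<\lambda$ for each such $\lambda$, which upon reduction does force the needed equality in $k_c$. (A secondary issue: your appeal to Proposition~\ref{prop:RVQE} for strong stable embeddedness of $k_c$ in $\RV_c$ is not justified either, since that result treats only the abelian-group structure of the short exact sequence, not the full induced $\cL_c$-structure on $k_c$.)
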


\begin{proof}
%By dimension theory on $\RV_c$, it suffices to show that both notions agree at a point $(\xi,\zeta) \in R$  where $\xi \in (\RV_c)^{\times}$ is full-dimensional.
%By rescaling, we may assume that the coordinates of $\xi$ lie in $k_c \subset \RV_c$. %Rescale e.g. by some $\eta$ with $\abs{\eta}_c = \abs{\xi}_c$ and for which $\xi$ is still generic over $\eta$
%\Mathias{TODO: finish adapting this proof}
%Let $f \colon \LL R \to \LL S$ be a lift of $h$ as in Lemma~\ref{lem:lift.bijections.mu}.
Let $\Delta\leq \Gamma_K^\times$ be the nontrivial proper subgroup by which we coarsen to obtain $|\cdot|_c$.
By Proposition~\ref{prop:coarsening.eff} the theory $\cT$ is effective.
Hence there exists an $\cL_c$-definable lift $f\colon \LL R\to K^n$ of $h$ as in Lemma~\ref{lem:lift.bijections.mu}.
Let $\chi\colon K^n\to \RV^N$ be a cell decomposition of $K^n$ for the original valuation $|\cdot |$ with the following properties:
there exists an $m \in \NN$ such that
\begin{enumerate}
\item $\chi$ refines $\rv$,
\item $\LL R$ is a union of $\abs{m}$-twisted boxes of $\chi$,
\item $f$ is $C^1$ on every $\abs{m}$-twisted box of $\chi$,
\item $\rv_\lambda \partial_i f$ is constant on the $\abs{m}\lambda$-twisted boxes of $\chi$ for every $\lambda\in \Gamma_K^\times, \lambda \leq 1$ and $i = 1, \ldots, n$, and
\item for every $j\in \{1, \ldots, n\}$, every coordinate permutation $\sigma\colon K^n\to K^n$, every $x\in K^{n-1}$ and every $\xi \in \RV_c^m$ the map
\[
y\mapsto f_j(\sigma(x,y), \xi)
\]
satisfies the $\lambda$-Jacobian property on every $\abs{m}\lambda$-twisted box of $\chi$ for every $\lambda\in \Gamma_K^\times, \lambda\leq 1$.
\end{enumerate}
Now refine $\chi$ to a cylindrical cell decomposition $(\chi_1, \ldots, \chi_n)$ and let $Z$ be the union of all $\abs{m}$-twisted boxes of $\chi_n$ of dimension strictly less than $n$.
Put $R' = R\setminus \rv_c(Z)$.
By Theorem~\ref{thm:dim.theory.RV}(\ref{it:generic.points}) we indeed have that $\dim_{\RV_c}(R\setminus R') < n$.
After further shrinking $R'$, we may assume that $h$ is differentiable on $R'$.

We show that $R'$ is as desired.
For $\xi\in R'$ the value of $\operatorname{IJac} h(\xi)$ is determined by $\rv_{k_c, \lambda}(\partial_i h(\xi))$ for $\lambda\in \Delta, \lambda \leq 1$, $i = 1, \ldots, n$.
By construction we have for $x\in \LL R'$ with $\rv_c x = \xi$ that
\[
\rv_\lambda (\partial_i f(x) ) = \rv_{k_c, \lambda}(\partial_i h(\xi)).
\]
So we conclude that $\Jac_\RV h = \rv \Jac f$ is equal to $\operatorname{IJac} h$ on $R'$.
\end{proof}

We have a similar result for o-minimal residue fields.

\begin{prop}\label{prop:ijac.real.closed}
Let $\cT$ be one of the following:
\begin{enumerate}
\item $\cT$ is the theory of an almost real closed valued field, either in the language of valued fields, or in a language with analytic functions as in~\cite{NSV24},
\item $\cT_{\mathrm{omin}}$ is a power-bounded theory of o-minimal fields and $\cT$ is the corresponding $\cT_{\mathrm{omin}}$-convex theory.
\end{enumerate}
Assume that $\cT$ is effective and let $h\colon R\subset (\RV^\times)^n\to (\RV^\times)^n$ be $\emptyset$-definable.
For any $\RV$-Jacobian $\Jac_\RV h\colon R\to \RV^\times$ of $h$ there exists a $\emptyset$-definable $R'\subset R$ with $\dim_\RV( R\setminus R') < n$ and such that $\Jac_\RV h = \operatorname{IJac} h$ on $R'$.
\end{prop}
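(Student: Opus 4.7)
The plan is to mirror the strategy of Proposition~\ref{prop:ijac.coarsening}, using effective lifting combined with a $C^1$ cell decomposition and exploiting the natural derivative structure on the residue field (o-minimal in the $\cT_\omin$-convex case, and reducing to a real-closed residue field by coarsening in the almost real closed cases).

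Since $\cT$ is effective in each of the listed cases, I first apply Lemma~\ref{lem:lift.bijections.mu} to obtain a $\emptyset$-definable bijective lift $f\colon \LL R \to \LL S$ of $h$ for which $\rv(\Jac f)$ is constant on each $\rv$-fibre of $\LL R$ and realizes a given $\Jac_\RV h$. Refining via a $C^1$ cell decomposition of $\LL R$ and discarding cells of dimension $<n$, Theorem~\ref{thm:dim.theory.RV} furnishes a $\emptyset$-definable $R' \subset R$ with $\dim_\RV(R\setminus R') < n$ on which $f$ is $C^1$ with the Jacobian property; shrinking further by Lemma~\ref{lem:int.jac.well.defined} we can also arrange that $\operatorname{IJac} h$ is defined on $R'$.

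Fix $\xi \in R'$ and a valued-field lift $\tilde\xi$ of $\xi$. For $\alpha \in (\cO_K^\times)^n$ set $x = \alpha\tilde\xi$, so $\rv(x) = \rv(\alpha)\xi$, and introduce the translated map $\phi(\alpha) = f(\alpha\tilde\xi)/f(\tilde\xi)$, where multiplication and division are componentwise. The chain rule will yield
\[
\rv(\Jac\phi(\alpha)) = \frac{\xi_1\cdots\xi_n}{h(\xi)_1\cdots h(\xi)_n}\,\rv(\Jac f(x)),
\]
and the reduction of $\phi$ to the residue field agrees with the map $f_\xi$ from the definition of $\operatorname{IJac} h$. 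Granted the compatibility $\rv(\Jac\phi(\alpha)) = \Jac f_\xi(\rv(\alpha))$, rearranging gives $\rv(\Jac f(x)) = (h(\xi)/\xi)\,\Jac f_\xi(\rv(x)/\xi) = \operatorname{IJac} h(\rv(x))$, using the convention that $h(\xi)$ and $\xi$ denote the products of coordinates in the definition of the intrinsic Jacobian. This will establish the proposition on $R'$.

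The hard part will be the compatibility claim that the residue of the valued-field Jacobian of $\phi$ equals the residue-field Jacobian of its reduction $f_\xi$, i.e. that taking residues commutes with differentiation for definable maps between unit balls. In the $\cT_\omin$-convex setting this boils down to the fact that $\cO_K$ is closed under continuous $\cL_\omin$-definable functions so that the limit defining the derivative descends to the quotient, together with the $\cL_\omin$-isomorphism of Lemma~\ref{lem:res.isom} which identifies differentiation in a maximal elementary substructure $K_0 \preccurlyeq K$ with $K_0 \subset \cO_K$ and differentiation in $k$. For the almost real closed cases, I plan to coarsen the valuation so that the residue field becomes real closed (as in Section~\ref{sec:coarsenings} and Section~\ref{ss:or.field.real.an}) and then apply the previous argument; in the analytic case the residue-field analytic structure $\tr_H\cB$ of Definition~\ref{def:truncation} is designed exactly so as to commute with residues and hence preserves derivatives.
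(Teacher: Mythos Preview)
Your approach diverges from the paper's and has a genuine gap at the ``compatibility claim.'' You argue, in the $\cT_\omin$-convex case, that $\res(\Jac\phi)=\Jac f_\xi\circ\res$ because $\cO_K$ is closed under continuous $\cL_\omin$-definable functions and $K_0\cong k$ as $\cL_\omin$-structures. But your $\phi$ is only $\cL$-definable (the valuation ring is in the language), not $\cL_\omin$-definable; neither $\cT_\omin$-convexity nor Lemma~\ref{lem:res.isom} says anything about such $\phi$. More fundamentally, the $C^1$ cell decomposition and Jacobian property you invoke control $\phi(x)-\phi(y)$ only for $x,y$ in the same twisted box, i.e.\ with $|x-y|<1$; computing the o-minimal derivative of $f_\xi$ at $\bar a$ requires controlling difference quotients with $\res(b)$ order-close to $\res(a)$, hence with $|b-a|=1$. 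Those are disjoint scales, so your cell decomposition gives you no leverage. (For the almost real closed case you additionally write ``coarsen'' when you mean refine, and then defer to the same unproven claim.)

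The paper avoids this entirely by introducing a strictly finer convex valuation $\cO_{\mathrm{can}}\subsetneq\cO_K$ (the convex hull of $\QQ$, or of a smaller elementary submodel). Then $\cO_K$ is a proper nontrivial coarsening of $\cO_{\mathrm{can}}$, and Proposition~\ref{prop:ijac.coarsening} applies verbatim to give the comparison on an $\cL'=\cL\cup\{\cO_{\mathrm{can}}\}$-definable set of full dimension. The point is that the proof of Proposition~\ref{prop:ijac.coarsening} uses the $\lambda$-Jacobian property for \emph{all} $\lambda$ in the convex subgroup, which is exactly the multi-scale control you were missing; since the induced valuation on $k$ is again convex, its topology coincides with the order topology and the two notions of derivative on $k$ agree. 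One then replaces the $\cL'$-definable exceptional set by an $\cL$-definable one via \cite[Lem.~4.3.4]{CHR}. If you want to make your direct route work, you would need something like the full-depth Jacobian property for a finer valuation---which is precisely what the paper's reduction packages.
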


The idea of the proof is to add a canonical valuation on the residue field, and then deduce the result from the corresponding result when coarsening.

\begin{proof}
We work in a sufficiently saturated model $K$ of $\cT$.
By effectivity, there exists a $\emptyset$-definable lift $f\colon \LL R\to K^n$ of $h$ as in Lemma~\ref{lem:lift.bijections.mu}.
If $\cT$ is the theory of an almost real closed field, let $\cO_{\mathrm{can}}\subset K$ be the convex closure of $\QQ$ in $K$.
Note that it is a proper subring of $\cO_K$ by saturation.
If $\cT$ is a $\cT_{\mathrm{omin}}$-convex theory, then $\cO_K$ is the convex closure of a proper elementary submodel $K'\preceq K$.
By saturation there exists a further proper elementary submodel $K''\preceq K'$, and we define $\cO_{\mathrm{can}}\subset k$ to be the convex closure of $K''$ in $K$.
In either case $\cO_{\mathrm{can}}$ is a valuation ring of $K$ strictly contained in $\cO_K$.
Let $\cL' = \cL\cup\{\cO_\mathrm{can}\}$ and consider the expansion of $K$ with a predicate for $\cO_{\mathrm{can}}$.
Then the theory of $K$ is $1$-h-minimal in the language $\cL'$ with respect to the finer valuation $\cO_{\mathrm{can}}$.
Indeed, both types of fields are 1-h-minimal with respect to $\cO_{\mathrm{can}}$, and adding a symbol for $\cO_K$ amounts to an $\RV$-expansion, which preserves 1-h-minimality by~\cite[Thm.\,4.1.19]{CHR}.  
%Indeed, for almost real closed fields this follows from~\cite[Thm.\,1]{NSV24} and the fact that $\RV$-expansions preserve $1$-h-minimality~\cite[Thm.\,4.1.19]{CHR}.
%For $\cT_{\mathrm{omin}}$-convex theories, this follows from the fact that $T$ is also the theory of $K$ with a symbol for $\cO_{\mathrm{can}}$ and no symbol for $\cO_K$, together with~\cite[Thm.\,4.1.19]{CHR}.

Now Proposition~\ref{prop:ijac.coarsening} gives an $\cL'$-definable $S\subset R$ with the desired properties.
Let $Z\subset K^n$ be $\cL'$-definable such that $\rv(Z) = R$ and $\dim Z = \dim_\RV R$. 
By~\cite[Lem.\,4.3.4]{CHR} $Z$ is contained in an $\cL$-definable set $Y$ of the same dimension.
Then $R' = \rv(Y)$ is as desired.
\end{proof}

Using the intrinsic $\RV$-Jacobian, we can give an alternative definition of the categories $\mu \RV[n]$.

\begin{defn}
Let $\cT$ be one of the theories above, in which an intrinsic $\RV$-Jacobian is defined.
Let $n$ be a positive integer.
Denote by $\mu \RV_{\intr}[n]$ the category with objects $(R, \omega)$ where $R\subset (\RV^\times)^n\times \RV^m$ is $\emptyset$-definable and with finite-to-one projection onto $(\RV^\times)^n$, and where $\omega\colon R\to \RV^\times$ is $\emptyset$-definable.
A morphism in this category consists of a $\emptyset$-definable bijection $h\colon R\to S$ such that
\[
\omega(\xi) = \rho(h(\xi))\operatorname{IJac}(h)(\xi) \text{ for all } \xi \text{ away from a set of $\RV$-dimension $<n$},
\]
and such that
\[
|\omega(\xi)||\xi| = |\rho(h(\xi))||h(\xi)| \text{ for all } \xi\in R.
\]
\end{defn}

This definition is closer to the definition of Hrushovski--Kazhdan.
In effective theories, the categories $\mu \RV[n]$ and $\mu_\intr \RV[n]$ coincide.

\begin{prop}
Let $\cT$ be effective and assume that there is an intrinsic $\RV$-Jacobian.
Then the categories $\mu \RV[n]$ and $\mu_\intr \RV[n]$ are equivalent.
\end{prop}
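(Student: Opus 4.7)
The plan is to show the identity-on-objects functor $\mu\RV[n] \to \mu_\intr\RV[n]$ is an equivalence of categories. Since the two categories have the same objects, the only task is to verify that a $\emptyset$-definable bijection $h\colon R \to S$ is a morphism in $\mu\RV[n]$ if and only if it is a morphism in $\mu_\intr\RV[n]$. Throughout, write $J \coloneqq \omega/(\rho\circ h) \colon R \to \RV^\times$.

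For the forward direction, suppose $h$ is a morphism in $\mu\RV[n]$, so there exists an $\RV$-Jacobian $\Jac_\RV h$ with $\omega(\xi) = \rho(h(\xi)) \Jac_\RV h(\xi)$ on all of $R$. Taking norms and invoking property (4) of Lemma~\ref{lem:lift.bijections.mu}, which yields $|\Jac_\RV h(\xi)| = |h(\xi)|/|\xi|$, immediately gives the norm condition $|\omega(\xi)||\xi| = |\rho(h(\xi))||h(\xi)|$. Whichever of Propositions~\ref{prop:ijac.alg.bdd}, \ref{prop:ijac.coarsening} or \ref{prop:ijac.real.closed} applies in the given setting provides a $\emptyset$-definable $R' \subset R$ with $\dim_\RV(R \setminus R') < n$ on which $\Jac_\RV h = \operatorname{IJac} h$. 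Restricting the equation to $R'$ gives $\omega = (\rho\circ h) \cdot \operatorname{IJac} h$ almost everywhere, so $h$ is a morphism in $\mu_\intr\RV[n]$.

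For the reverse direction, suppose $h$ is a morphism in $\mu_\intr\RV[n]$. The norm condition ensures $|J(\xi)| = |h(\xi)|/|\xi|$ on all of $R$, so it suffices to realize $J$ as an $\RV$-Jacobian by constructing an explicit lift $f\colon \LL R \to \LL S$ satisfying the conclusions of Lemma~\ref{lem:lift.bijections.mu} with $\rv(\Jac f) = J$. On each $\rv$-fibre $\rv^{-1}(\eta) \times \{\zeta\} \subset \LL R$, the plan is to take $f$ to be a diagonal affine map: fix $\emptyset$-definable lifts $a(\eta)\in K^n$ of $\eta$ and $b(\eta,\zeta) \in K^n$ of $h_0(\eta,\zeta)$ (available by effectivity and Lemma~\ref{lem:K.to.RV.lift}), and choose diagonal entries $c_1,\dots,c_n \in K^\times$ with $\rv(c_i) = h_i(\eta,\zeta)/\eta_i$ for $i \geq 2$ and $\rv(c_1) = J(\eta,\zeta) \cdot \prod_{i\geq 2} \eta_i/h_i(\eta,\zeta)$. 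The norm condition forces $|c_1| = |h_1(\eta,\zeta)|/|\eta_1|$, which together with the norm choices for $c_i$ ($i\geq 2$) ensures that the affine map $x \mapsto b(\eta,\zeta) + \operatorname{diag}(c_1,\dots,c_n)\cdot(x - a(\eta))$ sends $\rv^{-1}(\eta)$ bijectively to $\rv^{-1}(h_0(\eta,\zeta))$. Setting $f(x,\zeta) \coloneqq (b(\rv(x),\zeta) + \operatorname{diag}(c_i(\rv(x),\zeta))\cdot(x - a(\rv(x))), h_1(\rv(x),\zeta))$ yields a $\emptyset$-definable bijection $\LL R \to \LL S$ which is $C^1$ on each $\rv$-fibre with constant Jacobian $\prod c_i$; this realises $J$ as $\rv(\Jac f)$ on all of $R$.

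The main technical obstacle is ensuring the reverse direction works in a $\emptyset$-definable and globally consistent way: the diagonal entries $c_i$ must be chosen uniformly in $(\eta,\zeta)$, their norms are pinned by the requirement that the affine map preserves the correct $\rv$-fibre structure, and the product of their $\rv$-values must equal the prescribed function $J$. The flexibility in distributing the residue-field factor across the diagonal, combined with effectivity (to lift the required $\RV$-valued data to $K$-valued data), is exactly what makes this construction go through. The easy forward direction uses only the norm clause of Lemma~\ref{lem:lift.bijections.mu} and the generic agreement between $\Jac_\RV h$ and $\operatorname{IJac} h$.
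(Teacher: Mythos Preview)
Your forward direction is correct and matches the paper's (terse) argument: the propositions give $\Jac_\RV h = \operatorname{IJac} h$ almost everywhere, and property~(4) of Lemma~\ref{lem:lift.bijections.mu} supplies the norm condition.

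The reverse direction has a genuine gap. You write ``fix $\emptyset$-definable lifts $a(\eta)\in K^n$ of $\eta$ and $b(\eta,\zeta)\in K^n$ of $h_0(\eta,\zeta)$ (available by effectivity and Lemma~\ref{lem:K.to.RV.lift})'', but neither of those tools produces a map whose \emph{domain} is an $\RV$-sort. Effectivity lifts $A$-definable points for $A\subset K$; Lemma~\ref{lem:K.to.RV.lift} lifts maps $K^n\to\RV^m$ to maps $K^n\to K^m$. Neither yields a $\emptyset$-definable section $\RV^\times\to K^\times$, and in general no such section exists: already in a pure henselian field such as $\RR((t))$ one has $\dcl(\xi)\cap K=\dcl(\emptyset)\cap K$ for generic $\xi\in\RV^\times$, so there is no $\xi$-definable lift of $\xi$. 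If you instead take $a,b,c_i$ as functions of $x\in K^n$ (which is what those results actually give), then on each $\rv$-fibre the map $x\mapsto b(x,\zeta)+\operatorname{diag}(c_i(x,\zeta))(x-a(x))$ is no longer affine, and $\Jac f$ picks up terms involving $\partial_j a_i,\partial_j b_i,\partial_j c_i$ that you have not controlled.

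A repair goes as follows. By the cited propositions, any lift $g$ from Lemma~\ref{lem:lift.bijections.mu} already has $\Jac_\RV^g h=\operatorname{IJac} h=J$ on $R\setminus R_0$ for some $\emptyset$-definable $R_0$ of $\RV$-dimension $<n$; so only $R_0$ requires adjustment. One then shows, by induction on $\dim_\RV R_0$, that any target $J_0\colon R_0\to\RV^\times$ with $|J_0|=|h|/|\cdot|$ is realized by some lift on $\LL R_0$. The inductive step follows the construction in the proof of Lemma~\ref{lem:lift.bijections.mu} (the case $\dim_\RV R<n$), pushing the desired Jacobian value into the lift $g$ of the projection to $\RV[n-1]$. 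The base case $\dim_\RV R_0=0$ is exactly your affine construction, and there it \emph{does} work: $R_0$ is finite, so Lemma~\ref{lem:lift.fin} supplies honest $\emptyset$-definable points $a_\eta,b_{\eta,\zeta}\in K^n$ lifting each element, and then your diagonal map has Jacobian literally equal to $\prod c_i$.
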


\begin{proof}
This follows from Propositions~\ref{prop:ijac.alg.bdd}, \ref{prop:ijac.coarsening}, and~\ref{prop:ijac.real.closed}.
\end{proof}

%\section{Reduction to residue field and value group}\label{sec:RF.VG}

%\input{RF_VG/RF_VG}

%\section{A comparison with the Cluckers--Loeser framework}\label{sec:CL}

%\input{comparison/comparison}

%\section{Distributions, Fourier and Mellin transforms}
%
%\input{Fourier/Fourier}

%\section{Mixed characteristic}

%\input{mixed_char/mixed_char}

%\section{Rings of motivic functions}

\printbibliography %biblatex > bibtex
%\bibliographystyle{amsalpha}
%\bibliography{anbib}

\end{document}